\documentclass[aop]{imsart}

\RequirePackage{amsthm,amsmath,amsfonts,amssymb}
\RequirePackage[numbers]{natbib}

\usepackage[hidelinks]{hyperref}
\usepackage{pgfplots}
\usepackage{amsthm}
\usepackage{latexsym}
\usepackage{dsfont}
\usepackage{bbm}
\usepackage{amssymb}
\usepackage{amsmath}
\usepackage{mathtools}
\usepackage{subcaption}
\usepackage{float}
\usepackage{tikz}
\usepackage{esvect}

\usepackage{enumerate}

\startlocaldefs

\newcommand{\defeq}{\vcentcolon=}
\newcommand{\eqdef}{=\vcentcolon}


\newcommand{\distto}{%
  \mathrel{\vbox{\offinterlineskip\ialign{%
    \hfil##\hfil\cr
    $\scriptscriptstyle\mathrm{d}$\cr
    \noalign{\kern-.05ex}
    $\to$\cr
}}}}
\newcommand{\findimto}{%
  \mathrel{\vbox{\offinterlineskip\ialign{%
    \hfil##\hfil\cr
    $\scriptscriptstyle\mathrm{f.d.}$\cr
    \noalign{\kern-.05ex}
    $\to$\cr
}}}}
\newcommand{\stablyto}{%
  \mathrel{\vbox{\offinterlineskip\ialign{%
    \hfil##\hfil\cr
    $\scriptscriptstyle\mathrm{st\ }$\cr
    \noalign{\kern-.05ex}
    $\to$\cr
}}}}
\newcommand{\Probto}{%
  \mathrel{\vbox{\offinterlineskip\ialign{%
    \hfil##\hfil\cr
    $\scriptscriptstyle\Prob$\cr
    \noalign{\kern-.05ex}
    $\to$\cr
}}}}
\newcommand{\TVto}{%
  \mathrel{\vbox{\offinterlineskip\ialign{%
    \hfil##\hfil\cr
    $\scriptscriptstyle\mathrm{TV}$\cr
    \noalign{\kern-.05ex}
    $\to$\cr
}}}}

\newcommand{\ind}[1]{\mathds{1}_{\{#1\}}}

\newcommand{\N}{\mathds{N}}
\newcommand{\Z}{\mathds{Z}}
\newcommand{\Q}{\mathds{Q}}
\newcommand{\R}{\mathds{R}}
\newcommand{\B}{\mathcal{B}}
\newcommand{\G}{\mathds{G}}
\newcommand{\C}{\mathds{C}}

\newcommand{\Gfct}{\mathcal{G}}
\newcommand{\familytree}{\mathcal{T}}

\newcommand{\I}{\mathcal{I}}
\newcommand{\cJ}{\mathcal{J}}

\newcommand{\imag}{\mathrm{i}}
\newcommand{\Real}{\mathrm{Re}}
\newcommand{\Imag}{\mathrm{Im}}
\newcommand{\Surv}{\mathcal{S}}
\newcommand{\comp}{\mathsf{c}}
\newcommand{\transp}{\mathsf{T}}
\newcommand{\e}{\mathsf{e}}

\newcommand{\Prob}{\mathds{P}}
\newcommand{\Probsf}{\mathsf{P}}
\DeclareMathOperator{\Var}{\mathrm{Var}}
\DeclareMathOperator{\Cov}{\mathrm{Cov}}
\DeclareMathOperator{\Res}{\mathrm{Res}}

\newcommand{\E}{\mathds{E}}
\newcommand{\Esf}{\mathsf{E}}
\newcommand{\U}{\mathsf{U}}		
\newcommand{\A}{\mathcal{A}}
\renewcommand{\L}{\mathcal{L}}
\newcommand{\F}{\mathcal{F}}

\newcommand{\cZ}{\mathcal{Z}}
\newcommand{\cC}{\mathcal{C}}
\newcommand{\1}{\mathds{1}}
\newcommand{\cN}{\mathcal N}

\renewcommand{\S}{\mathcal{S}}

\newcommand{\eqdist}{%
  \mathrel{\vbox{\offinterlineskip\ialign{%
    \hfil##\hfil\cr
    $\scriptscriptstyle\mathrm{law}$\cr
    \noalign{\kern.2ex}
    $=$\cr
}}}}

\newcommand{\dd}{\mathrm{d}}
\newcommand{\ds}{\mathrm{d} \mathit{s}}
\newcommand{\dt}{\mathrm{d} \mathit{t}}

\newcommand{\dx}{\mathrm{d} \mathit{x}}
\newcommand{\dy}{\mathrm{d} \mathit{y}}
\newcommand{\dz}{\mathrm{d} \mathit{z}}

\newcommand{\dProb}{\mathrm{d} \Prob}

\theoremstyle{plain}
\newtheorem{theorem}{Theorem}[section]
\newtheorem{corollary}[theorem]{Corollary}
\newtheorem{lemma}[theorem]{Lemma}
\newtheorem{proposition}[theorem]{Proposition}
\theoremstyle{remark}
\newtheorem{remark}[theorem]{Remark}
\newtheorem{oproblem}{Open problem}
\theoremstyle{definition}

\numberwithin{equation}{section}

\newcounter{assumptions}

\endlocaldefs

\begin{document}
	
\begin{frontmatter}
\title{Asymptotic fluctuations in supercritical Crump-Mode-Jagers processes}
\runtitle{Asymptotic fluctuations of supercritical CMJ processes}
		
\begin{aug}
\author[A]{\fnms{Alexander} \snm{Iksanov}\ead[label=e1]{iksan@univ.kiev.ua}},
\author[B]{\fnms{Konrad} \snm{Kolesko}\ead[label=e2]{konrad.kolesko@math.uni.wroc.pl}}
\and
\author[C]{\fnms{Matthias} \snm{Meiners}\ead[label=e3]{matthias.meiners@math.uni-giessen.de}}
\address[A]{Faculty of Computer Science and Cybernetics,
Taras Shevchenko National University of Kyiv, Ukraine, \printead{e1}}
\address[B]{Mathematical Institute, University of Wroc{\l}aw, Poland, \printead{e2}}
\address[C]{Mathematisches Institut, University of Gie\ss en, Germany, \printead{e3}}
\end{aug}
		
\begin{abstract}
Consider a supercritical Crump--Mode--Jagers process $(\mathcal Z_t^{\varphi})_{t \geq 0}$
counted with a random characteristic $\varphi$.
Nerman's celebrated law of large numbers
[{\it Z.~Wahrsch.~Verw.~Gebiete} 57, 365--395, 1981] states that,
under some mild assumptions, $e^{-\alpha t} \mathcal Z_t^\varphi$
converges almost surely as $t \to \infty$ to $aW$.
Here, $\alpha>0$ is the Malthusian parameter,
$a$ is a constant and $W$ is the limit of Nerman's martingale,
which is positive on the survival event.
In this general situation, under additional (second moment) assumptions,
we prove a central limit theorem for $(\mathcal Z_t^{\varphi})_{t \geq 0}$.
More precisely, we show that there exist a constant $k \in \mathds N_0$
and a function $H(t)$, a finite random linear combination of functions of the form
$t^j e^{\lambda t}$ with $\alpha/2 \leq \Real(\lambda)<\alpha$,
such that $(\mathcal Z_t^\varphi - a e^{\alpha t}W -H(t))/\sqrt{t^k e^{\alpha t}}$
converges in distribution to a normal random variable with random variance.
This result unifies and extends various central limit theorem-type results for specific branching processes.
\end{abstract}
		
\begin{keyword}[class=MSC]
	\kwd[Primary ]{60J80}	
	\kwd[; secondary ]{60F05}	
	\kwd{60G44}			
\end{keyword}
		
\begin{keyword}
	\kwd{Asymptotic fluctuations}
	\kwd{central limit theorem}
	\kwd{general branching process (counted with a random characteristic)}
	\kwd{Laplace transform}
	\kwd{Nerman's martingale}
\end{keyword}	
\end{frontmatter}

\maketitle

\tableofcontents

\section{Introduction}		\label{sec:Introduction}

A general (Crump-Mode-Jagers) branching process starts at time $0$ with a single individual,
the ancestor, who is alive in the random time interval $[0,\zeta)$ for a random variable $\zeta$,
the life span, taking values in $[0,\infty]$.
The ancestor produces offspring born at the points of a reproduction point process $\xi$ on $[0,\infty)$.
No particular assumption about the dependence structure between $\xi$ and $\zeta$
is made.
For each individual $u$ that is ever born there is an independent copy $(\xi_u,\zeta_u)$ of the pair $(\xi,\zeta)$
that determines the birth times of the individual's offspring relative to $u$'s time of birth
and its life span.

The general branching process encompasses e.g.\ the
Bienaym\'e-Galton-Watson process,
the Yule process,
the continuous-time Markov branching process,
the Sevastyanov process,
and the Bellman-Harris process.
We refer to \cite{Jagers:1975} for a more detailed account of the history of the general branching process
and its predecessors.

The general branching process counted with a random characteristic at time $t$
is the sum over all individuals ever born where the contribution
of each individual to the sum is determined by some random characteristic that may take into account
all aspects of the individual's life such as its age at time $t$, its life span, etc.
This formulation makes it possible to treat at one go various quantities of interest derived from the general branching process
such as the number of births up to time $t$, the number of individuals alive at time $t$,
the number of individuals alive at time $t$ which are
younger than a given threshold $a > 0$, etc.
A formal description of the model will be given in Section \ref{sec:Setup}.

General branching processes serve as models of
biological populations such as humans, cells or plants
\cite{Haccou+al:2007,Jagers:1975,Kimmel+Axelrod:2015,Olofsson+Sindi:2014},
as models for tumor growth \cite{Durrett:2015,Kimmel+Axelrod:2015},
but also for neutron chain reactions \cite{Asmussen+Hering:1983}
or fragmentation \cite{Janson+Neininger:2008} (after a change of time) to name but a few.
The general branching process is also an important tool within related fields of applied probability or theoretical computer science.
In fact, its applications in these fields are numerous and any attempt to give a complete survey here is hopeless.
We confine ourselves to mentioning its successful application
in the study of asymptotic properties of random graph growth models driven by preferential attachment dynamics
\cite{Athreya+al:2008,Bhamidi+al:2015,Mori+Rokob:2019,Rudas+Toth:2009}
and particularly random tree growth models \cite{Devroye:1987,Holmgren+Janson:2017,Iksanov+Kabluchko:2018,Jog+Loh:2017,Leckey+Mitsche+Wormald:2020,Pittel:1994}.
It is also used as an approximation for epidemic models
\cite{Bhamidi+al:2014,Trapman+al:2016}
and as a model of the initial phase of epidemics such as SARS, Ebola and SARS-CoV-2
\cite{Ball+al:2014,Britton+al:2019,Britton+Scalia_Tomba:2019},
during which the disease spreads exponentially fast
but the impact of population structure and preventive measures is
still small \cite{Trapman+al:2016}.

The laws of large numbers of the supercritical general branching process
counted with a random characteristic are due to Nerman \cite{Nerman:1979,Nerman:1981} in the single-type, non-lattice case,
that is, when the reproduction point process is not concentrated on any lattice.
There were earlier results for special cases, but here we refrain from sketching the history
and instead refer to the introduction of \cite{Nerman:1981}.
The lattice version of Nerman's law of large numbers was proved
by Gatzouras \cite{Gatzouras:2000}.

In view of the relevance of the general branching process in applications
and the fact that the laws of large numbers date back as far as 1981, it is remarkable, and rather
surprising,
that there is no comprehensive
central limit theorem for the general process counted with a random characteristic in the literature.
However, there are results for related models indicating the intricate nature of the fluctuations that can occur.
For the multi-type continuous-time Markov branching process with finite type space
where individuals give birth only at the time of their death
Athreya \cite{Athreya:1969a,Athreya:1969b} proved a central limit theorem
and Janson \cite{Janson:2004} proved a functional central limit theorem.
Asmussen and Hering \cite[Section VIII.3]{Asmussen+Hering:1983} provide
results for the asymptotic fluctuations of multi-type Markov branching processes
with rather general type space. In principle, these results contain the single-type case
of the general branching process
since such a process can be seen as a Markov process
in which the type of an individual at time $t$ is its entire life history up to time $t$.
However, this type space is large, and the assumptions of \cite{Asmussen+Hering:1983}
are typically not satisfied except in special cases such as the case of the Galton-Watson process.
Recently, Janson studied the asymptotic fluctuations of single-type supercritical general branching processes
in the lattice case \cite{Janson:2018}.
For the non-lattice case, there is a second-order result by Janson and Neininger \cite{Janson+Neininger:2008}
for Kolmogorov's conservative fragmentation model that may be translated into the language
of general branching processes. It gives a central limit theorem
for the number of individuals born up to time $t$,
but it requires that the offspring variable $N \defeq \xi([0,\infty))$ be bounded
and the additional assumption that $\int e^{-x} \, \xi(\dx) = 1$ almost surely,
a rather restrictive assumption in the context of general branching processes.
Another related work is the paper by Charmoy, Croydon, and Hambly \cite{Charmoy+al:2017},
where the authors investigate 
the fluctuations of the eigenvalue counting function related to certain random fractals.
This problem can be addressed using limit theorems for specific Crump-Mode-Jagers processes.
The random characteristics  in this model are no longer assumed to be independent,
which takes it beyond the scope of the present paper.
It is worth noting that limit theorems for general branching processes were previously explored
by Jagers and Nerman \cite{Jagers+Nerman:1984b}.
However, the conditions in this paper can be challenging to verify, even for  relatively simple characteristics.
Another related result is the central limit theorem for Nerman's martingale \cite{Iksanov+Kolesko+Meiners:2021b}.

In the present paper, we close the gap in the literature and present
a central limit theorem for the general branching process counted with a random characteristic.
Our main result, Theorem \ref{Thm:main}, contains and extends all results for single-type processes
summarized above. A non-exhaustive list of applications given in Section \ref{sec:applications}
contains Galton-Watson processes, Nerman's martingale and its complex-valued counterparts,
epidemic models, Crump-Mode-Jagers processes with homogeneous Poisson offspring process and general lifetimes,
and conservative fragmentation models.

\subsection*{Organization of the paper}
The paper is organized as follows.
In Section \ref{sec:Setup} we formally introduce the general branching process
counted with a random characteristic. We further state and discuss the assumptions we are working with.
In Section \ref{subsec:main results}, we state the main result, Theorem~\ref{Thm:main}, and its corollaries.
We then apply our general results to some specific models in Section~\ref{sec:applications}.
Section \ref{sec:preliminaries} contains some preliminaries for the proofs.
Nerman's martingale and further related martingales play a crucial role in our theory.
All these martingales are introduced and discussed in Section~\ref{sec:Nerman's martingales as CMJ processes}.
Section~\ref{sec:proofs} is devoted to proving our main result, Theorem~\ref{Thm:main}.
Our central limit theorem is based on an asymptotic expansion
of the mean of a general branching process counted with a random characteristic.
Such asymptotic expansions are derived in Section \ref{sec:asymptotic expansion of the mean}.
We close the paper with Section~\ref{sec:discussion}, in which possible future research directions are outlined.

\section{Setup, preliminaries and main results}	\label{sec:Setup}

We continue with a formal description of the general branching process.

\subsection{The general branching process counted with a random characteristic}	\label{subsec:General branching process}

We introduce the general (Crump-Mode-Jagers) branching process following Jagers \cite{Jagers:1975,Jagers:1989}.
The process starts with a single individual, the ancestor, born at time $0$.
The ancestor produces offspring born at the points of a reproduction point process
$\xi = \sum_{j=1}^N \delta_{X_j}$ on $[0,\infty)$
where $N = \xi([0,\infty))$ takes values in $\N_0 \cup \{\infty\}$ with $\N_0 \defeq \{0,1,2,\ldots\}$
and $X_j \defeq \inf\{t \geq 0: \xi([0,t]) \geq j\}$.
Here and throughout the paper, the infimum of the empty set is defined to be $\infty$.
The ancestor has a random lifetime $\zeta$, which may be dependent on
$\xi$.
Formally, $\zeta$ is a random variable assuming values in $[0,\infty]$.

Individuals are indexed by $u \in \I = \bigcup_{n \in \N_0} \N^n$ according to their genealogy.
Here, $\N = \{1,2,\ldots\}$ and $\N^0 \defeq \{\varnothing\}$ is the singleton set containing only the empty tuple $\varnothing$.
We use the usual Ulam-Harris notation.
We abbreviate a tuple $u = (u_1, \ldots, u_n) \in \N^n$ by $u_1 \ldots u_n$
and refer to $n$ as the length or generation of $u$; we write $|u| = n$.
In this context, any $u = u_1 \ldots u_n \in \I$ is called (potential) individual.
Its ancestral line is encoded by
\begin{equation*}
\varnothing \to u_1 \to u_1 u_2 \to \ldots \to u_1 \ldots u_n = u
\end{equation*}
where $u_1$ is the $u_1^{\mathrm{th}}$ child of the ancestor, $u_1u_2$ the $u_2^{\mathrm{th}}$ child of $u_1$, etc.
If $v = v_1 \ldots v_m \in \I$, then $uv$ is short for $u_1 \ldots u_n v_1 \ldots v_m$.
For $u \in \I$ and $i \in \N$, the individuals $ui$ will be called children of $u$.
Conversely, $u$ will be called parent of $ui$.
More generally,
$w$ will be called descendant of $u$ (short: $u \preceq w$)
iff $uv = w$ for some $v \in \I$.
Conversely, $u$ will be called an ancestor/progenitor of $w$.
We write $u \prec w$ if $u \preceq w$ and $u \not = w$.
Often, we shall refer to $\N^n$ as the (potential) $n^{\mathrm{th}}$ generation ($n \in \N_0$).
With these notations, we have
\begin{equation*}
|u| = n
\quad	\text{iff}	\quad u \in \N^n
\quad	\text{iff}	\quad u \text{ is an $n^{\mathrm{th}}$ generation (potential) individual}.
\end{equation*}
For $u = u_1 \ldots u_n \in \N^n$ and $k \in \N_0$,
let $u|_k$ denote the ancestor of $u$ in the $k^{\mathrm{th}}$ generation.
Formally, $u|_k$ is the restriction of the vector $u$ to its first $k$ components:
\begin{equation}	\label{eq:v_restr}
u|_k =	\begin{cases}
		\varnothing			& \text{if	$k=0$},					\\
		u_1 \ldots u_k			& \text{if	$1 \leq k \leq |u|$},		\\
		u					& \text{if	$k > |u|$}.
		\end{cases}
\end{equation}
For typographical reasons, we may sometimes write $v|k$ instead of $v|_k$.
For $u \in \I$ let $u\I$ denote the subtree of $\I$ emanating from $u$,
that is,
\begin{equation*}
u\I \defeq \{uv: \, v \in \I\} = \{w \in \I: \, w|_{|u|}=u\}.
\end{equation*}

For each $u \in \I$ there is an independent copy $(\xi_u,\zeta_u)$
of the pair $(\xi,\zeta)$ that determines the birth times of $u$'s offspring relative to its time of birth,
and the duration of its life.
Quantities derived from $(\xi_u,\zeta_u)$ are indexed by $u$.
For instance, $N_u$ is the number of offspring of $u$ and
$X_{u,k}$ is the difference between the birth-time of the $k^{\mathrm{th}}$ child of $u$ and $u$ itself, etc.
The birth-times $S(u)$ for $u \in \I$ are defined recursively.
We set $S(\varnothing) \defeq 0$ and, for $n \in \N_0$,
\begin{equation*}
S(uj) \defeq S(u) + X_{u,j}	\quad	\text{for } u \in \N^n	\text{ and } j \in \N.
\end{equation*}
The family tree of all individuals ever born is denoted by $\familytree \defeq \{u \in \I: S(u) < \infty\}.$
We call
\begin{equation*}
\Surv \defeq \bigcap_{n \in \N} \{\#(\familytree \cap \N^n)\geq 1\}
\end{equation*}
the survival set and its complement $\Surv^\comp=\cup_{n \in \N} \{\#(\familytree \cap \N^n) = 0\}$ the extinction set.
The time of death of individual $u$ is $S(u) + \zeta_u$.
An individual $u$ is alive at time $t \geq 0$ if it is born, but not yet dead at time $t$,
i.e., if
\begin{equation*}
S(u) \leq t < S(u) + \zeta_u.
\end{equation*}

We now construct the canonical space for the general branching process.
For $u \in \I$, let $(\Omega_u,\A_u,P_u)$ be a copy of a given probability space
$(\Omega_\varnothing,\A_\varnothing,P_\varnothing)$,
the \emph{life space}\index{life space} of the ancestor.
An element $\omega \in \Omega_u$ is a possible life career for individual $u$
and any property of interest of $u$ like its mass at some age
or its life span is viewed as a measurable function on the life space.
In particular, $\xi$ and $\zeta$,
the reproduction point process and the life span, are measurable functions defined on $(\Omega_\varnothing,\A_\varnothing)$.

From the life space, we construct the population space:
\begin{equation*}	\textstyle
(\Omega,\F,\Prob)	\defeq \big(\bigtimes_{u \in \I} \Omega_u, \bigotimes_{u \in \I} \A_u, \bigotimes_{u \in \I} P_u \big).
\end{equation*}
For $u \in \I$, we write $\pi_{u}$ for the projection
$\pi_{u}: \bigtimes_{v \in \I} \Omega_v \to \Omega_u$
and $\theta_u$ for the shift $\theta_u((\omega_v)_{v \in \I}) = (\omega_{uv})_{v \in \I}$.
To formally lift an entity $\chi$ defined on the life space, i.e.\ a function $\chi$ on $\Omega_u$,
to the population space,
we define $\chi_u \defeq \chi \circ \pi_{u}$.
In particular, $\xi_u = \xi \circ \pi_{u}$ and $\zeta_u = \zeta \circ \pi_{u}$.
In slight abuse of notation,
if $\chi$ is defined on the life space,
when working on the population space, we write $\chi$ instead of $\chi \circ \pi_{\varnothing}$.
For instance, we sometimes write $\Prob(\zeta \leq t)$
for $P_u(\zeta \leq t) = \Prob(\zeta_\varnothing \leq t)$. A technical remark is in order. Sometimes random variables independent of $\F$ appear.
This means that when required, we work on a suitable extension of the space $(\Omega,\F,\Prob)$.

We are interested in the general branching process counted with a random characteristic.
A \emph{random characteristic} $\varphi$ is a random process on $(\Omega_\varnothing,\A_\varnothing,P_\varnothing)$
taking values in the Skorokhod space of right-continuous functions $f:\R \to \R^d$
with existing left limits at every point in $\R$. Such functions are called c\`adl\`ag for short.
The characteristic $\varphi$ may also be viewed as a stochastic process $\varphi: \Omega_\varnothing \times \R \to \R^d$,
$(\omega,t) \mapsto \varphi(\omega,t)$ with right-continuous paths and existing left limits.
Notice that unlike in some important references \cite{Gatzouras:2000,Nerman:1981},
we allow, 
and actually need at some places, 
that $\varphi(t)\not=0$ for some $t < 0$.
It is known that such a process is product-measurable.
Define $\varphi_u = \varphi \circ \pi_{u}$. By product measurability, $\varphi_u(t-S(u))$ is a random variable.
Note that, for given $u\in \I$, $\varphi_u$ is independent of $S(u)$.
However, $\varphi_u$ and $S(v)$ can be dependent, when $u$ is an ancestor of $v$. The general branching process counted with characteristic $\varphi$
is $\cZ^\varphi = (\cZ_t^\varphi)_{t \in \R}$ where $\cZ_t^\varphi$ is defined by
\begin{equation}	\label{eq:Z_t^phi}
\cZ_t^\varphi \defeq \sum_{u \in \I} \varphi_u(t-S(u)),	\quad	t \in \R.
\end{equation}
Here, we use the convention $\varphi(-\infty) \defeq 0$ and so the above sum involves only terms
associated with individuals that are eventually born.
In the special case $\varphi = \1_{[0,\zeta)}$,
\begin{equation}	\label{eq:Z_t^1_[0,zeta]}
\cZ^{\1_{[0,\zeta)}}_t = \sum_{u \in \I} \1_{[0,\zeta_u)} (t-S(u)) = \sum_{u \in \I} \1_{\{S(u) \leq t < S(u) + \zeta_u\}},
\end{equation}
i.e., $\cZ^{\1_{[0,\zeta)}}_t$ is the number of individuals alive at time $t$.
Similarly,
\begin{equation}	\label{eq:Z_t^1_[0,infty)}
N((t,t+a]) \defeq \cZ^{\1_{[0,a)}}_{t+a} = \sum_{u \in \I} \1_{[0,a)} (t+a-S(u))
= \sum_{u \in \I} \1_{\{t<S(u)\leq t+a\}}
\end{equation}
is the number of individuals born strictly after time $t$ and up to and including time $t+a$, $a > 0$.
The setup covers a wide range of possible applications.
Some special cases and specific examples are covered in Section \ref{sec:applications}.

Notice that $\cZ^\varphi$ is not well-defined {\it a priori}.
Conditions for the finiteness of the general branching process are given in \cite[Section 6.2]{Jagers:1975}.
For instance, the existence of a Malthusian parameter, a condition formally stated as (A\ref{ass:Malthusian parameter}) below
and assumed throughout this paper, implies that the number of individuals born up to and including time $t$ is finite for all $t \geq 0$
almost surely, see \cite[Theorem 6.2.3]{Jagers:1975}.
In particular, if (A\ref{ass:Malthusian parameter}) holds, then $\cZ_t^\varphi$ is well-defined whenever the characteristic $\varphi$
vanishes on the negative half-line since in this case, the sum on the right-hand side of \eqref{eq:Z_t^phi} has only finitely many non-vanishing
summands almost surely.
As, in general, we allow the characteristic $\varphi$ to be real-valued and do not require that it vanishes
on the negative half-line, we need a finiteness result that goes beyond \cite[Theorem 6.2.3]{Jagers:1975}.
Jagers and Nerman \cite{Jagers+Nerman:1984} work under their assumption (6.1),
which corresponds to our condition (A\ref{ass:mean growth}) for $|\varphi|$ below.
However, in our proofs, we shall require the well-definedness of $\cZ_t^{\chi_\lambda}$
for a specific centered characteristic $\chi_\lambda$, defined in Section \ref{sec:Nerman's martingales as CMJ processes},
with $|\chi_\lambda|$ not satisfying (A\ref{ass:mean growth}).
Instead, we work under (A\ref{ass:mean growth}) and (A\ref{ass:variance growth})
to ensure the general branching process counted with a random characteristic to be well-defined.
The corresponding result is Proposition \ref{Prop:L1 existence of Z_t^varphi} below.

\subsection{Assumptions}	\label{subsec:Assumptions}

We write $\mu(\cdot)$ for the intensity measure $\E[\xi(\cdot)]$ of the point process $\xi(\cdot)$,
and $\L \mu$ for its Laplace transform, i.e.,
\begin{equation}	\label{eq:tilde mu}
\L\mu(z) \defeq \int e^{-z x} \, \mu(\dx) = \E \bigg[ \sum_{j=1}^N e^{-z X_j}\bigg]
\end{equation}
for all $z \in \C$ for which the above integral converges absolutely.

Throughout this paper we distinguish between the lattice and the non-lattice case.
Here, we say that $\xi$ is \emph{lattice} if $\mu([0,\infty) \setminus h\N_0) = 0$  for some $h>0$, and
we say that $\xi$ is \emph{non-lattice}, otherwise.
In the lattice case, without loss of generality, we assume that the lattice span is $1$, $\mu([0,\infty) \setminus \N_0) = 0$
and $\mu([0,\infty) \setminus h\N_0) > 0$ for all $h > 1$.
We set $\G \defeq \Z$ in the lattice case and $\G \defeq \R$ in the  non-lattice case.
We use the symbol $\ell$ to denote the counting measure on $\Z$ in the lattice case and the Lebesgue measure in the non-lattice case, respectively.

For a function $f:\G\mapsto \C$   we define the bilateral Laplace transform $\L f$ of $f$ at $z\in\C$ by
\begin{equation*}
\L f(z) \defeq \int_\G e^{-zx} f(x) \, \ell(\dx)
\end{equation*}
whenever the integral converges absolutely.

The following assumption is essential in the law of large numbers \cite{Gatzouras:2000,Nerman:1981}
and, therefore, also for the central limit theorem studied here.

\begin{enumerate}[{\bf{(A}1)}]
	\setcounter{enumi}{\value{assumptions}}
	\item
		There exists a \emph{Malthusian parameter} $\alpha > 0$, i.e., an $\alpha > 0$ satisfying
		\begin{align}
		&\L\mu(\alpha) = \int e^{-\alpha x} \, \mu(\dx) = 1	\quad	\text{and}			\label{eq:Malthusian alpha}	\\
		&\E\bigg[\sum_{j=1}^N X_j e^{-\alpha X_j} \bigg] = -(\L\mu)'(\alpha) \eqdef \beta \in (0,\infty).	\label{eq:beta}
		\end{align}
		\label{ass:Malthusian parameter}
\setcounter{assumptions}{\value{enumi}}
\end{enumerate}
Notice that (A\ref{ass:Malthusian parameter}) implies the supercriticality of the general branching process,
that is, $\E[N] = \mu([0,\infty)) \in (1,\infty]$, which, in turn, ensures that the underlying branching process survives with positive probability
meaning that $\Prob(\Surv)>0$. We stress that the case $\Prob(N=\infty)>0$ is allowed.
For the rest of the paper, we assume that (A\ref{ass:Malthusian parameter}) is satisfied.

In our main results,  we further assume that the Laplace transform $\L\mu$ is finite
on an open half-space $\Real(z) > \vartheta$ for some $\vartheta < \frac\alpha2$:
\begin{enumerate}[{\bf{(A}1)}]
	\setcounter{enumi}{\value{assumptions}}
	\item
	There exists $\vartheta \in (0,\frac\alpha2)$ such that
\begin{equation}	
\L\mu(\vartheta) =\E \bigg[ \sum_{j=1}^N e^{-\vartheta X_j}\bigg] < \infty.
	\end{equation}
	\label{ass:first moment}
	\setcounter{assumptions}{\value{enumi}}		
\end{enumerate}
For the central limit theorem, we need a second moment assumption for the point process $\xi$.
Before we state it, we set $k^*$ to be the maximum of all multiplicities of the roots of $\L\mu(z) = 1$
on the critical line $\Real(z) = \frac\alpha2$ or $k^* \defeq \frac12$ if there is no such root.
\begin{enumerate}[{\bf{(A}1)}]
\setcounter{enumi}{\value{assumptions}}
	\item
		The random variable
		\begin{equation}	\label{eq:Konrad's condition}
		\int(1+x^{k^*-\frac12})e^{-\frac \alpha 2 x} \, \xi(\dx)
		= \sum_{j=1}^N \big(1+X_j^{k^*-\frac12}\big) e^{-\frac \alpha 2 X_j}
		\end{equation}
		has finite second moment.
\label{ass:second moment}
\setcounter{assumptions}{\value{enumi}}
\end{enumerate}

\begin{remark}	\label{Rem:Janson's 2nd moment condition}
Notice that Condition (A6) in \cite{Janson:2018}, namely, the existence of a $\vartheta < \frac\alpha2$ such that
\begin{equation*}	
\E\bigg[\bigg(\sum_{j=1}^N e^{-\vartheta X_j}\bigg)^{\!\!2}\,\bigg] < \infty,
\end{equation*}
implies both our conditions (A\ref{ass:first moment}) and (A\ref{ass:second moment}).
Janson's condition (A6) may be easier to check in cases where it holds.
\end{remark}

The existence of the Malthusian parameter 
allows us to define a nonnegative martingale,
called \emph{Nerman's martingale}, namely,
\begin{align}	\label{eq:Nerman's martingale}
W_t = W_t(\alpha) = \sum_{u \in \cC_t} e^{-\alpha S(u)},	\quad	t \geq 0
\end{align}
where
\begin{align}	\label{eq:coming generation}
\cC_t \defeq \{uj \in \familytree: S(u) \leq t < S(uj)\}
\end{align}
is the coming generation at time $t$. For the proof of the martingale property under (A\ref{ass:Malthusian parameter}) see \cite[Proposition 2.4]{Nerman:1981}.
We denote the almost sure limit of Nerman's martingale by $W$.
Martingale theory implies
that $\E[W]=1$ iff $(W_t)_{t \geq 0}$ is uniformly integrable.
Sufficient conditions for
the latter can be found in \cite[Corollary 3.3]{Nerman:1981}, \cite[Theorem 2.1]{Olofsson:1998} and \cite[Theorems 2.1 and 3.3]{Gatzouras:2000}.
In the given situation,
$(W_t)_{t \geq 0}$ is uniformly integrable iff
\begin{equation*}	\tag{$Z \log Z$}	\label{eq:Z log Z}
\E[Z_1 \log_+ Z_1] < \infty
\end{equation*}
holds where
\begin{align}	\label{eq:Biggins's martingale}
Z_n = \sum_{|u|=n} e^{-\alpha S(u)},	\quad	n \in \N_0.
\end{align}
The process $(Z_n)_{n \in \N_0}$ is also a nonnegative martingale, called Biggins' martingale,
and it has the same almost sure limit $W$ as Nerman's martingale $(W_t)_{t \geq 0}$ \cite[Theorem 3.3]{Gatzouras:2000}.
Since (A\ref{ass:second moment}) immediately implies \eqref{eq:Z log Z},
we infer that validity of (A\ref{ass:Malthusian parameter}) and (A\ref{ass:second moment})
implies that both martingales, $(W_t)_{t \geq 0}$ and $(Z_n)_{n \in \N_0}$,
converge almost surely and in $L^1$ to the same limit $W \geq 0$.
Hence, in our theorems, \eqref{eq:Z log Z} will not be imposed explicitly,
but will hold automatically whenever (A\ref{ass:Malthusian parameter}) and (A\ref{ass:second moment}) are assumed to hold.

We continue with assumptions concerning the random characteristic $\varphi$.
These assumptions are not made throughout the paper, but in certain results only.
It will be explicitly stated, when this is the case.

Throughout the paper, if $\varphi$ is a nonnegative or integrable characteristic
(meaning that $\E[|\varphi(t)|]$ is finite for every $t \in \R$),
then we write $\E[\varphi]$ for the (measurable) function that maps $t \mapsto \E[\varphi](t) \defeq \E[\varphi(t)]$.
This notation has the advantage that if $X$ is a random variable, then we can write $\E[\varphi](X)$,
which is again a random variable.
Similarly, we write $\Var[\varphi]$ for the variance function $\E[(\varphi-\E[\varphi])^2]$,
so $\Var[\varphi](t) = \Var[\varphi(t)]$.
We start with an assumption regarding the mean of the characteristic.
\begin{enumerate}[{\bf{(A}1)}]
\setcounter{enumi}{\value{assumptions}}
	\item
	$\varphi(t) \in L^1$ for every $t \in \R$ and $t \mapsto \E[\varphi](t)e^{-\alpha t}$ is directly Riemann integrable.
	\label{ass:mean growth}
	\setcounter{assumptions}{\value{enumi}}
\end{enumerate}
If (A\ref{ass:Malthusian parameter})
is fulfilled, and if $\varphi$ is a real-valued characteristic such that $|\varphi|$ satisfies
(A\ref{ass:mean growth}), then, in the non-lattice case,
the law of large numbers by Nerman (see \cite[Theorem 6.1]{Jagers+Nerman:1984}) states that
\begin{equation}	\label{eq:LLN CMJ}
e^{-\alpha t} \cZ^{\varphi}_t \to \beta^{-1} \L\E[\varphi](\alpha) W= \beta^{-1} \int e^{-\alpha x} \E[\varphi](x) \, \dx\cdot  W\quad	\text{as } t \to \infty
\end{equation}
in probability. 
If, additionally, \eqref{eq:Z log Z} holds, then the convergence in \eqref{eq:LLN CMJ} holds in $L^1$.
To see this, first recall that \eqref{eq:Z log Z} implies $\E[W]=1$ and hence $e^{-\alpha t} \E[\cZ^{\varphi}_t]$ converges
by the two-sided version of the key renewal theorem \cite[Satz 2.5.3]{Alsmeyer:1991}
to $\beta^{-1} \int e^{-\alpha x} \E[\varphi](x) \, \dx$, which is the expectation of the random variable on the right-hand side of \eqref{eq:LLN CMJ}.
If $\varphi$ is nonnegative, then the convergence of the first moment
in combination with convergence in probability gives the convergence in $L^1$ by Proposition 4.12 in \cite{Kallenberg:2002}.
The case of general $\varphi$ can be reduced to the case of nonnegative $\varphi$ using the decomposition $\varphi = \varphi_+-\varphi_-$
of $\varphi$ into its positive part minus its negative part.

What is more, (A\ref{ass:second moment}) implies $\L\mu(\frac\alpha2) < \infty$ and hence the holomorphy of $\L\mu$
on the half-space $\Real(z) > \frac\alpha2$, which implies that all higher derivatives of $\L\mu$ in the point $z=\alpha$
exist. This in turn implies (5.4) in \cite{Nerman:1981}
(for instance with $g(t) = 1 \wedge t^{-2}$ there).
Hence,
Conditions 5.1 of \cite{Nerman:1981}, 3.2 of \cite{Gatzouras:2000} and (3.2) and (3.4) of \cite{Meiners:2010} are satisfied.
This ensures
that the convergence in \eqref{eq:LLN CMJ} holds in the almost sure sense
and in $L^1$ provided that
\begin{itemize}
	\item
$\varphi$ vanishes on $(-\infty,0)$ and satisfies Condition 5.2 of \cite{Nerman:1981} in the non-lattice case;
	\item

$\varphi$ vanishes on $(-\infty,0)$ and satisfies Condition 3.1 in \cite{Gatzouras:2000} in the lattice case or
	\item
$\varphi$ satisfies Eq.\ (3.3) in \cite{Meiners:2010} in the case that it does not vanish on the negative half-line.
\end{itemize}

The next two assumptions are conditions on the second moments of the characteristic $\varphi$.
\begin{enumerate}[{\bf{(A}1)}]
	\setcounter{enumi}{\value{assumptions}}
	\item
	$\varphi(t) \in L^2$ for every $t \in \R$ and  $t \mapsto \Var[\varphi](t)e^{-\alpha t}$ 
is directly Riemann integrable.
	\label{ass:variance growth}
	\setcounter{assumptions}{\value{enumi}}
\end{enumerate}

\begin{enumerate}[{\bf{(A}1)}]
\setcounter{enumi}{\value{assumptions}}
	\item
	For any $t \in \R$ there is an $\varepsilon > 0$ such that the family
	\begin{equation*}	\textstyle
	(|\varphi(x)|^2)_{|x-t| \leq \varepsilon}	\quad	\text{is uniformly integrable.}
	\end{equation*}
	\label{ass:local ui of phi^2}
	\setcounter{assumptions}{\value{enumi}}
\end{enumerate}

Notice that if $\varphi$ is deterministic real-valued, then (A\ref{ass:local ui of phi^2}) holds since $\varphi$ is c\`adl\`ag, in particular,
locally bounded.

In the lattice case, if $t \in \Z$, then $t-S(u) \in \Z$ for all individuals $u$ with $S(u)<\infty$.
Then $\cZ_t^\varphi$ depends only on the values $\varphi_u(x)$ for $x \in \Z$ ($u \in \I$).
In particular, the values of $\varphi$ on $\R \setminus \Z$ are irrelevant for our purposes.
Therefore, in the lattice case, we make the assumption that $\varphi$ has paths that are
constant on intervals of the form $[n,n+1)$, $n \in \Z$.
With this assumption, condition (A\ref{ass:local ui of phi^2}) is meaningful also in the lattice case,
but reduces to the condition that $\varphi(x) \in L^2$ for all $x \in \Z$,
a condition  contained in (A\ref{ass:variance growth}).

We continue with a proposition giving sufficient conditions for the general branching process counted with characteristic $\varphi$
to be well-defined. Before this, we introduce the notion of an \emph{admissible ordering} of $\I$.
We call a sequence $u_1,u_2,\ldots \in \I$ an admissible ordering of $\I$ if
\begin{itemize}
	\item	$\I_n \defeq \{u_1,\ldots,u_n\}$ is a subtree of the Ulam-Harris tree $\I$ of cardinality $n$,
	\item	$\I = \bigcup_{n \in \N} \I_n$.
\end{itemize}
Admissible orderings exist.
Indeed, we can construct $(u_n)_{n \in \N}$ recursively.
First, let $u_1=\varnothing$. If we have constructed $u_i$ for $i =1,\ldots,2^k$ where $k \in \N_0$,
then, for any $2^k <i \leq 2^{k+1}$,
we set $u_i \defeq u_{i-2^k}j$ with the smallest $j \in \N$ such that $u_{i-2^k}j \not \in \{v_1,\ldots,u_{2^k}\}$,
see Figure \ref{fig:ordering}.
\begin{figure}[h]
	\includegraphics[clip, trim=3.7cm 21cm 3.7cm 3cm, width=1.00\textwidth]{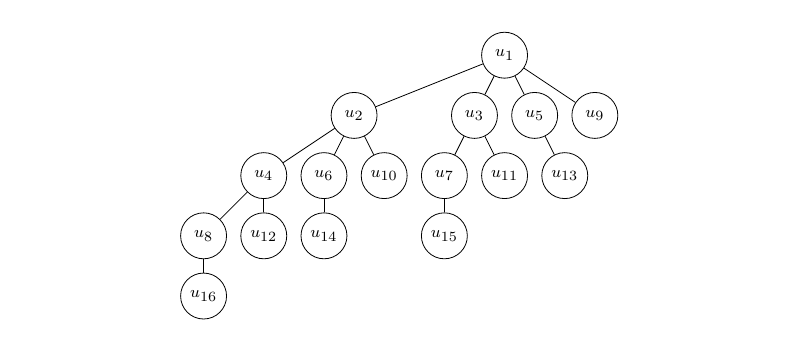}
	\caption{The subtree $\I_{16}$ for the particular admissible ordering of $\I$ given above.}
	\label{fig:ordering}
\end{figure}

 Recall that a series $\sum_{n\in\N} x_n$ in a Banach space $(X,\|\cdot\|)$ is said to converge unconditionally if, for any $\varepsilon >0$,
 there is a finite $I\subseteq\N$ such that $\| \sum_{n\in J} x_n\|<\varepsilon$ for any finite $J\subseteq \N \setminus I$.
 An equivalent definition is that the series converges for any rearrangement.
 For this and other characterizations we refer the reader to \cite{Hildebrandt:1940}.

\begin{proposition}	\label{Prop:L1 existence of Z_t^varphi}
Suppose that (A\ref{ass:Malthusian parameter}) holds and that $\varphi$ is a random characteristic satisfying
(A\ref{ass:mean growth}) and (A\ref{ass:variance growth}).
Then, for every $t \in \R$,
\begin{equation*}
\cZ_t^\varphi \defeq \sum_{u\in\I} \varphi_{u}(t-S(u))
\end{equation*}
converges unconditionally in $L^1$
  and almost surely over every admissible ordering of $\I$.
\end{proposition}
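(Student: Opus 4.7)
The plan is to split $\varphi$ into its deterministic mean part and the centered random part, and treat the two contributions separately. Write $\psi(y) \defeq \E[\varphi](y)$ and $\Psi_u \defeq \varphi_u(t-S(u)) - \psi(t-S(u))$, so that
\[
\sum_{u \in \I} \varphi_u(t-S(u)) = \sum_{u \in \I} \psi(t-S(u)) + \sum_{u \in \I} \Psi_u.
\]

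\emph{Deterministic part.} Since $\varphi_u$ is independent of $S(u)$, Fubini yields
\[
\E\biggl[ \sum_{u \in \I} |\psi(t-S(u))| \biggr] = \sum_{n \geq 0} \int |\psi|(t-x)\, \mu^{*n}(\dx) = \int |\psi|(t-x)\, U(\dx),
\]
where $U \defeq \sum_{n \geq 0} \mu^{*n}$. By (A\ref{ass:Malthusian parameter}), $e^{-\alpha x}\,U(\dx)$ is the renewal measure of the probability distribution $e^{-\alpha x}\,\mu(\dx)$, which has finite mean $\beta$ and is therefore locally finite. The direct Riemann integrability of $y \mapsto e^{-\alpha y}\psi(y)$ from (A\ref{ass:mean growth}) passes to its absolute value, and an exponential change of variables then gives
\[
\int |\psi|(t-x)\, U(\dx) = e^{\alpha t}\int e^{-\alpha(t-x)}|\psi|(t-x)\, e^{-\alpha x}\,U(\dx) < \infty.
\]
Hence $\sum_u \psi(t-S(u))$ converges absolutely almost surely and in $L^1$, and therefore unconditionally in $L^1$ and almost surely along any enumeration of $\I$.

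\emph{Centered part.} I claim that $(\Psi_u)_{u \in \I}$ is pairwise orthogonal in $L^2(\Prob)$. Given $u \neq v$, at least one of them, say $v$, is not a strict ancestor of the other. Then $\pi_v$ is independent of the family $(\pi_y)_{y \neq v}$, while $S(v)$ and $\Psi_u$ are both measurable with respect to $\sigma(\pi_y : y \neq v)$. The independence lemma yields
\[
\E\bigl[\Psi_v \bigm| \sigma(\pi_y : y \neq v)\bigr] = \psi(t-S(v)) - \psi(t-S(v)) = 0,
\]
and hence $\E[\Psi_u \Psi_v] = 0$. The same renewal computation as in the deterministic part then shows
\[
\sum_{u \in \I} \E[\Psi_u^2] = \sum_{u \in \I} \E\bigl[\Var[\varphi](t-S(u))\bigr] = \int \Var[\varphi](t-x)\, U(\dx) < \infty
\]
by (A\ref{ass:variance growth}). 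Orthogonality together with square-summability in the Hilbert space $L^2(\Prob)$ implies that $\sum_u \Psi_u$ converges unconditionally in $L^2$, and a fortiori in $L^1$.

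\emph{Almost sure convergence along an admissible ordering.} Fix an admissible ordering $(u_n)_{n \in \N}$ of $\I$ and set $\F_n \defeq \sigma(\pi_{u_1}, \ldots, \pi_{u_n})$. Since $\{u_1, \ldots, u_n\}$ is a subtree, the parent of $u_{n+1}$ lies in this set, and consequently $S(u_{n+1})$ is $\F_n$-measurable; combined with the independence of $\pi_{u_{n+1}}$ from $\F_n$, this yields $\E[\Psi_{u_{n+1}} \mid \F_n] = 0$. Thus $M_n \defeq \sum_{i=1}^n \Psi_{u_i}$ is an $(\F_n)$-martingale with $\sup_n \E[M_n^2] = \sum_{u \in \I} \E[\Psi_u^2] < \infty$, so $(M_n)$ converges almost surely by Doob's $L^2$-martingale convergence theorem. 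Together with the almost sure absolute convergence of the deterministic part, this gives almost sure convergence of the partial sums of $\cZ_t^\varphi$ along every admissible ordering.

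The main obstacle is bookkeeping the dependence structure: for orthogonality one must single out, for each pair $u \neq v$, a sub-$\sigma$-algebra with respect to which one of $\varphi_u, \varphi_v$ is independent while the opposite $\Psi$-term is measurable, exploiting that $\varphi_u$ is a function of $\pi_u$ alone; for the martingale step it is precisely the subtree property of admissible orderings that renders $S(u_{n+1})$ measurable with respect to $\F_n$, which is what makes the filtration-based argument go through.
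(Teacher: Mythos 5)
Your proof is correct and takes essentially the same route as the paper: the paper also splits $\varphi$ into its deterministic mean and the centered remainder, handles the first via the many-to-one formula and local finiteness of the $\alpha$-tilted renewal measure, and treats the second via the same subtree-filtration martingale, factored into Lemma \ref{Lem:centered characteristic->martingale}. Your pairwise-orthogonality argument for arbitrary $u\neq v$ is just a direct way of seeing the same uncorrelatedness that the paper extracts from the $L^2$-martingale structure along an admissible ordering.
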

The proof of the proposition will be given in Section \ref{subsec:expectation}.

\begin{remark}	\label{Rem:L1 existence of Z_t^varphi}
Notice that $\cZ_t^\varphi = \cZ_t^{\varphi\1_{(-\infty,t]}}$ and thus from the proposition,
we infer that $\cZ_t^\varphi$ converges unconditionally in $L^1$ and almost surely for every $t \in \R$
if (A\ref{ass:Malthusian parameter}) holds and, for every $t \in \R$, $\varphi \1_{(-\infty,t]}$
satisfies (A\ref{ass:mean growth}) and (A\ref{ass:variance growth}).
\end{remark}

\begin{remark}
Notice that by Proposition \ref{Prop:L1 existence of Z_t^varphi},
the process $\cZ^\varphi$ is defined almost surely for any fixed $t\in\R$.
In other words, it is defined only up to a modification.
\end{remark}

\begin{remark}	\label{Rem:dRi}
Notice that if the random characteristics $\varphi$ and $\psi$ satisfy condition (A\ref{ass:local ui of phi^2}),
then so does any linear combination of them.
Further, by the dominated convergence theorem,
both the expectation function and the variance function of any linear combination of $\varphi$ and $\psi$
are c\`adl\`ag.
This particularly implies that these functions are locally bounded and a.e.\ continuous.
Consequently, if, in addition to (A\ref{ass:local ui of phi^2}), also (A\ref{ass:variance growth}) holds for
$\varphi$ and $\psi$, then (A\ref{ass:variance growth})
also holds for any linear combination of $\varphi$ and $\psi$.
Indeed, for $\beta_1, \beta_2\in\R$,
\begin{equation*}
\Var[\beta_1\varphi(t)+\beta_2\psi(t)]e^{-\alpha t}\leq 2\beta_1^2 \Var[\varphi(t)]e^{-\alpha t}+2\beta_2^2 \Var[\psi(t)]e^{-\alpha t},
\quad t\in\R.
\end{equation*}
By \cite[Remark 3.10.5 on p.~237]{Resnick:1992},
the function in focus is directly Riemann integrable as a locally Riemann integrable function
dominated by a directly Riemann integrable function.
\end{remark}

The next proposition gives sufficient conditions for
(A\ref{ass:mean growth}), (A\ref{ass:variance growth}) and (A\ref{ass:local ui of phi^2}).
To formulate it, we introduce the following notation.
If $f: \R \to \R$ is a function, we set $f^*(t) \defeq \sup_{|x-t| \leq 1} |f(x)|$.
This notation extends immediately (pathwise) to random c\`adl\`ag functions such as random characteristics $\varphi$.

\begin{proposition}	\label{Prop:f^*}
Suppose that (A\ref{ass:Malthusian parameter}) holds.
\begin{enumerate}[(a)]
	\item
		If $f:\R \to \R$ is c\`adl\`ag and $\int f^*(x) \, \dx < \infty$,
		then $f$ is directly Riemann integrable.
		Conversely, if $f:\R \to \R$ is directly Riemann integrable,
		then so is $f^*$. ((A\ref{ass:Malthusian parameter}) is not needed.)
	\item
		If a random characteristic $\varphi$ satisfies
		\begin{equation}	\label{eq:int Evarphi*e^-alphax dx finite}
		\int \E[\varphi^*](x) e^{-\alpha x} \, \dx < \infty,
		\end{equation}
		then (A\ref{ass:mean growth}) holds for $\varphi$.
	\item
		If a random characteristic $\varphi$ satisfies
		\begin{equation}	\label{eq:int Evarphi*^2e^-alphax dx finite}
		\int \E\big[(\varphi^*)^2 \big](x) e^{-\alpha x} \, \dx < \infty,
		\end{equation}
		then $\varphi$ also satisfies (A\ref{ass:variance growth}) and (A\ref{ass:local ui of phi^2}).
\end{enumerate}
\end{proposition}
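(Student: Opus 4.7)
The plan is to prove part (a) first, since parts (b) and (c) both reduce to it via explicit pathwise domination. For (a), I would invoke the standard characterization that a Lebesgue-measurable $f:\R\to\R$ is directly Riemann integrable iff $f$ is continuous almost everywhere and $\sum_{n\in\Z}\sup_{x\in[n,n+1]}|f(x)|<\infty$. A c\`adl\`ag function has countable discontinuity set; and since $[n,n+1]\subseteq[x-1,x+1]$ for every $x\in[n,n+1]$, we have $f^*(x)\geq\sup_{y\in[n,n+1]}|f(y)|$, so integrating over $[n,n+1]$ and summing yields $\sum_n\sup_{[n,n+1]}|f|\leq\int_\R f^*(x)\,\dx<\infty$; this gives the forward direction. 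For the converse, the bound $\sup_{x\in[n,n+1]}f^*(x)\leq\sum_{k=-1}^{1}\sup_{y\in[n+k,n+k+1]}|f(y)|$ yields summability of the sup sequence of $f^*$, and almost everywhere continuity of $f^*$ is inherited from that of $f$.

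For (b), the integral hypothesis and Tonelli's theorem force $\E[\varphi^*(s)]<\infty$ for Lebesgue-a.e.\ $s\in\R$. Fix $t\in\R$ and pick such an $s\in[t-1,t+1]$; since $|\varphi(t)|\leq\varphi^*(s)$ we get $\E|\varphi(t)|\leq\E\varphi^*(s)<\infty$, which is the first clause of (A\ref{ass:mean growth}). For c\`adl\`ag continuity of $g(t)\defeq\E[\varphi](t)e^{-\alpha t}$, one picks $s$ so that $\varphi^*(s)$ simultaneously dominates $\{|\varphi(t+h)|:|h|\leq h_0\}$ for some small $h_0\in(0,1)$, and applies dominated convergence to the pathwise c\`adl\`ag property of $\varphi$. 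The pointwise bound $g^*(t)\leq e^\alpha\E[\varphi^*(t)]e^{-\alpha t}$ then gives
\begin{equation*}
\int_\R g^*(x)\,\dx\leq e^\alpha\int_\R\E[\varphi^*](x)e^{-\alpha x}\,\dx<\infty,
\end{equation*}
and part (a) delivers the dRi conclusion of (A\ref{ass:mean growth}).

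Part (c) follows the same template with second moments. I would first verify (A\ref{ass:local ui of phi^2}): given $t\in\R$ and $\varepsilon\in(0,1)$, a.e.\ finiteness of $\E[(\varphi^*)^2]$ lets one pick $s\in[t-(1-\varepsilon),t+(1-\varepsilon)]$ with $\E[(\varphi^*(s))^2]<\infty$, and the $L^1$ random variable $(\varphi^*(s))^2$ dominates $|\varphi(x)|^2$ uniformly for $|x-t|\leq\varepsilon$. With (A\ref{ass:local ui of phi^2}) in hand, the neighbour trick again yields $\E[|\varphi(t)|^2]<\infty$ at every $t$; c\`adl\`ag continuity of $t\mapsto\E[|\varphi(t)|^2]$, and hence of $\Var[\varphi](t)$, follows by combining pathwise c\`adl\`ag of $|\varphi|^2$ with the uniform integrability just established; and the bound $\Var[\varphi](t)\leq\E[|\varphi(t)|^2]\leq\E[(\varphi^*(t))^2]$ yields $\int V^*<\infty$ for $V(t)\defeq\Var[\varphi](t)e^{-\alpha t}$ exactly as in (b). A final invocation of part (a) closes (A\ref{ass:variance growth}).

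The main obstacle I anticipate is the converse direction of (a): a generic dRi $f$ is not assumed c\`adl\`ag, so transferring a.e.\ continuity from $f$ to the sliding supremum $f^*$ requires some care (one checks that $f^*$ is continuous at every $t$ whose $1$-neighbourhood avoids the measure-zero exceptional set of $f$, together with the boundary-attainment points). Everything else is routine dominated-convergence bookkeeping, resting entirely on the fact that every neighbourhood of $t$ contains a.e.\ points $s$ at which $\varphi^*(s)$ has finite first or second moment and hence dominates $|\varphi|$ on a neighbourhood of $t$.
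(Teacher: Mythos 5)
Your proposal follows the same route as the paper: the same elementary characterization of direct Riemann integrability (a.e.\ continuity plus summability of $\sup_{[n,n+1]}|f|$) for part (a), and the same Tonelli-plus-sliding-neighbour argument to reduce (b) and (c) to (a) via the pathwise bounds $g^*(t)\leq e^{\alpha}\E[\varphi^*](t)e^{-\alpha t}$ and $\Var[\varphi]\leq \E[\varphi^2]\leq \E[(\varphi^*)^2]$. Two spots are left loose, and you correctly sense the first. In the converse of (a), the condition you propose — that the whole $1$-neighbourhood of $t$ avoid the exceptional set of $f$ — is too strong: that set can be empty even when the discontinuity set is Lebesgue-null (e.g.\ if it is dense), so it cannot deliver a.e.\ continuity of $f^*$. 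What is actually needed, and what the paper uses, is only that $f$ be continuous at the two endpoints $t-1$ and $t+1$; this already forces $f^*$ to be continuous at $t$, and the set of such $t$ has full measure. In (c), after deducing that $t\mapsto\E[\varphi^2(t)]$ is c\`adl\`ag you write ``hence of $\Var[\varphi](t)$''; but $\Var[\varphi]=\E[\varphi^2]-\E[\varphi]^2$ also requires $t\mapsto\E[\varphi(t)]$ to be c\`adl\`ag, which does hold — apply Jensen to $\E\bigl[\sup_{|s-t|\leq 1/2}|\varphi(s)|^2\bigr]<\infty$ to obtain an $L^1$ majorant for $|\varphi|$ near $t$ and then dominated convergence — but the step should be stated. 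Neither point changes the overall structure, which matches the paper's.
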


By $\Lambda$ we denote the set of solutions to the equation
\begin{align}	\label{eq:roots}
\L\mu(\lambda)=1
\end{align}
such that $\Real(\lambda) > \frac\alpha2$ and by $\partial\Lambda$
we denote the set of roots on the \emph{critical line} $\Real(\lambda) = \frac\alpha 2$.
In the lattice case, $\L\mu$ is $2\pi \imag$-periodic, and we define
$\Lambda$ to be the set of $\lambda$ with $\Real(\lambda) > \frac\alpha2$ satisfying \eqref{eq:roots}
and $\Imag(\lambda) \in (-\pi, \pi]$.
Analogously, in this case, $\partial \Lambda$ denotes the set of roots $\lambda$ with $\Real(\lambda) = \frac\alpha 2$
satisfying $\Imag(\lambda) \in (-\pi, \pi]$.
Finally, in both the non-lattice and the lattice case, we set $\Lambda_{\geq} \defeq \Lambda \cup \partial \Lambda$.
Notice that $\alpha \in \Lambda$ and that every other element $\lambda \in \Lambda_{\geq}, \lambda \not= \alpha$
satisfies $\Real(\lambda) \in [\frac\alpha2,\alpha)$ and $\Imag(\lambda) \not = 0$.
Further, $\lambda = \theta + \imag \eta \in \Lambda_{\geq}$ implies
that the complex conjugate $\overline{\lambda} = \theta - \imag \eta \in \Lambda_\geq$
except if $\eta = \pi$ in the lattice case.

Although one may
consider cases where $\Lambda_{\geq}$ contains infinitely many elements,
in all relevant examples $\Lambda_{\geq}$ is finite.
Therefore, and for simplicity, we assume throughout the paper the following:
\begin{enumerate}[{\bf{(A}1)}]
	\setcounter{enumi}{\value{assumptions}}
	\item
	The set of roots $\Lambda_{\geq}$ is finite.	\label{ass:Lambda finite}
	\setcounter{assumptions}{\value{enumi}}
\end{enumerate}
We stress that if $\mu$ has a density with respect to the Lebesgue measure and (A\ref{ass:first moment}) holds,
then also (A\ref{ass:Lambda finite}) holds.
This is justified in the proof of Lemma \ref{Lem:asymptotics of E[N(t)] non-lattice}
 in combination with Remark \ref{Rem:Riemann-Lebesgue}.

\subsection{Main results}		\label{subsec:main results}

For each root $\lambda \in \C$ of the function $\L\mu - 1$, we denote its multiplicity by $k(\lambda) \in \N$.
Then, for any $j=0,\ldots,k(\lambda)-1$,
we can define
\begin{equation}	\label{eq:W_t^(j)(lambda)}
W_t^{(j)}(\lambda) \defeq (-1)^{j} \sum_{u \in \cC_t} S(u)^j e^{-\lambda S(u)},	\qquad	t \in \R,
\end{equation}
where $\cC_t$ is the coming generation at time $t$ formally defined in \eqref{eq:coming generation}.
The Malthusian parameter $\alpha>0$ is a root of multiplicity $1$
and gives rise to one martingale, namely, Nerman's martingale $(W_t)_{t \in \R} = (W_t(\alpha))_{t \in \R}$
defined in \eqref{eq:Nerman's martingale},
which is of great importance in the law of large numbers for the general branching process.
On the other hand,
the martingales corresponding to $\lambda \in \Lambda$
are relevant in the central limit theorem.

\begin{theorem}	\label{Thm:martingale convergence}
Suppose that (A\ref{ass:Malthusian parameter}) through (A\ref{ass:second moment}) hold.
Then, for any $\lambda \in \Lambda$ and $j=0,\ldots,k(\lambda)-1$,
the process $(W^{(j)}_t(\lambda))_{t\ge0}$ is a martingale and there is a random variable $W^{(j)}(\lambda) \in L^2$
such that
\begin{equation*}
W_t^{(j)}(\lambda) \to W^{(j)}(\lambda)	\quad	\text{a.\,s.\ and in } L^2	\text{ as } t \to \infty.
\end{equation*}
\end{theorem}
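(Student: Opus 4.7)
The plan is to establish three claims in sequence: the martingale property of $(W_t^{(j)}(\lambda))_{t \ge 0}$, its uniform boundedness in $L^2$, and, as a consequence, its almost sure and $L^2$ convergence via Doob's theorem. The common input is the expectation identity $\E[W_r^{(j)}(\lambda)] = \delta_{j, 0}$ for $j = 0, \ldots, k(\lambda) - 1$, which I establish first.

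For the expectation, set $h(r, z) \defeq \E\bigl[\sum_{u \in \cC_r} e^{-zS(u)}\bigr]$. Decomposing $\cC_r$ by its first-generation ancestor yields the renewal equation
\begin{equation*}
h(r, z) = \int_{(r, \infty)} e^{-zx}\, \mu(\dx) + \int_{[0, r]} e^{-zx} h(r - x, z)\, \mu(\dx),
\end{equation*}
whose Laplace transform in $r$ equals $(\L\mu(z) - \L\mu(s + z))/(s(1 - \L\mu(s + z)))$. By (A\ref{ass:first moment}), $\L\mu$ is holomorphic in a neighbourhood of $\lambda$, so we may factor $\L\mu(z) - 1 = (z - \lambda)^{k(\lambda)} g(z)$ with $g(\lambda) \ne 0$; the transform then rewrites as $1/s - (z - \lambda)^{k(\lambda)} R(s, z)$ with $R$ analytic at $z = \lambda$. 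Hence $h(r, \lambda) = 1$ and $\partial_z^j h(r, \lambda) = 0$ for $j = 1, \ldots, k(\lambda) - 1$, which translates into $\E[W_r^{(j)}(\lambda)] = \delta_{j, 0}$. The martingale property then follows from the subtree decomposition: for $s > t$, each $u \in \cC_t$ emits an independent subtree; expanding $(S(u) + \widehat S(v))^j$ by the binomial theorem and conditioning on $\F_t$ leaves, thanks to the expectation identity, only the $i = 0$ contribution from each subtree, which reassembles into $W_t^{(j)}(\lambda)$.

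For $L^2$-boundedness, I proceed by induction on $j$. Introduce the convention $\widetilde W_r^{(\ell)}(\lambda) \defeq \delta_{\ell, 0}$ for $r < 0$ and $\widetilde W_r^{(\ell)}(\lambda) \defeq W_r^{(\ell)}(\lambda)$ for $r \ge 0$, so that the first-generation decomposition
\begin{equation*}
W_t^{(j)}(\lambda) = \sum_{\ell = 0}^{j} \binom{j}{\ell} (-1)^{j - \ell} \sum_{i} X_i^{j - \ell} e^{-\lambda X_i}\, \widetilde W_{t - X_i}^{(\ell), i}(\lambda)
\end{equation*}
holds for all $t \in \R$, with $\widetilde W^{(\ell), i}$ independent copies attached to the subtree of $i$. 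Setting $F^{(j)}(t) \defeq \E[|W_t^{(j)}(\lambda)|^2]$, expanding the square and splitting into diagonal ($i = i'$) and off-diagonal ($i \ne i'$) parts, the off-diagonal collapses via independence of subtrees and $\E[\widetilde W_r^{(\ell)}(\lambda)] = \delta_{\ell, 0}$ to the $t$-independent quantity
\begin{equation*}
\E\Bigl[\Bigl|\sum_i X_i^j e^{-\lambda X_i}\Bigr|^2\Bigr] - \E\Bigl[\sum_i X_i^{2j} e^{-2\Real(\lambda) X_i}\Bigr],
\end{equation*}
finite by (A\ref{ass:second moment}) combined with $\Real(\lambda) > \alpha/2$ (which ensures that $|X_i^j e^{-\lambda X_i}|$ is dominated by a constant multiple of $e^{-\alpha X_i/2}$). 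The diagonal is a convolution against the finite measure $e^{-2\Real(\lambda) x}\, \mu(\dx)$ of total mass $\L\mu(2\Real(\lambda)) < 1$. Isolating the $(\ell, \ell') = (j, j)$ piece $\int e^{-2\Real(\lambda) x} F^{(j)}(t - x)\, \mu(\dx)$, bounding all pure lower-order terms by the inductive hypothesis, and bounding mixed terms (one index equal to $j$, the other $< j$) via Cauchy--Schwarz so that only a single factor $\sqrt{F^{(j)}(t - x)}$ appears, one arrives at
\begin{equation*}
F^{(j)}(t) \le \L\mu(2\Real(\lambda))\, M(t) + C_1 \sqrt{M(t)} + C_2, \qquad M(t) \defeq \sup_{s \le t} F^{(j)}(s),
\end{equation*}
for finite constants $C_1, C_2$. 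Taking $\sup_{s \le t}$ turns this into a quadratic inequality in $\sqrt{M(t)}$, whose solution is uniformly bounded in $t$.

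Doob's $L^2$-martingale convergence theorem, applied separately to the real and imaginary parts of $W_t^{(j)}(\lambda)$, now delivers an $L^2$ random variable $W^{(j)}(\lambda)$ such that $W_t^{(j)}(\lambda) \to W^{(j)}(\lambda)$ almost surely and in $L^2$. The main obstacle is the bookkeeping in the $L^2$ recursion: the binomial expansion couples all derivatives $j = 0, \ldots, k(\lambda) - 1$ through a web of cross terms, and the induction must be arranged so that the Cauchy--Schwarz bounds on the mixed terms combine with the strict inequality $\L\mu(2\Real(\lambda)) < 1$ into a quadratic inequality that still closes.
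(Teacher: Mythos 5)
Your proof takes a genuinely different route from the paper. The paper represents $W_t(\lambda,k)$ as a rescaled CMJ process counted with the matrix characteristic $\phi_\lambda$ (Lemma~\ref{Lem:Nerman's martingales}), verifies that $\phi_\lambda$ and $\chi_\lambda$ satisfy (A\ref{ass:mean growth})--(A\ref{ass:local ui of phi^2}) (Lemma~\ref{Lem:phi^1 and phi^2}), and then obtains $L^2$-boundedness and convergence in one stroke by writing $W_t(\lambda,k)-W(\lambda,k)$ as the CMJ process counted with the centered characteristic $\chi_{\lambda,(-\infty,-t)}$ and letting its variance vanish (Lemma~\ref{Lem:Nerman's martingale is L^2 bounded}, built on Lemma~\ref{Lem:centered characteristic->martingale}). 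You instead work scalar by scalar: a renewal-equation/Laplace-transform computation to pin down the means $\E[W_r^{(j)}(\lambda)]=\delta_{j,0}$; a first-generation decomposition with a binomial expansion to get both the martingale property and a convolution recursion for $F^{(j)}(t)=\E[|W_t^{(j)}(\lambda)|^2]$; and finally a contraction argument exploiting $\L\mu(2\Real(\lambda))<1$ (valid since $\Real(\lambda)>\alpha/2$ and $\L\mu$ is strictly decreasing on $(\vartheta,\infty)$ with $\L\mu(\alpha)=1$). This is closer in spirit to the smoothing-transform / branching-random-walk literature than to the paper's characteristics framework; it is arguably more elementary and self-contained, at the cost of not plugging into the machinery reused elsewhere in the paper. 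The Laplace-transform computation of $h(r,\lambda)$ and the binomial decomposition are both correct, and the recursion structure (off-diagonal constant by (A\ref{ass:second moment}), $(j,j)$-diagonal piece contributing $\L\mu(2\Real\lambda)M(t)$, mixed terms contributing $C_1\sqrt{M(t)}$ via Cauchy--Schwarz and the inductive bound on lower orders) is sound.

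There is, however, a genuine gap: the contraction step needs to be preceded by an argument that $M(t)=\sup_{s\le t}F^{(j)}(s)$ is \emph{finite}. The inequality
\begin{equation*}
F^{(j)}(t)\le \L\mu(2\Real(\lambda))\,M(t)+C_1\sqrt{M(t)}+C_2
\end{equation*}
is vacuous when $M(t)=\infty$, and to reach it you must also interchange expectation with the infinite double sum obtained by expanding $|W_t^{(j)}(\lambda)|^2$, which itself presupposes integrability. In the paper this a priori finiteness is supplied by Lemma~\ref{Lem:phi^1 and phi^2} (which checks that the variance condition (A\ref{ass:variance growth}) holds for the relevant characteristics) together with the truncation via admissible orderings in the proof of Lemma~\ref{Lem:centered characteristic->martingale}. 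Within your framework the natural fix is to first run the recursion on a truncated version of $W_t^{(j)}$ (e.g.\ restrict $\cC_t$ to individuals of generation at most $n$ and to the first $K$ children of each individual, so the sum is manifestly in $L^2$), derive the same contraction inequality with constants independent of the cutoff, extract a uniform bound, and then pass to the limit by Fatou. This is a standard device and would be a short addition, but as written the argument assumes precisely what needs to be proved at the decisive step.
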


There are more technicalities to deal with before the main result (Theorem \ref{Thm:main} below) can be stated in its most general form.
Therefore, we shall 
first present illustrative special cases through
Theorems \ref{Thm:no roots}, \ref{Thm:main non-lattice} and \ref{Thm:main lattice}
(the proofs indeed reveal that they are corollaries of our main result, Theorem \ref{Thm:main}). 
We start with the non-lattice case. To this end, we need one more piece of notation.
For a function $f:\R \mapsto \R$ we define the total variation function $\mathrm{V}\!f$ by
\begin{equation}	\label{eq:def of Vf}
\mathrm{V}\!f(x) \defeq \sup\bigg\{\sum_{j=1}^n |f(x_j)-f(x_{j-1})|: -\infty < x_0 < \ldots < x_n \leq x,\ n \in \N \bigg\}
\end{equation}
for $x \in \R$.
In some theorems related to the non-lattice case we require the following additional assumption on the characteristic $\varphi$:
\begin{align}	\label{eq:VEphi integrable}
\int (\mathrm{V}\!\E[\varphi])(x) \big(e^{-\vartheta x} +e^{-\alpha x}\big) \, \dx < \infty
\end{align}
for some $\vartheta < \frac\alpha2$.
We use the symbol $\vartheta$ both in (A\ref{ass:first moment}) and in \eqref{eq:VEphi integrable}
to denote some parameter $<\frac\alpha2$ at which the corresponding condition is satisfied.
If we make both assumptions at the same time, there is no harm in assuming that the $\vartheta$'s coincide,
which is why we do not distinguish them by our notation.

\begin{theorem}	\label{Thm:no roots}
Suppose that (A\ref{ass:Malthusian parameter}) through (A\ref{ass:second moment}) hold and that the intensity measure $\mu$ has a density
with respect to the Lebesgue measure.
Further, suppose that there are no roots of the equation $\L\mu(z)=1$
in the strip $\vartheta < \Real(z) < \alpha$.	
Then, for any characteristic $\varphi$ satisfying
(A\ref{ass:variance growth}), (A\ref{ass:local ui of phi^2}) and \eqref{eq:VEphi integrable},
there exists $\sigma \geq 0$ such that,
for $a_\alpha \defeq \beta^{-1} \int \E[\varphi(x)] e^{-\alpha x} \, \dx$ and
a standard normal random variable $\cN$ independent of $W$,
\begin{align*}
e^{-\frac\alpha2 t} \big(\cZ_t^\varphi - a_\alpha e^{\alpha t} W \big)
\distto \sigma  \sqrt{\tfrac{W}{\beta}} \cN
\quad	\text{as } t \to \infty.
\end{align*}
The constant $\sigma$ can be explicitly computed,
see the formula \eqref{eq:sigma} given in Theorem \ref{Thm:main} below.
\end{theorem}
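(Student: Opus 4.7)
The plan is to derive Theorem \ref{Thm:no roots} as a special case of the general Theorem \ref{Thm:main}, exploiting the fact that the no-roots hypothesis eliminates almost every complication present in the latter. First I would track the consequences of the hypothesis. Because $\vartheta < \alpha/2$, the strip $\vartheta < \Real(z) < \alpha$ contains the critical line $\Real(z) = \alpha/2$, so $\partial\Lambda = \emptyset$; the convention $k^* = 1/2$ then forces $k = 0$ in the scaling $\sqrt{t^k e^{\alpha t}}$ of Theorem \ref{Thm:main}, reducing it to $e^{\alpha t/2}$. Moreover, $H(t)$ is a finite random linear combination of functions $t^j e^{\lambda t}$ indexed by roots $\lambda \in \Lambda_\geq \setminus \{\alpha\}$ with $\alpha/2 \leq \Real(\lambda) < \alpha$; since no such roots exist here, $H \equiv 0$.

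Second, I would check the hypotheses of Theorem \ref{Thm:main}. Assumptions (A\ref{ass:Malthusian parameter})--(A\ref{ass:second moment}) on the point process $\xi$ and (A\ref{ass:variance growth}), (A\ref{ass:local ui of phi^2}) on the characteristic $\varphi$ are given, and (A\ref{ass:Lambda finite}) follows from the density hypothesis on $\mu$ (by the remark after that assumption). The remaining condition (A\ref{ass:mean growth})---direct Riemann integrability of $x \mapsto \E[\varphi](x) e^{-\alpha x}$---must be extracted from the total-variation assumption \eqref{eq:VEphi integrable}: a function of locally bounded variation is dominated pointwise by its running total-variation function (up to an additive constant), so $|\E[\varphi](x)e^{-\alpha x}|$ is controlled by $(\mathrm{V}\!\E[\varphi])(x)(e^{-\vartheta x} + e^{-\alpha x})$, which is integrable by hypothesis. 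Proposition \ref{Prop:f^*}(a) together with the c\`adl\`ag nature of $\E[\varphi]$ then delivers dRi.

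Applying Theorem \ref{Thm:main} with $k = 0$ and $H \equiv 0$ produces
\begin{equation*}
e^{-\alpha t/2}\big(\cZ_t^\varphi - a_\alpha e^{\alpha t} W\big) \distto \cN\!\big(0,\,\sigma^2 W/\beta\big) \quad \text{as } t \to \infty,
\end{equation*}
and since this mixture normal is distributed as $\sigma \sqrt{W/\beta}\,\cN$ with $\cN$ a standard normal independent of $W$, this is exactly the conclusion of Theorem \ref{Thm:no roots}.

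The main obstacle is the final identification of the random variance. In Theorem \ref{Thm:main} the limiting variance is naturally a quadratic form in the $L^2$-martingale limits $W^{(j)}(\lambda)$ attached to the critical-line roots $\lambda \in \partial\Lambda$, as furnished by Theorem \ref{Thm:martingale convergence}. When $\partial\Lambda = \emptyset$, only the contribution of Nerman's martingale $W$ survives, and the prefactor $\beta^{-1}$ emerges from the renewal-type normalisation $\beta = -(\L\mu)'(\alpha)$. Verifying carefully that Theorem \ref{Thm:main}'s formula degenerates to precisely $\sigma^2 W/\beta$ in this regime---and not, for example, picks up spurious contributions from the (absent) roots in $(\alpha/2, \alpha)$ via the coefficient of $H$---is the one step requiring genuine care beyond bookkeeping.
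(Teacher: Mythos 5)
You correctly identify the reduction to Theorem \ref{Thm:main} and the fact that $\Lambda_\geq = \{\alpha\}$, $\partial\Lambda = \emptyset$, which is exactly the paper's route. Your derivation of (A\ref{ass:mean growth}) from \eqref{eq:VEphi integrable} is also the right idea (the paper does this via the Jordan decomposition of $\E[\varphi]$ and Proposition \ref{Prop:f^*}(a)). However, there is a genuine gap: Theorem \ref{Thm:main} requires, as an explicit hypothesis, that $m_t^\varphi$ admits the asymptotic expansion \eqref{eq:expansion of mean} with a remainder satisfying $\sup_{t}(1+t^2)e^{-\alpha t/2}|r(t)| < \infty$. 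This is not one of (A\ref{ass:Malthusian parameter})--(A\ref{ass:Lambda finite}) and does not follow from \eqref{eq:VEphi integrable} alone; it must be established. The paper does precisely this by appealing to Lemma \ref{Lem:nonlattice asymptotic of the mean}, whose hypothesis \eqref{eq:Lmu becomes small} is supplied via the Riemann--Lebesgue lemma because $\mu$ has a Lebesgue density (Remark \ref{Rem:Riemann-Lebesgue}). Your proposal silently assumes this expansion and therefore skips the one step where the density assumption on $\mu$ is actually consumed. Moreover, once the expansion is in hand, one still has to identify the coefficient $a_{\alpha,0}$ of the leading term with $a_\alpha = \beta^{-1}\int \E[\varphi(x)]e^{-\alpha x}\,\dx$ (via Nerman's LLN or Proposition \ref{Prop:determining the coefficients}); you assert this without justification.

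A smaller point: your claim that $H \equiv 0$ is a misreading of the notation. In Theorem \ref{Thm:main}, $H(t) = H_\Lambda(t) + H_{\partial\Lambda}(t)$, and since $\alpha \in \Lambda$ with $k(\alpha)=1$, we have $H_\Lambda(t) = a_{\alpha,0} e^{\alpha t} W$ even when $\Lambda = \{\alpha\}$; only $H_{\partial\Lambda}$ vanishes. (The abstract's $H$, which excludes the $\alpha$ term, is a different object from Theorem \ref{Thm:main}'s $H$.) Your final display happens to come out correct because you re-introduce $a_\alpha e^{\alpha t}W$ by hand, but the reasoning as written is inconsistent. Relatedly, your closing paragraph attributes the random variance to martingale limits $W^{(j)}(\lambda)$ for $\lambda \in \partial\Lambda$; this is not how Theorem \ref{Thm:main} is structured. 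The $W^{(j)}(\lambda)$ (for $\lambda \in \Lambda$, strictly above the critical line) appear only in $H(t)$, while the limiting law is always $a_t\sqrt{W/\beta}\,\cN$ with $a_t$ deterministic; no martingale limits enter the variance. This misconception doesn't wreck the final statement but indicates the mechanism was not traced through.
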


\begin{theorem}	\label{Thm:main non-lattice}
Suppose that (A\ref{ass:Malthusian parameter}) through (A\ref{ass:second moment}) hold
and that the intensity measure $\mu$ has a density with respect to the Lebesgue measure.
Then (A\ref{ass:Lambda finite}) holds and
there are $b_{\lambda,l}$, $l=0,\ldots,k(\lambda)-1$, $\lambda \in \Lambda_{\geq}$
satisfying $b_{\overline{\lambda},l} = \overline{b_{\lambda,l}}$
such that for any characteristic $\varphi$ satisfying
(A\ref{ass:variance growth}), (A\ref{ass:local ui of phi^2}) and \eqref{eq:VEphi integrable}
there exists $\sigma \geq 0$ such that, for a standard normal random variable $\cN$ independent of $W$,
the following assertions hold.
\begin{enumerate}[(i)]
	\item
		If there are no roots of $\L\mu(z)=1$ on the critical line $\Real(z)=\frac\alpha2$, then
		\begin{align*}
		e^{-\frac\alpha2 t} \bigg(\cZ_t^\varphi-\sum_{\lambda\in\Lambda} e^{\lambda t}
		\sum_{l=0}^{k(\lambda)-1} b_{\lambda,l}
		\sum_{j=0}^l \binom{l}{j}W^{(j)}(\lambda)\int(t\!-\!x)^{l-j} \E[\varphi(x)] e^{-\lambda x}\, \dx \bigg)	\\
		\distto \sigma  \sqrt{\tfrac{W}{\beta}} 
\cN
		\end{align*}
		as $t \to \infty$.
        \item
		Otherwise, let $k \in \N$ be the maximal multiplicity $k(\lambda)$ of a root $\lambda \in \partial \Lambda$. Then
		\begin{align*}
		e^{-\frac\alpha2 t}t^{-k+\frac12} \bigg(\cZ_t^\varphi-\sum_{\lambda\in\Lambda} e^{\lambda t}
		\sum_{l=0}^{k(\lambda)-1} b_{\lambda,l}
		\sum_{j=0}^l \binom{l}{j} W^{(j)}(\lambda)
		\int (t\!-\!x)^{l-j} \E[\varphi(x)] e^{-\lambda x} \, \dx \bigg)	\\
		\distto \sigma \sqrt{\tfrac{W}{\beta}} \cN
		\end{align*}
		as $t \to \infty$.
\end{enumerate}
\end{theorem}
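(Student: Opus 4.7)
The plan is to combine a two-scale decomposition along the branching tree with a conditional central limit theorem. Choose an intermediate time $s = s(t)\to\infty$ with $t-s\to\infty$ at a rate to be tuned later, and split the process at the coming generation
\begin{equation*}
\cZ_t^\varphi = X_{s,t}^\varphi + \sum_{u\in\cC_s}\cZ_{t-S(u),u}^\varphi,
\end{equation*}
where $X_{s,t}^\varphi$ collects contributions $\varphi_u(t-S(u))$ of individuals with $S(u)\le s$, and $\cZ_{\cdot,u}^\varphi$ is the general branching process rooted at $u$. Conditionally on $\F_s$, the family $(\cZ_{\cdot,u}^\varphi)_{u\in\cC_s}$ consists of independent copies of $\cZ_\cdot^\varphi$, which opens the door to a standard Lindeberg--Feller argument.

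\textbf{Asymptotic expansion of the mean supplies the correction.} The key analytic input, developed in Section~\ref{sec:asymptotic expansion of the mean}, is that under the present hypotheses
\begin{equation*}
\E[\cZ_\tau^\varphi] = \sum_{\lambda\in\Lambda_\geq} e^{\lambda \tau}Q_\lambda(\tau) + r(\tau),
\end{equation*}
with $Q_\lambda$ polynomial of degree $<k(\lambda)$ and remainder $r(\tau)=o(e^{\frac\alpha2 \tau})$ in case (i), respectively $o(\tau^{k-\frac12}e^{\frac\alpha2 \tau})$ in case (ii). This is obtained by Laplace-inverting the identity $\L\E[\cZ^\varphi](z)=\L\E[\varphi](z)/(1-\L\mu(z))$ and shifting the contour from $\Real(z)>\alpha$ down to $\Real(z)=\frac\alpha2$, picking up the residues at each $\lambda\in\Lambda_\geq$; assumption \eqref{eq:VEphi integrable} provides the decay of $\L\E[\varphi]$ along vertical lines needed to bound the shifted contour integral. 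Substituting this expansion into $\sum_{u\in\cC_s}\E[\cZ_{t-S(u)}^\varphi]$ and expanding $(t-S(u))^{l-j}$ via the binomial theorem, one recognizes the factors $(-1)^j\sum_{u\in\cC_s}S(u)^je^{-\lambda S(u)}=W_s^{(j)}(\lambda)$; the constants $b_{\lambda,l}$ arise from reading off the coefficients of $Q_\lambda$. For $\lambda\in\Lambda$, $W_s^{(j)}(\lambda)\to W^{(j)}(\lambda)$ in $L^2$ by Theorem~\ref{Thm:martingale convergence}, producing exactly the subtracted correction in the statement; the analogous terms indexed by $\lambda\in\partial\Lambda$ do \emph{not} converge as $L^2$-martingales but rather contribute to the Gaussian fluctuation in case (ii), which is the origin of the extra $t^{k-\frac12}$ factor.

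\textbf{Conditional CLT and variance identification.} After subtracting the correction, the remainder is
\begin{equation*}
\sum_{u\in\cC_s}\bigl(\cZ_{t-S(u),u}^\varphi - \E[\cZ_{t-S(u)}^\varphi]\bigr) + (\text{bias terms}),
\end{equation*}
a conditionally centered sum of conditionally independent variables to which one applies the Lindeberg--Feller CLT given $\F_s$. The conditional variance is $V_{s,t}\defeq\sum_{u\in\cC_s}\Var(\cZ_{t-S(u)}^\varphi)$; an asymptotic expansion of $\tau\mapsto\Var(\cZ_\tau^\varphi)$ analogous to that of the mean shows it grows like $\sigma^2 e^{\alpha\tau}$ (case (i)) or $\sigma^2\tau^{2k-1}e^{\alpha\tau}$ (case (ii)), after which Nerman's law of large numbers applied to this deterministic variance characteristic yields $V_{s,t}/(e^{\alpha t}t^{2k-1})\to \sigma^2 W/\beta$ almost surely (with $t^{2k-1}\equiv 1$ in case (i)); the exact value of $\sigma^2$ is read off from this expansion. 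The Lindeberg condition follows from (A\ref{ass:second moment}) by a standard truncation. Since the conditional variance is asymptotically $\F_s$-measurable and $W_s\to W$, stable convergence produces the product-form limit $\sigma\sqrt{W/\beta}\,\cN$ with $\cN$ independent of $W$.

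\textbf{Main obstacle.} The delicate step is tuning $s$ to simultaneously control two competing errors. On one hand, replacing $W_s^{(j)}(\lambda)$ by $W^{(j)}(\lambda)$ in the correction introduces an error of order $|e^{\lambda t}|\cdot\|W^{(j)}(\lambda)-W_s^{(j)}(\lambda)\|_{L^2}$ for $\lambda\in\Lambda$; since $\Real(\lambda)>\frac\alpha2$, the exponential prefactor strictly exceeds the CLT scale $e^{\frac\alpha2 t}$, forcing one to use a quantitative geometric rate $\|W^{(j)}(\lambda)-W_s^{(j)}(\lambda)\|_{L^2}=O(e^{-(\Real(\lambda)-\frac\alpha2)s})$, obtained from orthogonality of martingale increments together with (A\ref{ass:second moment}). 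On the other hand, the remainder $r(t-S(u))$ summed over $u\in\cC_s$ must be pushed below the CLT normalizer, which requires controlling $\sum_{u\in\cC_s}e^{-\frac\alpha2 S(u)}$ in $L^2$ via (A\ref{ass:second moment}). Balancing both error sources (together with negligibility of the boundary term $X_{s,t}^\varphi$, handled through (A\ref{ass:mean growth})--(A\ref{ass:variance growth}) and Proposition~\ref{Prop:L1 existence of Z_t^varphi}) forces a careful choice of $s$ — typically $s\asymp t/2$ or polylogarithmic in $t$ depending on the sharpest available rates — and is the main technical hurdle of the proof.
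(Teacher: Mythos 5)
Your two-scale split at an intermediate time $s$ is a natural idea — it is essentially Heyde's and Athreya's route for Galton--Watson and Markov branching processes — and it is a genuinely different strategy from the paper's. However, as set up, it has a gap that no choice of $s=s(t)$ can close. The conditional Lindeberg--Feller argument you describe centers the subtree contributions by the conditional mean $\E[\cZ_t^\varphi\mid\F_s]$, which upon expanding $\sum_{u\in\cC_s}\E[\cZ_{t-S(u)}^\varphi]$ produces the factors $W_s^{(j)}(\lambda)$, not the limits $W^{(j)}(\lambda)$ appearing in the statement. You propose to absorb the difference via the $L^2$-rate $\|W^{(j)}(\lambda)-W^{(j)}_s(\lambda)\|_{L^2}=O(e^{-(\Real(\lambda)-\alpha/2)s})$, but after multiplying by the prefactor $|e^{\lambda t}|$ and dividing by the normalizer $e^{\alpha t/2}$, the error is of order $e^{(\Real(\lambda)-\alpha/2)(t-s)}$, which tends to $+\infty$ for every admissible $s<t$ precisely because $\Real(\lambda)>\alpha/2$; it does not tend to zero. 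Relatedly, your claimed asymptotic $\Var(\cZ_\tau^\varphi)\sim\sigma^2 e^{\alpha\tau}$ (resp.\ $\sigma^2\tau^{2k-1}e^{\alpha\tau}$) is incorrect: the unconditional variance is dominated by the fluctuations of the supercritical martingales $W^{(j)}(\lambda)$ and grows like $e^{2\Real(\lambda_0)\tau}$ for the root $\lambda_0\in\Lambda$ of largest real part; it is $\Var\bigl(\cZ_\tau^\varphi-H_\Lambda(\tau)\bigr)$ that grows at the claimed rate. Consequently the conditional variance $V_{s,t}=\sum_{u\in\cC_s}\Var(\cZ_{t-S(u)}^\varphi)$ you would feed into Lindeberg--Feller blows up relative to $e^{\alpha t}t^{2k-1}$, and the conditional CLT in the form you state cannot hold.

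The paper's proof avoids this problem entirely by observing (Lemma~\ref{Lem:chi_lambda}) that $H_\Lambda(t)$ is \emph{exactly} a CMJ process, namely $H_\Lambda(t)=\cZ_t^{\psi_\Lambda}$ for the characteristic $\psi_\Lambda=\sum_{\lambda\in\Lambda}\vec{a}_\lambda^\transp(\phi_\lambda+\chi_\lambda)\e_1$, so that $\cZ_t^\varphi-H_\Lambda(t)=\cZ_t^{\varphi-\psi_\Lambda}$ is itself a CMJ process whose mean is $O(e^{\alpha t/2}/(1+t^2))$; the CLT for such slowly-growing-mean and centered characteristics (Theorems~\ref{Thm:slowly growing mean} and~\ref{Thm:CLT with centered phi}) is then proved by a martingale CLT along an admissible ordering of the Ulam--Harris tree $\I$ at a \emph{fixed} time $t$, indexed by the number of vertices added — so no intermediate time and no two-scale balancing is required. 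To salvage your approach you would need to center each subtree $u\in\cC_s$ by its own correction $H_{\Lambda,u}(t-S(u))$, exploiting the exact decomposition $W(\lambda,k)=\sum_{u\in\cC_s}\exp(\lambda,-S(u),k)W_u(\lambda,k)$; this is the algebraic identity your argument is missing, and is precisely the content of the paper's $\psi_\Lambda$ construction.
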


 The lattice analogue of Theorem \ref{Thm:main non-lattice} is given next.
\begin{theorem}	\label{Thm:main lattice}
Suppose that (A\ref{ass:Malthusian parameter}) through (A\ref{ass:second moment}) hold
and that the intensity measure $\mu$ is lattice with span $1$.
Then there are $b_{\lambda,l}$, $l=0,\ldots,k(\lambda)-1$, $\lambda \in \Lambda_{\geq}$
satisfying $b_{\overline{\lambda},l} = \overline{b_{\lambda,l}}$
such that for any characteristic $\varphi$ satisfying
\begin{equation*}
\sum_{n \in \Z} |\E[\varphi(n)]|(e^{-\vartheta n}+e^{-\alpha n})<\infty,
\end{equation*}
for some $\vartheta <\frac \alpha 2$ and
\begin{align*}
\sum_{n\in\Z}\Var[\varphi](n)e^{-\alpha n}<\infty
\end{align*}
there exists $\sigma \geq 0$ such that, for a standard normal random variable $\cN$ independent of $W$,
the following assertions hold.
\begin{enumerate}[(i)]
	\item
	If there are no roots of $\L\mu(z)=1$ on the critical line $\Real(z)=\frac\alpha2$, then
	\begin{align*}
		e^{-\frac\alpha2 t} \bigg(\cZ_t^\varphi-\sum_{\lambda\in\Lambda} e^{\lambda t}
		\sum_{l=0}^{k(\lambda)-1} b_{\lambda,l}
		\sum_{j=0}^l \binom{l}{j}W^{(j)}(\lambda)\sum_{n\in\Z}(t\!-\!n)^{l-j} \E[\varphi(n)] e^{-\lambda n}\ \bigg)	\\
		\distto \sigma  \sqrt{\tfrac{W}{\beta}}
		\cN
	\end{align*}
	as $t \to \infty$, $t \in \N$.
	\item
	Otherwise, let $k \in \N$ be the maximal multiplicity $k(\lambda)$ of the roots $\lambda \in \partial \Lambda$. Then
	\begin{align*}
	e^{-\frac\alpha2 t}t^{-k+\frac12} \bigg(\cZ_t^\varphi-\sum_{\lambda\in\Lambda} e^{\lambda t}
	\sum_{l=0}^{k(\lambda)-1} b_{\lambda,l}
	\sum_{j=0}^l \binom{l}{j} W^{(j)}(\lambda)
	\sum_{n\in\Z} (t\!-\!n)^{l-j} \E[\varphi(n)] e^{-\lambda n} \bigg)	\\
	\distto \sigma \sqrt{\tfrac{W}{\beta}}
	\cN
	\end{align*}
	as $t \to \infty$, $t \in \N$.
\end{enumerate}
\end{theorem}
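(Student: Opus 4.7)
The plan is to follow the template of the non-lattice case (Theorem~\ref{Thm:main non-lattice}) with the appropriate lattice modifications. Let $s>0$ be a large parameter that will eventually be sent to infinity, let $\F_s = \sigma(\{(\xi_u,\zeta_u) : S(u) \leq s\})$, and use the branching decomposition
\begin{equation*}
\cZ_t^\varphi = R_s(t) + \sum_{u \in \cC_s} \widetilde{\cZ}^{\varphi,(u)}_{t-S(u)},
\end{equation*}
where $(\widetilde{\cZ}^{\varphi,(u)})_{u \in \cC_s}$ are i.i.d.\ copies of $\cZ^\varphi$ with a suitable truncation, attached to the individuals of the coming generation, and $R_s(t)$ collects the contributions of the ancestors strictly before $\cC_s$. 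One then splits the main sum further as
\begin{equation*}
\sum_{u \in \cC_s} \widetilde{\cZ}^{\varphi,(u)}_{t-S(u)}
=\underbrace{\sum_{u \in \cC_s} \E\big[\cZ_{t-S(u)}^\varphi\big]}_{=:M_s(t)}
+\underbrace{\sum_{u \in \cC_s}\big(\widetilde{\cZ}^{\varphi,(u)}_{t-S(u)}-\E\big[\cZ_{t-S(u)}^\varphi\big]\big)}_{=:F_s(t)},
\end{equation*}
which isolates a \emph{mean} part $M_s(t)$ and an orthogonal \emph{fluctuation} part $F_s(t)$.

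The first step is to prove the lattice asymptotic expansion of the mean $g(t)\defeq\E[\cZ_t^\varphi]$. Because $\mu$ is $1$-lattice, $\L\mu(z)$ is $2\pi\imag$-periodic, so the inverse bilateral Laplace transform is taken along a vertical segment of imaginary length $2\pi$. From the lattice renewal identity $g(t) = \E[\varphi(t)] + (\mu \ast g)(t)$ (on $\Z$), one obtains $\L g(z) = \L\E[\varphi](z)/(1-\L\mu(z))$ on a suitable strip $\vartheta < \Real(z) < \alpha$, and inverting on the line $\Real(z)=\vartheta$ and pushing the contour to $\Real(z) = \alpha/2 - \varepsilon$ in case (i), resp.\ to $\Real(z)=\alpha/2$ in case (ii), produces, via the residue theorem, an expansion
\begin{equation*}
g(t) = \sum_{\lambda \in \Lambda_\geq} e^{\lambda t} \sum_{l=0}^{k(\lambda)-1} b_{\lambda,l}\, t^l\, \widehat{E}_{\lambda,l}(t) + r(t),
\end{equation*}
with a remainder $r(t) = o(e^{\alpha t/2})$ in case (i) and $o(t^{k-\frac12} e^{\alpha t/2})$ in case (ii). Here the constants $b_{\lambda,l}$ come from the Laurent expansion of $1/(1-\L\mu(z))$ at $\lambda$, and $\widehat{E}_{\lambda,l}(t) = \sum_{n \in \Z} (t-n)^{l-j} \E[\varphi(n)] e^{-\lambda n}$ emerges from expanding $\L\E[\varphi](z)$ around $\lambda$ and reassembling the discrete convolutions (together with the combinatorial coefficients $\binom{l}{j}$ visible in the statement). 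Substituting this expansion into $M_s(t)$ and invoking the martingale convergence in Theorem~\ref{Thm:martingale convergence} for $W_s^{(j)}(\lambda) \to W^{(j)}(\lambda)$ in $L^2$ as $s \to \infty$, one sees that $M_s(t)$, after subtraction, matches exactly the sum appearing inside the parentheses in the statement, up to an error that is $o(e^{\alpha t/2})$ (case (i)) or $o(t^{k-\frac12}e^{\alpha t/2})$ (case (ii)), uniformly in $t \geq s$ large.

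For the fluctuation part $F_s(t)$, one uses a conditional Lyapunov/Lindeberg CLT. Conditionally on $\F_s$, the summands are independent and centred, and their conditional variance is
\begin{equation*}
\sum_{u \in \cC_s} e^{-\alpha S(u)} \cdot e^{\alpha S(u)} \Var\big[\cZ_{t-S(u)}^\varphi\big].
\end{equation*}
The key input is a second-order asymptotic $\Var[\cZ_\tau^\varphi] \sim \sigma^2 \beta^{-1} e^{\alpha \tau}$ (case (i)) or $\sim \sigma^2 \beta^{-1} \tau^{2k-1} e^{\alpha \tau}$ (case (ii)), with an explicit $\sigma^2$ computed from $\L\mu$ and the Laplace transform of $\Var[\varphi]$ on the critical line, paralleling the computation in Theorem~\ref{Thm:main non-lattice}; the sums over $\Z$ defining the variance function are controlled by the lattice second-moment hypotheses $\sum_n \Var[\varphi](n) e^{-\alpha n} < \infty$ and $\sum_n |\E[\varphi(n)]|(e^{-\vartheta n}+e^{-\alpha n}) < \infty$. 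Together with Nerman's LLN, this gives, after letting first $t \to \infty$ and then $s \to \infty$, conditional variance $\sigma^2 W/\beta$, while the Lindeberg condition follows from the second-moment bound (A\ref{ass:second moment}) plus the lattice analogue of (A\ref{ass:variance growth}). A standard stable-type CLT then yields the stated convergence to $\sigma\sqrt{W/\beta}\,\cN$ with $\cN$ independent of $W$.

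The hardest step will be the lattice asymptotic expansion of $g(t)$ in case (ii), specifically bounding the contour integral along the critical line $\Real(z)=\alpha/2$ with $\Imag(z) \in (-\pi,\pi]$. The periodic nature of $\L\mu$ on the lattice makes the contour compact (a single segment rather than an infinite line), which actually simplifies the estimate relative to the non-lattice case, but the residue at a multiple root $\lambda \in \partial\Lambda$ of multiplicity $k$ requires careful bookkeeping of the Laurent expansion of $\L g(z)$ to produce the precise polynomial $t^l$ prefactors and to show the remainder is genuinely $o(t^{k-\frac12}e^{\alpha t/2})$ rather than of the same order as the main term. Controlling $R_s(t)$ in $L^2$ uniformly in $t$ and passing to the double limit $t \to \infty$, $s \to \infty$ is the second delicate point, handled by Proposition~\ref{Prop:L1 existence of Z_t^varphi} together with the $L^2$ convergence in Theorem~\ref{Thm:martingale convergence}.
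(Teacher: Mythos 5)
Your proposal takes a genuinely different route from the paper, and it contains a structural gap that I do not think can be patched within the framework you describe.

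The paper's own proof of Theorem~\ref{Thm:main lattice} is short: it verifies (A\ref{ass:Lambda finite}), checks that the lattice integrability hypotheses on $\varphi$ imply (A\ref{ass:mean growth})--(A\ref{ass:local ui of phi^2}), invokes Lemma~\ref{Lem:lattice asymptotic of the mean} and Remark~\ref{Rem:expansion of mean} to get the mean expansion \eqref{eq:expansion of mean}, and then applies the general Theorem~\ref{Thm:main}, noting that $H_{\partial\Lambda}(t)=O(t^{k-1}e^{\alpha t/2})$ can be absorbed into the normalization. Theorem~\ref{Thm:main} itself rests on a decomposition at the level of \emph{characteristics}, $\cZ_t^\varphi = H_\Lambda(t)+H_{\partial\Lambda}(t)+\cZ_t^{\varrho-\phi_{\partial\Lambda}}+\cZ_t^\chi$, using the pointwise identity $\cZ_t^{\phi_\lambda+\chi_\lambda}=\exp(\lambda,t,k)W(\lambda,k)$ (Lemma~\ref{Lem:chi_lambda}) so that $H_\Lambda$ is itself realized as a CMJ process; the two residual processes are then handled by a martingale CLT indexed by an admissible ordering of $\I$ (Theorem~\ref{Thm:CLT with centered phi}). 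Your proposal instead conditions on the coming generation $\cC_s$, splits the resulting conditionally independent sum into a mean part $M_s(t)$ and a fluctuation $F_s(t)$, and performs a double limit $t\to\infty$, then $s\to\infty$. Your first ingredient (the lattice expansion of $\E[\cZ_t^\varphi]$ via the renewal equation and residues of $1/(1-\L\mu)$) is in the spirit of Lemma~\ref{Lem:lattice asymptotic of the mean}, so that part is fine.

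The gap is in the fluctuation analysis. You feed $\Var[\cZ_\tau^\varphi]\sim\sigma^2\beta^{-1}\tau^{2k-1}e^{\alpha\tau}$ (case (ii)) into the conditional Lindeberg CLT as "the key input," but this asymptotics is false. By \eqref{eq:LLN CMJ}, $\cZ_\tau^\varphi$ has leading term $a_\alpha e^{\alpha\tau}W$ with $a_\alpha=\beta^{-1}\L\E[\varphi](\alpha)$, which is generically nonzero, and $W$ is generically nondegenerate; hence $\Var[\cZ_\tau^\varphi]\geq a_\alpha^2\,\Var[W]\,e^{2\alpha\tau}$, which dwarfs $\tau^{2k-1}e^{\alpha\tau}$. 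Consequently, the conditional variance $\sum_{u\in\cC_s}\Var[\cZ^\varphi_{t-S(u)}]$ of your $F_s(t)$ is of order $e^{2\alpha t}$, and the normalization by $t^{2k-1}e^{\alpha t}$ diverges rather than converging to $\sigma^2 W/\beta$. A related symptom on the mean side: $M_s(t)$ produces the terms $t^{l-j}e^{\lambda t}W_s^{(j)}(\lambda)$, whereas the quantity you must subtract is $H_\Lambda(t)$, built from $W^{(j)}(\lambda)$; the discrepancy $e^{\lambda t}\bigl(W_s^{(j)}(\lambda)-W^{(j)}(\lambda)\bigr)$, divided by $t^{k-1/2}e^{\alpha t/2}$, blows up as $t\to\infty$ for fixed $s$ because $\Real(\lambda)>\alpha/2$, so your claim that the error is $o(t^{k-1/2}e^{\alpha t/2})$ "uniformly in $t\geq s$ large" fails, and the double limit does not close. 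Both symptoms have the same cause: the random centering $H_\Lambda(t)$ must be subtracted \emph{inside} each subtree before the conditional CLT can be applied, which requires a consistent branching decomposition of $W^{(j)}(\lambda)$ at the cut $\cC_s$. This is precisely what the paper's characteristic decomposition via $\phi_\lambda$ and $\chi_\lambda$ achieves, and your branching-at-a-fixed-time-$s$ decomposition, as written, does not.
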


\begin{remark}
In Theorems \ref{Thm:main non-lattice} and \ref{Thm:main lattice},
we do not exclude the case $\sigma=0$.
There, a more precise limit theorem can be derived with the help of Theorem \ref{Thm:main}.
In particular, the expression in parentheses in (i) is
just a deterministic function of the order $O(e^{\theta t})$ for some $\theta < \frac\alpha2$ (cf.\ Theorem \ref{Thm:main}(i)).
In case (ii), we need to additionally subtract a linear combination of $e^{\lambda t} t^l$,
where $\lambda$ runs over the roots on the critical line and $l < k(\lambda)$
with $k(\lambda)$ denoting the order of the root,
and to use a different normalization in order to get a nontrivial limit.
\end{remark}

\begin{remark}
The constants $b_{\lambda,l}$, $l=0,\ldots,k(\lambda)-1$, $\lambda \in \Lambda_{\geq}$
can be computed using Proposition~\ref{Prop:determining the coefficients}.
\end{remark}

\begin{remark}
Note that the  fluctuations in Theorem~\ref{Thm:main lattice} and \ref{Thm:main non-lattice} are very similar to fluctuations obtain by Janson \cite{Janson:2004} in the multi-type case where the birth times constitutes a homogeneous Poisson point process. This indicates that there may be a general theorem describing the fluctuations of  multitype CMJ process, which covers the aforementioned models (cf.~Open Problem \ref{OP:multitype_CMJ} in Section \ref{sec:discussion}).
\end{remark}

%

In order to obtain
the asymptotic expansion of $\cZ^\varphi_t$  as presented in Theorems~\ref{Thm:no roots},
\ref{Thm:main non-lattice} and \ref{Thm:main lattice},
we first need an expansion for the mean $m^{\varphi}_t \defeq \E[\cZ^\varphi_t]$, $t \in  \G$.
It turns out that, under suitable assumptions, the following holds:
\begin{align}	\label{eq:expansion of mean}
m^{\varphi}_t = \1_{[0,\infty)}(t) \sum_{\lambda\in\Lambda_{\geq}} e^{\lambda t} \sum_{l=0}^{k(\lambda)-1} a_{\lambda,l} t^l + r(t),
\qquad	 t\in\G
\end{align}
for some constants $a_{\lambda,l} \in\C$ and
 a function $r$ satisfying $|r(t)| \leq C e^{\alpha t/2}/(1+t^2)$ for all $t \in \G$
and some finite constant $C \geq 0$.

 We shall provide three different sets of sufficient conditions for \eqref{eq:expansion of mean} to hold.
The first case is when the characteristic $\varphi$ is chosen in such a way that $\cZ^\varphi_t$ is a rescaled martingale
(see Section~\ref{sec:Nerman's martingales as CMJ processes}). In this case, \eqref{eq:expansion of mean} holds trivially.
The second case is when $\G=\Z$. Then 
expansion \eqref{eq:expansion of mean} is obtained in Lemma~\ref{Lem:lattice asymptotic of the mean} via generating functions. The third case is the non-lattice case where under
the additional (technical) assumption \eqref{eq:Lmu becomes small},
expansion \eqref{eq:expansion of mean} is derived in Lemma~\ref{Lem:nonlattice asymptotic of the mean}.
There might be more examples of $\cZ^\varphi_t$ that are not covered by any of the three sufficient conditions,
even though the corresponding expansion of $\E[\cZ^\varphi_t]$ is of 
form \eqref{eq:expansion of mean}.
For this reason, we formulate our main result, Theorem \ref{Thm:main},
for processes $\cZ_t^\varphi$ for which $\E[\cZ^\varphi_t]$ satisfies \eqref{eq:expansion of mean}.
What is more, in some examples, it is in fact more convenient to directly check that \eqref{eq:expansion of mean} holds
rather than checking the assumptions of Theorem \ref{Thm:main non-lattice} or \ref{Thm:main lattice},
see e.\,g.~Sections~\ref{subsec:GWP} and \ref{subsec:Nerman's martingales}.

From \eqref{eq:expansion of mean} we can obtain an asymptotic expansion
of $\cZ_t^\varphi$
where the principal terms are of the form a constant times $e^{\lambda t} t^j W^{(j)}(\lambda)$
for $\lambda \in \Lambda$ and $j=0,\ldots,k(\lambda)-1$.
More precisely, the principal terms are given by the expression
\begin{align}	\label{eq:H_Lambda}
H_\Lambda(t) \defeq \sum_{\lambda \in \Lambda} e^{\lambda t}
\sum_{l=0}^{k(\lambda)-1} \sum_{j=0}^{l} a_{\lambda,l} \binom{l}{j} t^j W^{(l-j)}(\lambda).
\end{align}
If, additionally, there are roots $\lambda \in \partial \Lambda$,
then the next terms in the expansion are given by the following deterministic sum
\begin{align}	\label{eq:H_dLambda}
H_{\partial \Lambda}(t) \defeq \sum_{\lambda\in\partial\Lambda} e^{\lambda t} \sum_{l=0}^{k(\lambda)-1} a_{\lambda,l} t^l,	\qquad	t \in \R.
\end{align}
(Of course, if $\Lambda_\geq = \partial \Lambda \cup \{\alpha\}$,
the terms from $H_{\partial \Lambda}(t)$ are the subleading terms.)
We set $H(t) \defeq H_\Lambda(t) + H_{\partial \Lambda}(t)$, $t \in \R$.
Further, for any $\lambda \in \partial\Lambda$ and $l=0,\ldots,k(\lambda)-1$,
we define a random variable $R_{\lambda,l}$ by
\begin{align}
	\label{eq:definition of R_lambda}
R_{\lambda,l}
\defeq \sum_{j=l}^{k(\lambda)-1}a_{\lambda,j}\binom{j}{l} \sum_{k=1}^N (- X_k)^{j-l} e^{-\lambda X_k}.
\end{align}
Assumption (A\ref{ass:second moment}) guarantees that $R_{\lambda,l} \in L^2$ for all $\lambda \in \partial \Lambda$
and $l=0,\ldots,k(\lambda)-1$.
We may thus define
\begin{equation*}
\rho_l^2 \defeq \sum_{\substack{\lambda \in \partial\Lambda:\\ k(\lambda) > l}} \Var[R_{\lambda,l}]
\end{equation*}
where $\Var[R_{\lambda,l}]=\E[|R_{\lambda,l}|^2]-|\E [R_{\lambda,l}]|^2$.
In general, throughout the paper, if $Y$ is a complex-valued random variable with finite mean,
we set $\Var[Y] \defeq \E[|Y-\E[Y]|^2]$.

As a final preparation for our main result,
we recall the fact that if a sequence of random variables $(Y_n)_{n \in \N_0}$
defined on $(\Omega,\F,\Prob)$
converges in distribution to some random variable $Y$, this convergence is said to be \emph{stable}
if for all continuity points $y$ of the distribution function of $Y$ and all $E \in \F$,
the limit $\lim_{n \to \infty} \Prob(\{Y_n \leq y\} \cap E)$ exists.
In this case, we write $Y_n \stablyto Y$.
An alternative characterization is the following.
There is a copy $Y^*$ of $Y$ defined on some extension of the probability space
$(\Omega,\F,\Prob)$ such that, for every $\F$-measurable random variable $X$, it holds that
\begin{equation*}
(Y_n,X) \distto (Y^*,X)\text{ as }n \to \infty,
\end{equation*}
see \cite[Condition (B')]{Aldous+Eagleson:1978}.
Without loss of generality we may and will assume that $Y=Y^*$ whenever we write $Y_n \stablyto Y$.
The second definition is more convenient as it allows to manipulate the sequence $Y_n$ (e.g.\ by multiplying with another random variable)
without loosing the convergence in distribution. In Remark \ref{Rem:application of stable convergence} below we prove \eqref{eq:partic case}
using such an argument.

\begin{remark}
Although it is not stated explicitly but it follows from the proofs that the convergences in
Theorems \ref{Thm:no roots}, \ref{Thm:main non-lattice} and \ref{Thm:main lattice} are in fact stable.
\end{remark}

For a measurable function $f$  and a measure $\nu$ (possibly random) on the Borel $\sigma$-field,
we write $f* \nu = \nu * f$ for the Lebesgue-Stieltjes convolution of $f$ and  $\nu$,
i.e.,  $f*\nu(t) = \nu*f(t) = \int f(t-x) \, \nu(\dx)$ whenever the integral exists.
In particular,
\begin{equation*}
f*\xi(t)=\int f(t-x)\, \xi(\dx)=\sum_{j=1}^N f(t-X_j),\quad t\in\R.
\end{equation*}
\begin{theorem}	\label{Thm:main}
Suppose that $\xi$ satisfies (A\ref{ass:Malthusian parameter}) through (A\ref{ass:second moment})  and (A\ref{ass:Lambda finite})
and that the real-valued characteristic $\varphi$ satisfies
 (A\ref{ass:mean growth}) through (A\ref{ass:local ui of phi^2}).
Further, assume that $m^\varphi_t$ satisfies \eqref{eq:expansion of mean}  with $\sup_{t \in \G} (1+t^2)e^{-\frac{\alpha}{2}t}|r(t)| < \infty$,
and let $n \defeq \max\{l \in \N_0: \rho_l > 0\}$ with $n=-1$ if the set is empty;
in that case we set $\rho_{-1} \defeq 0$.
Then there exists a finite constant $\sigma \geq 0$ such that, with
\begin{equation*}	\textstyle
a_t^2 \defeq \sigma^2 + \frac{\rho_n^2}{2n+1}t^{2n+1},	\qquad	t > 0,
\end{equation*}
it holds
\begin{enumerate}[(i)]
	\item	if $\sigma^2 = \rho_n^2 = 0$, then, $ t \mapsto H(t)+r(t)$ is a  c\`adl\`ag modification of the process $\cZ^\varphi$,
	\item	if $\sigma^2 > 0$ or $\rho_n^2 > 0$, then
		\begin{align}	\label{eq:limit theorem}
		a_t^{-1} e^{-\frac\alpha2 t} \big(\cZ^\varphi_t-H(t)\big)
		\stablyto \sqrt{\tfrac{W}{\beta}} \cN
		\end{align}
		as $t\to\infty$, $t\in\G$ where $\cN$ is a standard normal random variable independent of  $\F$
		and $\beta$ is as defined in \eqref{eq:beta}.
\end{enumerate}
If $n=-1$ the constant $\sigma$ can be explicitly computed, namely,
\begin{align}	\label{eq:sigma}
\sigma^2 = \int\Var\big[\varphi(x)+h^\varphi*\xi(x)\big]e^{-\alpha x} \, \ell(\dx)
\end{align}
where
\begin{align*}
h^\varphi(t)\defeq m^\varphi_t-\sum_{\lambda\in\Lambda} e^{\lambda t} \sum_{l=0}^{k(\lambda)-1} a_{\lambda,l} t^l.
\end{align*}
\end{theorem}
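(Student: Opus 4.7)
The proof hinges on the coming-generation decomposition at an intermediate time $s = s(t)$ chosen with $s, t-s \to \infty$:
\begin{equation*}
\cZ^\varphi_t = \sum_{u:\, S(u) \leq s} \varphi_u(t-S(u)) + \sum_{u \in \cC_s} \cZ^{\varphi,(u)}_{t-S(u)},
\end{equation*}
where, conditionally on $\F_s$, the processes $\cZ^{\varphi,(u)}$ for $u \in \cC_s$ are independent copies of $\cZ^\varphi$ with fixed offsets $S(u)$. The strategy is to split $\cZ^\varphi_t - H(t)$ into the $\F_s$-conditional mean minus $H(t)$ plus a centred sum of conditionally independent contributions, and then analyse each piece separately before letting $s \to \infty$ after $t$.

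For the conditional mean, plugging the ansatz \eqref{eq:expansion of mean} gives $\E[\cZ^\varphi_t \mid \F_s]$ as an $\F_s$-measurable part plus $\sum_{u \in \cC_s} m^\varphi_{t-S(u)}$. Expanding $(t-S(u))^l$ via the binomial theorem regroups the $\lambda \in \Lambda$ pieces as $e^{\lambda t}$-coefficients times the partial martingales $W_s^{(l-j)}(\lambda)$, which converge in $L^2$ to $W^{(l-j)}(\lambda)$ by Theorem~\ref{Thm:martingale convergence}; this reproduces $H_\Lambda(t)$ up to an $L^2$-negligible remainder. The $\lambda \in \partial\Lambda$ pieces contribute the deterministic part $H_{\partial\Lambda}(t)$ plus random corrections assembled from the variables $R_{\lambda,l}$ of \eqref{eq:definition of R_lambda}. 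Their $\F_s$-conditional variance telescopes---after a Nerman-type LLN for the characteristic $e^{\alpha \cdot}\1_{[0,\infty)}$ applied along $\cC_s$---into precisely $\rho_n^2 t^{2n+1}/(2n+1) \cdot W/\beta$, with exponent $2n+1$ arising from $\int_0^t x^{2n}\,\dx$ after the $l = n$ term dominates. The deterministic remainder $\sum_{u \in \cC_s} r(t-S(u))$ is of lower order thanks to the polynomial decay of $r$.

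For the centred part, the sum $\sum_{u \in \cC_s}(\cZ^{\varphi,(u)}_{t-S(u)} - m^\varphi_{t-S(u)})$ has $\F_s$-conditional variance $\sum_{u \in \cC_s}\Var[\cZ^\varphi_{t-S(u)}]$. A one-step conditioning on the offspring of the ancestor yields the renewal-type identity
\begin{equation*}
\Var[\cZ^\varphi_t] = \Var\big[\varphi(t) + h^\varphi * \xi(t)\big] + \mu * \Var[\cZ^\varphi_{\cdot}](t),
\end{equation*}
because $h^\varphi$ is precisely the piece of the mean whose cross-term with the centred fluctuations vanishes. The two-sided key renewal theorem---applicable by (A\ref{ass:variance growth}), (A\ref{ass:local ui of phi^2}) and Proposition~\ref{Prop:f^*}---then gives $e^{-\alpha t}\Var[\cZ^\varphi_t] \to \sigma^2/\beta$ with $\sigma^2$ as in \eqref{eq:sigma}. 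Combining with $e^{-\alpha t}\sum_{u \in \cC_s} e^{\alpha(t-S(u))} \to W/\beta$ yields $a_t^{-2}e^{-\alpha t}\Var[\cZ^\varphi_t \mid \F_s] \to W/\beta$ in probability. A conditional Lindeberg CLT---the Lindeberg condition being verified through (A\ref{ass:second moment}) and (A\ref{ass:local ui of phi^2}) by truncation---then delivers the stable convergence to $\sqrt{W/\beta}\,\cN$.

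Case (i) is a corollary: $\sigma = \rho_n = 0$ forces $\varphi(x) + h^\varphi * \xi(x)$ to be $\ell$-a.e.\ deterministic, and iterating the functional equation through the tree yields $\cZ^\varphi_t \equiv H(t) + r(t)$ almost surely. The principal obstacle I anticipate is case (ii) with $\partial\Lambda \neq \emptyset$: the random corrections from the critical line and the Lindeberg fluctuations both live at the scale $t^{n+1/2}e^{\alpha t/2}$, and one must verify that they are jointly asymptotically centred Gaussian with additive variances $\rho_n^2 t^{2n+1}/(2n+1)$ and $\sigma^2$, respectively. This is where the \emph{stability} of the convergence is indispensable: it allows multiplication by the $\F_\infty$-measurable factor $\sqrt{W/\beta}$, handled via the Aldous-Eagleson characterisation recalled just before the statement.
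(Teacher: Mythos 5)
Your proposal pursues a genuinely different route---a coming-generation decomposition at an intermediate time $s=s(t)$, in the spirit of Athreya and Asmussen--Hering---rather than the paper's purely algebraic decomposition of the characteristic, $\varphi=\psi_\Lambda+\psi_{\partial\Lambda}+\varrho$, for which $\cZ_t^{\psi_\Lambda}=H_\Lambda(t)$ holds as an exact identity, $\cZ_t^{\psi_{\partial\Lambda}}$ splits off $H_{\partial\Lambda}(t)$ plus a centred GBP, and Theorems~\ref{Thm:slowly growing mean} and \ref{Thm:CLT with centered phi}(ii) finish the job. But your sketch has concrete errors that block the argument.

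First, the variance recursion you write is not what one-step conditioning produces. Conditioning on the ancestor's life $\pi_\varnothing$ gives $\E[\cZ_t^\varphi\mid\pi_\varnothing]=\varphi(t)+m^\varphi*\xi(t)$, hence
\begin{equation*}
\Var[\cZ_t^\varphi]=\Var\big[\varphi(t)+m^\varphi*\xi(t)\big]+\mu*\Var[\cZ^\varphi_{\cdot}](t)
\end{equation*}
with $m^\varphi$, not $h^\varphi$. Since $m^\varphi_r\sim a_{\alpha,0}\,e^{\alpha r}$, the forcing term $\Var[m^\varphi*\xi(t)]$ is of order $e^{2\alpha t}$ and dominates $e^{\alpha t}$, so the two-sided key renewal theorem does not apply and your claimed limit $e^{-\alpha t}\Var[\cZ_t^\varphi]\to\sigma^2/\beta$ is false; indeed $\Var[\cZ_t^\varphi]$ grows like $e^{2\max_{\lambda\in\Lambda}\Real(\lambda)\,t}$ because of $H_\Lambda(t)$. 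The quantity $\sigma^2$ in \eqref{eq:sigma} describes the fluctuations of $\cZ_t^\varphi-H(t)$ only after $\psi_\Lambda$ and $\psi_{\partial\Lambda}$ have been subtracted, at which point the residual characteristic $\varrho-\phi_{\partial\Lambda}$ has mean $r(t)=O(e^{\alpha t/2}/(1+t^2))$ and Theorem~\ref{Thm:slowly growing mean} (itself proved via the centering trick of Lemma~\ref{Lem:reduction to centered phi}) applies. Second, the $\partial\Lambda$ part of your conditional mean $\sum_{u\in\cC_s}m^\varphi_{t-S(u)}$ introduces the critical-line martingales $W_s^{(j)}(\lambda)$, $\Real(\lambda)=\frac\alpha2$, whose $L^2$-norms diverge as $s\to\infty$; the assertion that their fluctuations ``telescope into $\rho_n^2t^{2n+1}/(2n+1)\cdot W/\beta$'' is given no derivation, and the link to the per-individual variables $R_{\lambda,l}$ of \eqref{eq:definition of R_lambda} is never made. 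The paper avoids these divergent martingales entirely by isolating the centred characteristic $\chi(t)=\1_{[0,\infty)}(t)\sum_{\lambda\in\partial\Lambda}\sum_l(R_{\lambda,l}-\E[R_{\lambda,l}])t^le^{\lambda t}$ and applying Theorem~\ref{Thm:CLT with centered phi}(ii) together with Lemma~\ref{lem:auxilarity lemma for the bounary} to $\cZ_t^\chi$; it is that step which produces the $t^{2n+1}$ variance scale.
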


In the situation of Theorem \ref{Thm:main}, the following remarks are in order.

\begin{remark}	\label{Rem:conjugate}
Observe that, in the non-lattice case, if $\lambda\in\Lambda_{\geq}$, then so is $\overline \lambda$
and $k(\overline \lambda)=k(\lambda)$. Moreover, as $m_t^{\varphi}$ is real for any $t\in\R$, we have
\begin{align*}
m_t^\varphi &=\sum_{\lambda\in\Lambda_{\geq}} \sum_{l=0}^{k(\lambda)-1} a_{\lambda,l} t^l e^{\lambda t} + r(t)
=\sum_{\lambda\in\Lambda_{\geq}} \sum_{l=0}^{k(\lambda)-1} a_{{\overline{\lambda}},l} t^l e^{\overline{\lambda} t} + r(t)	\\
&=\sum_{\lambda\in\Lambda_{\geq}} \sum_{l=0}^{k(\lambda)-1} \overline{a_{\overline\lambda,l}} t^l e^{\lambda t}+\overline{r(t)}
=\overline{m_t^\varphi},	\quad t \in \R,
\end{align*}
whence
\begin{align*}
0=m_t^\varphi -  \overline{m_t^\varphi}
= \sum_{\lambda\in\Lambda_{\geq}} \sum_{l=0}^{k(\lambda)-1} (a_{\lambda,l}-\overline{a_{\overline\lambda,l}}) t^l e^{\lambda t}
+ o(e^{\frac\alpha 2 t})	\quad	\text{as } t \to \infty,\ t \in \R.
\end{align*}
Next, we can choose $h>0$ such that the $e^{\lambda h}$, $\lambda\in\Lambda_{\geq}$ are distinct.
Recalling that $\Real(\lambda)\geq \frac\alpha2$ for $\lambda\in\Lambda_{\geq}$,
Lemma \ref{lem:asymptotic linear independent} gives that $a_{\lambda,l}-\overline{a_{\overline\lambda,l}}=0$,
that is, $\overline{a_{\lambda,l}}=a_{\overline\lambda,l}$. In particular, $r(t)$ is real for any $t\in\R$.

A similar reasoning in the lattice case gives $\overline{a_{\lambda,l}} = a_{\overline{\lambda},l}$ for all $\lambda \in \Lambda$ with $|\Imag(\lambda)|<\pi$
and $a_{\lambda,l} \in \R$ if $\Imag(\lambda)=\pi$.
\end{remark}

 \begin{remark}
 	\label{Rem:application of stable convergence}
Observe that for $N(t) \defeq \cZ_t^{\1_{[0,\infty)}}$,
the number of individuals born up to and including time $t$, by \eqref{eq:LLN CMJ}, we have
\begin{equation*}
e^{-\alpha t}N(t)\to \frac1\beta \int \limits_{[0,\infty)} e^{-\alpha x} \,\ell(\dx)\, W = \frac{c_\alpha}{\beta} W\quad\text{a.\,s.~as}~t\to\infty,~ t\in\G,
\end{equation*}
where $c_\alpha=(1-e^{-\alpha})^{-1}$ in the lattice case,
$c_\alpha=\alpha^{-1}$ in the non-lattice case.
The stable convergence in \eqref{eq:limit theorem} yields
\begin{equation}	\label{eq:partic case}
a_t^{-1} \sqrt{\frac{c_\alpha}{N(t)}} \big(\cZ^\varphi_t-H(t)\big)
\distto \cN	\quad	\text{as }t \to \infty	\quad\text{conditionally given }  \Surv.
\end{equation}
Indeed, with $G(t)\defeq a_t^{-1} e^{-\frac\alpha2 t} \big(\cZ^\varphi_t-H(t)\big)$,

\eqref{eq:partic case} is equivalent to
\begin{equation*}
\Prob(\Surv)^{-1}\cdot\E\Big[g\Big(\sqrt{\tfrac{c_\alpha e^{\alpha  t}}{ N(t)}}G(t)\Big)\1_\Surv\Big]\to\E[g(\cN)]
\end{equation*}
as $t\to\infty$, for any continuous, nonnegative function $g$ bounded by $1$.
With $F(t)\defeq\sqrt{\frac{c_{\alpha}We^{\alpha t}}{\beta N(t)}}\1_\Surv+\1_{\Surv^c}$,
which is well-defined since $N(t)>0$ on $\Surv$,
the above convergence can be rewritten as
\begin{align}	\label{eq:conditional convergence}
\Prob(\Surv)^{-1}\cdot\E\Big[g\Big(\sqrt \beta\tfrac{F(t)G(t)}{\sqrt {W}}\Big)\1_\Surv\Big]\to\E[g(\cN)]\quad	\text{as }t \to \infty.
\end{align}
For any fixed $\varepsilon>0$, there is some $\delta>0$ such that $\Prob(\Surv\cap\{W<\delta\})=\Prob(0<W<\delta)\le \varepsilon$  and consequently
\begin{align*}
	\Big|\E\Big[g\Big(\sqrt\beta\tfrac{F(t)G(t)}{\sqrt {W}}\Big)\1_\Surv\Big]-\E\Big[g\Big(\sqrt\beta\tfrac{F(t)G(t)}{\sqrt {W\vee \delta}}\Big)\1_\Surv\Big]\Big|\le \varepsilon
\end{align*}
as well as
\begin{align*}
	\Big|\E[g(\cN)\1_\Surv]-\E\Big[g\Big(\tfrac{\sqrt W \cN}{\sqrt {W\vee \delta}}\Big)\1_\Surv\Big]\Big|\le \varepsilon.
\end{align*}
On the other hand, \eqref{eq:limit theorem} yields
\begin{align*}
(G(t),W,\1_\Surv)\distto(\sqrt{\tfrac{W}{\beta}}\cN,W,\1_\Surv)\quad	\text{as }t \to \infty.
\end{align*}
Since $F(t)\to 1$ almost surely, Slutsky's theorem implies
\begin{align*}
(G(t),W,\1_\Surv,F(t))\distto(\sqrt{\tfrac{W}{\beta}}\cN,W,\1_\Surv,1)\quad	\text{as }t \to \infty.
\end{align*}
Taking advantage of the fact that the function $(u,v,x,y)\mapsto g(\sqrt\beta\frac{uy}{\sqrt{v\vee\delta}})(|x|\wedge1)$ is bounded and continuous, we conclude
\begin{align*}
	\E\Big[g\Big(\sqrt\beta\tfrac{F(t)G(t)}{\sqrt {W\vee \delta}}\Big)\1_\Surv\Big]\to \E\Big[g\Big(\tfrac{\sqrt W \cN}{\sqrt {W\vee \delta}}\Big)\1_\Surv\Big]
\end{align*}
as $t\to\infty$, and consequently
\begin{align*}
	\limsup_{t\to\infty}\Big|\E\Big[g\Big(\sqrt\beta\tfrac{F(t)G(t)}{\sqrt {W}}\Big)\1_\Surv\Big] -\E[g(\cN)\1_\Surv] \Big|\le 2\varepsilon.
\end{align*}
Letting now $\varepsilon$ to 0 and using the independence of $\cN$ and $\1_\Surv$
we get \eqref{eq:conditional convergence} and thereby \eqref{eq:partic case}.
\end{remark}

\begin{remark}	\label{Rem:sigma}
(i)
Notice that formula \eqref{eq:sigma} is not well-defined in the case $n \geq 0$ as then the integral diverges.
However, in this case, the exact value of $\sigma$ is irrelevant and can be set to $ \sigma \defeq 0$.	\smallskip

\noindent
(ii)
In the non-lattice case the variance $\sigma^2$ given by \eqref{eq:sigma} can be calculated using the bilateral Laplace transform $\L  h^\varphi$ of $ h^\varphi$.
Indeed, by Plancherel's theorem,
\begin{align*}
\sigma^2
&=	\int\Var[\varphi(x)+ h^\varphi*\xi(x)]e^{-\alpha x} \, \dx		\\
&=	\E\bigg[\int \big(\big(\varphi(x)\!-\!\E[\varphi](x)+( h^\varphi*(\xi\!-\!\mu))(x)\big) e^{-\frac\alpha2 x }\big)^2 \, \dx\bigg]	\\
&=	\frac{1}{2\pi}\E\bigg[\int\big|\L((\varphi(\cdot)\!-\!\E[\varphi](\cdot))e^{-\alpha \cdot/2 })(\imag \eta)+\L\big(((\xi\!-\!\mu)* h^\varphi)(\cdot)e^{-\frac\alpha2 \cdot}\big)(\imag\eta)\big|^2 \, \dd \eta\bigg]	\\
&=	\frac{1}{2\pi}\E\bigg[\int\big|\L(\varphi\!-\!\E[\varphi])(\tfrac\alpha2+\imag \eta)+\L\big(((\xi\!-\!\mu)* h^\varphi)(\cdot)\big)(\tfrac\alpha2+\imag\eta)\big|^2 \, \dd \eta\bigg]		\\
&=	\frac{1}{2\pi} \int \limits_{\Real(z)=\frac\alpha2}\Var[\L\varphi(z)+\L\xi(z)\L  h^\varphi(z)] \, |\dd z|,
\end{align*}
 where the variance of a complex random variable is defined in terms of absolute squares.
An analogous formula holds in the lattice case, see~\cite{Janson:2018}. We refrain from giving further details.	\smallskip

\noindent
(iii)
Suppose now that $\varphi$ vanishes on $(-\infty,0)$.
Then so do $m_t^\varphi$ and  the remainder function $r$ from the expansion \eqref{eq:expansion of mean}.
Additionally, assume that $r(t)=O(e^{(\frac \alpha 2 -\varepsilon) t})$ for some $\varepsilon>0$,
which holds in typical cases (see Section \ref{sec:asymptotic expansion of the mean}).
In the non-lattice case of Theorem \ref{Thm:main}, if
all the roots in $\Lambda$ are simple and there are no roots on the critical line $\{\Real(z)=\frac\alpha2\}$,
the bilateral Laplace transform $\L  h^\varphi$ of $ h^\varphi$ coincides on a neighborhood of $\{\Real(z)=\frac\alpha2\}$
with the function
\begin{equation*}
z\mapsto\frac{\L (\E[\varphi])(z)}{1-\L \mu(z)}.
\end{equation*}
To see this, notice that $ r(t)= m^\varphi_t-\sum_{\lambda\in\Lambda} a_{\lambda,0} e^{\lambda t}\1_{[0,\infty)}(t)$
and let  $h_{r}(t)\defeq h^\varphi(t)-r(t)$.
The bilateral Laplace transform $\L  r$ is well-defined on $\{\Real(z)>\frac\alpha 2-\varepsilon\}$.
Moreover, for $\Real(z)>\alpha$,
\begin{align*}
\L  r(z)=\L m^\varphi(z)-\sum_{\lambda\in\Lambda} \frac{a_{\lambda,0}}{z-\lambda}=
\frac{\L (\E[\varphi])(z)}{1-\L \mu(z)}-\sum_{\lambda\in\Lambda} \frac{a_{\lambda,0}}{z-\lambda}. 
\end{align*}
The right-hand side, being 
holomorphic  on $\{\Real(z)>\frac\alpha 2-\varepsilon\}$ (all the singularities are removable), 
coincides with $\L  r$ on that domain.
On the other hand, decreasing $ \varepsilon$ if needed 
we can and do 
assume that $\Lambda\subseteq\{\Real(z)>\frac\alpha 2+\varepsilon\}$. In particular, $\L h_{ r}(z)$ is well-defined  on $\{\Real(z)<\frac\alpha 2+\varepsilon\}$ and
equal to $\sum_{\lambda\in\Lambda} \frac{a_{\lambda,0}}{z-\lambda}$.
As a result, on the set $\{\frac\alpha 2-\varepsilon<\Real(z)<\frac\alpha 2+\varepsilon\}$
we have $\L  h^\varphi(z)=\L  r(z)+\L h_{ r}(z)=\frac{\L (\E[\varphi])(z)}{1-\L \mu(z)}$.
\end{remark}

\begin{remark}	\label{Rem:joint convergence}
Suppose that $\xi$ satisfies (A\ref{ass:Malthusian parameter}) through (A\ref{ass:second moment})
and that the real-valued characteristics $\varphi_1,\ldots,\varphi_d$ satisfy
(A\ref{ass:mean growth}) through (A\ref{ass:local ui of phi^2}).
Further, assume that each $m^{\varphi_j}_t$ satisfies \eqref{eq:expansion of mean}
(with coefficients $a_{\lambda,l}^j$ and remainder $r_j$ depending on $j$).
Then Theorem \ref{Thm:main} gives joint convergence in distribution
of the vector $(\cZ_t^{\varphi_1},\ldots,\cZ_t^{\varphi_d})$.
Indeed, by the Cram\'er-Wold device, convergence in distribution of the vector
is equivalent to convergence of all linear combinations of the form
\begin{equation*}
\sum_{j=1}^d c_j \cZ_t^{\varphi_j} = \cZ_t^{\sum_{j=1}^d c_j \varphi_j}.
\end{equation*}
A routine verification shows that the characteristic $\sum_{j=1}^d c_j \varphi_j$
satisfies the assumptions of Theorem \ref{Thm:main}.
\end{remark}

As a particular case of Remark \ref{Rem:joint convergence}
with $\varphi_j(\cdot) = \varphi(\cdot-s_j)$ for
$-\infty < s_1<\ldots < s_d < \infty$ and a given random characteristic $\varphi$,
we get the following result for the finite-dimensional distributions:

\begin{corollary}	\label{Cor:fd convergence}
In the situation of Theorem \ref{Thm:main} suppose that $\sigma^2 \not = 0$ or $\rho_n \not = 0$.
Then for $d \defeq (2n+1) \vee 0$,
\begin{equation*}
t^{-\frac d2} e^{-\frac\alpha2 t} \big(\cZ^\varphi_{t-s} -H(t-s)\big)_{s\in\R}
\findimto	\sqrt{\tfrac{W}{\beta}} (G_s)_{s\in\R}
\end{equation*}
where $(G_s)_{ s \in \R}$ is a centered Gaussian process  with the covariance function
\begin{align*}
\E[G_s G_u] =\int \Cov\big[\varphi(x-s)-h^\varphi*\xi(x-s),\varphi(x-u)-h^\varphi*\xi(x-u)\big] e^{-\alpha x} \, \ell(\dx)
\end{align*}
for any $s,u\in\R$ if $d=0$, i.e., $n=-1$ and
\begin{align*}
\E[G_sG_u]&=\frac{1}{d}
\sum_{\substack{\lambda \in \partial\Lambda:\\ k(\lambda)\geq n+1}} \Cov\bigg[\sum_{j=n}^{k(\lambda)-1}a_{\lambda,j}\binom{j}{n} \sum_{k=1}^N (- X_k-s)^{j-n} e^{-\lambda (X_k+s)},\\
&\hphantom{=\sum_{\substack{\lambda\in\partial\Lambda:\\k(\lambda) \geq n+1}}
	\Cov\bigg[}~\sum_{j=n}^{k(\lambda)-1}a_{\lambda,j}\binom{j}{n} \sum_{k=1}^N (- X_k-u)^{j-n} e^{-\lambda (X_k+u)}
\bigg]
\end{align*}
if $d=2n+1$ with $n \geq 0$.
\end{corollary}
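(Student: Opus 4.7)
The plan is to deduce the finite-dimensional convergence from Theorem~\ref{Thm:main} by means of the Cram\'er--Wold device, exploiting the linearity of the map $\varphi \mapsto \cZ_t^\varphi$ together with the time-shift identity $\cZ^\varphi_{t-s} = \cZ^{\tilde\varphi_s}_t$ for $\tilde\varphi_s(x) \defeq \varphi(x-s)$. Fix $s_1,\ldots,s_d \in \R$ and coefficients $c_1,\ldots,c_d \in \R$, and set $\psi \defeq \sum_{k=1}^d c_k \tilde\varphi_{s_k}$. Then
\begin{equation*}
\sum_{k=1}^d c_k \bigl(\cZ^\varphi_{t-s_k} - H(t-s_k)\bigr) = \cZ_t^\psi - \sum_{k=1}^d c_k H(t-s_k),
\end{equation*}
so it suffices to analyze a single characteristic $\psi$ and recognize the right-hand side as $\cZ_t^\psi - H^\psi(t)$, where $H^\psi$ denotes the function $H$ built from the expansion coefficients of $m^\psi_t$.

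The next step is the time-shift identity for $H$. Since $m^{\tilde\varphi_s}_t = m^\varphi_{t-s}$, expanding $(t-s)^l = \sum_{j=0}^l \binom{l}{j} t^j (-s)^{l-j}$ inside \eqref{eq:expansion of mean} and matching monomials in $t$ yields $a^{\tilde\varphi_s}_{\lambda,j} = e^{-\lambda s} \sum_{l \geq j} a_{\lambda,l} \binom{l}{j} (-s)^{l-j}$; substituting this into \eqref{eq:H_Lambda} and \eqref{eq:H_dLambda} and applying the binomial identity $\binom{m}{l}\binom{l}{j} = \binom{m}{j}\binom{m-j}{l-j}$ produces $H^{\tilde\varphi_s}(t) = H(t-s)$. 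By linearity of \eqref{eq:expansion of mean} in $\varphi$, $H^\psi = \sum_k c_k H(\cdot-s_k)$. The hypotheses (A\ref{ass:mean growth})--(A\ref{ass:local ui of phi^2}) for $\psi$ are inherited from those for $\varphi$ (Remark~\ref{Rem:dRi} handles (A\ref{ass:variance growth}) and (A\ref{ass:local ui of phi^2}); (A\ref{ass:mean growth}) follows analogously), and the remainder $r^\psi(t) = \sum_k c_k r(t-s_k)$ satisfies the required bound $\sup_t (1+t^2) e^{-\alpha t/2} |r^\psi(t)| < \infty$. Hence Theorem~\ref{Thm:main} applies to $\psi$ and delivers
\begin{equation*}
(a^\psi_t)^{-1} e^{-\tfrac{\alpha}{2}t} \bigl(\cZ_t^\psi - H^\psi(t)\bigr) \stablyto \sqrt{\tfrac{W}{\beta}}\,\cN,
\end{equation*}
with $\cN$ a standard normal random variable independent of $\F$.

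It then remains to identify the limiting variance for $\psi$ as a quadratic form in $(c_k)$ whose kernel coincides with the stated covariance of $(G_s)$. If $n=-1$, then $d=0$ and $a^\psi_t = \sigma_\psi$ is given by \eqref{eq:sigma}; plugging in $\psi = \sum_k c_k \varphi(\cdot-s_k)$ and $h^\psi = \sum_k c_k h^\varphi(\cdot-s_k)$, using bilinearity of covariance, and performing a change of variables in the integral produce $\sigma_\psi^2 = \sum_{i,j} c_i c_j \E[G_{s_i} G_{s_j}]$ with the claimed kernel. If $n\geq 0$ and $d=2n+1$, then $(a^\psi_t)^2 \sim \rho_n^2[\psi]\, t^{2n+1}/(2n+1)$, so renormalizing by $t^{-d/2}$ converts the limit into one with conditional variance $W \rho_n^2[\psi]/\bigl(\beta(2n+1)\bigr)$. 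The same binomial substitution that produced the time-shift identity yields
\begin{equation*}
R^{\tilde\varphi_s}_{\lambda,n} = \sum_{j\geq n} a_{\lambda,j} \binom{j}{n} \sum_{k=1}^N (-X_k-s)^{j-n}\, e^{-\lambda(X_k+s)},
\end{equation*}
whence $R^\psi_{\lambda,n} = \sum_k c_k R^{\tilde\varphi_{s_k}}_{\lambda,n}$ and $\rho_n^2[\psi]/(2n+1)$ coincides with the quadratic form $\sum_{i,j} c_i c_j \E[G_{s_i} G_{s_j}]$ for the stated kernel.

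The main obstacle I expect is the binomial bookkeeping required to verify the two identities $H^{\tilde\varphi_s}(t) = H(t-s)$ and the explicit form of $R^{\tilde\varphi_s}_{\lambda,n}$; once these are established, the independence $\cN \perp \F \supseteq \sigma(W)$ implies via Cram\'er--Wold that the limiting vector $(G_{s_1},\ldots,G_{s_d})$ is jointly centered Gaussian, independent of $W$, with the prescribed covariance function, completing the finite-dimensional convergence.
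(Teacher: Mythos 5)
Your proposal is correct and follows essentially the same route as the paper: Cramér--Wold applied to linear combinations of time-shifted characteristics, invoking Theorem~\ref{Thm:main} for each such linear combination, and then recovering the covariance kernel by bilinearity/polarization. The paper compresses your binomial bookkeeping into the matrix identity that the shifted characteristic has coefficient vectors $\exp(\lambda,-s,k(\lambda))^\transp \vec{a}_\lambda$, which is exactly the relation $a^{\tilde\varphi_s}_{\lambda,j} = e^{-\lambda s}\sum_{l\geq j} a_{\lambda,l}\binom{l}{j}(-s)^{l-j}$ you derive explicitly.
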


We close this section with a figure displaying the way to the proofs of our main results.

\begin{figure}[H]
			\includegraphics[clip, trim=3.7cm 15cm 3.7cm 3.7cm, width=1.00\textwidth]{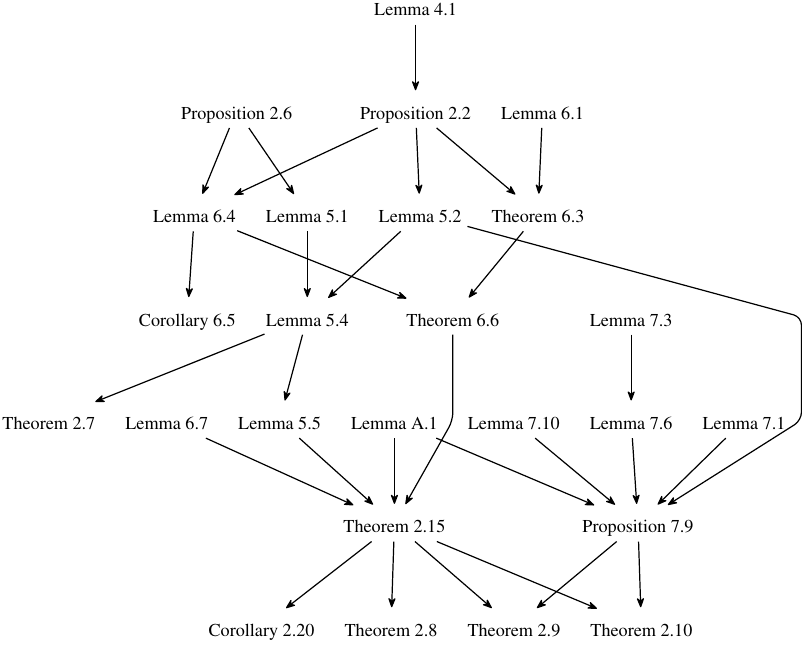}
	\caption{Diagram representing the dependence of the results.}
	\label{fig:dependence of the results}
\end{figure}

\section{Applications}	\label{sec:applications}

\subsection{The Galton-Watson process}	\label{subsec:GWP}

Consider a supercritical Gal\-ton-Watson branching process,
i.e., $\xi = \sum_{k=1}^N \delta_1 = N \delta_1$
where $N$ is a random variable taking values in $\N_0$ with
$m \defeq \E[N] \in (1,\infty)$ and $\E[N^2] < \infty$.
Then $\xi$ is lattice with span $1$.
Further,
\begin{equation*}
\L\mu(\lambda) = \E\bigg[\sum_{k=1}^N e^{-\lambda}\bigg] = m e^{-\lambda},	\quad	\lambda \in \C.
\end{equation*}
The equation $\L\mu(z) = 1$ is equivalent to $e^z = m$ and has only one solution
in the strip $\Imag(z) \in (-\pi, \pi ]
$.
In particular,
(A\ref{ass:Malthusian parameter}) holds,
i.e., there is a Malthusian parameter $\alpha > 0$, namely, $\alpha = \log m$,
and $\Lambda_{\geq} = \{\alpha\}$. In this case the parameter $\beta$  defined by \eqref{eq:beta} is equal to 1.
Then
\begin{equation*}
\E\bigg[\bigg( \sum_{k=1}^N e^{-\theta }\bigg)^{\!2} \bigg] = e^{-2\theta} \E[N^2] < \infty	\quad	\text{for all } \theta \in \R.
\end{equation*}
By Remark \ref{Rem:Janson's 2nd moment condition}, this implies that (A\ref{ass:first moment})
and (A\ref{ass:second moment}) hold.

Consider the characteristic $\phi(t) \defeq 
\1_{[0,1)}(t)$.
Then for any $n\in\N_{0}$, $\cZ_n^{\phi} $ is the number of individuals in the $n^{\mathrm{th}}$
generation and the corresponding Nerman's martingale \eqref{eq:Nerman's martingale} is the size of $n^{\mathrm{th}}$ generation
normalized by its expectation $m^{n}$, i.e., $W_{n}=e^{-\alpha n}\cZ_n^\phi$.
Clearly, $\phi$
satisfies (A\ref{ass:mean growth}), (A\ref{ass:variance growth}) and (A\ref{ass:local ui of phi^2}). 
Therefore, we may apply the lattice  version of Theorem  \ref{Thm:main} with $\rho_{-1}=0$, which yields
\begin{align*}
m^{-\frac{n}2} \big(m^{n} W_{n} - a_\alpha m^n W\big)
= e^{-\frac\alpha2 n} \big(\cZ_n^\phi - a_\alpha e^{\alpha n} W \big)
\distto \sigma \sqrt{W} \cN
\quad	\text{as } n \to \infty
\end{align*}
where $\cN$ is standard normal and independent of $W$.
To calculate $a_\alpha\defeq a_{\alpha, 0}$, we use \eqref{eq:expansion of mean}:
$m^{n} = \E[\cZ_n^\phi] = a_\alpha m^n$, i.e., $a_\alpha=1$.
Further, $\sigma > 0$ is given by \eqref{eq:sigma},
i.e.,
\begin{align*}
\sigma^2 = \sum_{n \in \Z} \Var\big[\phi(n)+h*\xi(n)\big]e^{-\alpha n}
=\sum_{n \in \Z} \Var\big[h*\xi(n)\big]m^{- n},
\end{align*}
where
\begin{align*}
h(n)= m^{\phi 
}_n - a_\alpha e^{\alpha n} = m^{n}\1_{\{n\ge 0\}}-m^n=-m^n\1_{\{n< 0\}}.
\end{align*}
Now, since $h*\xi(n)=Nh(n-1)$ we infer
\begin{align*}
\sigma^2
=\sum_{n<1} \Var[N]m^{2n-2}m^{- n}=\frac{\Var[N]}{m^2-m}.
\end{align*}
Consequently,
\begin{align*}
m^{\frac{n}2} \big(W_{n} - W\big)
\distto \Big(\frac{\Var[N] W}{m^2-m}\Big)^{\frac12} \cN
\quad	\text{as } n \to \infty.
\end{align*}
We have thus just rediscovered Heyde's classical central limit theorem for the martingale in the Galton-Watson process
\cite{Heyde:1970}.

We can also deal with the total number of individuals in the generations $0,\ldots,n$.
Indeed, this number is $\cZ_n^{f}$ for $f(t) \defeq \1_{[0,\infty)}(t)$, $t \in \R$,
which satisfies  (A\ref{ass:mean growth}), (A\ref{ass:variance growth}) and (A\ref{ass:local ui of phi^2}).
Invoking once again Theorem \ref{Thm:main} with $\rho_{-1}=0$ we obtain 
\begin{align*}
e^{-\frac\alpha2 n} \big(\cZ_n^{f} - a_\alpha e^{\alpha n} W \big)
\distto \sigma \sqrt{W} \cN
\quad	\text{as } n \to \infty.
\end{align*}
This time $a_\alpha$ can be computed with the help of \eqref{eq:expansion of mean} as follows.
We have the asymptotic expansion
\begin{equation*}
m_n^f = \E[\cZ_n^f] = \sum_{k=0}^n m^k = \frac{m^{n+1}-1}{m-1} = \frac{m}{m-1} e^{\alpha n}  - \frac1{m-1},
\end{equation*}
for $n\ge0$ and 0 otherwise.
Consequently, $a_\alpha = \frac{m}{m-1}$ and thereupon
\begin{align*}
m^{-\frac{n}2} \Big(\cZ_n^f - \frac{m^{n+1}}{m-1} W \Big) \distto \sigma \sqrt{W} \cN
\quad	\text{as } n \to \infty.
\end{align*}
This time $\sigma > 0$ is given by $\sigma^2=\sum_{n \in \Z} \Var[N]|h(n-1)|^2 m^{- n}$
with
\begin{align*}
h(n)
= m_n^f-\frac{m}{m-1} e^{\alpha n}.
\end{align*}
Therefore,	
\begin{align*}
\sigma^2
&= \Var[N]   \Big( \sum_{n <1  } \Big(\frac{m}{m-1} m^{ n-1}
\Big)^2 m^{- n} + \sum_{n \geq 1} \Big(\frac1{m-1}\Big)^2m^{- n}\Big)	\\
&=\frac1{(m-1)^{2}} \Var[N] \bigg(\sum_{n \le 0} m^{n} + \sum_{n > 0} m^{-n} \bigg)
=\frac{m+1}{(m-1)^3}  \Var[N].
\end{align*}

\subsection{Nerman's martingales}	\label{subsec:Nerman's martingales}

Suppose that $\xi$ is non-lattice and satisfies (A\ref{ass:Malthusian parameter}) through (A\ref{ass:second moment}),
and let $\lambda = \theta + \imag\eta$ be a root to $\L \mu(z)=1$
with $0 \leq \Real(\lambda)=\theta<\frac\alpha2$. Further, suppose that
\begin{align}
	\label{eq:L^2 assumption beyond the criticality}
\E\bigg[\bigg(\sum_{k=1}^N e^{-\theta X_k} \bigg)^{\!\! 2} \bigg]<\infty.
\end{align}
For simplicity let $Z_1(\lambda) \defeq \sum_{k=1}^N e^{-\lambda X_k}$.
We can view the complex variable $Z_1(\lambda)$ as a random variable taking values
in $\R^2$.
We denote by $\Sigma^\lambda$ the corresponding covariance matrix.
The aforementioned condition guarantees that $\Sigma^\lambda$ is well-defined.

Let $(W_t(\lambda) )_{t \geq 0}$ be defined by \eqref{eq:W_t^(j)(lambda)} for $j=0$.
But, since $\Real(\lambda) < \frac\alpha2$,
we cannot apply Theorem \ref{Thm:martingale convergence},
though one can still wonder what the long-term behavior of the process is.
To analyze this, we shall apply a special case of our main result, Theorem \ref{Thm:main}.
Let
\begin{equation*}
\phi(t) \defeq e^{\lambda t} \sum_{j=1}^N \1_{[0,X_j)}(t) e^{-\lambda X_j},	\quad	 t \in \R.
\end{equation*}
 Then
\begin{align*}
|\phi(t)| \leq e^{ \theta t}\1_{[0,\infty)}(t)\bigg|\sum_{j=1}^N  e^{- \theta X_j}\bigg|,
\end{align*}
and, by \eqref{eq:L^2 assumption beyond the criticality}, we conclude that the functions
\begin{align*}
	t \mapsto  e^{-(\alpha-p \theta) t}\1_{[0,\infty)}(t) \E\bigg[\bigg|\sum_{j=1}^N  e^{- \theta X_j}\bigg|^p\bigg]\quad \text{with }p=1,2
\end{align*}
are directly Riemann integrable. By \cite[Remark 3.10.5]{Resnick:1992},
 characteristics $\phi,\Real(\phi)$ and $\Imag(\phi)$
fulfill (A\ref{ass:mean growth}), (A\ref{ass:variance growth}) and (A\ref{ass:local ui of phi^2}).  Remark \ref{Rem:Nerman's martingales} below
(applied to $\phi=\phi_{\lambda,1}$)
gives that
\begin{equation*}
\cZ^{\phi}_t = e^{\lambda t} W_t(\lambda)
\end{equation*}
and $\E[\cZ^{\phi}_t]=e^{\lambda t} \1_{[0,\infty)}(t)$.

By Theorem \ref{Thm:slowly growing mean}, we deduce
\begin{equation*}
e^{-\frac\alpha2 t} (\cZ^{\Real(\phi)}_t,\cZ^{\Imag(\phi)}_t) \distto \sqrt{\tfrac{W}{\beta}} \cN
\end{equation*}
or equivalently
\begin{equation*}
(\Real(e^{(\lambda - \frac\alpha2) t} W_t(\lambda)),\Imag(e^{(\lambda-\frac\alpha2) t} W_t(\lambda)) )
\distto \sqrt{\tfrac{W}{\beta}}\cN
\end{equation*}
where $\cN$ is a $2$-dimensional centered Gaussian vector with covariance matrix $\Sigma$,
which can be explicitly computed. Indeed, we have
\begin{align*}
\Sigma_{11}
&\defeq
\int \Var\Big[\Real(\phi_\lambda(x)) + \sum_{j=1}^N \Real(e^{\lambda(x-X_j)})\1_{[0,\infty)}(x-X_j)\Big] e^{-\alpha x} \, \ell(\dx)
\\
&= \int \limits_0^\infty \Var\big[\Real(e^{\lambda x}Z_1(\lambda))\big] e^{-\alpha x} \, \dx	\\
&= \int \limits_0^\infty e^{2 \theta x} \Big(\cos^2(\eta x)\Sigma_{11}^{\lambda}-2\sin(\eta x)\cos(\eta x)\Sigma_{12}^{\lambda}  +\sin^2(\eta x)\Sigma_{22}^{\lambda}\Big)e^{-\alpha x} \, \dx\\
=&\frac{\Sigma_{11}^{\lambda}}{\alpha-2\theta}\frac{2\eta^2+(\alpha-2\theta)^2}{4\eta^2+(\alpha-2\theta)^2}
-{\Sigma_{12}^{\lambda}}\frac{2\eta}{4\eta^2+(\alpha-2\theta)^2}
+\frac{\Sigma_{22}^{\lambda}}{\alpha-2\theta}\frac{2\eta^2}{4\eta^2+(\alpha-2\theta)^2}.
\end{align*}
Similarly,
\begin{align*}
\Sigma_{22}
&=	\int \limits_0^\infty \Var\big[\Imag(e^{\lambda x}Z_1(\lambda))\big] e^{-\alpha x} \, \dx \\
&=	\frac{\Sigma_{22}^{\lambda}}{\alpha-2\theta}\frac{2\eta^2+(\alpha-2\theta)^2}{4\eta^2+(\alpha-2\theta)^2}
	+{\Sigma_{12}^{\lambda}}\frac{2\eta}{4\eta^2+(\alpha-2\theta)^2}
	+\frac{\Sigma_{11}^{\lambda}}{\alpha-2\theta}\frac{2\eta^2}{4\eta^2+(\alpha-2\theta)^2},
\end{align*}
and
\begin{align*}
\Sigma_{12}
&=	\int \limits_0^\infty \Cov\big[\Real(e^{\lambda x}Z_1(\lambda)),\Imag(e^{\lambda x}Z_1(\lambda))\big] e^{-\alpha x} \, \dx\\
&=	\frac{(\Sigma_{11}^\lambda-\Sigma_{22}^\lambda)\eta}{4\eta^2+(\alpha-2\theta)^2}+{\Sigma_{12}^{\lambda}}\frac{\alpha-2\theta}{4\eta^2+(\alpha-2\theta)^2}.
\end{align*}

\subsection{Epidemic models}	\label{sec:epidemic models}

In this section, 
we consider the epidemic model discussed in \cite{Britton+Scalia_Tomba:2019}.
In this model, the role of the ancestor is that of the first person in a community
infected by an infectious disease. Birth events become infection events etc.

Suppose that $\xi$ is a Poisson point process on $[0,\infty)$
with intensity measure $R_0 g(x) \, \dx$ where
\begin{equation*}
g(x) = \1_{(0,\infty)}(x) \frac{b^a x^{a-1}}{\Gamma(a)} e^{-b x},	\quad	x \in \R
\end{equation*}
is the density of the Gamma distribution with parameters $a,b>0$ and $R_0 > 1$ is the basic reproduction mean.
(No additional difficulties would occur if $R_0$ was replaced by a positive random variable $N$ with mean $R_0$
and finite variance.)
The function $g$ is the infection rate scaled to become a probability density.
It models the time delay between the infection of a person and a random person infected by that person.
Characteristics of interest are $I(t) = R_0 g(t)$ and $f(t) = \1_{[0,\infty)}(t)$
with $\cZ_t^I$ being the incidence at time $t$ and $\cZ_t^f$ the number of infections up to time $t$.
In the given situation,
the Laplace transform $\L\mu$ can be calculated explicitly in terms of $a,b$ and $R_0$, namely,
\begin{align*}
\L\mu(\lambda)
&= \int_0^\infty e^{-\lambda x} \, \mu(\dx)
= R_0 \Big(\frac{b}{b+\lambda}\Big)^{\! a},	\quad	\Real(\lambda) > - b.
\end{align*}
Hence the equation $\L\mu(\lambda) = 1$ takes the form 
\begin{equation*}
R_0 \Big(\frac{b}{b+\lambda}\Big)^a = 1.
\end{equation*}
Write $\frac{b}{b+\lambda} = r e^{\imag \varphi}$ with $r > 0$ and $|\varphi| < \pi/2$.
Then $R_0 (r e^{\imag \varphi})^a = 1$ is equivalent to
\begin{equation*}
e^{-\imag a \varphi} = r^a R_0.
\end{equation*}
This implies $r= R_0^{-1/a}$ and $\varphi \in (2\pi/a) \Z \cap (-\pi/2, \pi/2)$.
Solving for $\lambda$ yields
\begin{equation}	\label{eq:roots epidemic model}
\lambda = b(R_0^{1/a} e^{-\imag \varphi}-1)
\end{equation}
with $\varphi \in (2\pi/a) \Z \cap (-\pi/2,\pi/2)$, cf. Figure \ref{fig:solutions in epidemic model}.
The Malthusian parameter is obtained by setting $\varphi=0$, i.e.,
\begin{equation*}
\alpha = b(R_0^{1/a}-1).
\end{equation*}
The real part of a root $\lambda$ as in \eqref{eq:roots epidemic model} is given by
\begin{equation}	\label{eq:real part of roots epidemic model}
\Real(\lambda) = b(R_0^{1/a} \cos \varphi -1).
\end{equation}
A second root exists only if $a>4$ 
(otherwise $(2\pi/a) \Z \cap (-\pi/2,\pi/2) = \{0\}$),
in which case the root $\lambda \not = \alpha$ with largest real part is $\lambda = b(R_0^{1/a} e^{\imag 2 \pi / a}-1)$
with
\begin{equation*}
\Real(\lambda) = b(R_0^{1/a} \cos(\tfrac{2\pi}{a}) -1).
\end{equation*}
Further
\begin{equation*}
\Real(\lambda) = b(R_0^{1/a} \cos(\tfrac{2\pi}{a}) -1) \geq \frac\alpha2 = \frac{b}2(R_0^{1/a}-1)
\end{equation*}
if and only if $a > 6$ and $R_0 \geq R_0(a) \defeq (2 \cos(\frac{2\pi}{a}) - 1)^{-a}$.
Notice that $R_0(a) \to \infty$ for $a \downarrow 6$ and $R_0(a) \to 1$ for $a \to \infty$.
If $R_0 < R_0(a)$, Theorem \ref{Thm:no roots} applies and yields Gaussian fluctuations of $\cZ_t^I$ and $\cZ_t^f$.
That is, for $\cZ_t^f$ we have
\begin{align*}
e^{-\frac\alpha2 t} \big(\cZ_t^f - a_\alpha e^{\alpha t} W \big)
\distto \sigma \sqrt{\tfrac W\beta} \cN
\quad	\text{as } t \to \infty,
\end{align*}
with $\beta \defeq R_0 ab^a(b+\alpha)^{-a-1}$ and 
$a_\alpha \defeq (\alpha\beta)^{-1}$. 
Left with calculating $\sigma$ we obtain with the help of Remark~\ref{Rem:sigma}
\begin{align*}
\sigma^2&=\frac{1}{2\pi} \int \limits_{\Real(z)=\frac\alpha2}\Var\Big[\L f(z)+\L\xi(z)\frac{\L f(z)}{1-\L \mu(z)}\Big] \, |\dd z|\\
&=\frac{1}{2\pi} \int \limits_{\Real(z)=\frac\alpha2}\Big|\frac{1}{z(1-\L \mu(z))}\Big|^2\Var[\L\xi(z)] \, |\dd z|\\
&=\frac{1}{2\pi} \int \limits_{\Real(z)=\frac\alpha2}\Big|\frac{1}{z(1-\L \mu(z))}\Big|^2\L\mu(\alpha) \, |\dd z|\\
&=\frac{1}{2\pi}  \int_{-\infty}^\infty \frac{4}{(\alpha ^2+4t^2)g(t)}\, \dd t,
\end{align*}
with $g(t)\defeq 1+R_0^2\Big(\frac{4b^{2}}{(2b+\alpha)^2+4t^2}\Big)^{a}
-2R_0\Big(\frac{4b^{2}}{(2b+\alpha)^2+4t^2}\Big)^{\frac a2}\cos\Big(a\arctan\Big(\frac{2t}{2b+\alpha}\Big)\Big)$.

If $R_0 \geq R_0(a)$, the more general Theorem \ref{Thm:main} applies and gives additional periodic fluctuations
of greater magnitude than the Gaussian fluctuations.
We refrain from providing further details.


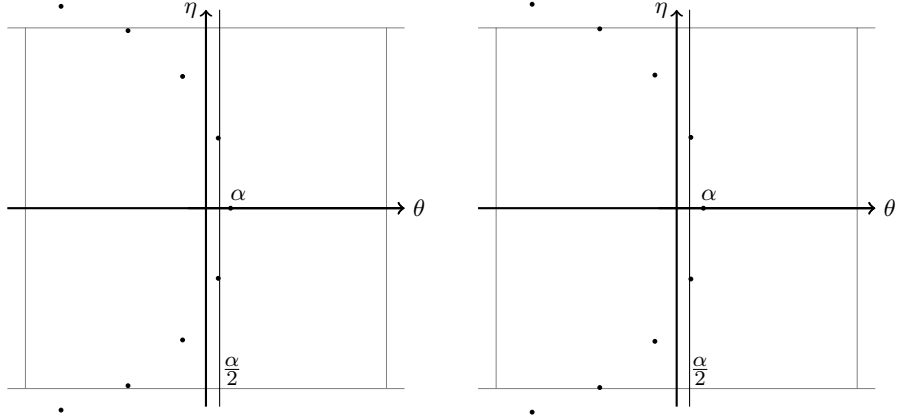
\begin{figure}[h]
\begin{subfigure}[t]{0.3\textwidth}
\begin{center}
\begin{tikzpicture}[scale=2.4]
		\draw [help lines] (-1.1,-1) grid (1.1,1);
		\draw [thick,->] (0,-1.1) -- (0,1.1);
		\draw [thick,->] (-1.1,0) -- (1.1,0);
		\draw (1.1,0) node[right]{$\theta$};
		\draw (0,1.1) node[left]{$\eta$};
		\filldraw (0.1364637,0) circle (0.3pt);
		\draw (0.18,0) node[above]{$\alpha$};
		\filldraw (0.06792652,0.3886935) circle (0.3pt);
		\filldraw (0.06792652,-0.3886935) circle (0.3pt);
		\filldraw (-0.1294183,0.7305048) circle (0.3pt);
		\filldraw (-0.1294183,-0.7305048) circle (0.3pt);
		\filldraw (-0.4317682,0.9842064) circle (0.3pt);
		\filldraw (-0.4317682,-0.9842064) circle (0.3pt);
		\filldraw (-0.8026552,1.119198) circle (0.3pt);
		\filldraw (-0.8026552,-1.119198) circle (0.3pt);
		\draw [thin,-,color=black] (0.1364637/2+0.0075,-1.1)--(0.1364637/2+0.0075,1.1);
		\draw (0.1364637/2-0.03,-0.9) node[right,color=black]{$\tfrac\alpha2$};
		\draw [thick,->] (-0.1,0) -- (1.1,0);
\end{tikzpicture}
\end{center}
\end{subfigure}
\qquad\qquad\qquad
\begin{subfigure}[t]{0.3\textwidth}
\begin{center}
\begin{tikzpicture}[scale=2.4]
		\draw [help lines] (-1.1,-1) grid (1.1,1);
		\draw [thick,->] (0,-1.1) -- (0,1.1);
		\draw [thick,->] (-1.1,0) -- (1.1,0);
		\draw (1.1,0) node[right]{$\theta$};
		\draw (0,1.1) node[left]{$\eta$};
		\filldraw (0.1480334,0) circle (0.3pt);
		\draw (0.18,0) node[above]{$\alpha$};
		\filldraw (0.07879849,0.3926505) circle (0.3pt);
		\filldraw (0.07879849,-0.3926505) circle (0.3pt);
		\filldraw (-0.1205554,0.7379416) circle (0.3pt);
		\filldraw (-0.1205554,-0.7379416) circle (0.3pt);
		\filldraw (-0.4259833,0.9942261) circle (0.3pt);
		\filldraw (-0.4259833,-0.9942261) circle (0.3pt);
		\filldraw (-0.8006461,1.130592) circle (0.3pt);
		\filldraw (-0.8006461,-1.130592) circle (0.3pt);
		\draw [thin,-,color=black] (0.1480334/2-0.0025,-1.1)--(0.1480334/2-0.0025,1.1);
		\draw (0.1480334/2-0.05,-0.9) node[right,color=black]{$\tfrac\alpha2$};
		\draw [thick,->] (-0.1,0) -- (1.1,0);
\end{tikzpicture}
\end{center}
\end{subfigure}
\caption{Solutions to $\L\mu(\lambda)=1$ in the cases $a=18$, $b=1$, $R_0=10$ (left figure)
and $a=18$, $b=1$, $R_0=12$. In the left figure, the 
root $\lambda \not = \alpha$ with largest real part has $\Real(\lambda) < \frac\alpha2$,
in the right figure $\Real(\lambda) > \frac\alpha2$.}
\label{fig:solutions in epidemic model}
\end{figure}

\subsection{Supercritical binary homogeneous Crump-Mode-Jagers processes}	\label{sec:Poisson}
In this section, we assume that
\begin{equation*}
\xi \defeq \sum_{j \geq 1} \1_{[0,\zeta]}(P_j)\delta_{P_j}
\end{equation*}
where
$(P_j)_{j \geq 1}$ are the arrival times of a  homogeneous Poisson process with intensity $b>0$,
independent of  the $[0,\infty]$-valued random variable $\zeta$.
We are interested in $\cZ^{\1_{[0,\zeta)}}_t$ the number of individuals alive at time $t$,
see \eqref{eq:Z_t^1_[0,zeta]}. Thus, the corresponding characteristic $\phi$ is given by $\phi(t)\defeq\1_{[0,\zeta)}(t)$ for $t \geq 0$.

We put $\L_\zeta(z)\defeq\E[e^{-z\zeta}]$ for $\Real(z)\geq 0$ and start by noting that
\begin{align*}
\L\mu(z)&=\E\bigg[\sum_{j \geq 1}\1_{[0,\zeta]}(P_j) e^{-z P_j}\bigg]
= \E\bigg[\int\1_{[0,\zeta]}(x)e^{-z x} b \dx\bigg]
= \int \limits_0^\infty e^{-z x}b \Prob(\zeta \geq x) \, \dx	\\
&=\frac{b(1-\L_\zeta(z))}{z},\quad \Real(z)>0.
\end{align*}
The Malthusian parameter $\alpha$ is the unique real number that satisfies
\begin{align*}
1-\L_\zeta(\alpha)=\frac\alpha b,
\end{align*}
and the parameter $\beta$ is given by
\begin{equation*}
\beta=\frac{1}{\alpha}(1+b\L_\zeta'(\alpha))=\frac{1}{\alpha}(1-b\E[\zeta e^{-\alpha\zeta}]).
\end{equation*}
Now we shall show
that $\alpha$ is the only solution to $\L\mu(z)=1$ with $\Real(z)>0$.
Indeed, for positive $\theta$ and $\eta$
\begin{align*}
-\Imag&\big(\L\mu(\theta+\imag\eta)\big)=\int_0^\infty \sin(\eta x) e^{-\theta x} b \Prob(\zeta \geq x)\dx\\
&=b\sum_{l\geq 0}\int\limits_{2\pi l/\eta}^{\pi (2l+1)/\eta}\sin(\eta x)\Big(e^{-\theta x} \Prob(\zeta \geq x)-e^{-\theta (x+\pi/\eta)} \Prob(\zeta \geq x+\pi/\eta)\Big)\dx>0
\end{align*}
which, together with $\L\mu(\overline{\lambda})=\overline{\L\mu(\lambda)}$, shows that
$\Lambda_{\ge}\cap \{z:\Real(z)>0\}=\{\alpha\}$. Since $\L(\E[\phi])(\alpha)=b^{-1}\L\mu(\alpha)=b^{-1}$,
an application of Theorem~\ref{Thm:no roots} yields
\begin{align*}
e^{-\frac{\alpha}{2}t}\Big(\cZ^{\1_{[0,\zeta)}}_t-e^{\alpha t}\frac{W}{b\beta}\Big)\distto \sigma \sqrt{\tfrac{W}{\beta}}\cN.
\end{align*}
Next,	
we express the variance $\sigma^2$ in terms of the parameters $b$, $\alpha$ and $\beta$.
By Remark~\ref{Rem:sigma}
\begin{align*}
\sigma^2=\frac{1}{2\pi}\int \limits_{\Real(z)=\frac\alpha2}\Var\Big[\L\phi(z)+\L\xi(z)\frac{\L (\E[\phi])(z)}{1-\L \mu(z)}\Big] \, |\dd z|.
\end{align*}
For $\Real(z)>0$
\begin{align*}
\Var\Big[\L\phi(z)+\L\xi(z)\frac{\L (\E[\phi])(z)}{1-\L \mu(z)}\Big]
&=\E\Big[\Var\Big[\L\phi(z)+\L\xi(z)\frac{\L \mu(z)}{b(1-\L \mu(z))}\Big|{\zeta}\Big]\Big]	\\
&\hphantom{=}+ \Var\Big[\E\Big[\L\phi(z)+\L\xi(z)\frac{\L \mu(z)}{b(1-\L \mu(z))}\Big|\zeta\Big]\Big]
\eqdef \mathit{I}+\mathit{I\!I}
\end{align*}
with
\begin{align*}
\mathit{I}
&=\E\Big[\Var\Big[\L\xi(z)\frac{\L \mu(z)}{b(1-\L \mu(z))}\Big|\zeta\Big]\Big]	\\
&=\E\big[\Var\big[\L\xi(z)\big|\zeta\big]\big]\Big|\frac{\L \mu(z)}{b(1-\L \mu(z))}\Big|^2
=\E\Big[\int_{0}^\zeta e^{-2\Real(z)x}b\, \dd x\Big]\Big|\frac{\L \mu(z)}{b(1-\L \mu(z))}\Big|^2,
\end{align*}
 where we have used properties of the Poisson process, and
\begin{align*}
\mathit{I\!I}
&=\Var\Big[\frac{1-e^{-z\zeta}}{z}+\frac{b(1-e^{-z\zeta})}{z}\frac{\L \mu(z)}{b(1-\L \mu(z))}\Big]
=\Big|\frac{1}{z(1-\L\mu(z))}\Big|^2\Var[e^{-z\zeta}].
\end{align*}
Assuming now that $\Real(z)=\frac \alpha 2$ we arrive at
\begin{align*}
\mathit{I} = \Big|\frac{\L \mu(z)}{b(1-\L \mu(z))}\Big|^2
\end{align*}
because
\begin{align*}
\E\Big[\int_{0}^\zeta e^{-2\Real(z)x}b\, \dd x\Big]
=\E\Big[\int_{0}^\zeta e^{-\alpha x}b\, \dd x\Big]=1,
\end{align*}
and
\begin{equation*}
\mathit{I\!I}=\Big|\frac{1}{z(1-\L\mu(z))}\Big|^2\big(\L_\zeta(\alpha)-|\L_\zeta(z)|^2\big).
\end{equation*}
Observing that $\overline{z}=\alpha-z$ and $\overline{\L \mu(z)}=\L\mu(\alpha-z)$ whenever
$\Real(z)=\frac \alpha 2$, we further infer
\begin{align*}
\Big|\frac{z\L \mu(z)}{b}\Big|^2+\L_\zeta(\alpha)-|\L_\zeta(z)|^2
&=
|1-\L_\zeta(z)|^2+1-\frac{\alpha}{b}-|\L_\zeta(z)|^2	\\
&=1-\L_\zeta(z) + \overline{1-\L_\zeta(z)}-\tfrac{\alpha}{b}	\\
&=\tfrac1b \big(z(\L\mu(z)-1)+(\alpha-z)(\L\mu(\alpha-z)-1)\big),
\end{align*}
which in turn gives
\begin{align*}
\mathit{I}+\mathit{I\!I}
&=\Big|\frac{1}{z(1-\L \mu(z))}\Big|^2\Big(\Big|\frac{z\L \mu(z)}{b}\Big|^2+\L_\zeta(\alpha)-|\L_\zeta(z)|^2\Big)	\\
&=\frac{z(\L\mu(z)-1)+(\alpha-z)(\L\mu(\alpha-z)-1)}{bz(\L\mu(z)-1)(\alpha-z)(\L\mu(\alpha-z)-1)}	\\
&=\frac{1}{b(\alpha-z)(\L\mu(\alpha-z)-1)}+\frac{1}{bz(\L\mu(z)-1)}	\\
&=\frac2b \Real\Big(\frac{1}{z(\L\mu(z)-1)}\Big).
\end{align*}
We can now compute the variance as follows
\begin{align*}
\sigma^2
&=\lim_{R\to \infty}\frac{1}{b\pi}\Real\bigg(\int\limits_{\frac\alpha 2-\imag R}^{\frac\alpha 2+\imag R}
\frac{1}{z(\L\mu(z)-1)}\, |\dd z|\bigg)
=\frac{1}{b\pi} \lim_{R\to \infty}\Imag\bigg(\int\limits_{\frac\alpha 2-\imag R}^{\frac\alpha 2+\imag R}
\frac{1}{z(\L\mu(z)-1)}\, \dd z\bigg)\\
&=\frac{1}{b\pi} \lim_{R\to \infty}\Imag\bigg(\int\limits_{\frac\alpha 2-\imag R}^{\frac\alpha 2+\imag R}
	\frac{1}{z(\L\mu(z)-1)}+\frac{1}{z}\, \dd z\bigg)-\lim_{R\to \infty}\frac 1{b\pi}\int\limits_{-R}^R \frac{\frac{\alpha}{2}}{(\tfrac{\alpha}{2})^2+t^2}\, \dt\\
&=\frac{1}{b\pi}\Imag\bigg( \lim_{R\to \infty}\int\limits_{\frac\alpha 2-\imag R}^{\frac\alpha 2+\imag R}
\frac{\L\mu(z)}{z(\L\mu(z)-1)}\, \dd z\bigg)-\frac 1 b.
\end{align*}
To calculate the limit, we use the residue theorem. For $R>\alpha$,
\begin{align*}
\int\limits_{-\frac\pi 2}^{\frac\pi 2}\frac{\imag Re^{\imag \theta}\L\mu(\frac\alpha 2+Re^{\imag \theta})\dd\theta}{(\frac\alpha 2+Re^{\imag \theta})
(\L\mu(\frac\alpha 2+Re^{\imag \theta})-1)}-\int\limits_{\frac\alpha 2-\imag R}^{\frac\alpha 2+\imag R} \frac{\L\mu(z)\dd z}{z(\L\mu(z)-1)}
=2\pi\imag \, \underset{z=\alpha}{\Res} \frac{\L\mu(z)}{z(\L\mu(z)-1)}.
\end{align*}
It suffices to show that the integrand of the first integral decays to zero uniformly in $\theta$
as $R$ goes to infinity. In view of the inequality $|\L \mu(z)| \leq 2b|z|^{-1}$ and its consequence $|\L \mu(z)-1| \geq 1- 2b|z|^{-1}$
(both hold true for $\Real(z) \geq 0$) we conclude that
\begin{align*}
	\bigg|\frac{ R \L\mu(\frac\alpha 2+Re^{\imag \theta}) }{(\frac\alpha 2+Re^{\imag \theta})
		(\L\mu(\frac\alpha 2+Re^{\imag \theta})-1)}\bigg|\le \frac{R}{|\frac\alpha 2+Re^{\imag \theta}|}\cdot \frac{2b}{|\frac\alpha 2+Re^{\imag \theta}|-2b}=O(R^{-1})
\end{align*}
as $R\to\infty$ uniformly in $\theta  \in [-\frac\pi2,\frac\pi2]$.
Finally,
\begin{align*}
\sigma^2 = \frac{2-\alpha \beta}{\alpha b\beta }
\end{align*}
and thereupon 
\begin{align*}
e^{-\frac{\alpha}{2}t}\Big(\cZ^{\1_{[0,\zeta)}}_t-e^{\alpha t}\frac{W}{b\beta}\Big)\stablyto\sqrt{\frac{(2-\alpha\beta)W}{\alpha b\beta^2}}\cN.
\end{align*}
An application of a similar argument as in Remark \ref{Rem:application of stable convergence} enables us to conclude that the convergence mentioned above holds true conditionally given $\Surv$.
The distribution of $W$ conditionally given $\Surv$ is exponential with parameter $\alpha/b$.
Therefore, we have just reproved Henry's central limit theorem \cite{Henry:2017}.


\subsection{The conservative fragmentation model}	\label{sec:conservative fragmentation}

In this section we consider the conservative fragmentation model
as discussed in \cite{Janson+Neininger:2008}.
Let $b \geq 2$ be integer and $(V_1,V_2,\ldots,V_b)$ a vector of nonnegative random variables such that
$\sum_{j=1}^b V_j = 1$ a.\,s.
 For the sake of simplicity, we assume that $V_1,\ldots,V_b$ have Lebesgue densities except possible atoms at $0$.
(Our theory would allow to cover more general cases, too.)
Starting with an object of mass $x \geq 1$, we break it into pieces with masses $(V_1x,V_2x,\ldots,V_bx)$.
Continue recursively with each piece of mass $\geq 1$,
using new and independent copies of the random vector $(V_1,V_2,\ldots,V_b)$ each time.
 Once a fragment has mass $<1$, it is not further crumbled.
The process terminates a.\,s.\ after a finite number of steps, leaving a finite set of fragments of masses $<1$.

Denote by $n(x)$ the random number of fragmentation events,
i.e., the number of pieces of mass $\geq 1$ that appear during the process.
Further, let $n_e(x)$ be the final number of fragments, i.e., the number of pieces of mass $<1$ that appear.
A limit theorem for $n(x)$ has been proved in \cite{Janson+Neininger:2008},
where it was shown that the asymptotic behavior of $n(x)$ as $x$ goes to infinity
depends on the position of the roots of the function $z \mapsto \sum_{j \geq 1} \E[V_j^z]$.

Letting $\xi \defeq  \sum_{j=1}^b \1_{\{V_j > 0\}} \delta_{-\log V_j}$, we conclude that
the corresponding Malthusian parameter is 1, i.e., $\alpha=1$ and the limit of Nerman's martingale satisfies $W=1$ a.\,s. Further $\beta=\sum_{j=1}^b \mathbb{E}[V_j|\log V_j|]\in (0,\infty).$
Note also that $n(x)=N(\log x)$ corresponds to the number of individuals born up to and including time $\log x$
and similarly, we can represent $n_e(x)$ as a general branching process,
namely, $n_e(x)=\cZ^\varphi_{\log x}$, with
\begin{equation*}
\varphi(t) \defeq \sum_{j=1}^b \1_{\{V_j > 0\}} \1_{[0,-\log V_j)}(t)	\quad	\text{for } t \in \R.
\end{equation*}
Hence, our main result provides (precise) limit theorems for both $n$ and $n_e$.
For instance, in the case when all root from $\Lambda$ are simple, we  infer from Theorem \ref{Thm:main non-lattice}
(the constants $b_{\lambda,0}$ in the theorem can easily be seen to equal
$b_{\lambda,0} = -1/(\L\mu)'(\lambda)$, $\lambda \in \Lambda$
by Proposition \ref{Prop:determining the coefficients})
\begin{align*}
x^{-1/2}&\Big(n(x)+\sum_{\lambda\in\Lambda} \tfrac{W(\lambda)}{\lambda(\L\mu)'(\lambda)} x^\lambda\Big)
\distto \frac{\sigma}{\sqrt{\beta}}
\cN
\quad\text{if }\partial\Lambda \text{ is empty  and}	\\	
x^{-1/2}\big(\log x\big)^{-k+1/2}&\Big(n(x)+\sum_{\lambda\in\Lambda} \tfrac{W(\lambda)}{\lambda(\L\mu)'(\lambda)} x^\lambda\Big)
\distto \frac{\rho_{k-1}}{\sqrt{(2k-1)\beta}} 
\cN
\quad\text{if }\partial\Lambda \text{ is non-empty},
\end{align*}
where $k$ is the largest multiplicity  of a root on the critical line $\Real(z)=\frac\alpha2$, and $\rho_{k-1}$ is as in Theorem \ref{Thm:main}.

\section{Preliminaries for the proofs of the main results}	\label{sec:preliminaries}

In this section we gather facts from the literature, introduce some notation used throughout the paper
and perform some basic calculations.

\subsection{Change of measure and the connection to renewal theory}	\label{subsec:renewal theory}

 The existence of the Malthusian parameter (i.e., \eqref{eq:Malthusian alpha}) enables us
to use a change-of-measure argument as follows.
We define a random walk
$(S_n)_{n \in \N_0}$  with $S_0=0$ on some probability space
with underlying probability measure $\Probsf$ and increment distribution given by
\begin{align}	\label{eq:associated RW increment}
\Probsf(S_1 \in B) = \E\bigg[ \sum_{|u|=1} e^{-\alpha S(u)} \1_B(S(u))\bigg]
 = \int \limits_B e^{-\alpha x} \, \mu(\dx),	\qquad	B \in \B(\R).
\end{align}
With this definition, the many-to-one formula (see, e.g., \cite[Theorem 1.1]{Shi:2015}) holds:
\begin{align}	\label{eq:many-to-one1}
\Esf[f(S_1,\ldots,S_n)] &= \E\bigg[\sum_{|u|=n} e^{-\alpha S(u)} f(S(u|_1),\ldots,S(u)) \bigg]
\end{align}
for all Borel measurable $f:\R^n \to \R$ such that the expectation on the left- or right-hand side of \eqref{eq:many-to-one1}
is well-defined, possibly infinite.
In particular,  under \eqref{eq:beta}
\begin{align}	\label{eq:E[S_1]}
\Esf[S_1] &= \beta \in (0,\infty).
\end{align}
In other words, the increments of the random walk
$(S_n)_{n \in \N_0}$
have positive, finite mean. As a consequence, the associated renewal measure
\begin{equation*}
\U(\cdot) = \sum_{n \in \N_0} \Probsf(S_n \in \cdot)
\end{equation*}
is uniformly locally finite in the sense that
\begin{equation}	\label{eq:U unif loc finite}
\U([t,t+h]) \leq \U([0,h]) < \infty	\qquad	\text{for all } t,h \geq 0.
\end{equation}
Indeed, if $\tau \defeq \inf\{n \in \N_0: S_n \geq t\}$, then
\begin{equation*}
\sum_{n \geq 0} \1_{[t,t+h]}(S_n) = \sum_{n \geq 0} \1_{[t,t+h]}(S_{\tau+n}) \leq \sum_{n \geq 0} \1_{[0,h]}(S_{\tau+n}-S_\tau).
\end{equation*}
Now take expectations and use the strong Markov property at $\tau$ to infer \eqref{eq:U unif loc finite}.

By the many-to-one formula, (A\ref{ass:first moment}) implies
that the increments of the associated random walk $(S_n)_{n \in \N_0}$
have a finite exponential moment of order $\alpha-\vartheta > \alpha/2$
since
\begin{align}	
\Esf[e^{(\alpha-\vartheta) S_1}] &=  \E\bigg[\sum_{j=1}^N e^{-\vartheta X_j} \bigg] = \L\mu(\vartheta) < \infty.
\end{align}

\subsection{The expectation of the general branching process}	\label{subsec:expectation}

There is a connection between the renewal measure $\U$
and the expectation $m_t^\varphi = \E[\cZ_t^\varphi]$ of the general branching process
counted with characteristic $\varphi$ provided that $\varphi$ satisfies suitable assumptions.
For instance, if  $\varphi$ is nonnegative and
$t \mapsto \E[\varphi(t)] e^{-\alpha t}$ is a directly Riemann integrable function,
then we infer from the many-to-one formula
\begin{align}
m_t^\varphi e^{-\alpha t}
&\defeq \E[\cZ_t^\varphi] e^{-\alpha t}
= \sum_{n=0}^\infty \E\bigg[\sum_{|u|=n} e^{-\alpha S(u)} \varphi_u(t-S(u)) e^{-\alpha(t-S(u))} \bigg]		\notag	\\
&= \sum_{n=0}^\infty \E\bigg[\sum_{|u|=n} e^{-\alpha S(u)} \E[\varphi](t-S(u)) e^{-\alpha(t-S(u))}\bigg]		\notag	\\
&= \sum_{n=0}^\infty \Esf\big[\E[\varphi](t-S_n) e^{-\alpha(t-S_n)}\big]							\notag	\\
&= \int \E[\varphi](t-x) e^{-\alpha(t-x)} \, \U(\dx).	\label{eq:m_t^phi in terms of U}
\end{align}
By the direct Riemann integrability of $t \mapsto \E[\varphi(t)]e^{-\alpha t}$
and \eqref{eq:U unif loc finite}, the function $t \mapsto m_t^\varphi e^{-\alpha t}$ is
bounded and, moreover,
\begin{equation}
\lim_{\substack{t \to \infty\\ t \in \G}} e^{-\alpha t} m_t^\varphi
= \frac1\beta \int \E[\varphi](x) e^{-\alpha x} \, \ell(\dx)
 = \frac1\beta (\L\E[\varphi])(\alpha)
\end{equation}
by the key renewal theorem, see \cite[Theorem 4.2]{Athreya+McDonald+Ney:1978} in the non-lattice case
and \cite[Theorem 2.5.3]{Alsmeyer:1991} in the lattice (and non-lattice) case.
Recall that, in the lattice case, $\L\E[\varphi]$ denotes the `discrete'  bilateral Laplace transform of $\E[\varphi]$,
$\G = \Z$ and $\ell$ is the counting measure on $\Z$,
whereas in the non-lattice case, $\L\E[\varphi]$ is the `continuous'  bilateral Laplace transform,
$\G = \R$ and $\ell$ is the Lebesgue measure on $\R$.

We need a lemma in preparation for the proof of Proposition \ref{Prop:L1 existence of Z_t^varphi}.

\begin{lemma}	\label{Lem:centered characteristic->martingale}
Suppose that (A\ref{ass:Malthusian parameter}) holds and that $\chi$
is a centered characteristic, i.e., $\E[\chi(t)]=0$ for all $t \in \R$.
Fix $t \in \R$ and suppose that
\begin{align}	\label{eq:Esum Varvarphi(t-S(u))<infty}
\E\bigg[\sum_{u \in \I} \Var[\chi](t-S(u))\bigg] < \infty.
\end{align}
Let $(u_n)_{n \in \N}$ be an admissible ordering of $\I$ (see the paragraph before Proposition \ref{Prop:L1 existence of Z_t^varphi} for the definition).
Define
\begin{equation*}
M_n(t) \defeq \sum_{j=1}^n \chi_{u_j}(t-S(u_j))
\end{equation*}
for $n \in \N_0$.
Then $(M_n(t))_{n \in \N_0}$ is a centered martingale and bounded in $L^2$.
In particular,
\begin{align}	\label{eq:definition of Z^chi}
Z^{\chi}_t \defeq \sum_{u\in \I} \chi_{u}(t-S(u))
\end{align}	
converges unconditionally in $L^2$ and it is also the almost sure limit of $M_n(t)$ as $n \to \infty$.
Further, for any (deterministic) sequence $(\I_n)_{n \in \N_0}$ with $\I_n \uparrow \I$,
\begin{equation*}
M_{\I_n}(t) = \sum_{u \in \I_n} \chi_{u}(t-S(u)) \to \cZ_t^\chi
\quad	\text{in $L^2$ as } n \to \infty.
\end{equation*}
Moreover,
\begin{align}	\label{eq:Var[Z_t^chi]}
\Var[\cZ_t^\chi] = \E[(\cZ_t^\chi)^2] = \E[\cZ_t^{\chi^2}] < \infty.
\end{align}
Finally, (A\ref{ass:variance growth}) is sufficient for \eqref{eq:Esum Varvarphi(t-S(u))<infty} to hold
for every $t \in \R$.
\end{lemma}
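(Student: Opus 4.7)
The plan is to take $\mathcal{F}_n \defeq \sigma(\omega_{u_j} : 1 \leq j \leq n)$ as the natural filtration and first verify that $(M_n(t))_{n \in \N_0}$ is a centered $L^2$-bounded $\mathcal{F}_n$-martingale. Because the ordering is admissible, $\I_n = \{u_1, \ldots, u_n\}$ is a subtree and hence contains every ancestor of $u_{n+1}$; consequently, $S(u_{n+1})$ is $\mathcal{F}_n$-measurable, whereas $\chi_{u_{n+1}}$, being a function of $\omega_{u_{n+1}}$ alone, is independent of $\mathcal{F}_n$. Centering of $\chi$ gives $\E[\chi_{u_{n+1}}(t-S(u_{n+1}))\mid\mathcal{F}_n] = 0$, so $M_n$ is a martingale. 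Orthogonality of the increments together with $\E[|\chi_{u_j}(t-S(u_j))|^2] = \E[\Var[\chi](t-S(u_j))]$ (obtained by conditioning on $S(u_j)$) yields $\E[M_n(t)^2] \leq \E[\sum_{u\in\I}\Var[\chi](t-S(u))] < \infty$ by \eqref{eq:Esum Varvarphi(t-S(u))<infty}, so the $L^2$-martingale convergence theorem provides an almost sure and $L^2$ limit which I take as the definition of $\cZ_t^\chi$.

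To extend these conclusions to arbitrary finite partial sums, I plan to show that the whole family $\{\chi_u(t-S(u)) : u \in \I\}$ is orthogonal in $L^2$. For distinct $u, v$ I distinguish two cases. If, say, $u \prec v$, I condition on $\mathcal{F}^* \defeq \sigma(\omega_w : w \preceq u)$ and write $S(v) = s + D''$ with $s$ being $\mathcal{F}^*$-measurable and $D''$ depending only on $\omega_w$ for $u \prec w \prec v$. Then $(D'', \chi_v)$ is independent of $\mathcal{F}^*$, so the tower property together with centering of $\chi$ forces $\E[\chi_v(t-S(v))\mid\mathcal{F}^*] = 0$, whence $\E[\chi_u(t-S(u))\chi_v(t-S(v))] = 0$. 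If neither $u \preceq v$ nor $v \preceq u$, I condition instead on $\sigma(\omega_w : w \neq u)$, which contains $S(u)$, $S(v)$ and $\chi_v(t-S(v))$ but is independent of $\omega_u$; centering again forces the product to have mean zero. Since $\sum_{u\in\I}\|\chi_u(t-S(u))\|_{L^2}^2 < \infty$, a standard Hilbert-space fact on orthogonal series yields unconditional $L^2$-convergence of $\sum_u \chi_u(t-S(u))$ and, in particular, $L^2$-convergence of $M_{\I_n}(t)$ to $\cZ_t^\chi$ along any deterministic $\I_n \uparrow \I$.

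The identity $\Var[\cZ_t^\chi] = \E[(\cZ_t^\chi)^2] = \E[\cZ_t^{\chi^2}]$ follows by combining $L^2$-convergence with Tonelli's theorem, namely $\E[(\cZ_t^\chi)^2] = \sum_u \|\chi_u(t-S(u))\|_{L^2}^2 = \sum_u \E[\Var[\chi](t-S(u))] = \E[\cZ_t^{\chi^2}]$, where the last equality uses that centering of $\chi$ gives $\E[\chi^2(s)] = \Var[\chi](s)$.

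Finally, for the sufficiency of (A\ref{ass:variance growth}), the many-to-one formula \eqref{eq:many-to-one1} gives
\begin{equation*}
\E\bigg[\sum_{u \in \I} \Var[\chi](t-S(u))\bigg] = \sum_{n \geq 0} \Esf\big[e^{\alpha S_n} \Var[\chi](t-S_n)\big] = e^{\alpha t} \int g(t-x) \, \U(\dx),
\end{equation*}
with $g(s) \defeq \Var[\chi](s) e^{-\alpha s}$. Direct Riemann integrability of $g$, assured by (A\ref{ass:variance growth}), combined with the uniform local finiteness \eqref{eq:U unif loc finite} of the renewal measure $\U$, ensures that the right-hand side is finite for every $t$. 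The main obstacle I foresee is pinning down the various independence statements underlying both the martingale property and the orthogonality of the family $\{\chi_u(t-S(u))\}$; these rely on the admissibility of the ordering in the first instance and on a careful choice of conditioning $\sigma$-algebra in the second. Once these are settled, the remaining steps are routine applications of martingale theory, Hilbert-space arguments and the many-to-one formula.
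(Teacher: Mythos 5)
Your proof is correct and follows essentially the same route as the paper: filtration from the admissible ordering, martingale property and $L^2$-bound via uncorrelated increments, unconditional $L^2$-convergence, the variance identity, and the many-to-one formula for the sufficiency of (A\ref{ass:variance growth}). The one minor variation is that the paper obtains orthogonality of the family $\bigl(\chi_u(t-S(u))\bigr)_{u\in\I}$ by reusing the already-established martingale structure (``martingale increments are uncorrelated''), whereas you re-derive it directly by conditioning on $\sigma(\omega_w:w\preceq u)$ or $\sigma(\omega_w:w\neq u)$ according to the genealogical relation of $u$ and $v$ --- a valid, slightly more hands-on variant of the same step.
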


\begin{proof}
Let $\mathcal{G}_n = \sigma(\pi_{u_j}: j \leq n)$,
where it should be recalled that $\pi_u$ is the projection onto the life space of individual $u$,
in particular, $(\xi_u,\zeta_u,\chi_u)$ is $\pi_u$-measurable.
Then $(M_n(t))_{n \in \N_0}$ is adapted with respect to $(\mathcal{G}_{n})_{n \in \N_0}$ as,
for any $u \in \I_n$, both $S(u)$ and $\chi_u$ are $\mathcal{G}_{n}$-measurable.
Moreover, \eqref{eq:Esum Varvarphi(t-S(u))<infty} implies that for any $u \in \I$,
$\E[|\chi_u(t-S(u))|]<\infty$. Hence $M_n(t)$ is integrable for any $n \in \N_0$.
The martingale property then follows since $S(u_{n+1})$ is $\mathcal{G}_n$-measurable
whereas $\chi_{u_{n+1}}$ is independent of  $\mathcal{G}_n$ and since $\E[\chi](x) = \E[\chi(x)] = 0$ for all $x \in \R$, so
\begin{equation*}
\E[\chi_{u_{n+1}}(t-S(u_{n+1})) |  \mathcal{G}_n] = \E[\chi](t-S(u_{n+1})) = 0	\quad	\text{almost surely.}
\end{equation*}
Next, we observe that, since the increments of $L^2$-martingales are uncorrelated,
\begin{align}
\E[M_n(t)^2]
&=\E\bigg[\sum_{j=1}^n \chi_{u_j}^2(t-S(u_j)) \bigg]
= \E\bigg[\sum_{j=1}^n \E[\chi^2](t-S(u_j)) \bigg]	\notag	\\
&\leq \E\bigg[\sum_{u \in \I} \E[\chi^2](t-S(u)) \bigg]
= \E\bigg[\sum_{u \in \I} \Var[\chi](t-S(u)) \bigg].	\label{eq:L^2 boundedness of the martingale}
\end{align}
By \eqref{eq:Esum Varvarphi(t-S(u))<infty},
the martingale $(M_n(t))_{n \in \N_0}$ is bounded in $L^2$ and thus converges in $L^2$ and almost surely.
We denote the limit by $\cZ_t^\chi$ and view it as the limit of the series
on the right-hand side of \eqref{eq:definition of Z^chi}.
This is justified by the following argument.
For any subset $\cJ \subseteq \I$, finite or infinite, since for any $u \in \cJ$ there is a unique $j \in \N$
with $u=u_j$ and again since martingale increments are uncorrelated, we have
\begin{equation*}
\E \bigg[\bigg|\sum_{u \in \cJ} \chi_u(t-S(u))\bigg|^2 \bigg] = \E\bigg[\sum_{u \in \cJ} \Var[\chi](t-S(u))\bigg].
\end{equation*}
From this and the Cauchy criterion,
on the one hand, we infer the unconditional convergence in $L^2$ of the series in \eqref{eq:definition of Z^chi},
thereby justifying to write $Z_t^\chi$ for the limit.
On the other hand, we conclude the convergence of $M_{\I_n}(t)$ to $\cZ_t^\chi$.
Since convergence in $L^2$ implies convergence in $L^1$,
$\cZ_t^\chi$ is centered.
Using this and again the convergence in $L^2$, we deduce
\begin{align*}
\Var[\cZ_t^\chi]
&= \E[(\cZ_t^\chi)^2] = \lim_{n \to \infty} \E[M_n(t)^2]	\\
&= \lim_{n \to \infty} \E\bigg[\sum_{u \in \I_n} \chi_u^2(t-S(u)) \bigg]
= \E\bigg[\sum_{u \in \I} \chi_u^2(t-S(u)) \bigg],
\end{align*}
i.e., \eqref{eq:Var[Z_t^chi]} holds.
Finally, (A\ref{ass:variance growth}) implies \eqref{eq:Esum Varvarphi(t-S(u))<infty}
since, for any $t \in \R$, by \eqref{eq:m_t^phi in terms of U} and the subsequent arguments,
\begin{align*}
\E\bigg[\sum_{u \in \I} \Var[\chi](t-S(u))\bigg]
&= \E\bigg[\sum_{u \in \I} \E[\chi^2](t-S(u)) \bigg]	\\
&= e^{\alpha t} \int \E[\chi^2](t-x) e^{-\alpha(t-x)} \, \U(\dx) \leq C e^{\alpha t}
\end{align*}
where as before $\U$ is the renewal measure of the associated random walk $(S_n)_{n \in \N_0}$
and $C > 0$ is some finite constant.
\end{proof}

We are now ready to prove Proposition \ref{Prop:L1 existence of Z_t^varphi}.

\begin{proof}[Proof of Proposition \ref{Prop:L1 existence of Z_t^varphi}]
By (A\ref{ass:mean growth}), $\E[\varphi](t)$ is finite for every $t \in \R$ and we may write
\begin{align*}
\varphi_u(t-S(u)) = \E[\varphi](t-S(u))+\big(\varphi_u(t-S(u))-\E[\varphi](t-S(u))\big)
\end{align*}
for every $u \in \I$.
It is therefore enough to check that both series
\begin{align}	\label{eq:two series}
\sum_{u \in \I} \E[\varphi](t-S(u))
\quad	\text{and}	\quad
\sum_{u \in \I} \big(\varphi_u(t-S(u))-\E[\varphi](t-S(u))\big)
\end{align}
converge almost surely over admissible orderings and unconditionally in $L^1$.
For the first series, note that by (A\ref{ass:mean growth})
the function $f(t) \defeq |\E[\varphi](t)| e^{-\alpha t}$ is directly Riemann integrable as well
and by  \eqref{eq:many-to-one1}, we have
\begin{align*}
\E\bigg[\sum_{u \in \I} |\E[\varphi](t-S(u))|\bigg]=\Esf\bigg[\sum_{n \geq 0}e^{\alpha S_n}|\E[\varphi](t-S_n)|\bigg]
=e^{\alpha t}\cdot f* \U(t),
\end{align*}
which is finite by \eqref{eq:U unif loc finite} and the direct Riemann integrability of $f$.
Hence, the series converges unconditionally in $L^1$ and absolutely almost surely.
The same argument as above gives
\begin{align*}
\E\bigg[\sum_{u \in \I} \Var[\varphi](t-S(u))\bigg] < \infty,
\end{align*}
i.e., $\chi(t) \defeq \varphi(t)-\E[\varphi](t)$ is a centered characteristic satisfying \eqref{eq:Esum Varvarphi(t-S(u))<infty}.
We may thus apply Lemma \ref{Lem:centered characteristic->martingale}
to conclude that the second series in \eqref{eq:two series} converges almost surely over admissible orderings of $\I$  and unconditionally in $L^2$.
\end{proof}

 We close this subsection with the proof of Proposition \ref{Prop:f^*}.

\begin{proof}[Proof of Proposition \ref{Prop:f^*}]

(a) Since $f$ is c\`adl\`ag, it is locally bounded and continuous Lebesgue-almost everywhere.
By (a slightly extended version of) \cite[Remark 3.10.4 on p.~236]{Resnick:1992}, this together with
\begin{align*}
\sum_{n \in \Z} \sup_{x \in [n,n+1]} |f(x)| \leq \int f^*(x) \, \dx < \infty
\end{align*}
ensures the direct Riemann integrability of $f$.

Conversely, if $f$ is directly Riemann integrable,
then it is locally bounded and continuous Lebesgue-almost everywhere.
Local boundedness of $f$ entails that of $f^*$.
Since $f^*$ is continuous on $\{x \in \R: f \text{ is continuous at } x-1 \text{ and } x+1\}$,
this implies that also $f^*$ is continuous Lebesgue-almost everywhere.
 Furthermore, for every $x \in \R$, we have
\begin{align*}
\sum_{n \in \Z} & \sup_{n \leq x < n+1} f^*(x)
= \sum_{n \in \Z} \sup_{n-1 \leq x < n+2} |f(x)|
\leq 3 \sum_{n \in \Z} \sup_{n \leq x < n+1} |f(x)| < \infty
\end{align*}
since $f$ is directly Riemann integrable.
Thus, again by \cite[Remark 3.10.4 on p.~236]{Resnick:1992}, $f^*$ is directly Riemann integrable.
	\smallskip

\noindent

We prove (b) and (c) at one go. To this end, let $p=1$ in the situation of (b)
and $p=2$ in the situation (c). Define $\phi(t) \defeq \varphi(t)^p$ for $t \in \R$.
Then we infer
\begin{equation}	\label{eq:int Ephi*e^-alphax dx finite}
\int \E[\phi^*](x) e^{-\alpha x} \, \dx < \infty,
\end{equation}
from \eqref{eq:int Evarphi*e^-alphax dx finite} or \eqref{eq:int Evarphi*^2e^-alphax dx finite}, respectively,
where we have used that $(\varphi^2)^* = (\varphi^*)^2$ in the situation of (c).
From \eqref{eq:int Ephi*e^-alphax dx finite} we deduce that $\E[\phi^*](x)<\infty$ for Lebesgue-almost all $x \in \R$
and hence
\begin{equation}	\label{eq:local majorant}
\E\Big[\sup_{|t-x| \leq \frac12} |\phi(t)|\Big] < \infty	\quad	\text{for all } x \in \R.
\end{equation}
In the case of (c), this implies the validity of (A\ref{ass:local ui of phi^2}).
In both cases, \eqref{eq:local majorant}
together with the dominated convergence theorem
imply that $\E[\phi]$
has c\`adl\`ag paths and thus also $f$ defined by $f(t) \defeq \E[\phi(t)]e^{-\alpha t}$.
Further, $\int f^*(x) \, \dx < \infty$ by \eqref{eq:int Evarphi*e^-alphax dx finite}
and \eqref{eq:int Evarphi*^2e^-alphax dx finite}, respectively,
since
$(\E[\phi](t)e^{-\alpha t})^*
\leq e^{\alpha}\E[\phi]^*(t)e^{-\alpha t} \leq e^{\alpha} \E[\phi^*](t)e^{-\alpha t}$.
Part (b) now follows from (a).
In the situation of (c), we deduce from (a) that $t \mapsto \E[\varphi^2](t) e^{-\alpha t}$ is directly Riemann integrable.
Also, $\E[\varphi]$ has c\`adl\`ag paths by \eqref{eq:local majorant} and the dominated convergence theorem.
Therefore, $\Var[\varphi](t) = \E[\varphi^2](t) - \E[\varphi(t)]^2$ is c\`adl\`ag
and, in particular, locally bounded and continuous
Lebesgue-almost everywhere. Since
\begin{align*}
\sum_{n \in \Z} \sup_{x \in [n,n+1]}  \Var[\varphi(x)] e^{-\alpha x}
&\leq
\sum_{n \in \Z} \sup_{x \in [n,n+1]} \E[\varphi(x)^2] e^{-\alpha x}
< \infty
\end{align*}
the direct Riemann integrability of $\Var[\varphi](t) e^{-\alpha t}$ follows from \cite[Remark 3.10.4]{Resnick:1992},
i.e., (A\ref{ass:variance growth}) holds.

\end{proof}

\subsection{Matrix notation}	\label{subsec:Matrix notation}
For any $s\in \R$ and $\gamma\in\C$ we define the following lower triangular $k \times k$ matrix
\begin{align}
\exp(\gamma,s,k)
\defeq e^{\gamma s}\times
\begin{pmatrix}
1&		0&		0&		\dots\qquad&	0\\
s&		1&		0&		\dots\qquad&	0\\
s^2&		2s&		1&		\dots\qquad&	0\\
\vdots&	\vdots&	\vdots&	\ddots\qquad&	\vdots\\
s^{k-1}\quad &	{k-1\choose 1} s^{k-2}\quad &	{k-1 \choose 2}s^{k-3}\quad &		\dots\qquad	&	1
\end{pmatrix}.
\end{align}
The $(i,j)^{\mathrm{th}}$ entry of the matrix is $e^{\gamma s} \binom{i-1}{j-1} s^{i-j}$, $i,j=1,\ldots,k$,
 where $\binom{i-1}{j-1} = 0$ for $j > i$ should be recalled.
Matrices of this form will be very useful since they simplify the notation
and allow us to deal with polynomial terms with relative ease.
 Indeed, for any $s,t \in \R$ and $\gamma\in\C$,
\begin{equation*}
\exp(\gamma, s,k) \cdot \exp(\gamma,t,k)=\exp(\gamma,s+t,k).
\end{equation*}
 This can be seen from elementary but tedious calculations.
 Alternatively, notice that
\begin{align*}
	\exp(\gamma,s,k) = \exp(sJ_{\gamma,k})
\end{align*}
where the matrix $J_{\gamma,k}$ is defined by
\begin{equation*}
	J_{\gamma,k}\defeq
	\begin{pmatrix}
		\gamma & &\\
		1      &\ddots &\mbox{0}\\
		&\ddots&\ddots&\\
		\mbox{0}&& k\!-\!1&\gamma
	\end{pmatrix}.
\end{equation*}
This leads to
\begin{align}
	\label{eq:derivative of exp}
	\frac{\dd}{\dx}\exp(\gamma,x,k)=J_{\gamma,k}\exp(\gamma,x,k).
\end{align}
With $\|\cdot\|$ denoting the operator norm and $\|\cdot\|_{\mathsf{HS}}$
denoting the Hilbert-Schmidt norm, the following (crude) bound holds
for every $\delta > 0$:
\begin{align}	\label{eq:matrix norm}
\|\exp(\gamma,s,k)\|
&\leq \|\exp(\gamma,s,k)\|_{\mathsf{HS}} \leq C' (1+|s|)^{k-1} e^{\Real(\gamma) s}
\leq Ce^{\Real(\gamma) s+ \delta |s|}
\end{align}
for some constant $C'>0$ depending on $k$ only and another constant $C>0$
depending on $k$ and $\delta>0$.
For a vector $x$, we write $x^\transp$ for its transpose.
Further, we write $\e_1, \e_2,\ldots$ for the canonical base vectors in Euclidean space.
Here, for ease of notation, we are slightly sloppy as we do not specify the dimension of that space
(formally, all Euclidean spaces may be embedded into an appropriate infinite-dimensional space such as $\ell^2$).
Then, for instance,
\begin{equation*}
\exp(\gamma,s,k) \cdot \e_1
= e^{\gamma s}
\begin{pmatrix}
1	\\	s	\\	s^2	\\	\vdots	\\	s^{k-1}
\end{pmatrix}.
\end{equation*}

Throughout the paper, for $\Real(\lambda) > \vartheta$, $n \in \N_0$ and $k \in \N$,
we denote by $Z_n(\lambda,k)$ the following random matrix
\begin{equation}	\label{eq:Z_n(lambda,k)}
Z_n(\lambda,k)\defeq\sum_{|u|=n}\exp(\lambda, -S(u), k).
\end{equation}
We set $Z_n(\lambda) \defeq Z_n(\lambda,1)$  for $\Real(\lambda) \geq \vartheta$.
In particular, $\mu(\theta) = \E[Z_1(\theta)]$
and (A\ref{ass:first moment}) becomes $\E[Z_1(\vartheta)] < \infty$.

\section{Nerman's martingales  as general branching processes}	\label{sec:Nerman's martingales as CMJ processes}

 Nerman's martingale and its complex counterparts are crucial for the paper
	as they constitute the building blocks for the asymptotic expansion of $\cZ^\varphi$.
	In the present section, we demonstrate how these martingales can be represented in terms
	of Crump-Mode-Jagers processes and which characteristics come into play.

 Suppose that (A\ref{ass:Malthusian parameter}) holds and that $\L\mu(\vartheta) < \infty$
	for some $0 < \vartheta < \alpha$. (Notice that the last condition is implied by (A\ref{ass:first moment}).)
 Further, let $\lambda \in \C$ with $\Real(\lambda) > \vartheta$ be a root of multiplicity $k=k(\lambda) \in \N$ of the mapping $z \mapsto \L\mu(z)-1$, i.e.,
\begin{align}
	&\L\mu(\lambda) = \E\bigg[\sum_{j=1}^Ne^{-\lambda X_j}\bigg] = 1,		\label{eq:lambda is a root}	\\
	&\L\mu^{(l)}(\lambda) = (-1)^l \E\bigg[\sum_{j=1}^N X_j^l e^{-\lambda X_j}\bigg] = 0	\quad	\text{for } l=1,\ldots,k(\lambda)-1,	\label{eq:lambda is at least kth root}	\\
	&\L\mu^{(k(\lambda))}(\lambda) \neq 0.	\label{eq:lambda is kth root}
\end{align}
Conditions \eqref{eq:lambda is a root} and \eqref{eq:lambda is at least kth root} are equivalent to
\begin{align}	\label{eq:lambda is a matrix root}
	\E[Z_1(\lambda,k)]=\E\bigg[\sum_{j=1}^N\exp(\lambda, -X_j,k)\bigg] = I_k
\end{align}
where $I_k$ is the $k \times k$ identity matrix.

Define the random matrix
\begin{equation*}
Y_u \defeq Z_{u,1}(\lambda,k)-I_k = \int \exp(\lambda,-x,k) \, \xi_u(\dx) - I_k.
\end{equation*}
Notice that $\E[Y_u]$ is the $k \times k$ zero matrix by \eqref{eq:lambda is a matrix root}.
Moreover, if $\Real(\lambda) \geq \frac\alpha 2$ and (A\ref{ass:second moment}) is satisfied then,
by the penultimate inequality in \eqref{eq:matrix norm},
we have $\E[\|Y_u \|^2] \leq C_{\lambda,k} < \infty$
for some constant $C_{\lambda,k}$ that depends only on $\lambda$ and $k$.

Now for $\lambda\in\C$ such that $\Real(\lambda)>\vartheta$ and \eqref{eq:lambda is a root} through \eqref{eq:lambda is kth root} holds
we define  matrix-valued characteristics $\phi_\lambda$ and $\chi_{\lambda,I}$,
which play a crucial role in the proof of the main theorem.
For $t \in \R$, we set
\begin{align}	\label{eq:phi_lambda}
\phi_{\lambda}(t) &\defeq \sum_{j=1}^N \1_{[0,X_j)}(t) \exp({\lambda,t-X_j},k),
\end{align}
and for any interval $I=[a,b)\cap \R$ with $-\infty \leq a<b<\infty$
\begin{align}	\label{eq:chi_lambda}
\chi_{\lambda,I}(t)	\defeq	\1_{I}(t) \exp({\lambda, t},k)Y_\varnothing
\quad\text{ and }\quad
\chi_\lambda(t)		\defeq	\chi_{\lambda,(-\infty,0)}(t).
\end{align}

By definition, characteristics take values in $\R^d$ for some $d \in \N$,
but here we use an obvious extension to $\C$ by splitting into real and imaginary part.
 Note also that  both $\phi_{\lambda}$ and $\chi_{\lambda,I}$ are $\sigma(\xi)$-measurable
and, in particular, the tuples $(\xi_u,\phi_{\lambda,u},\chi_{\lambda,u})$, $u\in\I$ are i.\,i.\,d.,
where $\chi_{\lambda,u} = \1_{(-\infty,0)}(t) \exp({\lambda, t},k)Y_u, t \in \R$.

\begin{lemma}	\label{Lem:phi^1 and phi^2}
	Suppose that (A\ref{ass:Malthusian parameter}) through (A\ref{ass:second moment}) hold.
	Let $\lambda\in\Lambda_{\ge}$, let $k$ denote the multiplicity of $\lambda$, and fix $x,y\in\R^k$.
	\begin{enumerate}[(a)]
		\item
		The characteristic $ x^\transp\phi_{\lambda} y $
		satisfies (A\ref{ass:mean growth}), (A\ref{ass:variance growth}) and (A\ref{ass:local ui of phi^2}).
		\item
		Let $\Real(\lambda)>\frac \alpha 2$,
		and $I=[a,b)\cap \R$ be an interval with $-\infty\le a<b<\infty$.
		Then the characteristic $x^\transp\chi_{\lambda,I} y$
		satisfies (A\ref{ass:mean growth}), (A\ref{ass:variance growth})
		and (A\ref{ass:local ui of phi^2}).
	\end{enumerate}
\end{lemma}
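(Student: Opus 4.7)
The plan is to verify (A\ref{ass:mean growth})--(A\ref{ass:local ui of phi^2}) for $\psi = x^\transp \phi_\lambda y$ in case (a) and $\psi = x^\transp \chi_{\lambda,I} y$ in case (b) by means of Proposition \ref{Prop:f^*}; the real and imaginary parts will be treated separately. In each case the key is to prove
\begin{equation*}
\int \E\big[(\psi^*)^2\big](x)\, e^{-\alpha x}\,\dx < \infty,
\end{equation*}
from which (A\ref{ass:variance growth}) and (A\ref{ass:local ui of phi^2}) follow via Proposition \ref{Prop:f^*}(c). In case (a), the support of $\phi_\lambda^*$ lies in $[-1,\infty)$, so Cauchy--Schwarz against the integrable weight $e^{-\alpha x}\1_{[-1,\infty)}(x)$, combined with $\E[\psi^*] \le (\E[(\psi^*)^2])^{1/2}$, reduces the remaining $\int \E[\psi^*] e^{-\alpha x}\,\dx < \infty$ needed for (A\ref{ass:mean growth}) (via Proposition \ref{Prop:f^*}(b)) to the very same $L^2$-bound. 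In case (b), (A\ref{ass:mean growth}) is immediate because $\E[Y_\varnothing]$ is the $k\times k$ zero matrix by \eqref{eq:lambda is a matrix root}, whence $\E[x^\transp \chi_{\lambda,I}(t) y] \equiv 0$.

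Part (b) thus reduces to the $L^2$-bound, which is short. The pointwise estimate
\begin{equation*}
|x^\transp \chi_{\lambda,I}(s) y|^2 \le C\,\|x\|^2\|y\|^2\, \1_I(s)\, (1+|s|)^{2k-2}\, e^{2\Real(\lambda) s}\, \|Y_\varnothing\|^2
\end{equation*}
follows from $|x^\transp A y|\le\|x\|\|y\|\|A\|$ and the matrix-norm estimate \eqref{eq:matrix norm}. Since $\E[\|Y_\varnothing\|^2]<\infty$ (which follows from (A\ref{ass:second moment}) via \eqref{eq:matrix norm} applied entrywise), one obtains, after taking the supremum over $|s-t|\le1$, that $\E[(\psi^*)^2](t)\le C'\,\1_{I_1}(t)(1+|t|)^{2k-2}e^{2\Real(\lambda) t}$ with $I_1$ the unit-neighborhood of $I$. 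Multiplied by $e^{-\alpha t}$, this yields $(1+|t|)^{2k-2} e^{(2\Real(\lambda)-\alpha) t}$, whose exponent is positive by $\Real(\lambda)>\alpha/2$; integrability on the interval $(-\infty, b+1]$ follows.

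Part (a) is more delicate. On $\{s\in[0,X_j)\}$ we have $X_j - s \ge 0$, so \eqref{eq:matrix norm} gives, for any $\delta\in(0,\Real(\lambda))$,
\begin{equation*}
\|\phi_\lambda(s)\| \le C_\delta \sum_{j=1}^N \1_{[0, X_j)}(s)\, e^{-(\Real(\lambda)-\delta)(X_j - s)}.
\end{equation*}
Squaring, taking the supremum over $|s-t|\le1$, integrating against $e^{-\alpha t}\,\dt$ and exchanging sum and integral yield a double sum over pairs $(j,l)$; after treating the orderings $X_j\le X_l$ and $X_j>X_l$ symmetrically and changing variables $u = (X_j\wedge X_l) - s$, this double sum is dominated by $C\,\big(\sum_j (1+X_j^{k^*-1/2}) e^{-\alpha X_j/2}\big)^2$, whose expectation is finite by (A\ref{ass:second moment}).

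The main obstacle I expect is this last step of part (a): arranging the bookkeeping so that the polynomial-exponential bounds produced by \eqref{eq:matrix norm} collapse into precisely the square appearing in (A\ref{ass:second moment}). The difficulty is greatest when $\Real(\lambda)$ lies on or near the critical line $\{\Real(z)=\alpha/2\}$, since then the decay $e^{-\Real(\lambda)(X_j - s)}$ from the support constraint barely compensates the weight $e^{-\alpha s}$; the asymmetric treatment of ordered pairs $X_j\le X_l$ together with the elementary estimate $(1+u)^m\le C_\varepsilon e^{\varepsilon u}$ will be the key technical devices.
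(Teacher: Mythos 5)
Your treatment of part (b) and of the non-critical subcase of part (a) is sound and agrees in spirit with the paper (the paper takes $\gamma \in (\frac\alpha2,\Real(\lambda))$ and uses the crude exponential form of \eqref{eq:matrix norm}, exactly as you do). Your Cauchy--Schwarz shortcut for deducing (A\ref{ass:mean growth}) from the $L^2$-bound (using that $\phi_\lambda^*$ is supported on $[-1,\infty)$ so that $e^{-\alpha x}\1_{[-1,\infty)}$ is integrable) is a pleasant simplification; the paper instead estimates the first moment separately.

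However, there is a genuine gap in part (a) on the critical line $\Real(\lambda)=\frac\alpha2$. Your proposed estimate trades the polynomial in \eqref{eq:matrix norm} for an exponential,
\begin{equation*}
\|\exp(\lambda,t-X_j,k)\| \le C_\delta\, e^{-(\Real(\lambda)-\delta)(X_j-t)}\qquad(t<X_j),
\end{equation*}
which is precisely what you cannot afford there. Following your own steps (square, sup over $|s-t|\le1$, integrate against $e^{-\alpha t}\dt$), the $t$-integral $\int_{-1}^{(X_i\wedge X_j)+1}e^{(2(\Real(\lambda)-\delta)-\alpha)t}\,\dt$ has a \emph{negative} exponent $-2\delta$ when $\Real(\lambda)=\frac\alpha2$, hence it is $O(1)$, and the double sum collapses to $C\big(\sum_j e^{-(\frac\alpha2-\delta)X_j}\big)^2$. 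This is \emph{not} dominated by $C\big(\sum_j (1+X_j^{k^*-1/2})e^{-\frac\alpha2 X_j}\big)^2$: for large $X_j$ one has $e^{\delta X_j}/X_j^{k^*-1/2}\to\infty$, so (A\ref{ass:second moment}) gives no control over its expectation. The elementary device $(1+u)^m \le C_\varepsilon e^{\varepsilon u}$, which you single out as the key tool on the critical line, is exactly the step that destroys the bound. The paper instead retains the polynomial form of \eqref{eq:matrix norm}, $\|\exp(\lambda,t-X_j,k)\|\le C(1+|t-X_j|)^{k-1}e^{\frac\alpha2(t-X_j)}$, so that the double sum produces $(X_i\wedge X_j + 2)(2+X_i)^{k-1}(2+X_j)^{k-1}e^{-\frac\alpha2(X_i+X_j)}$, and then uses the algebraic inequality $(X_i\wedge X_j+2)^2\le(2+X_i)(2+X_j)$ to symmetrize and arrive at $(2+X_j)^{k-\frac12}e^{-\frac\alpha2 X_j}$ squared, which (using $k\le k^*$) is exactly what (A\ref{ass:second moment}) controls. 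Put differently: the half-integer exponent $k^*-\frac12$ in (A\ref{ass:second moment}) is \emph{designed} to absorb the degree-$(k-1)$ polynomial from the matrix norm plus a degree-$\frac12$ factor from the $t$-integral, and an exponential substitute for the polynomial cannot reproduce this.
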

\begin{proof}
	Clearly, both characteristics $ x^\transp\phi_{\lambda} y$
	and $x^\transp\chi_\lambda y$ have c\`adl\`ag paths.
	Without loss of generality, we may assume that $|x|,|y| \leq 1$.
	In view of Proposition~\ref{Prop:f^*} it suffices to verify the
	integrability conditions \eqref{eq:int Evarphi*e^-alphax dx finite}
	and \eqref{eq:int Evarphi*^2e^-alphax dx finite}.
	Let us first assume that $\Real(\lambda)>\frac\alpha 2$.
	Then we take $\gamma\in(\frac \alpha 2,\Real(\lambda))$ and from \eqref{eq:matrix norm}
	we infer the existence of a constant $C$
	that depends only on $\lambda,\gamma$ and $k$ such that,
	for $s \leq 0$ and $|t-s| \leq 1$, we have
	\begin{equation*}
		\big| x^\transp\exp(\lambda ,s,k)y\big|
		\leq \|\exp(\lambda,s-t,k)\|_{\mathsf{HS}} \|\exp(\lambda ,t,k)\|_{\mathsf{HS}}
		\leq C e^{\gamma t}.
	\end{equation*}
	We can thus write for $t \in \R$
	\begin{align*}
		(x^\transp\phi_{\lambda} y )^*(t)&
		= \sup_{|s-t|\le 1}\big| x^\transp\phi_{\lambda}(s) y \big|\\
		&\leq \sup_{|s-t|\le 1}\sum_{j=1}^N \1_{[0,X_j)}(s) \big| x^\transp 
		\exp(\lambda ,s-X_j,k) y \big| 	\\
		&\leq C\sum_{j=1}^N \1_{[-1, X_j+1)}(t)  e^{\gamma (t-X_j)}.
	\end{align*}
	Hence, we have
	\begin{align*}
		\int \E\Big[ (x^\transp\phi_{\lambda} y )^*(t)\Big] e^{-\alpha t} \, \dt
		&\leq	
		C \int \limits_{-\infty}^{\infty}\E\bigg[\sum_{j=1}^N \1_{[-1, X_j+1)}(t) e^{\gamma (t-X_j)} \bigg]
		e^{-\alpha t} \, \dt	\\
		&\leq C \E\bigg[\sum_{j=1}^N  e^{-\gamma X_j}\bigg]
		\int \limits_{-1}^{\infty} e^{(\gamma -\alpha)t} \, \dt<\infty
	\end{align*}
	by (A\ref{ass:first moment}).
	Further,
	\begin{align*}
		\int & \E\Big[\Big( (x^\transp\phi_{\lambda} y )^*(t)\Big)^2\Big] e^{-\alpha t} \, \dt	\\
		&\leq	
		C^2 \int \limits_{-\infty}^{\infty}\E\bigg[\sum_{1 \leq i,j \leq N}\1_{[-1,X_i+1)}(t)  e^{\gamma (t-X_i)}
		\1_{[-1,X_j+1)}(t)  e^{\gamma (t-X_j)}  \bigg] e^{-\alpha t} \, \dt	\\
		&\leq C^2\E\bigg[\sum_{1 \leq i,j \leq N} \int_{-1}^{(X_i \wedge X_j)+1}e^{(2\gamma -\alpha)t} \, \dt\
		e^{-\gamma X_i} e^{-\gamma X_j} \bigg]\\
		&\leq  \frac{e^{2\gamma-\alpha}C^2}{2\gamma-\alpha}
		\E\bigg[\sum_{1 \leq i,j \leq N} e^{(2\gamma -\alpha)(X_i \wedge X_j)}
		e^{-\gamma X_i} e^{-\gamma X_j} \bigg]\\
		&\leq \frac{e^{2\gamma-\alpha}C^2}{2\gamma-\alpha}
		\E\bigg[\sum_{1 \leq i,j \leq N} e^{(\gamma-\frac{\alpha}{2})
			(X_i + X_j)}
		e^{-\gamma X_i} e^{-\gamma X_j} \bigg]	\\
		&= \frac{e^{2\gamma-\alpha}C^2}{2\gamma-\alpha}\E\bigg[\Big(\sum_{j=1}^N
		e^{-\frac{\alpha}{2} X_j}\Big)^2 \bigg]<\infty
	\end{align*}
	by (A\ref{ass:second moment}).
	Now	assume that $\Real(\lambda)=\frac\alpha 2$.
	Then for $s \leq 0$ and $|t-s| \leq 1$, we have, again by \eqref{eq:matrix norm},
	\begin{equation*}
		\big| x^\transp\exp(\lambda ,s,k)y\big|
		\leq C(1+|t|)^{k-1} e^{\frac{\alpha}{2} t}
	\end{equation*}
	for some $C$ depending on $\lambda$ and $k$
	(not necessarily the exact constant $C$ from \eqref{eq:matrix norm}, but a larger, finite one).
	This, in turn, gives, for arbitrary $t \in \R$,
	\begin{align*}
		(x^\transp\phi_{\lambda} y )^*(t)&
		=\sup_{|s-t| \leq 1}\big|x^\transp\phi_{\lambda}(s) y\big|\\
		&\leq \sup_{|s-t| \leq 1}\sum_{j=1}^N \1_{[0,X_j)}(s) \big| x^\transp\exp(\lambda ,s-X_j,k) y \big| 	\\
		&\leq C\sum_{j=1}^N \1_{[-1,X_j+1)}(t)  (1+|t-X_j|)^{k-1} e^{\frac{\alpha}{2} (t-X_j)},
	\end{align*}
	and, consequently,
	\begin{align*}
		\int  \E\big[ (x^\transp\phi_{\lambda} y )^*(t)\big]e^{-\alpha t} \, \dt		
		\leq \frac{C}{\sqrt{2}}\E\bigg[\sum_{j=1}^N(2+X_j)^{k-\frac{1}{2}}e^{-\frac{\alpha}{2} X_j}\bigg]
		\int \limits_{-1}^{\infty}e^{-\frac{\alpha}{2} t}\, \dt < \infty
	\end{align*}
	by (A\ref{ass:second moment}) and
	\begin{align*}
		\int & \E\big[\big( (x^\transp\phi_{\lambda} y)^*(t)\big)^2\big]e^{-\alpha t} \, \dt		\\
		&\leq
		C^2\int \limits_{-\infty}^{\infty}\E\bigg[\sum_{1 \leq i,j \leq N}\1_{[-1,X_i+1)}(t)  (1+|t-X_i|)^{k-1} e^{\frac{\alpha}{2} (t-X_i)}		\\
		& \hphantom{\leq C\int \limits_{-\infty}^{\infty}\E\bigg[\sum_{1 \leq i,j \leq N}}
		\cdot \1_{[-1,X_j+1)}(t)  (1+|t-X_j|)^{k-1} e^{\frac{\alpha}{2} (t-X_j)}  \bigg] e^{-\alpha t} \, \dt		\\
		&\leq
		C^2 \E\bigg[\sum_{1 \leq i,j \leq N} \int \limits_{-1}^{X_i\wedge X_j+1} \, \dt \, (2+X_i)^{k-1} e^{-\frac{\alpha}{2} X_i} (2+X_j)^{k-1}
		e^{-\frac{\alpha}{2} X_j} \bigg]	\\
		&\leq
		C^2 \E\bigg[\sum_{1 \leq i,j \leq N}(X_i\wedge X_j+2) (2+X_i)^{k-1} e^{-\frac{\alpha}{2} X_i} (2+X_j)^{k-1} e^{-\frac{\alpha}{2} X_j} \bigg]\\
		&\leq
		2^{2k-1} C^2 \E\bigg[\Big(\sum_{j=1}^N (1+\tfrac{X_j}{2})^{k-\frac 12}e^{-\frac\alpha 2 X_j}\Big)^2 \bigg] <\infty
	\end{align*}
	again by (A\ref{ass:second moment}), which finish the proof of (a).
	Regarding part (b), notice that (A\ref{ass:mean growth}) holds trivially
	as $\chi_{\lambda,I}$ is centered.
	Further, observe that
	\begin{align*}
		(x^\transp\chi_{\lambda,I} y)^*(t)
		&=\sup_{|s-t|\leq 1} \1_{[a,b)\cap \R}(s) \big| x^\transp\exp({\lambda, s},k) (Z_1(\lambda,k)-I_k)y\big|	\\
		&\leq C\1_{(-\infty,b+1)}(t) \|\exp({\lambda, t},k)\| \|Z_1(\lambda,k)-I_k\|	\\
		&= C^2  \1_{(-\infty,b+1)}(t) e^{\gamma t}\bigg(1+\sum_{j=1}^N e^{-\frac \alpha 2 X_j}\bigg)
	\end{align*}
	where, as before, $\gamma\in(\frac \alpha 2,\Real(\lambda))$.
	Thus
	\begin{align*}
		\int  \E\big[\big( (x^\transp\chi_{\lambda,I} y )^*(t)\big)^2\big]e^{-\alpha t} \, \dt
		\leq C^4\E\bigg[\Big(1+\sum_{j=1}^N e^{-\frac \alpha 2 X_j}\Big)^2\bigg]\int \limits_{-\infty}^{b+1}
		e^{(2\gamma-\alpha) t} \, \dt<\infty	
	\end{align*}
	by (A\ref{ass:second moment}), which completes the proof of (b).
\end{proof}

As a consequence of the above lemma we conclude that, under the assumptions  (A\ref{ass:Malthusian parameter}) -- (A\ref{ass:second moment}), for any $t\in \R$  $\cZ^{\phi_\lambda}_t$ for $\lambda\in\Lambda_\ge$ and $\cZ^{\chi_{\lambda,I}}_t$ for $\lambda\in\Lambda$ are well-defined.
The first one is so as an unconditional limit in $L^1$ by Proposition \ref{Prop:L1 existence of Z_t^varphi}
and the second as an unconditional limit in $L^2$
by (the first part of) Lemma \ref{Lem:centered characteristic->martingale} (by Lemma \ref{Lem:phi^1 and phi^2}, the characteristic $\chi_{\lambda,I}$ satisfies (A\ref{ass:variance growth}); according to the last part of Lemma \ref{Lem:centered characteristic->martingale}, (A\ref{ass:variance growth}) entails \eqref{eq:Esum Varvarphi(t-S(u))<infty}, the principal assumption of the first part of Lemma \ref{Lem:centered characteristic->martingale}). 
In particular,
\begin{align*}
	\cZ_0^{\chi_{\lambda}}=\sum_{u\in\I}\exp(\lambda,-S(u),k)Y_u
\end{align*}
converges unconditionally in $L^2$ and almost surely along admissible orderings of $\I$.

\subsection{Nerman's martingales with complex parameters}	\label{Exa:Nerman's martingales matrix-valued}

For $u\in \I$ we define $\mathcal G_u\defeq\sigma(\xi_v:v \preceq u)$ and, for $t \in \R$,
\begin{equation*}
\F^W_t	\defeq	\sigma(\{A\cap \{S(u)\leq t\}:  u\in\I \text{ and }A\in\mathcal G_u \}).
\end{equation*}
\begin{lemma}	\label{Lem:Nerman's martingales}
Suppose that (A\ref{ass:Malthusian parameter}) holds
and that $\L\mu(\vartheta)<\infty$ for some $0<\vartheta < \alpha$.
Let $\lambda\in\C$ with $\Real(\lambda) > \vartheta$ be a root of $z\mapsto\L\mu(z)-1$
with multiplicity $k$. If the characteristic $\phi_{\lambda}$ satisfies (A\ref{ass:mean growth}), (A\ref{ass:variance growth}) and (A\ref{ass:local ui of phi^2}),
then the following process
\begin{align*}
W_t(\lambda,k) \defeq \exp({\lambda },-t,k) \cdot  \cZ^{\phi_{\lambda}}_t,	\qquad	t \in \R
\end{align*}
is a (matrix-valued) martingale with respect to the filtration $(\F^W_t)_{t \geq 0}$.
Moreover, for any $t\in\R$, it holds
\begin{align}	\label{eq:alternative definition of Nerman's martingale}
W_t(\lambda,k)
=I_k\1_{[0,\infty)}(t) +\cZ^{\chi_{\lambda,[-t,1)}}_0
= \sum_{u \in \cC_t} \exp({\lambda, -S(u)},k)\quad \text{a.\,s.}
\end{align}
where
by definition (see \eqref{eq:coming generation}) $\cC_t = \{uj \in \familytree: S(u) \leq t < S(uj)\}$. In particular,
\begin{align*}
m^{\phi_{\lambda}}_t=\1_{[0,\infty)}(t)\exp(\lambda,t,k),	\quad 	t \in \R.
\end{align*}
\end{lemma}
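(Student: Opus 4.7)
The plan has three parts: first establish the two explicit representations of $W_t(\lambda,k)$ announced in \eqref{eq:alternative definition of Nerman's martingale}; then derive the mean formula as a consequence of the matrix root condition \eqref{eq:lambda is a matrix root}; and finally deduce the martingale property via a branching decomposition at the stopping line $\cC_s$. For the representations, I unravel definition \eqref{eq:phi_lambda}: the indicator $\mathbf{1}_{[0,X_{u,j})}(t-S(u))$ inside $\phi_{\lambda,u}(t-S(u))$ equals $1$ precisely when $v\defeq uj\in\cC_t$, and on that event $\exp(\lambda,t-S(u)-X_{u,j},k)=\exp(\lambda,t-S(v),k)$ by the semigroup identity for $\exp(\lambda,\cdot,k)$. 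Hence $\cZ_t^{\phi_\lambda}=\sum_{v\in\cC_t}\exp(\lambda,t-S(v),k)$, and multiplying by $\exp(\lambda,-t,k)$ gives the rightmost expression in \eqref{eq:alternative definition of Nerman's martingale}. For the middle expression, set $B_t\defeq\{u\in\familytree:S(u)\le t\}$; since $S(u)\ge0$, the indicator $\mathbf{1}_{[-t,1)}(-S(u))$ selects exactly $u\in B_t$ for $t\ge 0$, so $\cZ_0^{\chi_{\lambda,[-t,1)}}=\sum_{u\in B_t}\exp(\lambda,-S(u),k)\,Y_u$. Substituting $Y_u=\sum_{j=1}^{N_u}\exp(\lambda,-X_{u,j},k)-I_k$ and using that $\{v\ne\varnothing:v|_{|v|-1}\in B_t\}=(B_t\setminus\{\varnothing\})\sqcup\cC_t$, the sum telescopes to $\sum_{v\in\cC_t}\exp(\lambda,-S(v),k)-I_k$; the case $t<0$ is trivial since both sets are empty. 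The $L^2$-convergence of the series over $\cC_t$ is provided by Lemma~\ref{Lem:centered characteristic->martingale} applied to $\chi_{\lambda,[-t,1)}$, whose hypotheses are secured in Lemma~\ref{Lem:phi^1 and phi^2}(b).

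For the mean, Proposition~\ref{Prop:L1 existence of Z_t^varphi} guarantees finiteness of $m_t^{\phi_\lambda}\defeq\E[\cZ_t^{\phi_\lambda}]$ and the standard first-generation decomposition yields the renewal equation $m^{\phi_\lambda}=\E[\phi_\lambda]+m^{\phi_\lambda}*\mu$. Inserting the ansatz $m_t=\mathbf{1}_{[0,\infty)}(t)\exp(\lambda,t,k)$ and using the semigroup identity reduces this to $\int\exp(\lambda,-x,k)\,\mu(\dx)=I_k$, which is exactly the matrix root condition \eqref{eq:lambda is a matrix root}. A uniqueness argument (iterate against the renewal measure, using the exponential moment from (A\ref{ass:first moment})) then identifies $m_t^{\phi_\lambda}$ and yields $\E[W_t(\lambda,k)]=I_k$ for every $t\ge 0$.

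For the martingale property, integrability and adaptedness to $(\F^W_t)$ follow from the representation above, since for each $v=uj\in\cC_t$ the variable $S(v)$ is $\mathcal G_u$-measurable on $\{S(u)\le t\}\in\F^W_t$. Fix $0\le s\le t$. Every $w\in\cC_t$ has a unique ancestor $v\in\cC_s$ (the first $w|_i$ satisfying $S(w|_{i-1})\le s<S(w|_i)$), and conversely, for $v\in\cC_s$ its contribution to $\cC_t$ is either $\{v\}$ (if $S(v)>t$) or the image under $w\mapsto vw$ of the coming generation at time $t-S(v)$ of the shifted tree $\theta_v$. This yields
\begin{equation*}
W_t(\lambda,k)=\sum_{\substack{v\in\cC_s\\ S(v)>t}}\exp(\lambda,-S(v),k)+\sum_{\substack{v\in\cC_s\\ S(v)\le t}}\exp(\lambda,-S(v),k)\,W^{(v)}_{t-S(v)}(\lambda,k),
\end{equation*}
where $W^{(v)}$ is the same martingale built from $\theta_v$. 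Invoking the branching Markov property at the stopping line $\cC_s$---conditionally on $\F^W_s$, the shifted trees $(\theta_v)_{v\in\cC_s}$ are independent of $\F^W_s$ and form i.i.d.\ copies of the original process---and substituting $\E[W_{t-S(v)}(\lambda,k)]=I_k$ from the mean formula collapses the expression to $\sum_{v\in\cC_s}\exp(\lambda,-S(v),k)=W_s(\lambda,k)$. The principal subtlety is verifying this branching Markov property rigorously from the generators $A\cap\{S(u)\le s\}$ of $\F^W_s$ (so that one may legitimately condition on the random family $\cC_s$); once that is in hand, the remaining manipulations are purely arithmetic.
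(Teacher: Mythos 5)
Your proof is correct in outline but takes a genuinely different route from the paper in two places. For the martingale property, you decompose at the stopping line $\cC_s$ and appeal to the branching Markov property of the shifted subtrees $(\theta_v)_{v\in\cC_s}$ conditionally on $\F^W_s$; you rightly flag this as the chief thing left to verify, since $\cC_s$ is a random family. The paper sidesteps this entirely: it writes the increment as $W_t(\lambda,k)-W_s(\lambda,k)=\sum_{u\in\I}\1_{\{s<S(u)\le t\}}\exp(\lambda,-S(u),k)\big(Z_1(\lambda,k)\circ\theta_u-I_k\big)$ and checks, term by term and generator by generator, that $\E\big[\1_{\{s<S(u)\le t\}}\exp(\lambda,-S(u),k)(Z_1(\lambda,k)\circ\theta_u-I_k)\,\1_{A\cap\{S(v)\le s\}}\big]=0$ whenever $A\in\mathcal G_v$, using that $S(u)>s\ge S(v)$ forces $u\not\preceq v$ and hence independence of $\xi_u$ from $\mathcal G_v$; the $\pi$-$\lambda$ theorem then upgrades this to conditioning on all of $\F^W_s$, with no stopping-line branching property needed. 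For the mean formula, your renewal-equation-plus-uniqueness argument works, but the paper obtains $m^{\phi_\lambda}_t=\1_{[0,\infty)}(t)\exp(\lambda,t,k)$ for free by taking expectations in the middle equality of \eqref{eq:alternative definition of Nerman's martingale}: $\chi_{\lambda,[-t,1)}$ is centered and the sum has a.s.\ only finitely many nonzero terms, so $\E[\cZ_0^{\chi_{\lambda,[-t,1)}}]=0$. Finally, a small technical point: you justify well-definedness of $\cZ_0^{\chi_{\lambda,[-t,1)}}$ via Lemma~\ref{Lem:phi^1 and phi^2}(b), which requires $\Real(\lambda)>\frac\alpha2$, whereas the present lemma allows any $\Real(\lambda)>\vartheta$; the needed fact is actually elementary, since $\chi_{\lambda,[-t,1)}(-S(u))\ne 0$ forces $S(u)\le t$, of which there are a.s.\ finitely many individuals. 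With that substitution and a rigorous treatment of the stopping-line branching property, your argument goes through; the paper's is simply more self-contained.
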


\begin{remark}	\label{Rem:Nerman's martingales}
It is worth mentioning that for $1 \leq l \leq k$ and the matrix-valued characteristic
$\phi_{\lambda,l}$ obtained by taking the upper left $l \times l$ submatrix, i.e.,
\begin{align*}
\phi_{\lambda,l}(t) &\defeq \sum_{j=1}^N \1_{[0,X_j)}(t) \exp({\lambda,t-X_j},l),
\end{align*}
if $\phi_{\lambda,l}$ satisfies (A\ref{ass:mean growth}), (A\ref{ass:variance growth}) and (A\ref{ass:local ui of phi^2}),
then the proof below carries over and gives that $W_t(\lambda,l)$ is a matrix-valued martingale and
\begin{align*}
	 W_t(\lambda,l)
	= \sum_{u \in \cC_t}  \exp({\lambda, -S(u)},l)	\quad \text{a.\,s.}
\end{align*}
In particular, if the above conditions hold with $l=1$, then we obtain that $$W_t(\lambda,1)=\sum_{u \in \cC_t}  e^{-\lambda S(u)}$$
is a martingale.

\end{remark}

\begin{proof}[Proof of Lemma \ref{Lem:Nerman's martingales}]
(A\ref{ass:Malthusian parameter}), (A\ref{ass:mean growth}) and (A\ref{ass:variance growth}) entail that, by Proposition \ref{Prop:L1 existence of Z_t^varphi}, $\cZ^{\phi_{\lambda}}$ is well-defined as an unconditional limit in $L^1$ and that $W_t(\lambda,k)$ is integrable for any $t\in\R$.

We boldly write
\begin{align}
\exp&({\lambda },-t,k) \cZ^{\phi_{\lambda}}_t
=\sum_{u\in\I}\sum_{j=1}^{N_u} \1_{[0,X_{u,j})}(t-S(u))\exp({\lambda, -S(uj)},k)	\notag 	\\
&=\sum_{u\in\I}\sum_{j=1}^{N_u} \1_{\{S(u) \leq t\}} \1_{\{S(uj)>t\}} \exp({\lambda, -S(uj)},k)		\notag	\\
&=\sum_{u\in\I}\sum_{j=1}^{N_u} \big(\ind{S(u)\le t}-\ind{S(uj)\le t}\big) \exp({\lambda, -S(uj)},k)		\notag	\\
&=\sum_{u\in\I}\sum_{j=1}^{N_u} \ind{S(u)\le t}\exp({\lambda, -S(uj)},k)
-\sum_{|u| \geq 1}\1_{\{S(u) \leq t\}} \exp({\lambda, -S(u)},k)	\notag	\\
&= I_k\1_{[0,\infty)}(t) + \sum_{u\in\I} \1_{\{S(u) \leq t\}}
\exp({\lambda, -S(u)},k)\big(Z_1(\lambda,k)\circ\theta_u-I_k\big),
\label{eq:decomposition of Nerman's martingale}
\end{align}
where the rearrangements of the infinite series in the last two lines are justified by the fact that there are only finitely many non-zero terms almost surely.
Next, note that for any $t \in \R$,
\begin{align}	\label{eq:integrability}
\E\bigg[\sum_{u\in\I}\1_{\{S(u) \leq t\}} \big\|\exp({\lambda, -S(u)},k)\big\|
\bigg(\sum_{j=1}^{N_u} \big\|\exp({\lambda, -X_{u,j}},k)\big\|+1\bigg)\bigg]<\infty.
\end{align}
Indeed, by \eqref{eq:matrix norm},
the expectation in \eqref{eq:integrability} can be bounded by a finite, deterministic constant times
\begin{align*}
\E\bigg[\sum_{u\in\I}\ind{S(u)\le t} e^{-\vartheta S(u)} \big(Z_1(\vartheta)\circ\theta_u + 1\big)\bigg]
\leq \big(\L\mu(\vartheta)\!+\!1\big) \E\bigg[\sum_{u\in\I}\1_{\{S(u) \leq t\}} e^{-\vartheta S(u)} \bigg]
< \infty,
\end{align*}
where we have used the independence between $S(u)$ and $\xi_{u}$.
The finiteness of the last expectation follows from the many-to-one lemma
(Formula \eqref{eq:many-to-one1}), namely,
\begin{align*}
\E\bigg[\sum_{u\in\I}\1_{\{S(u) \leq t\}} e^{-\vartheta S(u)} \bigg]
&= \Esf\bigg[\sum_{n\ge 0} \1_{\{S_n \leq t\}} e^{(\alpha-\vartheta) S_n} \bigg]
\leq e^{(\alpha-\vartheta)t}\U([0,t])<\infty.
\end{align*}
Further, $(W_t(\lambda,k))_{t \geq 0}$ is adapted to the filtration $(\F^W_t)_{t \geq 0}$.
In order to show the martingale property note that, for $0 \leq s < t$,
\begin{align*}
W_t(\lambda,k)-W_s(\lambda,k)=\sum_{u\in\I}\ind{s<S(u)\le t}\exp({\lambda, -S(u)},k)\big(Z_1(\lambda,k)\circ\theta_u-I_k\big),
\end{align*}
and by \eqref{eq:integrability} it suffices to show that for any $u\in\I$
\begin{align}	\label{eq:martingale propery}
\E\big[\ind{s<S(u)\le t}\exp({\lambda, -S(u)},k)\big(Z_1(\lambda,k)\circ\theta_u-I_k\big)\big|\F^W_s\big]
= 0	\quad	\text{a.\,s.}
\end{align}
Let $u,v\in\I$ and note that the fact $S(u)>s$, $S(v)\leq s$ implies  $u \not\preceq v$.
In particular, for such $u$ and $v$, $\xi_u$ is independent of $\mathcal{G}_v$
and hence for any $A\in\mathcal{G}_v$
\begin{align*}
\E\big[\1_{\{s < S(u) \leq t\}} \exp({\lambda, -S(u)},k) \big(Z_1(\lambda,k) \circ \theta_u-I_k\big) \1_{A\cap\{S(v)\leq s\}}\big] = 0,
\end{align*}
where $\E[Z_1(\lambda,k)]=I_k$ was used.
The argument carries over if we take a finite intersection of sets of the type
$A \cap \{S(v) \leq s\}$, $A\in\mathcal G_v$ for different $v\in\I$.
The $\pi$-$\lambda$-theorem (or monotone class theorem) gives \eqref{eq:martingale propery}
and thus proves that $(W_t(\lambda,k))_{t \geq 0}$ is a martingale.
	
 It remains to prove \eqref{eq:alternative definition of Nerman's martingale}.
The first identity of this equation is \eqref{eq:decomposition of Nerman's martingale}.
Further, from the calculation leading towards \eqref{eq:decomposition of Nerman's martingale},
we have
\begin{align*}
W_t(\lambda,k)
&=\sum_{u\in\I}\sum_{j=1}^{N_u} \1_{\{S(u) \leq t\}} \1_{\{S(uj)>t\}} \exp({\lambda, -S(uj)},k)\\
&=\sum_{u\in\I}\sum_{j=1}^{N_u} \1_{\cC_t}(uj)\exp(\lambda, -S(uj),k)
=\sum_{u\in\cC_t}\exp(\lambda, -S(u),k).
\end{align*}

\end{proof}

 The random matrix $W_t(\lambda,k)$ has the following form
\begin{align}
	W_t(\lambda,k) =\begin{pmatrix}
		W^{(0)}_t(\lambda)&		0&		0&		\dots&	0\\
		W^{(1)}_t(\lambda)&		W^{(0)}_t(\lambda)&		0&		\dots&	0\\
		W^{(2)}_t(\lambda)&		2W^{(1)}_t(\lambda)&		W^{(0)}_t(\lambda)&		\dots&	0\\
		\vdots&	\vdots&	\vdots&	\ddots&	\vdots\\
		W^{(k-1)}_t(\lambda)&		{k-1\choose 1} W^{(k-2)}_t(\lambda)&	{k -1\choose 2}W^{(k-3)}_t(\lambda)&		\dots&	W^{(0)}_t(\lambda)
	\end{pmatrix},
\end{align}
where $W_t^{(j)}(\lambda) = \sum_{u \in \cC_t}(-S(u))^j e^{-\lambda S(u)}$ as in \eqref{eq:W_t^(j)(lambda)}.

\subsection{Convergence of Nerman's martingales}	\label{subsec:convergence of martingales}

The following lemma implies Theorem \ref{Thm:martingale convergence}.

\begin{lemma}	\label{Lem:Nerman's martingale is L^2 bounded}
Suppose that (A\ref{ass:Malthusian parameter}) through (A\ref{ass:second moment}) hold
and let $\lambda$ be a solution to \eqref{eq:roots} with multiplicity $k$ and $\Real(\lambda)>\frac\alpha2$.
Then the process $W_t(\lambda,k)$ is an $L^2$-bounded martingale
 with limit given by
\begin{equation*}
W(\lambda,k)\defeq I_k+\sum_{u\in\I}\exp(\lambda,-S(u),k)Y_u = I_k + \cZ^{\chi_{\lambda,(-\infty,1)}}_0,
\end{equation*}
where the series above converges unconditionally in $L^2$.
In particular, for every $0 \leq j \leq k\!-\!1$,
the martingale $(W_r^{(j)}(\lambda))_{t \geq 0}$  converges a.\,s.\ and in $L^2$.
\end{lemma}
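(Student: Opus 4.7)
The plan is to combine the representation \eqref{eq:alternative definition of Nerman's martingale} with the unconditional $L^2$-convergence machinery of Lemma \ref{Lem:centered characteristic->martingale}, applied to the centered characteristic $\chi_{\lambda,(-\infty,1)}$. First I would note that the interval $I=(-\infty,1)$ still satisfies the assumption $-\infty\le a<b<\infty$ of Lemma \ref{Lem:phi^1 and phi^2}(b) (since the proof there only uses that $b<\infty$ to control the upper support), so every entry of $\chi_{\lambda,(-\infty,1)}$ is a centered characteristic satisfying (A\ref{ass:mean growth}), (A\ref{ass:variance growth}) and (A\ref{ass:local ui of phi^2}). Consequently, by Lemma \ref{Lem:centered characteristic->martingale}, the random matrix
\begin{equation*}
\cZ^{\chi_{\lambda,(-\infty,1)}}_0 = \sum_{u\in\I} \1_{(-\infty,1)}(-S(u))\exp(\lambda,-S(u),k)Y_u
\end{equation*}
is well-defined as an unconditional $L^2$-limit, and since $-S(u)\le 0<1$ for every $u\in\familytree$, this sum equals $\sum_{u\in\I}\exp(\lambda,-S(u),k)Y_u$.

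Second, I would combine this with Lemma \ref{Lem:Nerman's martingales}, which, thanks to Lemma \ref{Lem:phi^1 and phi^2}(a), is applicable and yields that $(W_t(\lambda,k))_{t\ge0}$ is a matrix-valued martingale with the identity
\begin{equation*}
W_t(\lambda,k) = I_k\1_{[0,\infty)}(t) + \cZ^{\chi_{\lambda,[-t,1)}}_0\quad\text{a.\,s.}
\end{equation*}
Writing $\chi_{\lambda,(-\infty,1)} = \chi_{\lambda,[-t,1)} + \chi_{\lambda,(-\infty,-t)}$ gives
\begin{equation*}
\cZ^{\chi_{\lambda,(-\infty,1)}}_0 - \cZ^{\chi_{\lambda,[-t,1)}}_0 = \cZ^{\chi_{\lambda,(-\infty,-t)}}_0,
\end{equation*}
and by the variance identity \eqref{eq:Var[Z_t^chi]} applied entrywise, the $L^2$-norm of the right-hand side is controlled by
\begin{equation*}
\E\bigg[\sum_{u\in\I}\big\|\chi_{\lambda,(-\infty,-t)}\big\|^{2}(-S(u))\bigg],
\end{equation*}
which, since the analogous sum with $\chi_{\lambda,(-\infty,1)}$ in place of $\chi_{\lambda,(-\infty,-t)}$ is finite by step one, tends to $0$ as $t\to\infty$ by dominated convergence.

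Third, combining these two steps yields $W_t(\lambda,k)\to I_k+\cZ^{\chi_{\lambda,(-\infty,1)}}_0$ in $L^2$ as $t\to\infty$, so the martingale is $L^2$-bounded with the claimed limit $W(\lambda,k)$. A routine application of the $L^2$ martingale convergence theorem (applied entrywise) then upgrades this to almost sure convergence. Since each $W_t^{(j)}(\lambda)$ is (up to a binomial coefficient) an entry of $W_t(\lambda,k)$, the $L^2$ and a.\,s.\ convergence of the scalar martingales $(W_t^{(j)}(\lambda))_{t\ge0}$ for $j=0,\ldots,k-1$ is an immediate consequence. The only delicate step is the tail bound in the second paragraph — one must verify that the sum controlling the $L^2$ tail of $\cZ^{\chi_{\lambda,(-\infty,-t)}}_0$ is dominated by the corresponding sum for $\chi_{\lambda,(-\infty,1)}$, which is finite by step one. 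Here the bound $\Real(\lambda)>\alpha/2$ is essential, entering through the proof of Lemma \ref{Lem:phi^1 and phi^2}(b).
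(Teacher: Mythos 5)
Your proof follows essentially the same route as the paper: apply Lemma \ref{Lem:phi^1 and phi^2}(b) to the intervals $(-\infty,1)$, $[-t,1)$ and $(-\infty,-t)$, use Lemma \ref{Lem:centered characteristic->martingale} to get unconditional $L^2$ convergence of the associated CMJ processes, decompose $\chi_{\lambda,(-\infty,1)} = \chi_{\lambda,[-t,1)} + \chi_{\lambda,(-\infty,-t)}$, and kill the tail $\cZ^{\chi_{\lambda,(-\infty,-t)}}_0$ in $L^2$ by the variance identity \eqref{eq:Var[Z_t^chi]} plus dominated convergence. The only cosmetic difference is that the paper first reduces to scalar quantities $x^\transp W_t(\lambda,k)y$ for arbitrary $x,y\in\R^k$ before running this argument, whereas you work entrywise with the matrix directly; both are valid reductions and buy the same thing.
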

\begin{proof}
Fix $x,y\in\R^k$. It suffices to show the corresponding result for the martingale $ x^\transp W_t(\lambda,k)y$.
By Lemma \ref{Lem:phi^1 and phi^2} the centered characteristics
$ x^\transp\chi_{\lambda,(-\infty,1)} y$, $ x^\transp\chi_{\lambda,(-\infty,-t)} y$ and $ x^\transp\chi_{\lambda,[-t,1)} y$
satisfy (A\ref{ass:variance growth}) and (A\ref{ass:local ui of phi^2}).
In particular, by Lemma \ref{Lem:centered characteristic->martingale},
the general branching processes counted with these characteristics are well-defined as unconditional limits in $L^2$,
and, for $t\geq0$,
\begin{align*}
\sum_{u\in\I}& x^\transp\exp(\lambda,-S(u),k)Y_u y
=\cZ^{ x^\transp\chi_{\lambda,(-\infty,1)}y}_0
=\cZ^{ x^\transp\chi_{\lambda,(-\infty,-t)}y}_0
+\cZ^{ x^\transp\chi_{\lambda,[-t,1)}y}_0	\\
&=\sum_{u\in\I}\ind{S(u)>t} x^\transp\exp(\lambda,-S(u),k)Y_uy
+\sum_{u\in\I}\ind{S(u)\leq t} x^\transp\exp(\lambda,-S(u),k)Y_uy.
\end{align*}
Taking into account that, for any $s\in\R$, $t\geq-1$,
\begin{align*}
\Var[ x^\transp\chi_{\lambda,(-\infty,-t)} y] (s) \leq \Var[ x^\transp\chi_{\lambda,(-\infty,1)} y](s),
\end{align*}
and applying the identity \eqref{eq:Var[Z_t^chi]} (by splitting the characteristics into real and imaginary part)
the dominated convergence theorem  yields
\begin{align*}
\lim_{t\to\infty} \E\Big[\Big|\cZ^{ x^\transp\chi_{\lambda,(-\infty,-t)} y}_0\Big|^2\Big]=0.
\end{align*}
In particular, in view of  \eqref{eq:alternative definition of Nerman's martingale}, we infer that
\begin{align*}
 x^\transp W_t(\lambda,k)y= x^\transp y+\cZ^{ x^\transp\chi_{\lambda,(-\infty,1)}y}_0
-\cZ^{ x^\transp\chi_{\lambda,(-\infty,-t)}y}_0
\end{align*}
converges  in $L^2$ as $t \to \infty$.
\end{proof}

\subsection{Limits of Nerman's martingales as general branching processes}	\label{ex:phi^2}
Suppose now that the martingale $(W_t(\lambda,k))_{ t \geq 0}$ is uniformly integrable.
Then it converges in $L^1$ as $t \to \infty$ to some random matrix $W(\lambda,k)$ of the form
\begin{align}
	W(\lambda,k) =\begin{pmatrix}
		W^{(0)}(\lambda)&		0&		0&		\dots&	0\\
		W^{(1)}(\lambda)&		W^{(0)}(\lambda)&		0&		\dots&	0\\
		W^{(2)}(\lambda)&		2W^{(1)}(\lambda)&		W^{(0)}(\lambda)&		\dots&	0\\
		\vdots&	\vdots&	\vdots&	\ddots&	\vdots\\
		W^{(k-1)}(\lambda)&		{k-1\choose 1} W^{(k-2)}(\lambda)&	{k-1 \choose 2}W^{(k-3)}(\lambda)&		\dots&	W^{(0)}(\lambda)
	\end{pmatrix}.
\end{align}
By uniform integrability, $\E[W(\lambda,k)]=I_k$.

\begin{lemma}	\label{Lem:chi_lambda}
Suppose that (A\ref{ass:Malthusian parameter}) through (A\ref{ass:second moment}) hold
and let $\lambda$ be a solution to \eqref{eq:roots} with multiplicity $k$ and $\Real(\lambda)>\frac\alpha2$.
Then, for $t \geq 0$,
\begin{equation*}
\exp(\lambda,-t,k)\cZ^{\chi_\lambda}_t=W(\lambda,k)-W_t(\lambda,k).
\end{equation*}
In particular,  for $t\ge0$,
\begin{align}	\label{eq:phi+chi}
\cZ^{\phi_\lambda+\chi_\lambda}_t=\exp(\lambda,t,k)W(\lambda,k)	.
\end{align}
\end{lemma}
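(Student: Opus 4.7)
The strategy is to compute both sides explicitly as series indexed by $u \in \I$ and match them term by term, relying on the unconditional $L^2$-convergence established in Lemma~\ref{Lem:Nerman's martingale is L^2 bounded} and the first part of Lemma~\ref{Lem:centered characteristic->martingale} (applicable to $x^\transp \chi_\lambda y$ by Lemma~\ref{Lem:phi^1 and phi^2}(b)) in order to freely rearrange.

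First I would unfold the definition of $\cZ^{\chi_\lambda}_t$. Since $\chi_\lambda(s) = \1_{(-\infty,0)}(s)\exp(\lambda, s, k) Y_\varnothing$, and $Y_u = Z_{u,1}(\lambda,k) - I_k$ is the shifted copy attached to individual $u$, we obtain
\begin{equation*}
\cZ^{\chi_\lambda}_t
= \sum_{u \in \I} \1_{\{S(u) > t\}} \exp(\lambda, t - S(u), k)\, Y_u.
\end{equation*}
Applying the semigroup identity $\exp(\lambda, -t, k)\exp(\lambda, t - S(u), k) = \exp(\lambda, -S(u), k)$ (equivalently, left-multiplying by $\exp(\lambda,-t,k)$) then gives
\begin{equation*}
\exp(\lambda,-t,k)\, \cZ^{\chi_\lambda}_t
= \sum_{u \in \I} \1_{\{S(u) > t\}} \exp(\lambda,-S(u),k)\, Y_u.
\end{equation*}

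Next I would invoke the two already-proved representations. On the one hand, Lemma~\ref{Lem:Nerman's martingale is L^2 bounded} provides the $L^2$-convergent expansion
\begin{equation*}
W(\lambda,k) = I_k + \sum_{u \in \I} \exp(\lambda,-S(u),k)\, Y_u.
\end{equation*}
On the other hand, the computation leading to \eqref{eq:decomposition of Nerman's martingale} in the proof of Lemma~\ref{Lem:Nerman's martingales} shows that, for $t \geq 0$,
\begin{equation*}
W_t(\lambda,k) = I_k + \sum_{u \in \I} \1_{\{S(u) \leq t\}} \exp(\lambda,-S(u),k)\, Y_u.
\end{equation*}
Subtracting these (which is permitted because both series converge unconditionally in $L^2$) yields exactly
\begin{equation*}
W(\lambda,k) - W_t(\lambda,k) = \sum_{u \in \I} \1_{\{S(u) > t\}} \exp(\lambda,-S(u),k)\, Y_u,
\end{equation*}
which matches the display above and proves the first identity.

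For \eqref{eq:phi+chi} I would simply add: by linearity of $\varphi \mapsto \cZ^\varphi_t$ on the class of characteristics under consideration, $\cZ^{\phi_\lambda+\chi_\lambda}_t = \cZ^{\phi_\lambda}_t + \cZ^{\chi_\lambda}_t$. The definition of $W_t(\lambda,k)$ rearranges to $\cZ^{\phi_\lambda}_t = \exp(\lambda,t,k) W_t(\lambda,k)$, and multiplying the first identity on the left by $\exp(\lambda,t,k)$ gives $\cZ^{\chi_\lambda}_t = \exp(\lambda,t,k)\bigl(W(\lambda,k) - W_t(\lambda,k)\bigr)$. Summing, the $W_t(\lambda,k)$ terms cancel and only $\exp(\lambda,t,k) W(\lambda,k)$ remains.

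The only genuine subtlety is the handling of convergence: the series defining $W(\lambda,k)$ and the one representing $\exp(\lambda,-t,k)\cZ^{\chi_\lambda}_t$ converge unconditionally in $L^2$ (and almost surely along admissible orderings), so the subtraction producing $W(\lambda,k) - W_t(\lambda,k)$ is unambiguous. Once that is in place, the proof is essentially a bookkeeping identity and there is no further obstacle.
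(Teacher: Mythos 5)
Your proof is correct and amounts to the same argument as the paper's, just unpacked at the level of series. The paper first uses the shift identity $\exp(\lambda,-t,k)\,\chi_\lambda(s)=\chi_{\lambda,(-\infty,-t)}(s-t)$ to rewrite $\exp(\lambda,-t,k)\,\cZ^{\chi_\lambda}_t$ as $\cZ^{\chi_{\lambda,(-\infty,-t)}}_0$, then quotes the decomposition $W_t(\lambda,k)=I_k+\cZ^{\chi_{\lambda,(-\infty,1)}}_0-\cZ^{\chi_{\lambda,(-\infty,-t)}}_0$ from the proof of Lemma~\ref{Lem:Nerman's martingale is L^2 bounded}; you instead spell out the three series $\sum_u \1_{\{S(u)>t\}}\exp(\lambda,-S(u),k)Y_u$, $\sum_u \exp(\lambda,-S(u),k)Y_u$ and $\sum_u \1_{\{S(u)\leq t\}}\exp(\lambda,-S(u),k)Y_u$ and subtract. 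The ingredients are identical — the $L^2$-unconditional convergence from Lemma~\ref{Lem:centered characteristic->martingale}/\ref{Lem:phi^1 and phi^2}, the representation $W_t(\lambda,k)=I_k+\sum_u\1_{\{S(u)\leq t\}}\exp(\lambda,-S(u),k)Y_u$ from \eqref{eq:decomposition of Nerman's martingale}, the limit identity from Lemma~\ref{Lem:Nerman's martingale is L^2 bounded}, and linearity plus Lemma~\ref{Lem:Nerman's martingales} for \eqref{eq:phi+chi} — so this is substantively the same proof.
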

\begin{proof}
Note that for any $s\in \R$, $t \geq 0$, we have
\begin{align*}
\exp(\lambda,-t,k)\chi_{\lambda}(s)=\chi_{\lambda,(-\infty,-t)}(s-t).
\end{align*}
In particular,
\begin{align*}
\exp(\lambda,-t,k)\cZ^{\chi_\lambda}_t=\cZ^{\chi_{\lambda,(-\infty,-t)}}_0,
\end{align*}
which equals $W(\lambda,k)-W_t(\lambda,k)$ by Lemma \ref{Lem:Nerman's martingale is L^2 bounded}.
This together with Lemma \ref{Lem:Nerman's martingales}  implies
\begin{equation*}
\cZ^{\phi_\lambda+\chi_\lambda}_t=\exp(\lambda,t,k)W(\lambda,k),
\end{equation*}
for any $t\ge0$.
\end{proof}

\section{Proofs of the main results}
\label{sec:proofs}

In this section we provide a proof of our main Theorem~\ref{Thm:main}.
We begin by outlining the main ideas and steps.

The basic step is to decompose a given general branching process $\cZ_t^\varphi$
\begin{align*}
	\cZ_t^\varphi = H_\Lambda(t) + H_{\partial \Lambda}(t) + \cZ_t^{\varrho-\phi_{\partial\Lambda}} + \cZ_t^\chi.
\end{align*}
into $H_\Lambda(t)$ and $H_{\partial \Lambda}(t)$, see \eqref{eq:H_Lambda} and \eqref{eq:H_dLambda},
the leading terms in the expansion,
plus two general branching processes $\cZ_t^{\varrho-\phi_{\partial\Lambda}}$ and $\cZ_t^\chi$,
the first one with mean roughly of the order $o(e^{\frac\alpha2t})$ as $t \to \pm\infty$
and the second one with centered characteristic, i.e., $\E[\chi(t)]=0$ for all $t\in \R$.

General branching processes with centered characteristics are studied in Section \ref{subsec:centered characteristics}.
Theorem~\ref{Thm:CLT with centered phi} provides the fluctuations of $\cZ_t^\chi$
for a centered characteristic $\chi$.
There are two different cases of interest.
First, when $\chi$ satisfies (A\ref{ass:variance growth})
and second when $\int_0^t \Var[\chi(x)]e^{-\alpha x} \, \dx \sim c t^{\theta}$ for some $\theta \geq 0$ and $c>0$.
In both cases, $\cZ_t^\chi$, appropriately rescaled, is asymptotically normal.
The main tools to prove this are the martingale central limit theorem and Nerman's strong law of large numbers
for supercritical general branching processes.
The second case requires the additional auxiliary Lemma~\ref{Lem:regular varying variance}.

Characteristics such that the corresponding general branching process has mean function roughly of the order
$o(e^{\frac\alpha2 t})$ as $t \to \pm\infty$ are treated in Section \ref{subsec:slowly growing mean}.
Theorem \ref{Thm:slowly growing mean} of this section yields asymptotic normality for such processes.

Section \ref{subsec:deterministic characteristics} provides a connection between the cases
studied in Sections \ref{subsec:centered characteristics} and \ref{subsec:slowly growing mean}.
Roughly speaking, Lemma~\ref{Lem:reduction to centered phi} enables us to rewrite the process $\cZ_t^f$
for a deterministic characteristic $f$ in the form
\begin{equation*}
	\cZ^f_t=\cZ^{\chi_f}_t+m^{f}_t,
\end{equation*}
for an appropriately chosen centered characteristic $\chi_f$.
This enables us to reduce the case of general branching processes with mean function roughly of the order $o(e^{\frac\alpha2 t})$ as $t \to \pm\infty$
to the case of centered characteristics.

In Section \ref{subsec:Proof of main theorem}, we put all the pieces together and prove the main Theorem \ref{Thm:main}.

We investigate the asymptotic behavior of the general branching process $\cZ_t^\varphi$ counted with characteristic $\varphi$
as $t \to \infty$ in several steps.
In the first step, we prove convergence of Nerman's martingales at complex parameters.

\subsection{Centered characteristics}	\label{subsec:centered characteristics}

In this section we study the fluctuations of $\cZ^\chi_t$ as $t \to \infty$ for centered characteristics,
that is, for characteristics $\chi$ satisfying $\E[\chi(t)]=0$ for all $t\in\R$.
Theorem \ref{Thm:CLT with centered phi} below plays a key role in the proof of our main result Theorem \ref{Thm:main}.
Before we state it, we give a preparatory lemma.

\begin{lemma}	\label{Lem:regular varying variance}
Suppose that (A\ref{ass:Malthusian parameter}) through (A\ref{ass:second moment}) hold.
Let $\theta \geq 0$ and $f: [0,\infty) \to [0,\infty)$ be a continuous function with
$f(x) = O(x^\theta)$ as $x \to \infty$
such that $x^{-\theta} f(x)$ is uniformly continuous on $[1,\infty)$ and the limit
\begin{equation}	\label{eq:power integral}
\lim_{t \to \infty, \, t\in\G} \frac1{t^{\theta+1}} \int_{[0,t]} f(x) \, \ell(\dx) \eqdef c \in (0,\infty)
\end{equation}
exists. Then, for $\varphi(t) \defeq e^{\alpha t} f(t) \1_{[0,\infty)}(t)$,
 we have $\sup_{t \geq 1} e^{-\alpha t} t^{-\theta-1} \E[\cZ^\varphi_t] < \infty$ and
\begin{align}	\label{eq:cZ on critical line}
\frac{e^{-\alpha t}}{t^{\theta+1}} \cZ^\varphi_t \to \frac{cW}{\beta}		\quad \text{as } t \to \infty,\;t\in \G
\quad	\text{a.\,s.}
\end{align}
\end{lemma}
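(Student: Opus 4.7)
The overall plan is to first establish the sharp mean asymptotic $t^{-\theta-1} e^{-\alpha t} m^\varphi_t \to c/\beta$ via the many-to-one formula and standard renewal theory---this already yields the uniform bound (i) as a byproduct---and then to upgrade to the almost-sure convergence in (ii) through a branching decomposition at a deterministic generation $n$, exploiting the $L^2$-convergence $Z_n \to W$ of Biggins' martingale that is available under (A\ref{ass:second moment}).

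For the mean, applying \eqref{eq:m_t^phi in terms of U} to $\varphi(x) = e^{\alpha x} f(x) \1_{[0,\infty)}(x)$ gives
\[
e^{-\alpha t} m^\varphi_t = \int_{[0,t]} f(t-x) \, \U(\dx).
\]
The uniform bound (i) follows immediately from $f(y) \leq K(1+y)^\theta$ (using $f=O(x^\theta)$ and local boundedness of $f$) together with $\U([0,t])=O(t)$ from the elementary renewal theorem. To sharpen this to the stated asymptotic, I would split off the boundary layer $x\in(t-1,t]$ (whose contribution is $O(1)$), write $f(t-x)=(t-x)^\theta g(t-x)$ with $g(x)\defeq x^{-\theta}f(x)$ uniformly continuous on $[1,\infty)$, and approximate $g$ by step functions on intervals of small width $\delta$. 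Blackwell's theorem replaces $\U$ by $\ell/\beta$ uniformly on such intervals, so $\int_{[0,t]} f(t-x)\,\U(\dx) \approx \beta^{-1}\int_0^t f(y)\,\dy$, and the Cesàro hypothesis \eqref{eq:power integral} delivers the limit $c/\beta$ after letting $\delta\downarrow 0$.

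For (ii), I invoke the branching decomposition
\[
\cZ^\varphi_t = \sum_{|u|=n} \cZ^\varphi_{t-S(u)}[u],
\]
where the $\cZ^\varphi[u]$ are i.i.d.\ copies of $\cZ^\varphi$ independent of $\F_n\defeq\sigma(\xi_v:|v|<n)$. Taking conditional expectations and applying the mean asymptotic to each subtree,
\[
\frac{e^{-\alpha t}}{t^{\theta+1}} \E\!\left[\cZ^\varphi_t \mid \F_n\right] = \sum_{|u|=n} e^{-\alpha S(u)} \Bigl(\tfrac{t-S(u)}{t}\Bigr)^{\!\theta+1}\cdot \frac{e^{-\alpha(t-S(u))}\,m^\varphi_{t-S(u)}}{(t-S(u))^{\theta+1}} \longrightarrow \frac{c}{\beta}\,Z_n
\]
almost surely as $t\to\infty$ for each fixed $n$, and then $cZ_n/\beta \to cW/\beta$ almost surely as $n\to\infty$.

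The hard part will be controlling the conditional fluctuation $\cZ^\varphi_t - \E[\cZ^\varphi_t\mid \F_n]$. Conditional independence of the subtree contributions gives
\[
\Var\!\left[\cZ^\varphi_t\mid\F_n\right] = \sum_{|u|=n}\Var\!\left[\cZ^\varphi_{t-S(u)}\right],
\]
so the key estimate will be a bound $\Var[\cZ^\varphi_s] = O(e^{2\alpha s} s^{2\theta+1})$, of smaller order than $(m^\varphi_s)^2$, to be derived from a many-to-two type identity that exploits the uniform local finiteness of $\U$ and the second-moment assumption (A\ref{ass:second moment}). Substituting and using that $\sum_{|u|=n} e^{-2\alpha S(u)} \to 0$ almost surely as $n\to\infty$ (a consequence of the $L^2$-convergence of Biggins' martingale), the normalized conditional variance will vanish in the iterated limit $t\to\infty$, $n\to\infty$. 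A Chebyshev and Borel--Cantelli argument along a sparse subsequence $t_k\uparrow\infty$, combined with interpolation between consecutive $t_k$ that uses the nonnegativity of $f$ and the regularity of the envelope $e^{\alpha t}t^{\theta+1}$, will then convert this in-probability statement into the desired almost-sure convergence.
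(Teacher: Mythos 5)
Your approach is genuinely different from the paper's. The paper does not attempt a from-scratch martingale/Borel--Cantelli argument at all: it first shows (as you do) that the contribution from individuals born before a fixed time $r$ is negligible, and then invokes off-the-shelf almost-sure laws of large numbers for window counts $N((t,t+\delta])$ resp.\ $N(\{k\})$ from \cite[Corollary 3.1]{Meiners:2010} (alternatively \cite[Theorem 5.4]{Nerman:1981} or \cite[Theorem 3.2]{Gatzouras:2000}), which hold \emph{for all} $t\ge r$ simultaneously. The limit \eqref{eq:cZ on critical line} then follows by a Riemann-sum approximation that uses the uniform continuity of $x^{-\theta}f(x)$. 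There is no conditioning on $\F_n$, no variance bound for $\cZ^\varphi_t$, no Chebyshev, no subsequence, no interpolation. Your route is in effect an attempt to re-prove a version of Nerman's strong law from scratch, which is much more work.

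Beyond the difference in route, there are concrete gaps in the proposal. First, the claimed bound $\Var[\cZ^\varphi_s]=O(e^{2\alpha s}s^{2\theta+1})$ is off by a power of $s$: using the decomposition $\cZ^\varphi_t=m^\varphi_t+\cZ_t^{\chi_\varphi}$ of Lemma~\ref{Lem:reduction to centered phi}(d) and the identity $\Var[\cZ^\varphi_t]=m^{\chi_\varphi^2}_t$, one finds $\E[\chi_\varphi^2](y)\asymp e^{2\alpha y}y^{2\theta+2}\Var[Z_1(\alpha)]$ and hence $\Var[\cZ^\varphi_t]\asymp e^{2\alpha t}t^{2\theta+2}$, which is of the \emph{same} order as $(m^\varphi_t)^2$, not smaller. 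The argument is not killed by this (what you actually use is the factor $\sum_{|u|=n}e^{-2\alpha S(u)}\to 0$ a.\,s., which is correct since $\L\mu(2\alpha)<1$), but the stated motivation is wrong. Second and more seriously, for fixed $n$ the Chebyshev bound on $\Prob\bigl(|\cZ^\varphi_{t_k}-\E[\cZ^\varphi_{t_k}\mid\F_n]|>\varepsilon\, e^{\alpha t_k}t_k^{\theta+1}\bigr)$ is of order $\L\mu(2\alpha)^n$ \emph{uniformly in $t_k$}, so $\sum_k\Prob(\cdot)=\infty$ and Borel--Cantelli along a subsequence fails with $n$ fixed. You would need $n=n(k)\to\infty$ jointly with $t_k$, chosen so that $\sum_k\L\mu(2\alpha)^{n(k)}<\infty$, while simultaneously ensuring $\max_{|u|=n(k)}S(u)=o(t_k)$ so that the conditional-mean step $\E[\cZ^\varphi_{t_k}\mid\F_{n(k)}]/(e^{\alpha t_k}t_k^{\theta+1})\approx (c/\beta)Z_{n(k)}$ remains valid; this requires a maximal inequality for the branching random walk that you do not address. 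Third, the interpolation between consecutive $t_k$ in the non-lattice case is merely asserted; $t\mapsto e^{\alpha(t-S(u))}f(t-S(u))$ is not monotone for non-monotone $f$, so the envelope/sandwich step you gesture at needs a real argument (the paper avoids it entirely because the cited a.\,s.\ results hold for all $t$, not just along a subsequence).
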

\begin{proof}
For any $t \geq 0$, we have $e^{-\alpha t} N((t-1,t]) = e^{-\alpha t} \cZ_t^{\1_{[0,1)}}$.
Taking expectations and using \eqref{eq:m_t^phi in terms of U} gives
\begin{align*}
e^{-\alpha t} \E[N((t-1,t])]
&= e^{-\alpha t} \E[\cZ_t^{\1_{[0,1)}}]
= \int \1_{[0,1)}(t-x) e^{-\alpha(t-x)} \, \U(\dx),
\end{align*}
which converges to a finite constant as $t \to \infty$, $t \in \G$ by the key renewal theorem and (A\ref{ass:Malthusian parameter}).
Hence,
\begin{align*}
\frac{e^{-\alpha t}}{t^{\theta+1}}\E[\cZ^\varphi_t]
&\leq C_\theta \bigg(\frac{\E[N(\{0\})]}{t} + \frac{e^{-\alpha t}}{t^{\theta+1}} \sum_{n=0}^{\lfloor t\rfloor} e^{\alpha (t-n)} (t-n)^\theta\E[N((n,n+1])] \bigg)
\end{align*}
is bounded  for $t \geq 1$.
It remains to show \eqref{eq:cZ on critical line}.
To this end, first assume that $|t^{-\theta} f(t)| \leq C_\theta < \infty$ for all $t > 0$.
In particular, $f(0)=0$ if $\theta > 0$.
First notice that for any fixed $r>0$  and $t \geq r$,
\begin{align*}
\frac{e^{-\alpha t}}{t^{\theta+1}} \sum_{\substack{u \in \I: \\ S(u) \leq r}} e^{\alpha (t-S(u))} f(t\!-\!S(u))
\leq \frac{C_\theta}{t} \cdot N([0,r]) \to 0
\end{align*}
almost surely as $t \to \infty$.
Hence, almost surely, the limiting behavior of $e^{-\alpha t}t^{-\theta-1} \cZ^\varphi_t$ as $t \to \infty$, $t \in \G$,
is the same as that of
\begin{align}	\label{eq:lim with r<S(u)<=t}
e^{-\alpha t} t^{-\theta-1} \sum_{\substack{u \in \I: \\ r < S(u) \leq t}} e^{\alpha (t\!-\!S(u))} f(t-S(u)).
\end{align}
Now first consider the lattice case and notice that by \cite[Corollary 3.1(b)]{Meiners:2010},
for given $\varepsilon > 0$, with probability $1$ we may choose  (a random) $r \in \N$ so large that
\begin{equation*}
(1-\varepsilon) e^{\alpha k} \frac{W}{\beta} \leq N(\{k\}) \leq (1+\varepsilon) e^{\alpha k} \frac{W}{\beta}
\end{equation*}
for all $k \in \N$, $k \geq r$.
Then, for $t \in \N$ with $t>r$,
\begin{align*}
e^{-\alpha t} t^{-\theta-1} \!\!\! \sum_{\substack{u \in \I: \\ r \leq S(u) \leq t}} \!\!\! e^{\alpha (t\!-\!S(u))} f(t-S(u))
&=
\frac1{t^{\theta+1}} \sum_{k=r}^t f(t-k) e^{-\alpha k} N(\{k\})	\\
&\leq
\frac{(1+\varepsilon) W}{\beta t^{\theta+1}}\sum_{k=0}^{t-r} f(k)
\to (1+\varepsilon) \frac{cW}{\beta}	\quad	\text{as } t \to \infty
\end{align*}
by \eqref{eq:power integral}. The corresponding lower bound can be obtained analogously.
Now \eqref{eq:cZ on critical line} follows by letting $\varepsilon \to 0$.

Next, we turn to the non-lattice case
and fix small $\varepsilon,\delta > 0$.
By \cite[Corollary 3.1(a)]{Meiners:2010},
 with probability $1$, we may choose  (a random) $r \in \delta \N$, $r \geq 1$ so large that
\begin{equation*}
(1-\varepsilon) e^{\alpha t} \frac{e^{\alpha \delta} - 1}{\alpha} \frac{W}{\beta}
\leq N((t,t+\delta])
\leq (1+\varepsilon) e^{\alpha t} \frac{e^{\alpha \delta} - 1}{\alpha} \frac{W}{\beta}
\end{equation*}
for all $t \geq r-\delta$. For $t \geq r$,
define $I_k^{\delta} \defeq [k\delta,(k+1)\delta)$ for $k = 0,\ldots,t_\delta-1$
where $t_\delta \defeq \lfloor \frac{t-r}{\delta} \rfloor$, and $I_{t_\delta}^\delta \defeq [t_\delta \delta, t-r)$.
Notice that $t-t_\delta \delta \geq r$.
Hence, almost surely,
\begin{align}
\limsup_{t \to \infty}
\frac{e^{-\alpha t}}{t^{\theta+1}} \!\!\! & \sum_{\substack{u \in \I: \\ r < S(u) \leq t}} \!\!\! e^{\alpha (t\!-\!S(u))} f(t\!-\!S(u))	\notag	\\
&\leq
\limsup_{t \to \infty} \frac{e^{-\alpha t}}{t^{\theta + 1}}
\sum_{k=0}^{t_\delta} \!\!\! \sum_{\substack{u \in \I: \\ t-S(u) \in I_{k}^\delta}} \!\!\! e^{\alpha  \sup I_k^\delta} f(t\!-\!S(u))	\notag	\\
&\leq
\limsup_{t \to \infty} \frac{e^{-\alpha t}}{t^{\theta + 1}}
\sum_{k=0}^{t_\delta} e^{\alpha  \sup I_k^\delta} N(t-I_k^\delta) \sup_{x \in I_k^\delta} f(x)	\notag	\\
&\leq
(1+\varepsilon) \frac{e^{\alpha \delta} - 1}{\alpha \delta} \frac{W}{\beta} \limsup_{t \to \infty}
\frac{1}{t^{\theta + 1}}
\sum_{k=0}^{t_\delta} \delta \sup_{x \in I_k^\delta} f(x).		\label{eq:g^delta upper bound}
\end{align}
Write $w(\delta) \defeq \sup_{x,y \geq 1,\, |x-y| \leq \delta} |x^{-\theta} f(x)- y^{-\theta} f(y)|$ for the modulus of continuity of $x^{-\theta} f(x)$ on $[1,\infty)$.
By uniform continuity, $w(\delta) \to 0$ as $\delta \to 0$.
We now estimate the $\limsup$ in \eqref{eq:g^delta upper bound}:
\begin{align*}
&\limsup_{t \to \infty} \frac{1}{t^{\theta + 1}}
\sum_{k=0}^{t_\delta} \delta \sup_{x \in I_k^\delta} f(x)	\\
&~\leq
\limsup_{t \to \infty} \bigg(\frac{1}{t^{\theta + 1}} \!\! \int \limits_{0}^{t-r +\delta} \! f(x) \, \dx
+\frac{1}{t^{\theta + 1}} \!\! \int \limits_{0}^{t-r +\delta} \!
((x+\delta)^\theta (x^{-\theta}f(x) + w(\delta)) - x^\theta x^{-\theta}f(x)) \, \dx  \bigg)	\\
&= c + \limsup_{t \to \infty} \bigg(\frac{C_\theta}{t^{\theta + 1}} \!\! \int \limits_{0}^{t-r +\delta} \! \big((x+\delta)^\theta - x^\theta\big) \, \dx
+ \frac{w(\delta)}{t^{\theta + 1}} \!\! \int \limits_{0}^{t-r +\delta} \! (x+\delta)^\theta \dx  \bigg)	\\
&\leq c+\frac{w(\delta)}{\theta+1}.
\end{align*}
Using this in \eqref{eq:g^delta upper bound} gives
\begin{align*}
\limsup_{t \to \infty}
\frac{e^{-\alpha t}}{t^{\theta+1}} \!\!\! & \sum_{\substack{u \in \I: \\ r \leq S(u) \leq t}} \!\!\! e^{\alpha (t\!-\!S(u))} f(t\!-\!S(u))	\\
&\leq
(1+\varepsilon) \frac{e^{\alpha \delta} - 1}{\alpha \delta} \frac{W}{\beta} \Big(c+\frac{w(\delta)}{\theta+1}\Big)
\end{align*}
almost surely. Letting $\varepsilon, \delta \to 0$ yields the upper bound of \eqref{eq:cZ on critical line}.
The lower bound can be obtained analogously.

\noindent
For the general case, we split $f=f_1+f_2$ with
\begin{equation*}
f_2(x) =	\begin{cases}
		f(1) x^\theta	&	\text{for } 0 \leq x \leq 1,	\\
		f(x)					&	\text{for } x \geq 1
		\end{cases}
\end{equation*}
and $f_1 \defeq f - f_2$. Then $f_1,f_2$ are continuous and the previous part of the proof applies to $f_2$.
Further, the limit in \eqref{eq:power integral} is the same if $f$ is replaced by $f_2$.
Define $\varphi_i(t) \defeq e^{\alpha t} f_i(t) \1_{[0,\infty)}(t)$ for $i=1,2$ so that $\varphi=\varphi_1+\varphi_2$.
We conclude
\begin{align*}	
\frac{e^{-\alpha t}}{t^{\theta+1}} \cZ^{\varphi_2}_t \to \frac{cW}{\beta}		\quad \text{as } t \to \infty,\;t\in \G
\quad	\text{a.\,s.}
\end{align*}
On the other hand, as $\varphi_1$ is bounded and supported on $[0,1)$,
we have $e^{-\alpha t}\cZ^{\varphi_1}_t$ converges a.\,s.\ to an a.\,s.\ finite limit
by \cite[Theorem 5.4]{Nerman:1981} and \cite[Theorem 3.2]{Gatzouras:2000},
which finishes the proof.
\end{proof}

\begin{remark}	\label{Rem:regular varying variance}
Notice that in the proof of Lemma \ref{Lem:regular varying variance},
we actually do not use the full power of assumptions (A\ref{ass:first moment}) and (A\ref{ass:second moment}).
Indeed, we only need the assumptions regarding $\xi$ that allow us to apply
\cite[Corollary 3.1]{Meiners:2010}.
What is more, we could replace the application
of \cite[Corollary 3.1(a)]{Meiners:2010} in the non-lattice case by an application of
\cite[Theorem 5.4]{Nerman:1981}
and the application of \cite[Corollary 3.1(b)]{Meiners:2010} in the lattice case
by an application of \cite[Theorem 3.2]{Gatzouras:2000} to get the assertion of the lemma
under the even weaker assumptions of \cite[Theorem 5.4]{Nerman:1981} and \cite[Theorem 3.2]{Gatzouras:2000},
respectively.
\end{remark}

The following theorem gives the central limit theorem in the case of a centered characteristic $\chi$.
 Recall that $\G=\Z$ in the lattice case and $\G=\R$ in the non-lattice case
and that $\F = \sigma(\pi_u: u \in \I)$ where $\pi_u$ is the projection onto the life space of individual $u$
(in particular, $(\xi_u,\zeta_u,\chi_u)$ is $\sigma(\pi_u)$-measurable).

\begin{theorem}	\label{Thm:CLT with centered phi}
 Suppose that (A\ref{ass:Malthusian parameter}) through (A\ref{ass:second moment}) hold.
Let $\chi$ be a real-valued, centered characteristic,
and let $\cN$ be a standard normal random variable independent of $\F$.
\begin{enumerate}[(i)]
	\item
		Suppose that (A\ref{ass:variance growth}) holds for the characteristic $\chi$. Then
		\begin{equation}    \label{eq:1st summand convergenceII}
		e^{-\frac\alpha2 t}\cZ^{\chi}_t
		\stablyto \bigg(\frac{W}\beta \int \limits_{\G} \E[\chi^2](x) e^{-\alpha x} \, \ell(\dx)\bigg)^{\!\!1/2} \cN
		\quad \text{as }	t\to\infty,\,t\in\G.
		\end{equation}
	\item
		Suppose that there are $\theta \geq 0$
		and a function $f$  not vanishing identically on $\G$ and
		satisfying the conditions of Lemma \ref{Lem:regular varying variance},
		$\E[\chi^2(t)] = e^{\alpha t} f(t) \1_{[0,\infty)}(t)$ and
		\begin{align} 	\label{eq:pointwise Lindeberg's condition}
		\E\big[\chi^2(t)\ind{\chi^2(t)> \varepsilon e^{\alpha t} t^{\theta+1}} \big]=o(t^\theta e^{\alpha t})	\quad \text{as }t\to\infty
		\end{align}
		for every $\varepsilon > 0$.
		Then
		\begin{equation}    \label{eq:1st summand convergenceXIII}
		\bigg(e^{\alpha t}\int \limits_{[0,t]} \E[\chi^2(x)] e^{-\alpha x} \, \ell(\dx)\bigg)^{\!\! -1/2}{\cZ_t^{\chi}} \stablyto
		\big(\tfrac{W}{\beta}\big)^{\! 1/2}\cN \text{ as } t\to\infty,\, t\in\G.
		\end{equation}
\end{enumerate}
\end{theorem}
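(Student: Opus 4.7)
The plan is to decompose $\cZ_t^\chi$ at the coming generation $\cC_s$ of an auxiliary intermediate time $s<t$:
\begin{equation*}
\cZ_t^\chi = R_s(t) + \Xi_s(t),
\end{equation*}
where $R_s(t)\defeq\sum_{u:\,S(u)\le s}\chi_u(t-S(u))$ and $\Xi_s(t)\defeq\sum_{w\in\cC_s}\cZ^\chi_{t-S(w)}\circ\theta_w$. By the branching property, conditionally on the $\sigma$-algebra $\F^W_s$ (which determines $\cC_s$ and the birth times $S(w)$ for $w\in\cC_s$), the summands constituting $\Xi_s(t)$ are independent, centered, and square integrable, and the conditional variance of the $w$-th summand equals $\E[\cZ^{\chi^2}_{t-S(w)}]$ by Lemma~\ref{Lem:centered characteristic->martingale}. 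Together with the renewal representation \eqref{eq:m_t^phi in terms of U} applied to $\chi^2$, the asymptotic conditional variance of $\Xi_s(t)$ reduces to a renewal computation, setting the stage for a conditional Lindeberg--Feller CLT.

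For part (i), the key renewal theorem yields $e^{-\alpha\tau}\E[\cZ^{\chi^2}_\tau]\to C\defeq\beta^{-1}\int_\G\E[\chi^2](x)e^{-\alpha x}\ell(\dx)$ as $\tau\to\infty$, hence
\begin{equation*}
e^{-\alpha t}\Var\bigl[\Xi_s(t)\mid\F^W_s\bigr]=\sum_{w\in\cC_s}e^{-\alpha S(w)}e^{-\alpha(t-S(w))}\E[\cZ^{\chi^2}_{t-S(w)}]\longrightarrow C\,W_s
\end{equation*}
as $t\to\infty$, and then $CW$ as $s\to\infty$. The normalized remainder is controlled via the explicit formula
\begin{equation*}
e^{-\alpha t}\E[R_s(t)^2]=\int_{[0,s]}\E[\chi^2](t-x)e^{-\alpha(t-x)}\U(\dx),
\end{equation*}
which vanishes as $t\to\infty$ for every fixed $s$ by dominated convergence (using (A\ref{ass:variance growth}) and the local finiteness of $\U$). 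The conditional Lindeberg condition follows from the uniform boundedness of $\tau\mapsto e^{-\alpha\tau}\E[(\cZ^\chi_\tau)^2]$ via \eqref{eq:m_t^phi in terms of U} together with the divergence of $\#\cC_s$. A diagonal argument (first $t\to\infty$, then $s\to\infty$) combined with the Aldous--Eagleson characterization of stable convergence then yields \eqref{eq:1st summand convergenceII}.

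For part (ii) the architecture is identical, but Lemma~\ref{Lem:regular varying variance}, applied (after taking expectations) to the deterministic characteristic $\varphi(t)=\E[\chi^2](t)=e^{\alpha t}f(t)\1_{[0,\infty)}(t)$, replaces the key renewal theorem and provides $e^{-\alpha\tau}\tau^{-\theta-1}\E[\cZ^{\chi^2}_\tau]\to c/\beta$. Combined with the asymptotic $e^{\alpha t}\int_{[0,t]}f(x)\ell(\dx)\sim c\,t^{\theta+1}e^{\alpha t}$, this yields the conditional variance limit $W/\beta$ under the normalization of \eqref{eq:1st summand convergenceXIII}. The conditional Lindeberg condition now requires nontrivial truncation control, for which the pointwise assumption \eqref{eq:pointwise Lindeberg's condition} is tailor-made: it bounds the truncated second moments $\E[\chi^2(\tau)\1_{\{\chi^2(\tau)>\varepsilon e^{\alpha t}t^{\theta+1}\}}]$ uniformly in $\tau=t-S(w)$, and after summation over $w\in\cC_s$ and an application of Nerman's LLN to the resulting renewal-type integrals, furnishes the Lindeberg--Feller condition on the scale $t^{\theta+1}e^{\alpha t}$.

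The main obstacle is the verification of the conditional Lindeberg condition for $\Xi_s(t)$: the summands are not identically distributed, since the argument $\tau=t-S(w)$ varies randomly across $w\in\cC_s$, so the truncated second moments must be controlled uniformly in $\tau$. In part (i) this uniformity is immediate from the a priori $e^{-\alpha\tau}\E[(\cZ^\chi_\tau)^2]$-bound afforded by (A\ref{ass:variance growth}); in part (ii) it is precisely the role played by the pointwise truncation hypothesis \eqref{eq:pointwise Lindeberg's condition}, which is the weakest natural assumption that propagates the Lindeberg condition through the random telescoping.
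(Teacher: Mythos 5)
Your decomposition is genuinely different from the paper's. The paper treats $\cZ_t^\chi$ as the terminal value of a martingale indexed by individuals (along an admissible ordering of $\I$) and applies the martingale CLT of Hall--Heyde along a diagonal sequence; the martingale increments are single terms $\chi_{v_j}(t-S(v_j))$. You instead freeze at a stopping line $\cC_s$ and decompose into $R_s(t)$ (early births) plus $\Xi_s(t)$ (subtree contributions), aiming for a conditional Lindeberg--Feller CLT. The remainder control and conditional-variance computation in your proposal are correct: $e^{-\alpha t}\E[R_s(t)^2]=\int_{[0,s]}\E[\chi^2](t-x)e^{-\alpha(t-x)}\,\U(\dx)\to0$, and $e^{-\alpha t}\Var[\Xi_s(t)\mid\F_s^W]\to CW_s\to CW$. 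The problem is the conditional Lindeberg step.

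The summands of $\Xi_s(t)$ are the \emph{whole subtree sums} $\cZ^\chi_{t-S(w)}\circ\theta_w$, $w\in\cC_s$, not single characteristic values. After normalizing by $e^{\alpha t/2}$ and factoring $e^{-\alpha t/2}\cZ^\chi_{t-S(w)}\circ\theta_w = e^{-\alpha S(w)/2}\cdot e^{-\alpha\tau/2}\cZ^\chi_\tau\circ\theta_w$ with $\tau=t-S(w)$, the conditional Lindeberg sum becomes
\begin{equation*}
\frac{1}{C}\sum_{w\in\cC_s}e^{-\alpha S(w)}\,
\E\big[\big(e^{-\alpha\tau/2}\cZ^\chi_\tau\big)^2\,\ind{|e^{-\alpha\tau/2}\cZ^\chi_\tau|>\varepsilon\sqrt{C}\,e^{\alpha S(w)/2}}\big]\Big|_{\tau=t-S(w)}.
\end{equation*}
Your claim that this tends to zero because $\sup_\tau e^{-\alpha\tau}\E[(\cZ^\chi_\tau)^2]<\infty$ is incorrect: uniform boundedness of second moments does not yield the Lindeberg condition. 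What is needed is uniform integrability of the family $\big(e^{-\alpha\tau}(\cZ^\chi_\tau)^2\big)_{\tau\ge0}$, so that the truncated second moment tends to zero uniformly in $\tau$ as the truncation level $\varepsilon\sqrt{C}\,e^{\alpha S(w)/2}$ grows. This is strictly stronger than bounded second moments and does \emph{not} follow from the assumptions (A1)--(A5). Nerman's LLN gives $L^1$-convergence, hence uniform integrability, of $e^{-\alpha\tau}\cZ^{\chi^2}_\tau$, but that is a different random variable from $e^{-\alpha\tau}(\cZ^\chi_\tau)^2$; the two merely share the same mean. Establishing UI of $e^{-\alpha\tau}(\cZ^\chi_\tau)^2$ without a fourth-moment condition would essentially require the CLT you are trying to prove. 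Moreover, the iterated-limit order you propose (``first $t\to\infty$, then $s\to\infty$'') makes the failure sharper: for fixed $s$ the truncation level is bounded, the number of summands is frozen at $\#\cC_s$, and the individual summands $e^{-\alpha t/2}\cZ^\chi_{t-S(w)}\circ\theta_w$ are \emph{not} asymptotically negligible, so the Lindeberg--Feller theorem cannot be applied at the first stage at all. Finally, in case (ii) you appeal to the pointwise truncation hypothesis \eqref{eq:pointwise Lindeberg's condition}, but that is an assumption on the characteristic $\chi(\tau)$ itself, not on $\cZ^\chi_\tau$; it controls the paper's martingale increments $\chi_{v_j}(t-S(v_j))$ but gives no handle on your subtree sums.

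This is precisely why the paper chooses the individual-level martingale: there, each increment is one $\chi_{v_j}(t-S(v_j))$, with conditional second moment $\E[\chi^2](t-S(v_j))$, and the Lindeberg sum is bounded by $e^{-\alpha t}\cZ_t^{v_2(\cdot,s)}$ with $v_2(\tau,s)=\E[\chi^2(\tau)\ind{|\chi(\tau)|>s}]$; Nerman's LLN plus dominated convergence then yield the Lindeberg condition from (A5) alone (or from \eqref{eq:pointwise Lindeberg's condition} in case (ii)). To rescue your stopping-line approach you would need an extra ingredient: for instance, truncate $\chi$ at the level of the original martingale increments and split $\cZ^\chi$ into a bounded-increment part and a tail part, or impose a fourth-moment condition giving $\E[(\cZ^\chi_\tau)^4]=O(e^{2\alpha\tau})$ via a Rosenthal-type bound. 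As written, the proposal has a genuine gap at the Lindeberg step.
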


\begin{proof}[Proof of Theorem \ref{Thm:CLT with centered phi}]
 Consider an admissible ordering $v_1,v_2,\ldots$ of $\I$ and put
$\I_n \defeq \{v_1,\ldots,v_n\}$ and $\mathcal{G}_n \defeq \sigma(\pi_{v_j}: j=1,\ldots, n)$.
Now we set
\begin{equation*}
a_t \defeq
\begin{cases}
\frac{1}{\beta} \int_{ \G} \E[\chi^2](x) e^{-\alpha x} \, \ell(\dx) \cdot e^{\alpha t}		&	\quad	\text{in case (i)}, \\[0.7em]
\frac{1}{\beta} \int_{0}^t \E[\chi^2](x) e^{-\alpha x} \, \ell(\dx) \cdot e^{\alpha t}			&	\quad	\text{in case (ii)}
\end{cases}
\end{equation*}
for all $t \in \G$.
 If $\|\chi\|_{L^2(\dProb \otimes e^{-\alpha t}\ell(\dt))} = \int \E[\chi^2(x)] e^{-\alpha x} \, \ell(\dx)=0$ in case (i),
then the assertion is trivial.
Hence, we exclude this case
and may thus assume that $a_t>0$ for all sufficiently large $t \in \G$.
The latter is automatic in case (ii) in view of the assumption that $f$ does not vanish identically on $\G$ and is uniformly continuous.
For  $t$ with $a_t>0$, we define
\begin{equation*}
M_n(t) \defeq a_t^{-1/2} \sum_{u \in \I_n} \chi_u(t-S(u)).
\end{equation*}
 Then $(M_n(t),\mathcal{G}_n)_{n \in \N_0}$ is a centered,
martingale and bounded in $L^2$
by Lemma \ref{Lem:centered characteristic->martingale}
We write $M(t)$ for its limit (almost sure and in $L^2$).
Let $(t_n)_{n \in \N}$ be an increasing sequence in $\G$ that diverges to infinity.
Then there exists an increasing sequence $(k_n)_{n \in \N}$
such that $\E[(M(t_n)-M_{k_n}(t_n))^2] \leq 2^{-n}$ for every $n \in \N$
and, therefore, $M(t_n)-M_{k_n}(t_n)$ converges to $0$ almost surely as $n \to \infty$.
In view of Slutsky's theorem \cite[Theorem 8.6.1]{Resnick:2014},
in order to prove the convergence in distribution of
$M(t_n) = M(t_n)-M_{k_n}(t_n) + M_{k_n}(t_n)$ as $n \to \infty$,
it suffices to prove convergence in distribution of $M_{k_n}(t_n)$ as $n \to \infty$.
For the latter,
we rely on the martingale central limit theorem \cite[Corollary 3.1 on p.~58]{Hall+Heyde:1980}.
To apply the cited theorem,
it suffices to verify that
\begin{align}
&a_{t_n}^{-1} \sum_{j=1}^{k_n}\E\Big[ \chi_{v_j}^2(t_n-S(v_j))\Big| \mathcal{G}_{j-1}\Big]\Probto W	\quad \text{as } n \to \infty	\label{eq:mgale CLT1}	\\
&a_{t_n}^{-1} \sum_{j=1}^{k_n}\E\Big[ \chi_{v_j}^2(t_n-S(v_j))\ind{|\chi_{v_j}(t_n-S(v_j))|>\varepsilon a_{t_n}^{1/2}}\Big| \mathcal{G}_{j-1}\Big]
\Probto 0	\quad \text{as } n \to \infty	   \label{eq:mgale CLT2}
\end{align}
for every $\varepsilon>0$.
To prove \eqref{eq:mgale CLT1} observe that
\begin{align*}
a_{t_n}^{-1}\E\bigg[\sum_{j=k_n+1}^{\infty}\E\Big[\chi_{v_j}^2(t_n-S(v_j))\Big| \mathcal{G}_{j-1}\Big]\bigg] = \E\big[(M(t_n)-M_{k_n}(t_n))^2\big] \leq 2^{-n}
\end{align*}
and hence \eqref{eq:mgale CLT1} is equivalent to
\begin{align}	\label{eq:3.1}
a_{t_n}^{-1}\sum_{j=1}^{\infty}\E\Big[\chi_{v_j}^2(t_n-S(v_j))\Big| \mathcal{G}_{j-1}\Big]
=a_{t_n}^{-1}\sum_{u\in\I}\E[\chi^2](t_n-S(u))
\Probto W.
\end{align}
In case (i), \eqref{eq:3.1} is equivalent to
\begin{align*}
&e^{-\alpha t_n}\sum_{u\in\I}\E[\chi^2](t_n-S(u))
= e^{-\alpha t_n} \cZ_{t_n}^{\E[\chi^2]}
~\Probto\frac {W}{\beta} \int e^{-\alpha x} \E[\chi^2](x) \, \ell(\dx),
\end{align*}
which follows from \cite[Theorem 6.1]{Jagers+Nerman:1984} in the non-lattice case.
The lattice case is analogous.
Lemma~\ref{Lem:regular varying variance} gives \eqref{eq:3.1} in case (ii).

Now we show \eqref{eq:mgale CLT2}. Let $v_2(t,s) \defeq \E[\chi^2(t) \ind{|\chi(t)|>s}]$ for $t \in \R$ and $s \geq 0$.
In case (i), for any $\varepsilon > 0$,
\begin{align*}
\limsup_{n \to \infty} e^{-\alpha t_n}
&\sum_{j=1}^{k_n}\E\big[\chi_{v_j}^2(t_n-S(v_j))\ind{|\chi_{v_j}(t_n-S(v_j))|>\varepsilon e^{\alpha t_n/2}}
\, \big| \,  \mathcal{G}_{j-1}\big]	\\
&\leq \limsup_{n \to \infty} e^{-\alpha t_n}\sum_{j=1}^{\infty}v_2(t_n-S(v_j),\varepsilon e^{\alpha t_n/2})\\
&\leq \liminf_{s \to \infty} \limsup_{n \to \infty} e^{-\alpha t_n} \cZ_{t_n}^{v_2(\cdot,s)}\\
&=\liminf_{s \to \infty} \frac {W}{\beta} \int v_2(x,s)  e^{-\alpha x} \, \ell(\dx) = 0	\qquad	\text{a.\,s.}
\end{align*}
by \cite[Theorem 6.1]{Jagers+Nerman:1984} in the non-lattice case and the dominated convergence theorem.
The lattice case is analogous.

\noindent
We turn to case (ii) and fix $\varepsilon > 0$.
We infer from \eqref{eq:pointwise Lindeberg's condition} that for any $\varepsilon,\delta>0$ there is a $T \geq 0$
such that, for all $t \geq T$,
\begin{equation*}
v_2(t, \varepsilon e^{\alpha t/2} t^{\frac{\theta+1}{2}}) = \E\Big[\chi(t)^2\ind{|\chi(t)|>\varepsilon {e^{\alpha t/2} t^{\frac{\theta+1}{2}}}} \Big]
\leq \delta e^{\alpha t}t^\theta.
\end{equation*}
Therefore, with $\|\E[\chi^2]\|_{[0,T]} \defeq \sup_{x \in [0,T]} \E[\chi^2](x)$,
\begin{align*}
&\limsup_{n\to\infty} \frac{e^{-\alpha t_n}}{t_n^{\theta+1}}
\sum_{j=1}^{k_n}\E\Big[\chi_{v_j}^2(t_n\!-\!S(v_j))\ind{|\chi_{v_j}(t_n-S(v_i))|>\varepsilon {e^{\alpha t_n/2} t_n^{\frac{\theta+1}{2}}}} \, \Big|\,  \mathcal{G}_{j-1}\Big]\\
&~\leq \limsup_{n\to\infty} \frac{e^{-\alpha t_n}}{t_n^{\theta+1}} \sum_{u\in \I} v_2(t_n\!-\!S(u),\varepsilon {e^{\alpha t_n/2} t_n^{\frac{\theta+1}{2}}}) \\
&~= \limsup_{n\to\infty} \frac{e^{-\alpha t_n}}{t_n^{\theta+1}}
\bigg( \!\!\!\! \sum_{\substack{u \in \I: \\S(u) \leq t_n \!-\! T}} \!\!\!\!\!\!\!\! v_2(t_n\!-\!S(u),\varepsilon {e^{\alpha t_n/2} t_n^{\frac{\theta+1}{2}}}) + \|\E[\chi^2]\|_{[0,T]} \cdot N((t_n-T,t_n])\bigg) \\
&~\leq \limsup_{n \to\infty} \frac{e^{-\alpha t_n}}{t_n^{\theta+1}}\bigg(\delta  \!\!\!\! \sum_{\substack{u \in \I: \\S(u) \leq t_n \!-\! T}} \!\!\!\!\!\!\!\! e^{\alpha (t_n\!-\!S(u))}(t_n-S(u))^\theta + \|\E[\chi^2]\|_{[0,T]} \cdot N((t_n-T, t_n]) \bigg)	\\
&~\leq \frac{\delta W}{\beta (\theta+1)}	\qquad	\text{a.\,s.}	
\end{align*}
by Lemma \ref{Lem:regular varying variance} with $f(t)=t^\theta$, $t \geq 0$
and the fact that $e^{-\alpha t_n}N((t_n-T,t_n])$ converges a.\,s.\ by \cite[Theorem 5.4]{Nerman:1981} in the non-lattice case
and by \cite[Theorem 3.2]{Gatzouras:2000} in the lattice case.
Since $\delta > 0$ was arbitrary, we conclude that the limit is zero and, therefore, \eqref{eq:mgale CLT2} holds true
in both cases.

It remains to justify that the convergence is stable
and that limiting random variable $\cN$ is independent of $\F$.
Although, this is not stated explicitly in \cite[Theorem 3.2]{Hall+Heyde:1980},
it follows from the proof of the preceding Lemma  3.1 of \cite{Hall+Heyde:1980}, cf.~Eq.~(3.15) there,
that is, for any $E \in \F$, we have
\begin{align*}
\E\Big[e^{\imag \theta M_{k_n}(t_n)}\1_E\Big] \to \E\Big[e^{-W \frac{\theta^2}2 }\1_E\Big]
\end{align*}
for every $\theta \in \R$.
The latter is equivalent, by a standard approximation argument, to say that for any $\F$-measurable random variable $Y$
\begin{align*}
\E\Big[e^{\imag \theta M_{k_n}(t_n)}e^{\imag \eta Y }\Big]
\to \E\Big[e^{-W\frac{\theta^2}2 }e^{\imag \eta Y }\Big]
=\E\Big[e^{\imag \theta \sqrt W \cN}e^{\imag \eta Y }\Big]
\end{align*}
for a standard normal variable $\cN$ independent of $(W,Y)$.
This also implies the stable convergence by \cite[Proposition 1]{Aldous+Eagleson:1978}.

\end{proof}

\subsection{Deterministic characteristics}	\label{subsec:deterministic characteristics}

Let $f$ be a deterministic characteristic, i.e.,
a c\`adl\`ag function $f:\R \to \R$.
We investigate the behavior of $\cZ^f_t$ as $t \to \infty$
by means of an auxiliary centered random characteristic $\chi_f$ defined by
\begin{equation}	\label{eq:chi_f}
\chi_f(t)	\defeq	f*\xi*V(t)-f*\mu*V(t) =  m^f * \xi(t) - m^f * \mu(t),
\end{equation}
where $V(\cdot) = \sum_{n=0}^\infty \mu^{*n}(\cdot) = \E[\sum_{u \in \I} \delta_{S(u)}(\cdot)]$ and $*$ denotes
 Lebesgue-Stieltjes convolution.
For instance, for every $t \in \R$,
\begin{equation*}
f*V(t) = \int f(t-x) \, V(\dx) = \E\bigg[\sum_{u \in \I} f(t-S(u))\bigg] = m_t^f
\end{equation*}
if the integrals are well-defined.
However, the latter is not guaranteed a priori.
The following lemma provides a sufficient condition along with an important connection
between $\cZ^f_t$ and $\cZ^{\chi_f}_t$.

\begin{lemma}	\label{Lem:reduction to centered phi}
Assume that (A\ref{ass:Malthusian parameter}) holds.
Let $f:\R \to \R$ be a deterministic c\`adl\`ag function such that $t \mapsto f(t)e^{-\alpha t}$ is directly Riemann integrable.
\begin{enumerate}[(a)]
	\item
		The characteristic $\chi_f$ given by \eqref{eq:chi_f} is well-defined and has almost surely c\`adl\`ag paths.
	\item	
		If (A\ref{ass:second moment}) holds and
		the function $t\mapsto m_t^f e^{-\frac\alpha2 t} (1+t^2)$ is bounded,
		then the characteristic $\chi_f$ satisfies (A\ref{ass:variance growth}) and (A\ref{ass:local ui of phi^2}).
	\item
		If (A\ref{ass:second moment}) holds and
		the function $t\mapsto m_t^f e^{-\frac\alpha2 t} (1+t^2)$ is bounded, then for any $t \in \R$, $\E[\chi_f(t)]=0$ and
		\begin{equation}	\label{eq:Z_t^chi_f}
		\cZ^{f}_t-m^{f}_t=\cZ^{\chi_f}_t	\qquad	\text{a.\,s.\quad for all } t \in \R.
		\end{equation}
	\item
		If $f$ is supported on $[0,\infty)$, i.e., if $f(t) = 0$ for all $t<0$, then \eqref{eq:Z_t^chi_f} also holds.
\end{enumerate}
\end{lemma}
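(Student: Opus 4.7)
My plan for part (a) is to first establish that $m^f = f * V$ is well-defined and c\`adl\`ag with $|m^f(t)| \leq C e^{\alpha t}$ for some finite $C$. The bound follows from the many-to-one formula \eqref{eq:many-to-one1}, direct Riemann integrability of $|f|(t)e^{-\alpha t}$, and \eqref{eq:U unif loc finite}, while the c\`adl\`ag property follows from dominated convergence applied to $t \mapsto \int f(t-x)\,V(\dx)$. The deterministic term $m^f * \mu$ in $\chi_f$ is then c\`adl\`ag by the same reasoning, and the random sum $m^f * \xi$ converges uniformly on compacts almost surely under the pathwise dominant $Ce^{\alpha b}\sum_j e^{-\alpha X_j}$ on $[a,b]$, which is finite a.s.\ by (A\ref{ass:Malthusian parameter}). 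Hence $\chi_f$ has c\`adl\`ag paths.

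For part (b), I would apply Proposition \ref{Prop:f^*}(c). Translating the hypothesis $|m^f(t)| \leq Ce^{\alpha t/2}/(1+t^2)$ to the local supremum $M(u) \defeq \sup_{|s-u|\leq 1}|m^f(s)|$ gives $M(u) \leq C' e^{\alpha u/2}/(1+u^2)$, and the pathwise bound $\chi_f^* \leq (M * \xi) + (M * \mu)$ then reduces the key integral $\int \E[(\chi_f^*)^2](t)\, e^{-\alpha t}\,\dt$, after expanding the square and using the polynomial factors $1/(1+(t-X_j)^2)$ to perform the $t$-integration, to an expression controlled by $\E\big[\big(\sum_j e^{-\alpha X_j/2}\big)^2\big]$, which is finite by (A\ref{ass:second moment}). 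Proposition \ref{Prop:f^*}(c) then delivers both (A\ref{ass:variance growth}) and (A\ref{ass:local ui of phi^2}).

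For part (c), the identity $\E[\chi_f(t)] = 0$ is immediate from the definition. For $\cZ_t^f - m_t^f = \cZ_t^{\chi_f}$, my plan is to use the branching recursion. Combining the decomposition $\cZ_t^f = f(t) + \sum_j \cZ_{t-X_j}^f \circ \theta_j$ with the renewal equation $m_t^f = f(t) + (\mu * m^f)(t)$ yields
\begin{equation*}
Y_t \defeq \cZ_t^f - m_t^f = \chi_f(t) + \sum_j Y_{t-X_j}\circ\theta_j,
\end{equation*}
which is the one-step recursion also satisfied by $\cZ_t^{\chi_f}$. Iterating $N$ times,
\begin{equation*}
Y_t = \sum_{|u|\leq N} \chi_{f,u}(t-S(u)) + R_N(t),\qquad R_N(t) \defeq \sum_{|u|=N+1} Y_{t-S(u)}\circ\theta_u,
\end{equation*}
where the first sum converges in $L^2$ to $\cZ_t^{\chi_f}$ by part (b) together with Lemma \ref{Lem:centered characteristic->martingale}. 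The hard part will be to show that the remainder $R_N(t)$ tends to $0$ as $N \to \infty$, which I would address by combining the variance recursion $\Var[\cZ_t^f] = \Var[\chi_f(t)] + (\Var[\cZ^f_\cdot] * \mu)(t)$ (to control $\|Y_s\|_2$) with the conditional independence of the subtrees rooted at generation $N+1$ and the second-moment control on the coming generations provided by (A\ref{ass:second moment}).

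For part (d), since $f$ vanishes on $(-\infty,0)$ and $\mu$ is supported on $[0,\infty)$, the renewal equation $m^f = f + \mu * m^f$ forces $m^f$ to vanish on $(-\infty,0)$, whence $\chi_f(s) = 0$ for $s<0$. All series defining $\cZ_t^f$ and $\cZ_t^{\chi_f}$ thereby reduce to finite sums over $\{u\in\I : S(u)\leq t\}$, which is almost surely finite by (A\ref{ass:Malthusian parameter}) and \cite[Theorem 6.2.3]{Jagers:1975}. The recursion from (c) then holds pathwise, and the remainder $R_N(t)$ vanishes once $N$ exceeds the (a.s.\ finite) maximum generation containing an individual with $S(u)\leq t$, so the identity follows without requiring any second-moment hypothesis.
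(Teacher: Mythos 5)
Parts (a), (b), and (d) are in line with the paper's argument: (a) uses the $f^*$-style dominated-convergence argument implicit in your plan, (b) is essentially the paper's polynomial-decay estimate feeding into Proposition \ref{Prop:f^*}(c), and (d) correctly reduces to pathwise-finite sums.

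The gap is in part (c), precisely in what you yourself flag as ``the hard part.'' Your remainder is
\[
R_N(t) \;=\; \sum_{|u|=N+1} Y_{t-S(u)}\circ\theta_u
\;=\; \sum_{|u|\geq N+1} f(t-S(u)) \;-\; \sum_{|u|=N+1} m^f_{t-S(u)},
\]
and you propose to kill it via the variance recursion $\Var[\cZ_t^f]=\Var[\chi_f(t)]+\Var[\cZ^f_\cdot]*\mu(t)$ plus a conditional second-moment decomposition at generation $N+1$. This is circular as stated: the conditional variance argument requires $\Var[\cZ_s^f]<\infty$, but that finiteness is precisely what the lemma is used to establish (it is Corollary \ref{Cor:reduction to centered phi}, which the paper deduces \emph{from} this lemma). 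Without an independent a-priori $L^2$ bound on $\cZ^f_s$ you cannot even write the variance recursion; getting such a bound would require a separate truncation/monotone-convergence argument that you have not provided. The variance route can presumably be made rigorous, but as written it leaves the key step unproved. What the paper actually does at this point is much lighter: since $m^{|f|}_t=\E\big[\sum_{u\in\I}|f|(t-S(u))\big]<\infty$ is a convergent series, the tail estimate $\E\big[\sum_{|u|\geq n}|f|(t-S(u))\big]\to 0$ already gives $\E|R_N(t)|\to 0$ (both terms in the display above are controlled by this tail). This $L^1$ estimate requires no second-moment considerations and no conditional independence; it is the ingredient your plan is missing. The telescoping identity you wrote down is the same as the paper's, so once you replace the variance machinery by this dominated-convergence / tail-of-a-convergent-series argument, the proof closes.

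One further caveat: the one-step recursion $Y_t=\chi_f(t)+\sum_j Y_{t-X_j}\circ\theta_j$ for a general $f$ (not vanishing on $(-\infty,0)$) involves rearranging infinite sums, and this too needs justification via the unconditional $L^1$-convergence from Proposition \ref{Prop:L1 existence of Z_t^varphi}; the paper sidesteps this by working directly with the finite truncations $\sum_{0\leq|u|\leq n}$ and taking $L^1$ limits at the end, rather than asserting the full recursion as an identity first.
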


\begin{proof}
A function $g:R \to \R$ is directly Riemann integrable if and only if $g_+$ and $g_-$, the positive and negative part of $g$, respectively, are.
Hence, if $t \mapsto f(t) e^{-\alpha t}$ is directly Riemann integrable, then so is $t \mapsto |f(t)| e^{-\alpha t}$.

(a)
In order to see that $\chi_f$ is well-defined, it suffices to check that $|f|*\mu*V(t)$ is finite for all $t \in \R$.
Indeed, as $\mu * V \leq \delta_0+\mu*V = V$, putting $g(t) \defeq |f(t)| e^{-\alpha t}$ we obtain
\begin{align}
|f|*\mu*V(t) \leq |f|*V(t) &= \sum_{n=0}^\infty \E\bigg[ \sum_{|u|=n} |f|(t-S(u))\bigg]	\notag	\\
&= e^{\alpha t} \sum_{n=0}^\infty \E\bigg[ \sum_{|u|=n} e^{-\alpha S(u)} g(t-S(u))\bigg]
= e^{\alpha t} g*\U(t) < \infty,	\label{eq:|f|*mu*V}
\end{align}
where we have used the many-to-one formula \eqref{eq:many-to-one1} in the next-to-last step
and the direct Riemann integrability of $g$ in combination with \eqref{eq:U unif loc finite} in the last.
To prove that $\chi_f$ has c\`adl\`ag paths almost surely,
it suffices to show that $f^**\mu*V$ is finite.
This is justified by the fact that $f$ has c\`adl\`ag paths together with the dominated convergence theorem.
Since $f^**\mu*V \leq f^** V$, we have to check that the latter is finite.
Further, $e^{-\alpha t} f^*(t) \leq e^\alpha g^*(t)$.
Therefore, by a calculation analogous to \eqref{eq:|f|*mu*V},
it is enough to show that $g^**\U$ is finite,
which, in view of \eqref{eq:U unif loc finite},
is true if $g^*$ is directly Riemann integrable.
This however follows from the converse part of Proposition \ref{Prop:f^*}(a)
since $g$ is directly Riemann integrable.	\smallskip

\noindent

(b) By Proposition \ref{Prop:f^*}(c), it suffices to show that
\begin{equation}	\label{eq:int Echi_f*^2e^-alphax dx finite}
\int \E\big[(\chi_f^*)^2 \big](x) e^{-\alpha x} \, \dx < \infty,
\end{equation}
To this end, note that, since $\chi_f= m^f * \xi - m^f * \mu$,
we have, for any $x \in \R$,
\begin{align*}
\E\big[|\chi^*_f(x)|^2\big]
\leq 2\E\Big[\sup_{|t-x| \leq 1} |m^f \!* \xi(t)|^2\Big] + 2 \!\!\! \sup_{|t-x| \leq 1} |m^f \!* \mu(t)|^2
\leq 4\E\Big[\sup_{|t-x| \leq 1} |m^f \!* \xi(t)|^2\Big],
\end{align*}
where we have used Jensen's inequality.
For $|t-x| \leq 1$, we obtain
\begin{align*}
1+(x-X_j)^2
&= 1+(t-X_j + x- t)^2
\leq 1 + 2(t-X_j)^2 + 2(x-t)^2 \leq 3 + 2(t-X_j)^2	\\
&\leq 3 (1+(t-X_j)^2)
\end{align*}
for $j=1,\ldots,N$, and, therefore, with $C \defeq 3e^{\frac{\alpha}{2}} \sup_{t \in \R} e^{-\frac\alpha2 t} (1+t^2) |m_t^f|$,
\begin{align*}
|m^f \!* \xi(t)|^2
=\bigg| \sum_{j=1}^{N} m_{t-X_j}^f \bigg|^2
\leq C^2
\sum_{1 \leq i,j \leq N} \frac{e^{\frac{\alpha}{2}(x-X_i)}}{1+(x-X_i)^2}\frac{e^{\frac{\alpha}{2}(x-X_j)}}{1+(x-X_j)^2}.
\end{align*}
Thus, since
\begin{align*}
\int\frac{1}{1+(x-X_i)^2}\frac{1}{1+(x-X_j)^2} \, \dx \leq \int \frac{\dx}{1+(x-X_i)^2} = \pi,
\end{align*}
we conclude
\begin{align*}
\int \E\big[|\chi^*_f(x)|^2\big]e^{-\alpha x} \, \dx
\leq 4\pi C^2 \E\bigg[\sum_{1 \leq i,j \leq N} e^{-\frac\alpha 2X_i}e^{-\frac\alpha 2X_j}\bigg] < \infty
\end{align*}
from assumption
(A\ref{ass:second moment}).	\smallskip

\noindent

(c) By part (a), $ |f|* \mu * V(t)$ is finite for all $t \in \R$ and, hence,
$\E[\chi_f](t) = \E[f*\xi*V(t)] - f*\mu*V(t) = f*\mu*V(t)-f*\mu*V(t) = 0$ for all $t \in \R$.
Further, since $f$ satisfies (A\ref{ass:mean growth}) by assumption
and trivially also (A\ref{ass:variance growth}),
Proposition \ref{Prop:L1 existence of Z_t^varphi} implies that $\cZ^{f}_t$
converges in $L^1$ for every $t \in \R$.
The characteristic $\chi_f$ on the other hand satisfies (A\ref{ass:variance growth}) by part (b)
and trivially also (A\ref{ass:mean growth}) because it is centered.
Thus, Proposition \ref{Prop:L1 existence of Z_t^varphi} yields that also
the series defining $\cZ^{\chi_f}_t$ converges unconditionally in $L^1$ for all $t \in \R$.

In particular, for any $n \in \N$, the infinite series
\begin{equation*}
\sum_{0 \leq |u| \leq n} \chi_{f,u}(t-S(u))
\end{equation*}
also converges unconditionally in $L^1$ and so is well-defined and converges to $\cZ^{\chi_f}_t$ as $n\to \infty$  in $L^1$.
Moreover, due to the fact that $f$ is deterministic, $\chi_f$ is $\xi$-measurable.
For $u \in \I$, we have $\chi_{f,u}(t) = f*\xi_u*V(t)-f*\mu*V(t)$.
Using this and $V = \delta_0 + \mu*V$, we infer
\begin{align*}
\sum_{0 \leq |u| \leq n} \chi_{ f,u}(t-S(u))
& ~= \sum_{0 \leq |u| \leq n} \big(f*\xi_u*V(t-S(u)) - f*\mu*V(t-S(u))\big)	\\
&~= \sum_{1 \leq |u| \leq n+1} \! f*V(t-S(u)) - \sum_{0 \leq |u| \leq n} f*\mu*V(t-S(u))	\\
&~= \sum_{1 \leq |u| \leq n} \! f(t-S(u)) - f*\mu*V(t) + \sum_{|u|= n+1} f*V(t-S(u))		\\
&= \sum_{0 \leq |u| \leq n} \! f(t-S(u)) - f*V(t) + \sum_{|u|=n+1} f*V(t-S(u)).
\end{align*}
In the last line, $f*V(t) = m_t^f$.
The manipulations in the above chain of equalities are justified
by
the fact that
\begin{equation*}
\E\bigg[\sum_{0 \leq |u| \leq n+1} |f*V(t-S(u))| \bigg] < \infty.
\end{equation*}
The finiteness of the above expectation follows from
\begin{align*}
\E\bigg[\sum_{|u|=k} \! |f*V(t-S(u))| \bigg]
&\le \E\bigg[\sum_{|u|=k} \! |f|*V(t-S(u)) \bigg]
=\E\bigg[\sum_{|u|\ge k} \! |f|(t-S(u)) \bigg]\\
&\le \E\bigg[\sum_{u\in \I} \! |f|(t-S(u)) \bigg]=m^{|f|}_t<\infty.
\end{align*}
The dominated convergence theorem  yields
$$
\sum_{|u|=n+1} f*V(t-S(u))\to 0\quad \text{and} \sum_{0 \leq |u| \leq n} \! f(t-S(u))\to \cZ^{f}_t
$$
as $n \to \infty$
in $L^1$.

\smallskip

\noindent

(d) The last calculation carries over if $f(t) = 0$ for all $t<0$.
Indeed, the latter condition implies $\chi_f(t) = 0$ for all $t<0$
and hence with probability one,
all sums have only finitely many non-vanishing terms
(since only finitely many individuals are born before any fixed time almost surely
by \cite[Theorem 6.2.3]{Jagers:1975}).
\end{proof}

Lemma \ref{Lem:reduction to centered phi} has the following corollary.

\begin{corollary}	\label{Cor:reduction to centered phi}
If the assumptions of Lemma \ref{Lem:reduction to centered phi}(b) are satisfied,
then, for every $t \in \R$,
\begin{equation*}
\Var[\cZ^{f}_t] = m^{\chi_f^2}_t < \infty.
\end{equation*}
\end{corollary}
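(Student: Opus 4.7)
The plan is to reduce the corollary to a direct application of Lemma~\ref{Lem:reduction to centered phi}(c) followed by the variance formula \eqref{eq:Var[Z_t^chi]} from Lemma~\ref{Lem:centered characteristic->martingale}. By hypothesis, the assumptions of Lemma~\ref{Lem:reduction to centered phi}(b) hold, which in particular ensures (the stronger of) the assumptions of Lemma~\ref{Lem:reduction to centered phi}(c), so that
\begin{equation*}
\cZ^{f}_t - m^{f}_t = \cZ^{\chi_f}_t \quad \text{a.\,s., for every } t \in \R.
\end{equation*}
Since $m^{f}_t$ is deterministic, taking variances on both sides yields $\Var[\cZ^{f}_t] = \Var[\cZ^{\chi_f}_t]$.

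Next I would invoke Lemma~\ref{Lem:reduction to centered phi}(b) once more to note that $\chi_f$ is a centered characteristic satisfying (A\ref{ass:variance growth}) (and (A\ref{ass:local ui of phi^2}), though only the first is needed here). By the last sentence of Lemma~\ref{Lem:centered characteristic->martingale}, (A\ref{ass:variance growth}) implies the integrability condition \eqref{eq:Esum Varvarphi(t-S(u))<infty}, which is the hypothesis under which the identity \eqref{eq:Var[Z_t^chi]} was established. Applying \eqref{eq:Var[Z_t^chi]} to $\chi = \chi_f$ then gives
\begin{equation*}
\Var[\cZ^{\chi_f}_t] = \E\big[\cZ_t^{\chi_f^2}\big] = m^{\chi_f^2}_t,
\end{equation*}
and this quantity is finite because the proof of Lemma~\ref{Lem:centered characteristic->martingale} explicitly bounds $\E[\sum_{u \in \I} \E[\chi_f^2](t-S(u))]$ by $C e^{\alpha t}$ under (A\ref{ass:variance growth}), which is precisely $m^{\chi_f^2}_t$. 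Combining the two displays completes the proof.

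There is essentially no obstacle here: the corollary is a two-line consequence of results already proved in the paper. The only subtlety worth double-checking is that the hypotheses of Lemma~\ref{Lem:reduction to centered phi}(b) indeed entail those of Lemma~\ref{Lem:reduction to centered phi}(c), which is clear because part~(c) is stated under exactly the same boundedness assumption on $t\mapsto m^{f}_t e^{-\frac\alpha2 t}(1+t^2)$ together with (A\ref{ass:second moment}).
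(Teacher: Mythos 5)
Your proof is correct and follows essentially the same route as the paper's: decompose $\cZ^f_t = \cZ^{\chi_f}_t + m^f_t$ via Lemma~\ref{Lem:reduction to centered phi}(c) (whose hypotheses coincide with those of part (b)), pass to the centered process to identify the variance, and then invoke \eqref{eq:Var[Z_t^chi]} to get $\E[(\cZ^{\chi_f}_t)^2] = \E[\cZ^{\chi_f^2}_t] = m^{\chi_f^2}_t$. The only difference is that you spell out explicitly why \eqref{eq:Var[Z_t^chi]} applies (namely that (A\ref{ass:variance growth}) for $\chi_f$, supplied by part (b), entails \eqref{eq:Esum Varvarphi(t-S(u))<infty}), which the paper leaves implicit.
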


\begin{proof}
By Lemma \ref{Lem:reduction to centered phi}, we have
$\cZ^{f}_t = \cZ^{\chi_f}_t + m^{f}_t$ where $\cZ^{\chi_f}_t$ is centered
and
\begin{equation*}
e^{-\alpha t} \Var[\chi_f](t) = e^{-\alpha t} \E[\chi_f^2](t)
\end{equation*}
is directly Riemann integrable.
Using $\Var[\cZ^{f}_t] = \Var[\cZ^{\chi_f}_t + m^{f}_t] = \Var[\cZ^{\chi_f}_t] = \E[(\cZ^{\chi_f}_t)^2]$
we infer with the help of \eqref{eq:Var[Z_t^chi]} that
\begin{equation*}
\E[(\cZ^{\chi_f}_t)^2] = \E[\cZ^{\chi_f^2}_t]	\quad	\text{for all } t \in \R.
\end{equation*}
\end{proof}

\subsection{Slowly growing mean process with signed characteristics}	\label{subsec:slowly growing mean}

We now treat the case where $m_t^\varphi$ grows relatively slowly as $|t| \to \infty$.
Later, we shall reduce the general case to this one.

\begin{theorem}	\label{Thm:slowly growing mean}
Suppose that (A\ref{ass:Malthusian parameter})  through (A\ref{ass:second moment}) hold,
 the random characteristic $\varphi$ satisfies  (A\ref{ass:mean growth}),
(A\ref{ass:variance growth}) and that
the function $t\mapsto e^{-\frac\alpha2 t} (1+t^2) m_t^\varphi$ is bounded.
If $\G=\R$, assume in addition that (A\ref{ass:local ui of phi^2}) holds.
Then, with $\cN$ a standard normal random variable independent of  $\F$,
\begin{equation*}
e^{-\frac\alpha2 t} \cZ^\varphi_t \stablyto \sigma_\varphi\sqrt{\tfrac{W}{\beta}}\cN,
\end{equation*}
with
\begin{equation*}
\sigma^2_\varphi \defeq \int \Var[\varphi(x)+\xi*m^{\varphi}(x)] e^{-\alpha x} \, \ell(\dx).
\end{equation*}
Moreover, $\sigma^2_\varphi=0$ if and only
if the mean $m^\varphi$ is a version of the process $\cZ^\varphi$.

\end{theorem}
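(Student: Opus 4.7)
The plan is to reduce the assertion to an application of Theorem~\ref{Thm:CLT with centered phi}(i) to a carefully constructed centered random characteristic $\chi$, and then to identify the resulting variance with $\sigma_\varphi^2$.

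To build $\chi$, I would first set $f(t) \defeq \E[\varphi(t)]$, which is c\`adl\`ag by (A\ref{ass:local ui of phi^2}) (trivial in the lattice case) and satisfies $m^f_t = m^\varphi_t$, since $\varphi_u$ is independent of $S(u)$ for each $u \in \I$. Because $t \mapsto e^{-\alpha t/2}(1+t^2)m^f_t$ is bounded by hypothesis, Lemma~\ref{Lem:reduction to centered phi} applies to $f$ and yields the centered random characteristic
\begin{equation*}
\chi_f(t) \defeq m^f * \xi(t) - m^f * \mu(t),\qquad t \in \R,
\end{equation*}
which satisfies (A\ref{ass:variance growth}) and (A\ref{ass:local ui of phi^2}) and obeys $\cZ^f_t = m^f_t + \cZ^{\chi_f}_t$ a.\,s.\ for every $t \in \G$. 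I then set
\begin{equation*}
\chi(t) \defeq (\varphi(t) - f(t)) + \chi_f(t),
\end{equation*}
a centered random characteristic inheriting (A\ref{ass:variance growth}) and (in the non-lattice case) (A\ref{ass:local ui of phi^2}) from its two summands via Remark~\ref{Rem:dRi}.

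The linearity of $\cZ^{\cdot}_t$ in the characteristic, justified by the unconditional $L^1$-convergence from Proposition~\ref{Prop:L1 existence of Z_t^varphi}, then gives
\begin{equation*}
\cZ^\varphi_t - m^\varphi_t = \cZ^{\varphi - f}_t + \cZ^{\chi_f}_t = \cZ^\chi_t\qquad \text{a.\,s.},
\end{equation*}
and an application of Theorem~\ref{Thm:CLT with centered phi}(i) to $\chi$ produces
\begin{equation*}
e^{-\frac\alpha2 t}\cZ^\chi_t \stablyto \bigg(\tfrac{W}{\beta} \int \E[\chi^2](x)\, e^{-\alpha x}\, \ell(\dx)\bigg)^{\!\!1/2} \cN.
\end{equation*}
Since $\chi_f(x) - \xi * m^\varphi(x) = -\mu * m^\varphi(x)$ and $f(x)$ are deterministic,
\begin{equation*}
\E[\chi^2](x) = \Var[\chi(x)] = \Var[\varphi(x) + \chi_f(x)] = \Var[\varphi(x) + \xi * m^\varphi(x)],
\end{equation*}
so the variance in the CLT is exactly $\sigma_\varphi^2$. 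Because $e^{-\alpha t/2} m^\varphi_t \to 0$ by the boundedness assumption, Slutsky together with the stable mode of convergence delivers the claimed limit for $e^{-\alpha t/2}\cZ^\varphi_t$.

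For the final equivalence, if $\sigma_\varphi^2 = 0$ then $g(x) \defeq \E[\chi^2](x)$ vanishes $\ell$-almost everywhere; (A\ref{ass:local ui of phi^2}) together with dominated convergence renders $g$ right-continuous, so in fact $g \equiv 0$ on $\G$. Lemma~\ref{Lem:centered characteristic->martingale} then yields $\Var[\cZ^\chi_t] = \E[\cZ^{\chi^2}_t] = 0$, whence $\cZ^\chi_t = 0$ a.\,s.\ and $\cZ^\varphi_t = m^\varphi_t$ a.\,s. Conversely, if the latter holds for every $t \in \G$, the CLT established above forces the stable limit to be a Dirac mass at zero; since $W$ is positive with positive probability and $\cN$ is nondegenerate and independent of $W$, this forces $\sigma_\varphi = 0$. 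The main obstacle I anticipate is the careful verification that $\chi$ satisfies (A\ref{ass:variance growth}) (and (A\ref{ass:local ui of phi^2}) when $\G = \R$), so that Theorem~\ref{Thm:CLT with centered phi}(i) is truly applicable; once this is in place, the rest is a routine assembly of Lemma~\ref{Lem:reduction to centered phi} with the variance computation above.
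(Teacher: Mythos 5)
Your proposal is correct and follows essentially the same route as the paper's own proof: both construct the centered characteristic $\varphi-\E[\varphi]+\chi_{\E[\varphi]}$ via Lemma~\ref{Lem:reduction to centered phi}, verify (A\ref{ass:variance growth}) (and (A\ref{ass:local ui of phi^2}) in the non-lattice case) through Remark~\ref{Rem:dRi}, apply Theorem~\ref{Thm:CLT with centered phi}(i), identify $\E[\chi^2](x)=\Var[\varphi(x)+\xi*m^\varphi(x)]$, and dispose of the deterministic term $m_t^\varphi$ by the boundedness hypothesis. Your treatment of the degenerate case ($\sigma_\varphi^2=0$) argues via vanishing of $\E[\chi^2]$ and right-continuity rather than via cadlag paths of the centered characteristic itself as in the paper, but the two are equivalent and the conclusion is the same.
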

\begin{proof}
Clearly, $m^{\E[\varphi]}=m^\varphi$ where $\E[\varphi]$ denotes the function $t \mapsto \E[\varphi(t)]$.
In view of Lemma \ref{Lem:reduction to centered phi}, we can write
\begin{align}
\cZ_t^\varphi
&= \cZ^{\varphi-\E[\varphi]}_t + \cZ^{\E[\varphi]}_t
= \cZ^{\varphi-\E[\varphi]}_t+\cZ^{\chi_{\E[\varphi]}}_t+m^\varphi_t
= \cZ^{\varphi-\E[\varphi]+\chi_{\E[\varphi]}}_t+m^\varphi_t		\notag	\\
& = \cZ_t^{\varphi + m^\varphi*\xi - \E[\varphi + m^\varphi*\xi]}+m^\varphi_t	\quad	\text{a.\,s.}
\label{eq:Z_t^varphi centering trick}
\end{align}
By assumption, $e^{-\alpha t/2} m^\varphi_t \to 0$ as $t \to \infty$.
Hence, it suffices to show that $e^{-\frac\alpha2 t} \cZ^{\varphi-\E[\varphi]+\chi_{\E[\varphi]}}_t$
converges in distribution to the claimed distribution.
Since the characteristic
\begin{equation}	\label{eq:reduced characteristic}
t \mapsto \varphi(t)-\E[\varphi(t)]+\chi_{\E[\varphi]}(t)
\end{equation}
is centered, it is reasonable to apply Theorem \ref{Thm:CLT with centered phi}(i).
To this end, we need to check that (A\ref{ass:variance growth})
holds for the characteristic in \eqref{eq:reduced characteristic},
i.e., that the function
\begin{align}
t &\mapsto e^{-\alpha t} \Var[\varphi(t)-\E[\varphi(t)]+\chi_{\E[\varphi]}(t)]		\notag	\\
&= e^{-\alpha t} \Var[\varphi(t)+\chi_{\E[\varphi]}(t)]	\text{ is directly Riemann integrable.}		\label{eq:Var phi+Ephi^* dRi}
\end{align}
From Lemma \ref{Lem:reduction to centered phi}
we conclude that $\chi_{\E[\varphi]}(t)$ satisfies (A\ref{ass:variance growth}) and (A\ref{ass:local ui of phi^2}).
This is also true for $\varphi$. Hence, \eqref{eq:Var phi+Ephi^* dRi} holds by Remark \ref{Rem:dRi}.

Finally, if $\sigma_\varphi = 0$, then $\varphi(x)+m^{\varphi}*\xi(x)$
is equal to its expectation a.\,s.\ for $\ell$-almost every $x \in \G$,
i.e., $\varphi(x)+m^{\varphi}*\xi(x) - (\E[\varphi](x) + m^{\varphi}*\mu(x)) = 0$ a.\,s.\ for $\ell$-almost every $x \in \G$.
On the other hand, Lemma \ref{Lem:reduction to centered phi}(a) implies that
$\chi_{\E[\varphi]}=m^{\varphi}*\xi - m^{\varphi}*\mu$  has c\`adl\`ag paths a.\,s.\ and by Remark 	\ref{Rem:dRi} the same holds true for the characteristic $\E[\varphi]$, which in turn implies that, except on a $\Prob$-null set, $\varphi(x)+m^{\varphi}*\xi(x) - (\E[\varphi](x) + m^{\varphi}*\mu(x)) = 0$ for every $x \in \G$.
Consequently, by \eqref{eq:Z_t^varphi centering trick}, for every fixed  $t \in \G$,
\begin{align*}
\cZ_t^\varphi
= \cZ_t^{\varphi + m^\varphi*\xi - \E[\varphi + m^\varphi*\xi]}+m^\varphi_t = m^\varphi_t
\quad	\text{a.\,s.},
\end{align*}
i.e., for every fixed $t \in \G$, $\cZ_t^\varphi$ is a.\,s.\ deterministic.
\end{proof}

\subsection{Proof of Theorem \ref{Thm:main}}	\label{subsec:Proof of main theorem}

In the proof of Theorem \ref{Thm:main}, we use the following fact.

\begin{lemma}	\label{lem:auxilarity lemma for the bounary}
	Let $\eta_1,\ldots,\eta_m$ be distinct real numbers. In the lattice case, we additionally assume that $\eta_i\in(-\pi,\pi]$. Consider a collection of centered, square-integrable random variables $(Y_{j,l})_{1 \leq j \leq m, 0 \leq l \leq n}$.
Then for
\begin{equation*}
\chi(t)
\defeq \1_{[0,\infty)}(t)e^{\frac\alpha2 t} \sum_{l=0}^n\sum_{j=1}^m t^{l} e^{\imag \eta_j t} Y_{j,l}
\end{equation*}
it holds that
\begin{equation}	\label{eq:convergence of variance}
\frac{1}{t^{2n+1}} \int_{[0,t]}
\Var\big[\chi(x) \big] e^{-\alpha x}\, \ell(\dx) \to \frac{1}{2n+1} \sum_{j=1}^m \Var[Y_{j,n}]
\quad \text {as } t\to\infty, \, t \in \G
\end{equation}
and, for any $\varepsilon>0$,
\begin{align}	\label{eq:Lindeberg's condition}
\E\big[|\chi(t)|^2 \ind{|\chi(t)|^2> \varepsilon t^{2n+1} e^{\alpha t} } \big]
=o(t^{2n} e^{\alpha t})	\quad \text{as }	t\to\infty, \, t \in \G.
\end{align}
In other words, $\chi(t)$ fulfills the assumption of Theorem~\ref{Thm:CLT with centered phi}(ii)
with $\theta = 2n$.
\end{lemma}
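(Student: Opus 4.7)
My plan is a direct computation. Since the $Y_{j,l}$ are centered, $\E[\chi(x)]=0$ and $\Var[\chi(x)] = \E[|\chi(x)|^2]$, so expanding the squared modulus yields the finite double sum
\begin{equation*}
\Var[\chi(x)]\, e^{-\alpha x} = \sum_{l_1,l_2=0}^n \sum_{j_1,j_2=1}^m x^{l_1+l_2}\, e^{\imag(\eta_{j_1}-\eta_{j_2})x}\, \E\bigl[Y_{j_1,l_1}\overline{Y_{j_2,l_2}}\bigr].
\end{equation*}
Integrating term by term over $[0,t]$ with respect to $\ell$, I would classify the summands into three groups: (a) the diagonal top-degree terms $l_1=l_2=n$, $j_1=j_2$, whose total contribution equals $\frac{t^{2n+1}}{2n+1}\sum_{j=1}^m \E[|Y_{j,n}|^2] + O(t^{2n})$; (b) the diagonal lower-order terms $j_1=j_2$, $l_1+l_2<2n$, of order $O(t^{l_1+l_2+1})=O(t^{2n})$; and (c) the oscillatory terms $j_1\ne j_2$, for which one has $\int_{[0,t]} x^{l_1+l_2} e^{\imag(\eta_{j_1}-\eta_{j_2})x}\,\ell(\dx) = O(t^{l_1+l_2})$, via one integration by parts in the non-lattice case, or Abel summation against the bounded partial sums of the geometric series $\sum_{k\le t} e^{\imag\beta k}$ in the lattice case. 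Dividing by $t^{2n+1}$ and letting $t\to\infty$, only the group~(a) terms survive and sum to the claimed limit $\frac{1}{2n+1}\sum_{j=1}^m \Var[Y_{j,n}]$, establishing \eqref{eq:convergence of variance}.

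For the Lindeberg-type estimate \eqref{eq:Lindeberg's condition}, the idea is to dominate $|\chi(t)|^2$ pointwise by a constant multiple of $t^{2n} e^{\alpha t}\, Z$, where $Z\defeq\sum_{l,j}|Y_{j,l}|^2\in L^1$. Cauchy--Schwarz gives, for $t\ge 1$,
\begin{equation*}
|\chi(t)|^2 \le nm\cdot e^{\alpha t}\sum_{l,j} t^{2l}|Y_{j,l}|^2 \le nm\cdot t^{2n}\, e^{\alpha t}\, Z.
\end{equation*}
On the event $\{|\chi(t)|^2 > \varepsilon t^{2n+1}e^{\alpha t}\}$ this forces $Z > \varepsilon t/(nm)$, so
\begin{equation*}
\E\bigl[|\chi(t)|^2\ind{|\chi(t)|^2>\varepsilon t^{2n+1}e^{\alpha t}}\bigr] \le nm\cdot t^{2n}e^{\alpha t}\,\E\bigl[Z\ind{Z>\varepsilon t/(nm)}\bigr],
\end{equation*}
and the last expectation tends to $0$ by dominated convergence since $Z\in L^1$. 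This yields the required $o(t^{2n}e^{\alpha t})$.

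The argument is essentially bookkeeping plus a standard truncation, with no serious obstacle. The one point requiring a little care is the oscillatory bound in step~(c) in the lattice case, which relies on $\eta_{j_1}-\eta_{j_2}\notin 2\pi\Z$ for $j_1\ne j_2$; this may be assumed without loss of generality, since otherwise the corresponding summands have identical exponential characters on $\Z$ and can be merged prior to running the argument.
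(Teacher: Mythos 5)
Your proof is correct. For \eqref{eq:convergence of variance} you follow essentially the paper's route: expand $\Var[\chi(x)]e^{-\alpha x}$ as a finite double sum, isolate the resonant top-degree diagonal terms, and kill the oscillatory off-diagonal ones by integration by parts / Abel summation; this is the same decomposition the paper uses, with the paper invoking its display \eqref{eq:trigonometric power integral}. For \eqref{eq:Lindeberg's condition}, however, you take a genuinely different and cleaner route. The paper first reduces to the single summands $\chi_j$ via the elementary $|c_1+\cdots+c_m|^2\ind{|c_1+\cdots+c_m|>y}\le m^2\sum_i|c_i|^2\ind{|c_i|>y/m}$, then applies Markov's inequality to show $\Prob(|\chi_j(t)|^2>\varepsilon t^{2n+1}e^{\alpha t})\to0$ and bounds the top-degree term by $2t^{2n}e^{\alpha t}\E[|Y_{j,n}|^2\ind{\cdots}]$. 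You instead dominate $|\chi(t)|^2$ directly by $C\, t^{2n}e^{\alpha t}Z$ with $Z=\sum_{l,j}|Y_{j,l}|^2\in L^1$, observe the truncation event forces $Z>\varepsilon t/C$, and finish with dominated convergence in one stroke; this is shorter and avoids splitting off the top-degree contribution. Two trivial remarks: the Cauchy--Schwarz constant is $(n+1)m$ rather than $nm$ (there are $n+1$ values of $l$), which does not affect anything; and your observation about needing $\eta_{j_1}-\eta_{j_2}\notin2\pi\Z$ in the lattice case is astute — the paper's \eqref{eq:trigonometric power integral} is only asserted for the cases it uses, and in the application the $\eta_j$'s lie in $(-\pi,\pi]$ so the issue does not arise, but your explicit merging step makes the hypothesis clean as a standalone statement.
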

\begin{proof}
Expanding the variance gives
\begin{align}	\label{eq:expanding the variance}	\nonumber
\Var\bigg[\sum_{l=0}^n\sum_{j=1}^m x^{l} e^{\imag \eta_j x} Y_{j,l} \bigg]
&= x^{2n}\sum_{j=1}^m \Var[Y_{j,n}]\\
&\hphantom{=} ~+ x^{2n}\sum_{j \not = k} e^{\imag (\eta_j - \eta_k) x} \Cov[Y_{j,n},Y_{k,n}]+O(x^{2n-1})
\end{align}
as $x \to \infty$.
Further notice that, for $\eta \in \R$, with $|\eta|<2\pi$ in the lattice case,
\begin{align}	\label{eq:trigonometric power integral}
\frac{1}{t^{{2n}+1}} \int_{[0,t]} x^{2n} e^{\imag \eta x} \, \ell(\dx)
\to	
\begin{cases}
\frac1{2n+1},	&	\text{if } \eta = 0,	\\
0,			&	\text{if } \eta \not = 0.
\end{cases}
\end{align}
This follows from the fundamental theorem of calculus in the non-lattice case and integration by parts
if $\eta\not =0$,
whereas in the lattice case, it follows from Faulhaber's formula if $\eta=0$
and from summation by parts if $\eta \not = 0$.
Relation \eqref{eq:convergence of variance} now follows from
\eqref{eq:expanding the variance} and \eqref{eq:trigonometric power integral}.

In order to prove that \eqref{eq:Lindeberg's condition} holds, for $t\ge0$ we set
$\chi_j(t)\defeq e^{\frac\alpha2 t} \sum_{l=0}^nt^{l} e^{\imag \eta_j t} Y_{ j,l}$.
Since for any complex numbers $c_1,\dots,c_m$ and $y > 0$
\begin{equation*}
|c_1+\cdots+c_m|^2 \ind{|c_1+\cdots+c_m|>y} \leq m^2\big(|c_1|^2\ind{|c_1|>y/m}+\cdots+|c_m|^2\ind{|c_m|>y/m}\big),
\end{equation*}
it suffices to prove  \eqref{eq:Lindeberg's condition} for $\chi_j$ instead of $\chi$.
By Markov's inequality, we have
\begin{equation*}
\Prob(|\chi_j(t)|^2>\varepsilon t^{2n+1} e^{\alpha t})
\leq \frac{(n+1)^2}{\varepsilon t^{2n+1}} \sum_{l=0}^nt^{2l}  \E[|Y_{j,l}|^2] \to 0
\quad	\text{as } t \to \infty
\end{equation*}
as the sum is of the order $t^{2n}$ as $t \to \infty$.
Consequently,
\begin{align*}
&\E\big[|\chi_j(t)|^2 \ind{|\chi_j(t)|^2>\varepsilon t^{2n+1} e^{\alpha t}}\big]	\\
&~\leq 2t^{2n} e^{\alpha t} \E\big[|Y_{j,n}|^2\ind{|\chi_j(t)|^2>\varepsilon t^{2n+1} e^{\alpha t}}\big] + O(t^{2n-1} e^{\alpha t})
= o(t^{2n} e^{\alpha t})
\end{align*}
as $t \to \infty$. This proves \eqref{eq:Lindeberg's condition}.		
\end{proof}

We now turn to the proof of Theorem \ref{Thm:main}.

\begin{proof}[Proof of Theorem \ref{Thm:main}]
Suppose that $\varphi$ is a random characteristic satisfying
\begin{align*}	\tag{\ref{eq:expansion of mean}}
m^{\varphi}_t = \1_{[0,\infty)}(t) \sum_{\lambda\in\Lambda_{\geq}} \sum_{l=0}^{k(\lambda)-1} a_{\lambda,l} t^l e^{\lambda t} + r(t),	\quad t\in\G
\end{align*}
for some constants $a_{\lambda,l} \in \R$ and a function $r$ such that
$|r(t)| \leq C e^{\frac\alpha2 t}/(1+t^2)$ for a finite constant $C > 0$.
For any $\lambda\in\Lambda_{\geq}$,
we put
\begin{equation*}
\vec{a}_\lambda \defeq \sum_{l=1}^{k(\lambda)} a_{\lambda,l-1} \e_{l}
\end{equation*}
and consider the following characteristic
\begin{align*}
\psi_\Lambda(t)=\sum_{\lambda\in \Lambda} \vec{a}_\lambda^\transp(\phi_{\lambda}(t)+\chi_\lambda(t)) \e_1
\end{align*}
for $\phi_{\lambda}$ and $\chi_\lambda$ defined in \eqref{eq:phi_lambda} and \eqref{eq:chi_lambda}, respectively.
Then, by  Lemma \ref{Lem:chi_lambda}, for $t \geq 0$,
\begin{align}	\label{eq:Z_t^psi_Lambda}
\cZ_t^{\psi_\Lambda}
&=\sum_{\lambda\in \Lambda} \vec{a}_\lambda^\transp\cZ_t^{\phi_{\lambda}+\chi_\lambda} \e_1
= \sum_{\lambda\in \Lambda} \vec{a}_\lambda^\transp\exp(\lambda,t,k(\lambda)) W(\lambda,k(\lambda)) \e_1
= H_\Lambda(t),
\end{align}
 where the definition of $H_\Lambda$ should be recalled from \eqref{eq:H_Lambda}.
Further, by Lemma \ref{Lem:Nerman's martingales}
\begin{align}
m_t^{\psi_\Lambda}
&=\sum_{\lambda\in \Lambda}\vec{a}_\lambda^\transp
\E[\cZ_t^{\phi_{\lambda}+\chi_\lambda}] \e_1	
=\sum_{\lambda\in \Lambda}\vec{a}_\lambda^\transp\E[\cZ_t^{\phi_{\lambda}}] \e_1\notag	\\
&= \1_{[0,\infty)}(t)\sum_{\lambda\in \Lambda}\vec{a}_\lambda^\transp
\exp(\lambda,t,k(\lambda))  \e_1=\1_{[0,\infty)}(t)\E[H_\Lambda(t)].	\label{eq:m^psi_Lambda}
\end{align}
Similarly, putting
\begin{align*}
\psi_{\partial \Lambda}(t)
&=\sum_{\lambda\in\partial\Lambda}\vec{a}_\lambda^\transp\E[\phi_{\lambda}(t)] \e_1,
\end{align*}
again by Lemma \ref{Lem:Nerman's martingales}, we obtain,
for any $t \in \R$,
\begin{align}
m^{\psi_{\partial \Lambda}}_t
&= \E[\cZ_t^{\psi_{\partial \Lambda}}]
= \sum_{\lambda\in\partial\Lambda} \vec{a}_\lambda^\transp\E[\cZ_t^{\phi_{\lambda}}] \e_1	\notag	\\
&=
\1_{[0,\infty)}(t) \sum_{\lambda\in\partial\Lambda} \vec{a}_\lambda^\transp\exp{(\lambda,t,k(\lambda))} \e_1
= \1_{[0,\infty)}(t) H_{\partial \Lambda}(t),	\label{eq:m^psi_dLambda}
\end{align}
 where the definition of $H_{\partial\Lambda}$ should be recalled from \eqref{eq:H_dLambda}.
 In view of Remark \ref{Rem:conjugate}, we have $\overline{\vec{a}_\lambda}=\vec{a}_{\overline\lambda}$,
$\overline{\phi_{\lambda}}=\phi_{\overline\lambda}$ and $ \overline{\chi_{\lambda}}=\chi_{\overline\lambda}$.
We thus conclude that both characteristics $\psi_\Lambda$ and $\psi_{\partial\Lambda}$ are in fact real-valued.

Now write
\begin{align}
	\label{eq:first decomposition of Z^phi}
\cZ_t^\varphi=\cZ_t^{\psi_\Lambda}+\cZ_t^{\psi_{\partial \Lambda}}+\cZ_t^{\varrho}
= H_\Lambda(t)+\cZ_t^{\psi_{\partial \Lambda}}+\cZ_t^{\varrho},
\end{align}
where $\varrho \defeq \varphi - \psi_\Lambda - \psi_{\partial \Lambda}$.
Next, since $\psi_{\partial \Lambda}$ is deterministic, we may consider
the associated centered characteristic $\chi_{\psi_{\partial \Lambda}}$
defined in \eqref{eq:chi_f}, namely,
\begin{align*}
\chi_{\psi_{\partial \Lambda}}(t)
&= \xi * m^{\psi_{\partial \Lambda}}_t - \mu * m^{\psi_{\partial \Lambda}}_t	\\
&= \sum_{\lambda\in\partial\Lambda} \sum_{j=1}^N  \vec{a}_\lambda^\transp\exp{(\lambda,t-X_j,k(\lambda))} \e_1
\1_{[0,\infty)}(t-X_j)-\mu*m^{\psi_{\partial \Lambda}}_t.
\end{align*}
 If we set now
\begin{align*}
\psi_{\lambda}(t)
&\defeq \1_{[0,\infty)}(t) \sum_{j=1}^N  \vec{a}_\lambda^\transp\exp(\lambda,t-X_j,k(\lambda)) \e_1	\\
&\hphantom{\defeq}~
-\1_{[0,\infty)}(t) \E\bigg[\sum_{j=1}^N  \vec{a}_\lambda^\transp\exp(\lambda,t-X_j,k(\lambda)) \e_1 \bigg],\\
	 \chi(t)& \defeq	\sum_{\lambda\in\partial\Lambda}\psi_{\lambda}(t)
\end{align*}
and
\begin{align*}
\phi_{\partial\Lambda}(t)
&\defeq
\sum_{\lambda\in\partial\Lambda}
\sum_{j=1}^N  \vec{a}_\lambda^\transp\exp(\lambda,t-X_j,k(\lambda)) \e_1 \1_{[0,X_j)}(t)	\\
&\hphantom{\defeq}~
-\sum_{\lambda\in\partial\Lambda} \E\bigg[\sum_{j=1}^N  \vec{a}_\lambda^\transp\exp(\lambda,t-X_j,k(\lambda))
\e_1 \1_{[0,X_j)}(t) \bigg]	\\
&=\sum_{\lambda\in\partial\Lambda} \vec{a}_\lambda^\transp \phi_{\lambda}(t) \e_1-  \sum_{\lambda\in\partial\Lambda} \E\big[  \vec{a}_\lambda^\transp \phi_{\lambda}(t)\e_1 \big].
\end{align*}
  we  get the following decomposition
\begin{align}
	\label{eq:decomposition of psi_partial_Lambda}
\chi_{\psi_{\partial \Lambda}}(t)=\chi(t)-\phi_{\partial\Lambda}(t).
\end{align}
The fact that all the expectations above are finite and thus the characteristics are well-defined follows from (A\ref{ass:first moment}) and (A\ref{ass:Lambda finite}).
Note also that, for every $\lambda\in\partial\Lambda$,
\begin{align*}
\sum_{i=1}^N  \vec{a}_\lambda^\transp\exp(\lambda,t-X_i,k(\lambda)) \e_1
&= \sum_{i=1}^N e^{\lambda(t-X_i)} \sum_{l=0}^{k(\lambda)-1}a_{\lambda,l}(t-X_i)^l	\\
&=
\sum_{i=1}^N e^{\lambda(t-X_i)} \sum_{l=0}^{k(\lambda)-1} a_{\lambda,l} \sum_{j=0}^l \binom{l}{j} t^j (-X_i)^{l-j}	\\
&=
\sum_{j=0}^{k(\lambda)-1} \sum_{l=j}^{k(\lambda)-1}
\sum_{i=1}^N e^{-\lambda X_i} a_{\lambda,l} \binom{l}{j}(-X_i)^{l-j} t^j e^{\lambda t}	\\
&= \sum_{j=0}^{k(\lambda)-1} R_{\lambda,j} t^j e^{\lambda t}
\end{align*}
 by \eqref{eq:definition of R_lambda}.
With this at hand, we infer
\begin{align}	\label{eq:psi_lambda}
\psi_{\lambda}(t) = \1_{[0,\infty)}(t) \sum_{l=0}^{k(\lambda)-1} (R_{\lambda,l}-\E[R_{\lambda,l}])t^le^{\lambda t},
\quad	t \in \R.
\end{align}
Using  Lemma \ref{Lem:reduction to centered phi}(d) with $f=\psi_{\partial \Lambda}$  (note here that such $f$ fulfills the assumptions as for any $\lambda\in\partial\Lambda$ the characteristic $\phi_\lambda$ vanishes on $(-\infty,0)$ and satisfies (A\ref{ass:mean growth}) by Lemma \ref{Lem:phi^1 and phi^2}), we get that
$\cZ_t^{\psi_{\partial \Lambda}}-m_t^{\psi_{\partial \Lambda}} = \cZ_t^{\chi_{\psi_{\partial \Lambda}}}$
and, hence, $\cZ_t^{\psi_{\partial \Lambda}}= \cZ_t^{\chi_{\psi_{\partial \Lambda}}} + m_t^{\psi_{\partial \Lambda}}
= \cZ_t^{\chi_{\psi_{\partial \Lambda}}} + H_{\partial \Lambda}(t)$  for $t \geq 0$.
Therefore, from \eqref{eq:first decomposition of Z^phi} and \eqref{eq:decomposition of psi_partial_Lambda} we obtain the following decomposition,
\begin{align}	\label{eq:final decomposition Z_t^varphi}
\cZ_t^\varphi = H_\Lambda(t) + H_{\partial \Lambda}(t) + \cZ_t^{\varrho-\phi_{\partial\Lambda}} + \cZ_t^\chi,
\quad	t \geq 0.
\end{align}
It suffices to prove
the limit theorem for $\cZ_t^{\varrho-\phi_{\partial\Lambda}}$ and $\cZ_t^\chi$.
To this end, we invoke Theorem~\ref{Thm:slowly growing mean} for the first process
and Theorem \ref{Thm:CLT with centered phi}(ii) for the second  as both characteristics are real-valued
as a consequence of Remark \ref{Rem:conjugate}.
We begin with $\cZ_t^{\varrho-\phi_{\partial\Lambda}}$ and first notice that,
in view of Lemma \ref{Lem:phi^1 and phi^2} and Remark \ref{Rem:dRi},
the characteristic $\varrho-\phi_{\partial\Lambda}$ has c\`adl\`ag paths
and satisfies (A\ref{ass:mean growth}), (A\ref{ass:variance growth}) and (A\ref{ass:local ui of phi^2}).
Moreover,  using the fact that $\phi_{\partial \Lambda}$ is centered in combination with  \eqref{eq:m^psi_Lambda}, \eqref{eq:m^psi_dLambda} and
 \eqref{eq:expansion of mean}
we infer
\begin{align}	\label{eq:m^rho-phi_dLambda_t=r(t)}
m^{\varrho-\phi_{\partial\Lambda}}_t
&=m^{\varrho}_t
= m^{\varphi - \psi_\Lambda - \psi_{\partial \Lambda}}_t
= m_t^\varphi - m_t^{\psi_\Lambda} - m_t^{\psi_{\partial \Lambda}} \\
\nonumber
& = m_t^\varphi -\1_{[0,\infty)}(t)\big(\E \big[H_\Lambda(t)\big] +H_{\partial \Lambda}(t)\big)
=r(t).
\end{align}
We may thus apply Theorem~\ref{Thm:slowly growing mean} to conclude that
\begin{equation}	\label{eq:convergence of Z^rho-dLambda}
e^{-\frac\alpha2t} \cZ^{\varrho-\phi_{\partial\Lambda}}_t
\stablyto \sigma\sqrt{\tfrac{W}{\beta}}\cN,
\end{equation}
where
\begin{align}	\label{eq:variance}
\sigma^2 &\defeq
\int v(x) e^{-\alpha x} \, \ell(\dx),
\end{align}
and $v(t) \defeq \Var\big[\varrho(t)-\phi_{\partial\Lambda}(t)+r*\xi(t)\big]$.
The function $v$ can be further simplified in the following way
\begin{align*}
v(t)
&= \Var\big[\varphi(t) - \psi_\Lambda(t) - \psi_{\partial \Lambda}(t)-\phi_{\partial\Lambda}(t)+r*\xi(t)\big]\\
&=\Var\bigg[\varphi(t)-\sum_{\lambda\in \Lambda} \vec{a}_\lambda^\transp(\phi_{\lambda}(t)+\chi_\lambda(t)) \e_1-\sum_{\lambda\in \partial\Lambda} \vec{a}_\lambda^\transp\phi_{\lambda}(t)\e_1+r*\xi(t)\bigg]\\
&=\Var\bigg[\varphi(t) +\sum_{j=1}^N\bigg( -\sum_{\lambda\in \Lambda} \vec{a}_\lambda^\transp \1_{(-\infty,X_j)}(t) \exp({\lambda,t-X_j},k(\lambda)) \e_1\\
&\hphantom{=\Var\bigg[} -\sum_{\lambda\in \partial\Lambda} \vec{a}_\lambda \1_{[0,X_j)}(t) \exp({\lambda,t-X_j},k(\lambda)) \e_1+r(t-X_j)\bigg)\bigg]\\
&=\Var\bigg[\varphi(t) +\sum_{j=1}^N\bigg( -\sum_{\lambda\in \Lambda}\vec{a}_\lambda^\transp  \exp({\lambda,t-X_j},k(\lambda)) \e_1\\
&\hphantom{=\Var\bigg[} -\1_{[0,\infty)}(t)\sum_{\lambda\in \partial\Lambda} \vec{a}_\lambda^\transp  \exp({\lambda,t-X_j},k(\lambda)) \e_1+m^\varphi(t-X_j)\bigg)\bigg].
\end{align*}
This proves the theorem under the assumption that
$\rho_l=0$ for all $l \geq 0$. Indeed, in this case
$\cZ_t^\chi=0$  and $ \sum_{j=1}^N  \vec{a}_\lambda^\transp  \exp({\lambda, t-X_j},k(\lambda)) \e_1$ is  a.\,s.\ constant for any $\lambda\in \partial\Lambda, t\in\G$ and
\eqref{eq:sigma} follows.

Now, combining \eqref{eq:final decomposition Z_t^varphi}
and \eqref{eq:convergence of Z^rho-dLambda}, we arrive at \eqref{eq:limit theorem} if $\sigma^2 > 0$.
However, if $\sigma^2=0$, then for all $t \in \G$,
$\cZ_t^{\varrho-\phi_{\partial\Lambda}}$ equals its expectation,
which is $r(t)$ a.\,s., as shown by \eqref{eq:m^rho-phi_dLambda_t=r(t)}. This establishes (i).

It remains to prove the theorem in the case where $\rho_l>0$ for some $l \geq 0$.
First notice that by \eqref{eq:convergence of Z^rho-dLambda},
the already established central limit theorem for $\cZ_t^{\varrho-\phi_{\partial\Lambda}}$,
we have $\cZ_t^{\varrho-\phi_{\partial\Lambda}}=o(t^{\frac12}e^{\frac\alpha2 t})$
as $t \to \infty$ in probability. Let $n \in \N_0$ be maximal with $\rho_n > 0$.
We show that the characteristic $\chi$ satisfies the assumptions of Theorem \ref{Thm:CLT with centered phi}(ii)
with $\theta=2n$.
Observe that, for any $\lambda \in \partial \Lambda$, $l \leq k(\lambda)-1$ and some constant $C_{\lambda,\vec{a}_\lambda}$ depending on $\lambda,l$ and $\vec{a}_\lambda$,
\begin{align*}
|R_{\lambda,l}| \leq C_{\lambda,\vec{a}_\lambda} \sum_{j=1}^N (1+X_j^{k(\lambda)-1}) e^{-\frac\alpha2 X_j}.
\end{align*}
In view of assumption (A\ref{ass:second moment}) the random variable $R_{\lambda,l}$ is square integrable.
Setting $R_{\lambda,l}\defeq 0$ for $l \geq k(\lambda)$,
we 
write
\begin{align*}
\chi(t)=\1_{[0,\infty)}(t) e^{\frac\alpha2 t} \cdot \sum_{\lambda\in\partial\Lambda}
\sum_{l=0}^{n} (R_{\lambda,l}-\E[R_{\lambda,l}])t^le^{\imag\,\Imag(\lambda) t}.
\end{align*}
An application on Lemma~\ref{lem:auxilarity lemma for the bounary} gives
\begin{align*}
\frac1{t^{2n+1}}
\int_0^t \E[\chi^2(x)] e^{-\alpha x} \, \ell(\dx)
\to\frac{\sum_{\lambda\in\partial\Lambda}\Var[R_{\lambda,n}]}{2n+1}
= \frac{\rho^2_{n}}{2n+1}
\end{align*}
and
\begin{align*}
\E\big[|\chi(t)|^2 \ind{|\chi(t)|^2> \varepsilon t^{2n+1}  e^{\alpha t}} \big]=o(t^{2n} e^{\alpha t})
\quad	\text{as } t \to \infty,\; t \in \G.
\end{align*}
Finally, by 
Theorem~\ref{Thm:CLT with centered phi}(ii), 
\begin{align*}
\Big(\frac{\rho^2_{n}t^{2n+1}}{2n+1}e^{\alpha t}\Big)^{\!-\frac12}\cZ_t^\chi \stablyto \sqrt{\tfrac{W}{\beta} }\cN,
\end{align*}
which finishes the proof.
\end{proof}

\begin{remark}
The proofs of Theorems \ref{Thm:main} and \ref{Thm:slowly growing mean} reveal that for any characteristic $\varphi$
satisfying the assumptions (A\ref{ass:mean growth}) through (A\ref{ass:local ui of phi^2}), there exists a decomposition  $\varphi=\varphi_1+\varphi_2+\varphi_3$, where each term also satisfies (A\ref{ass:mean growth}) through (A\ref{ass:local ui of phi^2}). Furthermore, for the corresponding Crump-Mode-Jagers processes, it is established that $\cZ^{\varphi_1}_t=H(t)$ for $t\ge0$, $\cZ^{\varphi_2}_t$ is centered, and $\cZ^{\varphi_3}_t$ is a deterministic function equal to $r(t)$. In particular, for the characteristic $\tilde \varphi \defeq\varphi_1+\varphi_3$, one obtains that for $t\ge0$, $\cZ^{\tilde \varphi}_t=H(t)+r(t)$ almost surely, indicating the lack of Gaussian fluctuations as $t$ goes to infinity.
\end{remark}

\begin{proof}[Proof of Corollary \ref{Cor:fd convergence}]
First observe that linear combinations as well as the translations $\varphi(\cdot)\mapsto\varphi(\cdot-s)$
preserve the conditions  (A\ref{ass:mean growth}),  (A\ref{ass:variance growth}) and (A\ref{ass:local ui of phi^2}).
Moreover, for the characteristic $\psi(t) \defeq \varphi(t-s)$
the mean function $m^\psi_t$ has 
expansion
\eqref{eq:expansion of mean} with coefficients given by vectors
$\big({\exp(\lambda,-s,k(\lambda))}^\transp\vec{a}_\lambda\big)$.
According to the Cram\'er--Wold device the  convergence in distribution of
\begin{equation*}
t^{-\frac{d}2}e^{-\frac\alpha2 t}
\big(\cZ^\varphi_{t-s_1} -H(t-s_1),\dots,\cZ^\varphi_{t-s_n} -H(t-s_n)\big)
\end{equation*}
is equivalent to the convergence in distribution of
\begin{equation*}
t^{-\frac{d}2}e^{-\frac\alpha2 t} \sum_{j=1}^n c_j\big(\cZ^\varphi_{t-s_j} -H(t-s_j)\big),
\end{equation*}
for all choices $c_1,\dots,c_n \in \R$,
and the latter convergence follows from Theorem \ref{Thm:main}.
The covariance can be obtained by the polarization identity applied to the variance
and the fact that $m^\psi_{t}=m^\varphi_{t-s}, \ h^\psi(t)=h^\varphi(t-s)$.
\end{proof}

\section{Asymptotic expansion of the mean}	\label{sec:asymptotic expansion of the mean}

In this section we are concerned with
the asymptotic expansion of the mean $m_t^{ \varphi} = \E[\cZ^{ \varphi}_t]$
of a supercritical general branching process $(\cZ_t^{ \varphi})_{t \geq 0}$ as $t \to \infty$.	
Throughout the section, we assume that (A\ref{ass:Malthusian parameter}) and
(A\ref{ass:first moment}) hold.
We fix some notation throughout the section.
By $\theta$ we denote a parameter from $(0,\frac\alpha2)$ such that
 \begin{align}
 	\label{eq:choice of theta}
 	\L\mu(\theta)<\infty \text{ and } \L\mu(z) \neq 1 \quad \text{whenever } \theta \leq \Real(z) < \tfrac\alpha2.
\end{align}
Such a $\theta$ exists in all particular cases considered in this section
and may sometimes be enlarged in order to ensure the validity of additional conditions.
We also fix
 $\gamma > \frac\alpha2$ such that $\gamma < \min\{\Real(\lambda):\lambda\in\Lambda\}$.

It's worth noting that the mean $m^\varphi_t$ depends on the underlying point process $\xi$ only through its intensity measure $\mu$. Therefore, when analyzing $m^\varphi_t$, without loss of generality, we can assume that, additionally to (A\ref{ass:Malthusian parameter}) and
(A\ref{ass:first moment}), the condition (A\ref{ass:second moment}) holds true. Otherwise, we can always replace $\xi$ with the Poisson point process whose intensity measure is $\mu$.

In the non-lattice case we work with
the corresponding  bilateral  Laplace transforms whereas in the lattice case,
we use generating functions.

\subsection{The lattice case}	\label{subsec:the lattice case}

In the present subsection,
we assume that $\mu$ is concentrated on the lattice $\Z$  (and not on a smaller lattice).
We set
\begin{align}	\label{eq:generating_mu}
\Gfct\mu(z) &\defeq \sum_{k=0}^\infty\mu(\{k\}) z^k = \int z^x \, \mu(\dx)
\end{align}
for all $z\in \C$ for which the series is absolutely convergent. In particular, $\L\mu(z) = \Gfct\mu (e^{-z})$.
Note that, due to assumption (A\ref{ass:first moment}), $\Gfct\mu(e^{-\vartheta}) < \infty$ and hence
the power series \eqref{eq:generating_mu}
defines a holomorphic function on $\{|z|<e^{-\vartheta}\}$.
Further, by slightly increasing the value of $\vartheta$ if necessary, we may assume
without loss of generality that there are only finitely many solutions of the equation $\Gfct\mu(z)=1$
in the disc $\{|z|<e^{-\vartheta}\}$.

\begin{lemma}	\label{Lem:lattice asymptotic of the mean}
 Assume that (A\ref{ass:Malthusian parameter}) and (A\ref{ass:first moment}) hold.
Let $\theta\in(\vartheta,\frac\alpha2)$ be such that
there are no solutions to $\Gfct\mu(z)=1$ in $\{z:e^{-\alpha/2}< |z| \leq e^{-\theta }\}$.
Then there are constants  $b_{\lambda,l}$, $\lambda \in \Lambda_{\geq},\,l=0,\ldots,k(\lambda)-1$ such that,
for any characteristic $\varphi$ with
\begin{equation}
	\label{eq:integrability condition in the lattice case}
\sum_{n \in \Z} |\E[\varphi(n)]|(e^{-\theta n}+e^{-\alpha n})<\infty,
\end{equation}
it holds that, for $t \in \Z$,
\begin{align}	\label{eq:m_t^varphi lattice case}
m^{\varphi}_t
&=	\begin{cases}
	\sum \limits_{\lambda\in\Lambda_{\geq}} \sum \limits_{l=0}^{k(\lambda)-1}b_{\lambda,l}
	\sum \limits_{n \in \Z} \E[\varphi(n)] (t-n)^le^{\lambda(t-n)} +O(e^{\theta t})	&	\text{as } t \to \infty	\\
	O(e^{\gamma t})	&	\text{as } t \to -\infty.
	\end{cases}
\end{align}
\end{lemma}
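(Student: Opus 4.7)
My plan is to use the convolution identity $m^\varphi_t = \sum_{n \in \Z} \E[\varphi](n)\, V(\{t-n\})$, where $V = \sum_{k \geq 0}\mu^{*k}$, together with a contour-integral inversion of the generating function $\Gfct V(z) = (1-\Gfct\mu(z))^{-1}$. Assumption (A\ref{ass:first moment}) makes $\Gfct\mu$ holomorphic on $\{|z| < e^{-\vartheta}\}$, hence on a neighborhood of the closed disk $\{|z| \leq e^{-\alpha/2}\}$, so $1-\Gfct\mu$ has only finitely many zeros there---precisely the points $z_\lambda \defeq e^{-\lambda}$ for $\lambda \in \Lambda_{\geq}$, of order $k(\lambda)$. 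The hypothesis of the lemma then guarantees that $\Gfct V$ extends meromorphically to $\{|z| \leq e^{-\theta}\}$ with poles only at these $z_\lambda$, and that the circle $\{|z|=e^{-\theta}\}$ is pole-free.

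The first main step is to expand $V(\{k\})$. Beginning with $V(\{k\}) = (2\pi\imag)^{-1}\oint_{|z|=r_0} z^{-k-1}\Gfct V(z)\, \dz$ for a small $r_0 \in (0,e^{-\alpha})$, I would deform the contour to $|z|=e^{-\theta}$ and collect residues to obtain
\begin{equation*}
V(\{k\}) = \sum_{\lambda \in \Lambda_{\geq}} \sum_{l=0}^{k(\lambda)-1} c_{\lambda,l}\, k^l e^{\lambda k} + r_V(k), \qquad k \geq 0,
\end{equation*}
where the polynomial-times-exponential structure comes from the Laurent expansion of $z^{-k-1}(1-\Gfct\mu(z))^{-1}$ at the order-$k(\lambda)$ pole $z_\lambda$, together with $z_\lambda^{-k} = e^{\lambda k}$. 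The remainder $r_V(k) = (2\pi\imag)^{-1}\oint_{|z|=e^{-\theta}} z^{-k-1}\Gfct V(z)\, \dz$ satisfies $|r_V(k)| \leq Ce^{\theta k}$ by an $ML$-bound, since $\Gfct V$ is bounded on the compact pole-free contour.

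The second step is to insert this expansion into the convolution. The pole contribution yields $\sum_{\lambda,l} c_{\lambda,l} \sum_{n \leq t} \E[\varphi](n)(t-n)^l e^{\lambda(t-n)}$. To match the form stated in the lemma I would extend each inner sum to $n \in \Z$: since $\Real(\lambda) \geq \alpha/2 > \theta$ and $|\E[\varphi](n)|e^{-\theta n}$ is summable, setting $m = n-t \geq 1$ in the correction $\sum_{n>t}\E[\varphi](n)(t-n)^l e^{\lambda(t-n)}$ and factoring out $e^{\theta t}$ leaves the convergent sum $\sum_{m \geq 1} m^l e^{(\theta - \Real(\lambda))m}$ (times a bounded factor), so the extension costs only $O(e^{\theta t})$. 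The remainder contribution $\sum_{n \leq t}\E[\varphi](n)\, r_V(t-n)$ is likewise $O(e^{\theta t})$ via $|r_V(t-n)| \leq Ce^{\theta(t-n)}$ and \eqref{eq:integrability condition in the lattice case}. Setting $b_{\lambda,l} \defeq c_{\lambda,l}$ completes the $t \to \infty$ expansion.

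For $t \to -\infty$ a crude bound suffices: writing $m^\varphi_t = \sum_{k \geq 0}\E[\varphi](t-k)V(\{k\})$ and using $V(\{k\}) \leq Ce^{\alpha k}$ (an immediate consequence of the many-to-one identity $V(\{k\}) = e^{\alpha k}\U(\{k\})$ and $\sup_k\U(\{k\}) < \infty$) gives $|m^\varphi_t| \leq Ce^{\alpha t}\sum_n|\E[\varphi](n)|e^{-\alpha n} = O(e^{\alpha t})$, which is $O(e^{\gamma t})$ as $t \to -\infty$ since $\gamma < \alpha$. I expect the main technical point to be the careful bookkeeping of the Laurent expansion at the higher-order poles and the verification that extending the convolution range from $n \leq t$ to all of $n \in \Z$ genuinely introduces only an $O(e^{\theta t})$ error---both routine but needing care.
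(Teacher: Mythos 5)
Your proposal is correct and follows essentially the same route as the paper: contour integration of the generating function $\Gfct V = (1-\Gfct\mu)^{-1}$, collecting residues at the poles $e^{-\lambda}$, $\lambda\in\Lambda_\geq$, and feeding the resulting expansion of $V(\{\cdot\})$ into the convolution $m_t^\varphi = \sum_n\E[\varphi](n)V(\{t-n\})$. The only organizational difference is that the paper formulates the expansion of $\E[N(\{n\})]$ uniformly for all $n\in\Z$ with indicator factors $\1_{\{n\geq 0\text{ or }\lambda\in\Lambda\}}$ and bounds the error by $C(e^{\theta n}\wedge e^{\gamma n})$, which handles the $t\to-\infty$ case automatically, whereas you keep the expansion only for $k\geq 0$, extend the convolution range from $n\leq t$ to $n\in\Z$ with an explicit $O(e^{\theta t})$ correction, and treat the $t\to-\infty$ bound separately via the crude estimate $V(\{k\})=e^{\alpha k}\U(\{k\})\leq Ce^{\alpha k}$. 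Both bookkeeping schemes are valid and yield the stated lemma; yours is arguably a hair cleaner in the $t\to\infty$ regime, while the paper's unified $e^{\theta n}\wedge e^{\gamma n}$ bound makes the two-sided statement drop out in one pass.
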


\begin{remark}
Defining $\vec{b}_{\lambda} \defeq (b_{\lambda,l-1})_{l=1,\ldots,k(\lambda)}$,
we may write \eqref{eq:m_t^varphi lattice case} more compactly in the form
\begin{align*}
m_t^\varphi = \sum_{\lambda\in\Lambda_{\geq}}\ind{t\ge 0\text{ or }\lambda\in\Lambda}
\sum_{n \in \Z} \E[\varphi(n)] {\vec{b}_\lambda} ^{ \transp}\exp(\lambda, t-n,k(\lambda)) \e_1+O(e^{\theta t}\wedge e^{\gamma t})
\end{align*}
as $t \to \pm \infty$, $t \in \Z$.
\end{remark}

\begin{proof}[Proof of Lemma \ref{Lem:lattice asymptotic of the mean}]
For $r>0$, let $B_r = \{|z| < r\}$ and $\partial B_r = \{|z|=r\}$.
Now fix $r<e^{-\alpha}$.
As $\Gfct(\mu^{*l})= (\Gfct\mu)^{l}$, for any $l\in\N$
and since $\Gfct\mu$ is holomorphic on $B_{e^{-\vartheta}}$,
we infer from Cauchy's integral formula that
\begin{align*}
\mu^{*l}(\{n\})=\frac1{2\pi\imag}\int \limits_{\partial B_r} \frac{(\Gfct\mu)^l(z)}{z^{n+1}} \, \dz.
\end{align*}
In particular,
\begin{align*}
\E[N(\{n\})]
&= \sum_{l=0}^\infty \mu^{*l}(\{n\})
= \sum_{l=0}^\infty \frac1{2\pi\imag} \int \limits_{\partial B_r} \! \frac{(\Gfct\mu)^l(z)}{z^{n+1}}\dz
= \frac1{2\pi\imag}\int \limits_{\partial B_r} \! \frac{\dz}{(1-\Gfct\mu(z))z^{n+1}}
\end{align*}
where the last equality follows by Fubini's theorem.
For $\lambda\in\Lambda_{\geq}$, let
\begin{equation*}
\sum_{j=-k(\lambda)}^{-1}{b_j(\lambda)}{(z-e^{-\lambda})^{j}}
\end{equation*}
be the principle part of the Laurent expansion of the meromorphic function $(1-\Gfct\mu(z))^{-1}$ around $e^{-\lambda}$.
Then the function
\begin{align*}
H(z) \defeq \frac{1}{1-\Gfct\mu(z)}-\sum_{\lambda\in \Lambda_{\geq}} \sum_{j=-k(\lambda)}^{-1}{b_j(\lambda)}{(z-e^{-\lambda})^{j}}
\end{align*}
is holomorphic on $B_{e^{-\theta}}$.
On the other hand, for any $d \in \N_0$,
\begin{align*}
\frac1{2\pi\imag}\int \limits_{\partial B_r} \frac{(z-e^{-\lambda})^{-d}}{z^{n+1}} \, \dz
= (-e^{\lambda})^d e^{\lambda n}{n+d-1 \choose d-1}
\end{align*}
by the residue theorem. Therefore,
\begin{align*}
G(z) \defeq \sum_{\lambda\in \Lambda_{\geq}} \sum_{j=-k(\lambda)}^{-1}{b_j(\lambda)}{(z-e^{-\lambda})^{j}}
\end{align*}
satisfies
\begin{align*}
\frac1{2\pi\imag} \int \limits_{\partial B_r} \frac{G(z)}{z^{n+1}} \, \dz &= \sum_{\lambda\in\Lambda_{\geq}}p_\lambda(n)e^{\lambda n}
\end{align*}
where $p_\lambda$, for $\lambda \in \Lambda_{\geq}$, is a polynomial with complex coefficients of degree $k(\lambda)-1$.
From the analyticity of $H$, we infer
\begin{equation*}
\bigg|\int \limits_{\partial B_{e^{-\theta}}} \frac {H(z)}{z^{n+1}} \, \dz\bigg| = O(e^{\theta n})	\quad	\text{as } n \to \infty,
\end{equation*}
which in turn gives
\begin{align*}
\E [N(\{n\})]
&= \frac1{2\pi\imag} \int \limits_{\partial B_r} \frac{G(z) + H(z)}{z^{n+1}} \, \dz	\\
&= \sum_{\lambda\in \Lambda_{\geq}} \vec{b}_\lambda^{\;\transp} \exp(\lambda,n,k(\lambda)) \e_1 + O(e^{\theta n})
\quad	\text{as } n \to \infty
\end{align*}
for some $\vec{b}_\lambda = \sum_{l=1}^{k(\lambda)}b_{\lambda,l-1} \e_{l} \in \R^{k(\lambda)}$.
In other words, there exists a constant $C > 0$ such that, for any $n \in \Z$,
\begin{align}	\label{eq:aux1}
\bigg|\E [N(\{n\})] - \sum_{\lambda\in\Lambda_{\geq}}  \1_{\{n \geq 0 \text{ or } \lambda \in \Lambda\}} \vec{b}_\lambda^{\;\transp}\exp(\lambda,n,k(\lambda))
\e_1\bigg|
\leq C(e^{\theta n}\wedge e^{\gamma n}).
\end{align}
Now we are ready to investigate
the asymptotic behavior of $m_t^\varphi$ as $t \to \pm \infty$, $t \in \Z$.
Since $m_t^\varphi = m_t^{\E[\varphi]}$, we assume without loss of generality
that $\varphi = f$ is a deterministic function satisfying
\begin{equation*}
\sum_{n \in \Z} |f(n)|(e^{-\theta n}+e^{-\alpha n})<\infty,
\end{equation*}
Then, for $t \in \Z$, we have $m_t^f=\sum_{n \in \Z} f(n)\E [N(\{t-n\})]$.
We  write
\begin{align}
&\bigg|m^f_t-\sum_{\lambda\in \Lambda_{\geq}} \ind{t \ge 0\text{ or }\lambda\in\Lambda} \sum_{n \in \Z} f(n)\vec{b}_\lambda^{\;\transp}\exp(\lambda,t-n,k(\lambda)) \e_1\bigg|	\notag	\\
&\leq \sum_{n \in \Z}|f(n)|\bigg|\E[N(\{t-n\})] - \sum_{\lambda\in \Lambda_{\geq}}\ind{t\ge n\text{ or }\lambda\in \Lambda }\vec{b}_\lambda^{\;\transp}\exp(\lambda,t-n,k(\lambda)) \e_1\bigg|	\notag	\\
&\hphantom{\leq}+\sum_{n \in \Z}|f(n)|\bigg| \sum_{\lambda\in \partial\Lambda}|\ind{t\ge 0}-\ind{t \geq n }| \vec{b}_\lambda^{\;\transp}\exp(\lambda,t-n,k(\lambda)) \e_1\bigg|.	\label{eq:m_t^f-P(t) decomposed}
\end{align}
We use \eqref{eq:aux1} to estimate the first sum on the right-hand side of \eqref{eq:m_t^f-P(t) decomposed} by
\begin{align*}
C \sum_{n \in \Z} |f(n)|(e^{\theta (t-n)}\wedge e^{\gamma (t-n)})
&\leq C \bigg(\sum_{n \in \Z} |f(n)| e^{\theta (t-n)}\bigg) \wedge \bigg(\sum_{n \in \Z} |f(n)| e^{\gamma (t-n)}\bigg)\\
&\leq (e^{\theta t}\wedge e^{\gamma t}) \, C \sum_{n \in \Z} |f(n)|(e^{- \theta n} + e^{-\alpha n}).
\end{align*}
On the other hand,
we use \eqref{eq:matrix norm} to conclude that for any $0<\epsilon<\alpha/2-\theta$ and $\lambda\in\partial\Lambda$
there is a constant $C_\epsilon \geq 0$ such that
$\|\exp(\lambda,n,k(\lambda))\| \leq C_\epsilon e^{\frac\alpha2 n+\epsilon|n|}$.
Hence the second sum on the right-hand side of \eqref{eq:m_t^f-P(t) decomposed} is bounded by
\begin{align*}
C_\epsilon \sum_{\lambda\in \partial \Lambda} |\vec{b}_\lambda|
\sum_{n \in \Z} |f(n)| |\ind{t \geq 0} - \ind{t \geq n} |e^{\frac\alpha2(t-n)+\epsilon |t-n|}.
\end{align*}
The latter sum can be estimated as follows
\begin{align*}
&\sum_{n \in \Z} |f(n)| |\ind{t\ge 0}-\ind{t\ge n}|e^{\frac\alpha2(t-n)+\epsilon |t-n|}\\
&=\1_{\N_0}(t) \sum_{n > t} |f(n)| e^{(\frac\alpha2-\epsilon )(t-n)}
+\1_{\Z \setminus \N_0}(t) \sum_{n \leq t} |f(n)| e^{(\frac\alpha2+\epsilon )(t-n)}\\
&\leq \1_{\N_0}(t) \sum_{n \in \Z} |f(n)| e^{\theta(t-n)}
+ \1_{\Z \setminus \N_0}(t) \sum_{n \leq t} |f(n)| e^{\alpha(t-n)}
= O(e^{\theta t}\wedge e^{\alpha t}),
\end{align*}
as $t \to \pm \infty$.
\end{proof}

\subsection{The non-lattice case} 	\label{subsec:the non-lattice case}

We again work under the conditions
(A\ref{ass:Malthusian parameter}) and (A\ref{ass:first moment})
as in Section \ref{subsec:the lattice case},
but now we assume that $\mu$ is non-lattice.

Similar to the lattice case, first we study the behavior of $\E[N(t)]$.
This was already done in \cite[Theorem 3.1]{Janson+Neininger:2008} in the special case
where $\L\mu(z)-1$ has only simple roots.
However, the proof given in the cited source can be adapted to the more general setting here.
In order to make this paper self-contained and for the reader's convenience, we include the proof.

\begin{lemma}	\label{Lem:asymptotics of E[N(t)] non-lattice}	
Suppose that, besides (A\ref{ass:Malthusian parameter}) and (A\ref{ass:first moment}),
the following condition holds:
\begin{equation}	\label{eq:Lmu becomes small}
\limsup_{\eta \to \infty} |\L\mu(\tfrac\alpha2-\delta+\imag \eta)| < 1
\end{equation}
for some $\delta \in (0,\tfrac\alpha2-\vartheta]$.
Then $\Lambda_\geq$ is finite. In fact, the function $\L\mu$ takes the value $1$
only at finitely many points in the strip $\tfrac\alpha2-\delta < \Real(z) < \alpha$.
Then, for any root $\lambda\in\Lambda_{\geq}$ of multiplicity $k(\lambda) \in \N$,
there exist constants $c_{\lambda,l}$, $l=0,\ldots,k(\lambda)-1$ such that,
for any $\theta \in(\tfrac\alpha2-\delta,\frac\alpha2)$  satisfying \eqref{eq:choice of theta} it holds that
\begin{align}	\label{eq:asymptotics of E[N(t)] non-lattice}
\E[N(t)]
=\sum_{\lambda\in\Lambda_{\geq}}e^{\lambda t}\sum_{l=0}^{ k(\lambda)-1}c_{\lambda,l}{t^l}+O(e^{\theta t})
\quad	\text{as }	t \to \infty.
\end{align}
\end{lemma}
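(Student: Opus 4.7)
The plan is to apply Laplace inversion to the generating function of $\E[N(t)]$ and shift contours, reading off the polynomial-exponential contributions as residues at the poles of the transform.

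I would first argue that $\Lambda_{\geq}$ is finite. By (A\ref{ass:first moment}), $\L\mu$ is holomorphic on $\{\Real(z) > \vartheta\}$. Condition \eqref{eq:Lmu becomes small} says $|\L\mu(\tfrac\alpha2 - \delta + \imag\eta)| < 1$ for $|\eta|$ large, and since $\mu$ is non-lattice, $|\L\mu(\alpha + \imag\eta)| < 1$ for $\eta \neq 0$. A Phragm\'en--Lindel\"of-type argument on the strip $\tfrac\alpha2 - \delta \leq \Real(z) \leq \alpha$ then yields $|\L\mu(z)| < 1$ throughout the strip for $|\Imag(z)|$ sufficiently large; this confines zeros of $1 - \L\mu$ to a compact region, where they are finite by the identity theorem.

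Next, writing $\E[N(t)] = V([0,t])$ with $V = \sum_{n \geq 0} \mu^{*n}$, Fubini gives, for $\Real(z) > \alpha$,
\begin{equation*}
\hat F(z) \defeq \int_0^\infty e^{-zt} \E[N(t)] \, \dt = \frac{1}{z(1 - \L\mu(z))}.
\end{equation*}
This extends meromorphically to $\{\Real(z) > \vartheta\}$ with a simple pole at $z = 0$ (since $\L\mu(0) = \E[N] > 1$) and a pole of order $k(\lambda)$ at each $\lambda \in \Lambda_{\geq}$. Laplace inversion then gives, for $c > \alpha$,
\begin{equation*}
\E[N(t)] = \frac{1}{2\pi \imag} \int_{c - \imag\infty}^{c + \imag\infty} e^{zt} \hat F(z) \, \dz,
\end{equation*}
interpreted in a principal-value / smoothed sense as needed. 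I would then shift the contour to $\Real(z) = \theta \in (\tfrac\alpha2 - \delta, \tfrac\alpha2)$ satisfying \eqref{eq:choice of theta}. The poles crossed are exactly those in $\Lambda_{\geq}$ (the pole at $0$ is not crossed since $\theta > 0$). Computing $\Res_{z = \lambda} e^{zt}\hat F(z)$ from the Taylor expansion of $1/(1 - \L\mu(z))$ around $\lambda$ together with $e^{zt} = e^{\lambda t} \sum_{j \geq 0} \tfrac{t^j}{j!} (z - \lambda)^j$ yields $e^{\lambda t} \sum_{l=0}^{k(\lambda) - 1} c_{\lambda, l} t^l$, producing the terms of the claimed form.

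The main obstacle is to show that the remainder integral on $\Real(z) = \theta$ is $O(e^{\theta t})$. The naive bound $|\hat F(\theta + \imag\eta)| = O(1/|\eta|)$ — which uses that $|1 - \L\mu(\theta + \imag\eta)|$ is bounded below, a consequence of \eqref{eq:Lmu becomes small} combined with the absence of roots on $\theta \leq \Real(z) < \tfrac\alpha2$ — is not integrable at $\infty$. I would address this by integration by parts: writing $e^{zt} = t^{-1} \tfrac{\dd}{\dz} e^{zt}$ and shifting the derivative onto $\hat F$ produces an extra factor $1/t$ and an improved integrand $\hat F'(z) = O(1/|\eta|^2) + O(|\L\mu'(\theta + \imag\eta)|/|\eta|)$; the first term is absolutely integrable, and the second is handled using that $\L\mu'$ remains bounded on the line by (A\ref{ass:first moment}). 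Iterating the integration by parts once more if required produces an absolutely convergent remainder contributing $O(e^{\theta t}/t)$, and the boundary terms at $\pm \imag\infty$ vanish by the same decay. The interplay between the spectral decay of $\L\mu$ inherited from \eqref{eq:Lmu becomes small} and the requisite integrability on the shifted line is the most delicate step.
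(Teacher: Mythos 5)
The overall blueprint — compute $\L(\E[N(\cdot)])(z) = 1/(z(1-\L\mu(z)))$, invert along a vertical line, shift to $\Real(z) = \theta$, collect residues at $\Lambda_\geq$ — is indeed the same as the paper's. The finiteness of $\Lambda_\geq$ you handle slightly differently (Phragm\'en--Lindel\"of instead of citing \cite{Janson+Neininger:2008}, Lemmas 2.1 and 2.3), but that part is fine.

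The gap is in the remainder estimate. You correctly observe that $\hat F(\theta+\imag\eta)=O(1/|\eta|)$ is not absolutely integrable and propose integration by parts. But this does not close: differentiating gives
\begin{equation*}
\hat F'(z) = -\frac{1}{z^2\big(1-\L\mu(z)\big)} + \frac{\L\mu'(z)}{z\big(1-\L\mu(z)\big)^2},
\end{equation*}
and while the first term is $O(1/|\eta|^2)$, the second is only $O(1/|\eta|)$: the numerator $\L\mu'(\theta+\imag\eta)$ is bounded (by (A\ref{ass:first moment})) but in general does \emph{not} decay as $|\eta|\to\infty$ — Riemann--Lebesgue would require $\mu$ to have a density, which is not assumed here (only \eqref{eq:Lmu becomes small} is). Iterating integration by parts does not help: every time the $\frac{d}{dz}$ hits $\big(1-\L\mu(z)\big)^{-m}$ it brings down another bounded but non-decaying $\L\mu'$ factor, so you never escape the $O(1/|\eta|)$ tail. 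Thus the "absolutely convergent remainder" you promise after one or two iterations never materializes, and the $O(e^{\theta t})$ bound is not established.

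The paper's fix is the mollification $f_\varepsilon = \1_{[0,\infty)}*\rho_\varepsilon$ with $\rho_\varepsilon=\tfrac1\varepsilon\1_{[0,\varepsilon]}$. The transform of the smoothed quantity is $\L m^{f_\varepsilon}(z) = (1-e^{-\varepsilon z})/\big(\varepsilon z^2(1-\L\mu(z))\big)$, whose $z^{-2}$ decay makes the vertical-line integral \emph{absolutely} convergent, at the price of a $|\log\varepsilon|$ blow-up in the remainder bound and an $\varepsilon\,O(e^{\alpha t})$ perturbation of the residue coefficients. Choosing $\varepsilon=e^{-\alpha t}$ and using the sandwich $m^{f_\varepsilon}_t\le m^f_t\le m^{f_\varepsilon}_{t+\varepsilon}$ then yields the claimed $O(e^{\theta t})$ error. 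If you want to avoid smoothing, you would need either a genuine decay hypothesis on $\L\mu'$ (e.g.\ $\mu$ absolutely continuous) or a more careful conditional-convergence argument for the oscillatory integral that keeps track of total variation in $\eta$; neither is what you wrote.
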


\begin{remark}	\label{Rem:asymptotics of E[N(t)] non-lattice}
Note that \eqref{eq:asymptotics of E[N(t)] non-lattice} can be rewritten in the form
\begin{equation*}
\E[N(t)] = \sum_{\lambda \in \Lambda_\geq}\vec{c}_\lambda^{\ \transp}\exp(\lambda, t,k(\lambda)) \e_1+O(e^{\theta t})
\end{equation*}
as $t \to \infty$ with $\vec{c}_\lambda \defeq \sum_{l=1}^{k(\lambda)} c_{\lambda,l-1} \e_{l}$.
\end{remark}

\begin{remark}	\label{Rem:Riemann-Lebesgue}
Suppose that (A\ref{ass:Malthusian parameter}) and (A\ref{ass:first moment}) hold
 and that the intensity measure $\mu$ has a density with respect to the Lebesgue measure.
Then one can check using the Riemann-Lebesgue lemma that \eqref{eq:Lmu becomes small} holds  for any $\delta \in (0,\tfrac\alpha2-\vartheta]$.
Hence, in this case, Lemma \ref{Lem:asymptotics of E[N(t)] non-lattice} applies.
\end{remark}

\begin{proof}[Proof of Lemma \ref{Lem:asymptotics of E[N(t)] non-lattice}]
First, condition \eqref{eq:Lmu becomes small} implies that
\begin{equation*}	
\limsup_{\eta \to \infty} |\L\mu(\theta+\imag \eta)| < 1
\end{equation*}
for all $\theta \geq \tfrac\alpha2-\delta$ and that there are only finitely many roots
of the equation $\L\mu(z)=1$ in the strip $\tfrac\alpha2-\delta \leq \Real(z) < \alpha$,
see Lemmas 2.1 and 2.3 in \cite{Janson+Neininger:2008}  (note that although the setup in  \cite{Janson+Neininger:2008} is slightly different,
the proofs carry over without changes).

Now let $f=\1_{[0,\infty)}$ and recall
that $N(t) = \cZ_t^f$, hence $V(t) \defeq \E[N(t)] = m_t^f$ for $t \in \R$.
In analogy to the derivation of \cite[Eq.\ (3.11)]{Janson+Neininger:2008},
we use the recursive structure of $\cZ_t^f$ to obtain a renewal equation for $V(t)$ as follows. We start with
 \begin{equation*}
\cZ_t^f = f(t) + \sum_{j=1}^N \cZ_{j,t-X_j}^f
\end{equation*}
where $\cZ^f_{1,t},\cZ^f_{2,t}\ldots$ are independent copies of $\cZ^f_{t}$.
Taking expectations, then conditioning with respect to $\xi$,
the reproduction point process of the ancestor, we infer
\begin{equation}	\label{eq:renewal equation for V}
m_t^f = f(t) + \int m_{t-x}^f \, \mu(\dx) = f(t) + \mu*m_t^f ,	\quad	t \in \R.
\end{equation}
Our subsequent proof relies on a smoothing technique.
So let $
\rho \defeq \1_{[0,1]}$.
For any $\varepsilon>0$, we set
\begin{equation*}	\textstyle
\rho_{\varepsilon}(t) \defeq \frac1\varepsilon\rho(\frac{t}{\varepsilon}) = \frac1\varepsilon\1_{[0,\varepsilon]}(t),
\quad	t \in \R.
\end{equation*}
Then for $f_{\varepsilon} \defeq f*\rho_{\varepsilon}$ (Lebesgue convolution), we have
\begin{equation*}
f_{\varepsilon}(t) \leq f(t) \leq f_{\varepsilon}(t+\varepsilon)
\end{equation*}
for all $t \in \R$,
which in turn gives
\begin{equation}
	\label{eq:inequalities between m's}
m^{f_{\varepsilon}}_t \leq m^{f}_t \leq m^{f_{\varepsilon}}_{t+\varepsilon}.
\end{equation}
Also, one can check that $t \mapsto m^{f_{\varepsilon}}_t$ is a continuous function.
First, we find the asymptotic expansion of this function and then, we let $\varepsilon$ tend to $0$
in a controlled way while letting $t \to \infty$ to deduce the asymptotic behavior of $V(t) = m_t^f$ from that of $m^{f_{\varepsilon}}_t$.
From the renewal equation \eqref{eq:renewal equation for V} we conclude that for $\Real(z)>\alpha$ it holds
\begin{equation*}
\L m^{f_{\varepsilon}}(z) = \L(\rho_\varepsilon * m_t^f)(z) = \L{f_{\varepsilon}}(z)+\L\mu(z) \L m^{f_{\varepsilon}}(z),
\end{equation*}
hence,
\begin{equation*}
\L m^{f_{\varepsilon}}(z) = \frac{\L{f_{\varepsilon}}(z)}{1-\L\mu(z)}	\quad	\text{for } \Real(z)>\alpha.
\end{equation*}
The function
\begin{equation*}
\frac{\L{f_{\varepsilon}}(z)}{1-\L\mu(z)} = \frac{\L{\rho_{\varepsilon}}(z) \L f(z)}{1-\L\mu(z)}
= \frac{1-e^{-\varepsilon z}}{\varepsilon z^2(1-\L\mu(z))}
\end{equation*}
defines a meromorphic extension   of $\L m^{f_{\varepsilon}}$ on $\Real (z) > \vartheta$.
This function decays as $|\Imag(z)|^{-2}$ as $\Imag(z) \to \pm\infty$ and $\Real(z)$ is constant,
hence, it is integrable along vertical lines.
Thus, for any $ \tau>\alpha$, the Laplace inversion formula (see, for instance, \cite[Theorem 7.3 on p.~66]{Widder:1941}) gives
\begin{align*}
m^{f_{\varepsilon}}_t=\frac{m^{f_{\varepsilon}}_{t+}+m^{f_{\varepsilon}}_{t-}}{2}
= \frac1{2\pi\imag}\int \limits_{ \tau-\imag\infty}^{ \tau+\imag\infty}e^{tz}\L m^{f_{\varepsilon}}(z) \, \dz,
\quad	t >0.
\end{align*}
To simplify notation, we assume without loss of generality that
$\vartheta = \frac\alpha2 - \delta$ and that $\L\mu$ is holomorphic on a neighborhood of $\Real(z) \geq \vartheta$.
Then, for large enough $R$, an application of the residue theorem gives
\begin{align*}
\int \limits_{ \tau-\imag R}^{ \tau+\imag R} e^{tz}\L m^{f_{\varepsilon}}(z) \, \dz
= 2\pi\imag\sum_{\lambda \in \Lambda_{\geq}}\Res_{z=\lambda}\big(e^{tz}\L m^{f_{\varepsilon}}(z)\big)
+\int \limits_{\vartheta-\imag R}^{\vartheta+\imag R}e^{tz}\L m^{f_{\varepsilon}}(z) \, \dz\\
 +\int \limits_{\vartheta+\imag R}^{ \tau+\imag R}e^{tz}\L m^{f_{\varepsilon}}(z) \, \dz
 -\int \limits_{\vartheta-\imag R}^{ \tau-\imag R}e^{tz}\L m^{f_{\varepsilon}}(z) \, \dz.
\end{align*}
Here,
\begin{align*}
\bigg| \int \limits_{\vartheta+\imag R}^{ \tau+\imag R} e^{tz}\L m^{f_{\varepsilon}}(z) \, \dz\bigg|
& \leq e^{t \tau}\int \limits_{\vartheta}^{ \tau}\Big|\frac{1-e^{-\varepsilon (x+\imag R)}}
{\varepsilon (x+\imag R)^2(1-\L\mu(x+\imag R))}\Big| \, \dx		\\
&\leq
Ce^{t \tau}\int \limits_{\vartheta}^{ \tau}\Big|\frac{1}{\varepsilon (x+\imag R)^2}\Big| \, \dx
\underset{R\to \infty}{\longrightarrow} 0
\end{align*}
for some constant $C$ that depends only on $\mu$.
Here we used the fact that, by Lemma 2.1 of \cite{Janson+Neininger:2008}, $\L\mu(x+\imag R)$ for $x \geq \vartheta$
and $R \geq R_0$ is uniformly bounded away from $1$ for some sufficiently large $R_0>0$.
The same bound holds for the second horizontal integral.
Therefore, by letting $R$ tend to infinity we conclude
\begin{align}	\label{eq:1}
 m^{f_{\varepsilon}}_t
= \sum_{\lambda \in \Lambda_\geq} \Res_{z=\lambda}\big(e^{tz}\L m^{f_{\varepsilon}}(z)\big)
+ \frac1{2\pi\imag} \int \limits_{\vartheta-\imag\infty}^{\vartheta+\imag\infty}
e^{tz} \L m^{f_{\varepsilon}}(z) \, \dz.
\end{align}
Next, denoting by $\{b_j(\lambda)\}_{j \in \Z}$ the coefficients in the Laurent expansion
of the function $(1-\L\mu(z))^{-1}$ at $z=\lambda \in \Lambda_\geq$
(hence, in particular, $b_j(\lambda)=0$ for $j<-k(\lambda)$), we have
\begin{align}
\Res_{z=\lambda}\big(e^{tz}\L m^{f_{\varepsilon}}(z)\big)
&= \Res_{z=\lambda}\Big(e^{tz}\frac{\L{f_{\varepsilon}}(z)}{1-\L\mu(z)}\Big)	\notag	\\
&= e^{\lambda t} \!\!\! \sum_{\substack{n,l \geq 0 \\ n+l < k(\lambda)}} \!\!
\frac{t^l}{l!}\frac{(\L f_{\varepsilon})^{(n)}(\lambda)}{n!}b_{-1-n-l}(\lambda)	\notag	\\
&= e^{\lambda t} \!\!\! \sum_{\substack{n,l \geq 0 \\ n+l < k(\lambda)}} \!\!
\frac{t^l}{l!}\frac{\int (-x)^nf_\varepsilon(x)e^{-\lambda x} \, \dx}{n!} b_{-1-n-l}(\lambda)	\notag	\\
&= e^{\lambda t} \!\!\! \sum_{\substack{n,l \geq 0 \\ n+l < k(\lambda)}} \!\!
\frac{t^l}{l!}\frac{\int (-x)^nf(x)e^{-\lambda x}\dx}{n!}b_{-1-n-l}(\lambda)
+\varepsilon O(e^{ \alpha t})	\notag	\\
&\eqdef e^{\lambda t} \sum_{0 \leq l < k(\lambda)} c_{\lambda,l}t^l
+\varepsilon O(e^{ \alpha t}),		\label{eq:residue formula for Laplace transform}
\end{align}
where the implicit constant depends only on $\lambda$, not on $\varepsilon$.
It remains to estimate the second term in \eqref{eq:1}.
For $\varepsilon\le \vartheta^{-1}$, using that $|1-e^{-z}| \leq |z| \wedge 2$ for $\Real(z) \geq 0$,
we infer
\begin{align}
	\nonumber
\bigg|\int \limits_{\vartheta-\imag\infty}^{\vartheta+\imag\infty}
e^{tz} \L m^{f_{\varepsilon}}(z) \, \dz \bigg|
&\leq
e^{\vartheta t} \int \limits_{\vartheta-\imag\infty}^{\vartheta+\imag\infty}
\bigg|\frac{1-e^{-\varepsilon z}}{\varepsilon z^2(1-\L\mu(z))}\bigg| \, |\dz|	\\
&\leq \nonumber
C e^{\vartheta t} \int \limits_{\vartheta-\imag\infty}^{\vartheta+\imag\infty}
\frac{|\varepsilon z|^{-1}\wedge 1}{|z|} \, |\dz|	\\
&= \nonumber
Ce^{\vartheta t} \int \limits_{\varepsilon\vartheta-\imag\infty}^{\varepsilon\vartheta+\imag\infty}
(|z|^{-1}\wedge|z|^{-2}) \, |\dz|	\\
&\leq \nonumber
Ce^{\vartheta t} \int \limits_{-\infty}^{\infty} x^{-1}\wedge x^{-2} \wedge(\varepsilon \vartheta)^{-1} \, \dx	\\\label{eq:estimation of the integral}
&\leq
C'e^{\vartheta t}(|\log \varepsilon|+1)
\end{align}
for some constant $C'$ that
depends neither on $t$ nor on $\varepsilon$.
Using \eqref{eq:residue formula for Laplace transform} with $t+\varepsilon$ instead of $t$,
we conclude that
\begin{align*}
\big|\Res_{z=\lambda}\big(e^{(t+\varepsilon)z}\L m^{f_\varepsilon}(z)\big)
-\Res_{z=\lambda}\big(e^{tz}\L m^{f_{\varepsilon}}(z)\big)\big|
= \varepsilon O(e^{\alpha t}),
\end{align*}
where we used $k(\lambda)=1$ for $\lambda = \alpha$, and thereupon, by \eqref{eq:1} and \eqref{eq:estimation of the integral},
\begin{equation*}
m^{f_{\varepsilon}}_{t+\varepsilon}-m^{f_{\varepsilon}}_t= O(\varepsilon e^{\alpha t} +|\log \varepsilon|e^{\vartheta t}).
\end{equation*}
Setting now $\varepsilon\defeq e^{-\alpha t}$,  by \eqref{eq:inequalities between m's},  we infer
\begin{equation*}
m_t^{f}
= \sum_{\lambda\in\Lambda_\geq} e^{\lambda t} \sum_{0 \leq l \leq k(\lambda)-1}
c_{\lambda,l}t^l + O(te^{\vartheta t}),
\end{equation*}
which completes the proof of the lemma.
\end{proof}

Now we are ready to provide the asymptotic expansion for the expectation function
of a general branching process counted with a random characteristic $\varphi$.

\begin{lemma}	\label{Lem:nonlattice asymptotic of the mean}
Suppose that, besides (A\ref{ass:Malthusian parameter}) and (A\ref{ass:first moment}),
condition \eqref{eq:Lmu becomes small} holds.
Then $\Lambda_{\ge}$ is finite and
there are constants $b_{\lambda,l}$, $\lambda \in \Lambda_\geq$, $0 \leq l < k(\lambda)$
such that, for any $\vartheta < \theta< \frac\alpha2$
fulfilling \eqref{eq:choice of theta} and any random characteristic $\varphi$ satisfying \eqref{eq:VEphi integrable}, we have
\begin{align}	\label{eq:nonlattice asymptotic of the mean}
m^{\varphi}_t
&=	\begin{cases}
	\sum \limits_{\lambda\in\Lambda_{\geq}} \sum \limits_{l=0}^{k(\lambda)-1}b_{\lambda,l}
	\int(t-x)^le^{\lambda(t-x)} \E[\varphi(x)] \, \dx + O(e^{\theta t})	&	\text{as } t \to \infty	\\
	O(e^{\gamma t})									&	\text{as } t \to -\infty.
	\end{cases}
\end{align}
\end{lemma}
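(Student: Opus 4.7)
The strategy is to adapt the Laplace-transform proof of Lemma \ref{Lem:asymptotics of E[N(t)] non-lattice} from the special characteristic $\1_{[0,\infty)}$ to a general random characteristic $\varphi$ satisfying \eqref{eq:VEphi integrable}. Finiteness of $\Lambda_{\geq}$ and uniform boundedness of $1/(1-\L\mu(z))$ on the vertical lines entering the argument are already established in Lemma \ref{Lem:asymptotics of E[N(t)] non-lattice} under \eqref{eq:Lmu becomes small}. Integration by parts under \eqref{eq:VEphi integrable}, justified because $(\mathrm{V}\!\E[\varphi])(x)e^{-\vartheta x}$ and $(\mathrm{V}\!\E[\varphi])(x)e^{-\alpha x}$ both vanish at $\pm\infty$ by the monotonicity of $\mathrm{V}\!\E[\varphi]$, yields $\L\E[\varphi](z)=z^{-1}\int e^{-zx}\,\dd\E[\varphi](x)$, so $\L\E[\varphi]$ is holomorphic on $\{\vartheta<\Real(z)<\alpha\}$, continuous on the closure, and decays like $|\Imag(z)|^{-1}$ on vertical lines in the strip.

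First I would derive the renewal equation $m_t^\varphi = \E[\varphi(t)] + \mu\ast m^\varphi(t)$ by first-generation decomposition of $\cZ_t^\varphi$, which at the level of Laplace transforms reads $\L m^\varphi(z) = \L\E[\varphi](z)/(1-\L\mu(z))$. Following the smoothing trick of Lemma \ref{Lem:asymptotics of E[N(t)] non-lattice}, with $\rho_\varepsilon=\varepsilon^{-1}\1_{[0,\varepsilon]}$, I would apply the Laplace inversion formula to $m_t^{\E[\varphi]\ast\rho_\varepsilon}$ and shift the contour from $\Real(z)=\tau>\alpha$ to $\Real(z)=\theta$. The horizontal connecting segments at $\Imag(z)=\pm R$ vanish as $R\to\infty$ because $\L\E[\varphi](z)\,\L\rho_\varepsilon(z)$ decays like $|\Imag(z)|^{-2}$ while $1/(1-\L\mu(z))$ remains uniformly bounded in view of \eqref{eq:Lmu becomes small}.

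At each pole $\lambda\in\Lambda_{\geq}$, with Laurent expansion $1/(1-\L\mu(z))=\sum_{j\geq -k(\lambda)}b_j(\lambda)(z-\lambda)^j$, the residue of $e^{tz}\L\E[\varphi](z)/(1-\L\mu(z))$ evaluates to $e^{\lambda t}$ times a double sum in $t^l(\L\E[\varphi])^{(n)}(\lambda)$ with $l+n<k(\lambda)$. The elementary identity
\[
\int (t-x)^l e^{\lambda(t-x)}\,\E[\varphi(x)]\,\dx = e^{\lambda t}\sum_{k=0}^l \binom{l}{k}t^k\,(\L\E[\varphi])^{(l-k)}(\lambda),
\]
obtained by binomially expanding $(t-x)^l$ and identifying the derivatives of $\L\E[\varphi]$ at $\lambda$, then regroups those residues as $\sum_{l=0}^{k(\lambda)-1}b_{\lambda,l}\int(t-x)^l e^{\lambda(t-x)}\E[\varphi(x)]\,\dx$ with $b_{\lambda,l}=b_{-1-l}(\lambda)/l!$, depending only on $\mu$. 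The remaining vertical integral along $\Real(z)=\theta$ is $O(e^{\theta t}(|\log\varepsilon|+1))$, exactly as in \eqref{eq:estimation of the integral}; setting $\varepsilon=e^{-\alpha t}$ and comparing $m_t^\varphi$ with $m_t^{\E[\varphi]\ast\rho_\varepsilon}$ through the Hahn decomposition of $\E[\varphi]$ into positive and negative parts (so that the monotone sandwich \eqref{eq:inequalities between m's} applies piecewise) produces the $t\to+\infty$ expansion.

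For the $t\to-\infty$ claim, I would argue directly. Since $V=\sum_{n\geq 0}\mu^{\ast n}$ is supported on $[0,\infty)$, $|\E[\varphi(y)]|\leq(\mathrm{V}\!\E[\varphi])(y)$ (using $\E[\varphi(-\infty)]=0$, implicit in the finiteness of the integrals in the conclusion), and $V([n,n+1])\leq Ce^{\alpha(n+1)}$ by Lemma \ref{Lem:asymptotics of E[N(t)] non-lattice}, a dyadic decomposition of $\int\E[\varphi(t-x)]\,V(\dd x)$ together with the monotonicity of $\mathrm{V}\!\E[\varphi]$ yields
\[
|m_t^\varphi|\leq C'\,e^{\alpha t}\sum_{n\geq 0}(\mathrm{V}\!\E[\varphi])(t-n)\,e^{-\alpha(t-n)} = o(e^{\alpha t})\quad\text{as } t\to-\infty,
\]
the sum vanishing by the tail of \eqref{eq:VEphi integrable}. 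Since $\gamma<\alpha$ and $t<0$ imply $e^{\alpha t}\leq e^{\gamma t}$, the claim follows. The hard part will be the first half: synchronising the smoothing limit $\varepsilon\to 0$ with the time limit $t\to\infty$ so that the $|\log\varepsilon|$ factor is absorbed into $e^{\theta t}$ via the choice $\varepsilon=e^{-\alpha t}$, and then verifying that the combinatorial regrouping of residues into the claimed integral form is consistent across all $\lambda\in\Lambda_{\geq}$ of varying multiplicities—the first being a refinement of the delicate balance already handled in the proof of Lemma \ref{Lem:asymptotics of E[N(t)] non-lattice}.
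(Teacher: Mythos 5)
Your route differs genuinely from the paper's: you propose to re-run the Laplace-transform/contour-shift machinery of Lemma~\ref{Lem:asymptotics of E[N(t)] non-lattice} directly with the generic mean $\E[\varphi]$ in place of $\1_{[0,\infty)}$, whereas the paper \emph{uses} the already-proved expansion of $\E[N(t)]$ and transfers it to general $\varphi$ by writing $m_t^f=\int \E[N(t-x)]\,\nu(\dd x)$ for the Lebesgue--Stieltjes measure $\nu=\dd f$ of a monotone piece of $f=\E[\varphi]$ (Jordan decomposition), pushing the derivative $\vec b_\lambda=J_{\lambda,k}^\transp\vec c_\lambda$ through with \eqref{eq:derivative of exp}. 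Your residue bookkeeping is in fact consistent with the paper's (the identity $\int(t-x)^le^{\lambda(t-x)}\E[\varphi(x)]\,\dx = e^{\lambda t}\sum_k\binom{l}{k}t^k(\L\E[\varphi])^{(l-k)}(\lambda)$ and the regrouping $b_{\lambda,l}=b_{-1-l}(\lambda)/l!$ check out), and your $t\to-\infty$ estimate is sound.

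However, there is a genuine gap in the contour step. You apply the Laplace inversion formula to $m^{\E[\varphi]*\rho_\varepsilon}$ on a line $\Real(z)=\tau>\alpha$ and then shift left. This requires the bilateral Laplace transform $\L m^{\E[\varphi]*\rho_\varepsilon}(z)$ (and $\L\E[\varphi](z)$ itself) to converge at $\Real(z)=\tau>\alpha$, which fails whenever $\E[\varphi]$ does not vanish on $(-\infty,0)$. Condition \eqref{eq:VEphi integrable} only forces $\mathrm{V}\E[\varphi](x)e^{-\alpha x}\to0$ as $x\to-\infty$; the decay can be as slow as $e^{\alpha x}/(1+x^2)$, and then $\int_{-\infty}^0|\E[\varphi](x)|e^{-\tau x}\,\dx=\infty$ for every $\tau>\alpha$. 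The same is true for $m_t^\varphi$: the estimate $|m_t^\varphi|\le C e^{\alpha t}\int_{-\infty}^{t+1}\mathrm{V}\E[\varphi](y)e^{-\alpha y}\,\dy$ gives only $m_t^\varphi=o(e^{\alpha t})$ as $t\to-\infty$, so the two-sided transform $\L m^\varphi(z)$ converges on \emph{no} open vertical strip — the growth rate at $+\infty$ and the decay rate at $-\infty$ are both $\alpha$. The identity ``$\L m^\varphi=\L\E[\varphi]/(1-\L\mu)$'' is therefore not available as an identity of actual transforms, and the inversion formula has no legal starting contour. Lemma~\ref{Lem:asymptotics of E[N(t)] non-lattice} escapes this precisely because $f=\1_{[0,\infty)}$ is one-sided, so $m_t^f=\E[N(t)]$ vanishes for $t<0$ and $\L m^{f_\varepsilon}$ converges for $\Real(z)>\alpha$; the paper's reduction through $\nu$ is what lets one reach general two-sided characteristics without ever forming $\L m^\varphi$. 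To repair your argument you would need to split $\E[\varphi]$ into a part supported on $[T,\infty)$ (where your contour argument works, after a translation) and a left-tail part, and handle the latter by a separate renewal estimate — at which point you have essentially re-derived the paper's convolution step.
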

\begin{remark}
	\label{Rem:compact expansion of the mean}
If we set $\vec{b}_\lambda \defeq (b_{\lambda,l})_{l=0,\ldots,k(\lambda)-1}$, then formula
\eqref{eq:nonlattice asymptotic of the mean} can be written in the form
\begin{equation*}
m_t^\varphi
=\sum_{\lambda\in\Lambda_{\geq}} \ind{t \geq 0\text{ or } \lambda\in\Lambda}
\int  \vec{b}_\lambda^{\;\transp}\exp(\lambda, t-x,k(\lambda)) \e_1  \E[\varphi(x)] \, \dx +O(e^{\theta t}\wedge e^{\gamma t})
\end{equation*}
as $t \to \pm\infty$.
\end{remark}

\begin{proof}
Without loss of generality we assume that the characteristic $\varphi=f$ is a deterministic function.
By Lemma \ref{Lem:asymptotics of E[N(t)] non-lattice}
there  exist constants $c_{\lambda,l}$, $l=0,\ldots,k(\lambda)-1$, $\theta \in (\vartheta,\frac\alpha2)$ and a constant $C$ such that, for any $t\in\R$,
\begin{equation}
	\label{eq:expansion of N}
\bigg|\E[N(t)]-\sum_{\lambda\in\Lambda_{\geq}} \ind{t \geq 0\text{ or } \lambda\in\Lambda} \vec{c}_\lambda^{\ \transp} \exp(\lambda, t,k(\lambda)) \e_1\bigg|
\leq C(e^{\theta t}\wedge e^{\gamma t})
\end{equation}
and hence for the characteristic $f(t) = \1_{[x,\infty)}(t) = \1_{[0,\infty)}(t-x)$, we find
\begin{equation*}
\bigg|m^{f}_t-\sum_{\lambda\in\Lambda_{\geq}} \ind{t-x \geq 0\text{ or }\lambda\in\Lambda}\vec c_\lambda^{\ \transp}\exp(\lambda, t-x,k(\lambda)) \e_1\bigg|
\leq C(e^{\theta (t-x)}\wedge e^{\gamma (t-x)}).
\end{equation*}
Suppose now that $f \geq 0$ is a c\`adl\`ag, nondecreasing function with
\begin{align}	\label{eq:integrability of f}
\int f(x)(e^{-\alpha x}+e^{-\vartheta x}) \, \dx < \infty.
\end{align}
Then $f$ is the measure-generating function of a locally finite measure $\nu$ on the Borel sets of $\R$, namely, for any $y \in \R$,
\begin{equation*}
f(y) = \nu((-\infty,y]) = \int \1_{[x,\infty)}(y) \, \nu(\dx).
\end{equation*}
For any $t\in\R$, by an application of Fubini's theorem, we infer
\begin{equation*}
m^f_t = \int \E[N(t-x)] \, \nu(\dx).
\end{equation*}
By \eqref{eq:derivative of exp} we have $\frac{\dd}{\dx}\exp(\lambda,x,k)=J_{\lambda,k}\exp(\lambda,x,k)$.
We show that \eqref{eq:nonlattice asymptotic of the mean} holds with
$\vec{b}_\lambda\defeq J_{\lambda,k}^\transp\vec{c}_\lambda$, $\lambda \in \Lambda_{\geq}$.
To this end, first notice that,
as $f$ fulfills \eqref{eq:integrability of f}, another application of Fubini's theorem yields
\begin{align*}
\int \exp(\lambda, -x,k) \, \nu(\dx)
= \int J_{\lambda,k} \exp(\lambda, -x,k) f(x) \, \dx.
\end{align*}
We now write
\begin{align*}
&\bigg|m^f_t -\int  \Big(\sum_{\lambda\in\Lambda_{\geq}}\ind{t \geq 0\text{ or }\lambda\in\Lambda} \vec{b}_\lambda^{\;\transp} \exp(\lambda, t-x,k(\lambda))f(x)\e_1 \Big) \, \dx\bigg|	\\
&~= \bigg|m^f_t -\int \Big(\sum_{\lambda\in\Lambda_{\geq}}\ind{t\ge 0\text{ or }\lambda\in\Lambda} \vec{c}_\lambda^{\ \transp}J_{\lambda,k(\lambda)}\exp(\lambda, t-x,k(\lambda))f(x)\e_1\Big) \, \dx\bigg|	\\
&~=
\bigg|\int \Big(\E[N(t-x)]- \sum_{\lambda\in\Lambda_{\geq}}\ind{t\ge 0\text{ or }\lambda\in\Lambda} \vec{ c}_\lambda^{\ \transp}\exp(\lambda, t-x,k(\lambda))\e_1\Big) \, \nu(\dx)\bigg|\\
&~\leq
\int \bigg|\E[N(t-x)]- \sum_{\lambda\in\Lambda_{\geq}}\ind{t-x\ge 0\text{ or }\lambda\in\Lambda} \vec{ c}_\lambda^{\ \transp}\exp(\lambda, t-x,k(\lambda))\e_1 \bigg| \, \nu(\dx)\\
&\hphantom{~\leq}~+ \sum_{\lambda\in\partial\Lambda}\int \big| \ind{t \ge 0} - \ind{t-x \geq 0}\big|\big| \vec{ c}_\lambda^{\ \transp}\exp(\lambda, t-x,k(\lambda))\e_1 \big| \, \nu(\dx).
\end{align*}
For the first term, by	\eqref{eq:expansion of N}, we have
\begin{align*}
&\int\Big|\E[N(t-x)] - \sum_{\lambda\in\Lambda_{\geq}} \ind{t-x \geq 0\text{ or }\lambda\in\Lambda}\vec{c}_\lambda^{\ \transp}\exp(\lambda, t-x,k(\lambda))\e_1\Big| \, \nu(\dx)	\\
&~\leq	C\int (e^{\theta (t-x)}\wedge e^{\gamma (t-x)}) \, \nu(\dx)	\\
&~\leq	C\bigg(\int e^{\theta (t-x)}\nu(\dx)\wedge\int e^{\gamma (t-x)} \, \nu(\dx)\bigg)	\\
&~\leq	C(e^{\theta t}\wedge e^{\gamma t}) \bigg(\theta  \int f(x)e^{-\theta x} \, \dx + \gamma  \int f(x)e^{-\gamma x} \, \dx\bigg).
\end{align*}
Next, for $\lambda\in\partial\Lambda$, we estimate
\begin{align*}
&\int \big| \ind{t \geq 0} - \ind{t-x \geq 0} \big| \big|\vec{ c}_\lambda^{\ \transp}\exp(\lambda, t-x,k(\lambda)) \e_1 \big| \, \nu(\dx)	\\
&~= \1_{(-\infty,0)}(t) \int \limits_{(-\infty,t]}\big|\vec{ c}_\lambda^{\ \transp}\exp(\lambda, t-x,k(\lambda))\e_1\big| \, \nu(\dx)	\\
&\hphantom{~=}~
+ \1_{[0,\infty)}(t) \int \limits_{(t,\infty)} \big|\vec{ c}_\lambda^{\ \transp}\exp(\lambda, t-x,k(\lambda))\e_1\big| \, \nu(\dx)	\\
&~\leq
C \1_{(-\infty,0)}(t) \int e^{\alpha(t-x)} \, \nu(\dx)
+ C \1_{[0,\infty)}(t) \int e^{\vartheta(t-x)} \, \nu(\dx)	\\
&~\leq C' \big(e^{\alpha t}\wedge e^{\vartheta t}\big),
\end{align*}
where we have used \eqref{eq:matrix norm} in the penultimate step.
This completes the proof of the theorem for non-decreasing $f \geq 0$.

Now let $f$ be an arbitrary c\`adl\`ag function
satisfying the integrability condition \eqref{eq:VEphi integrable} (with $f$ in place of $\E[\varphi]$).
Define
\begin{equation*}
f_\pm(x) \defeq \sup\bigg\{\sum_{j=1}^n (f(x_j)-f(x_{j-1}))^\pm: -\infty < x_0 < \ldots < x_n \leq x,\ n \in \N \bigg\}
\end{equation*}
for $x \in \R$.
Clearly, $f_+,f_-: \R \to \R$ are nondecreasing with $f_\pm \geq 0$.
It is known that $f = f_+ - f_-$.
(This is the Jordan decomposition of $f$ on $\R$.)
It is further known that $f_+$ and $f_-$ are c\`adl\`ag since $f$ is.
Further, $\mathrm{V}\!f(x) = f_+(x) + f_-(x)$
and hence \eqref{eq:VEphi integrable} implies that both, $f_+$ and $f_-$ satisfy \eqref{eq:integrability of f}.
The previous part of the proof thereby applies to $f_+$ and $f_-$ and, by linearity,
extends to $f$.
\end{proof}

\begin{remark}	\label{Rem:expansion of mean}
In the situations of Lemmas \ref{Lem:lattice asymptotic of the mean} and \ref{Lem:nonlattice asymptotic of the mean},
$m^{\varphi}_t$ has a representation as in \eqref{eq:expansion of mean}.
Indeed, in both cases, $m_t^\varphi$ can be written as
\begin{align*}
m_t^\varphi
&=\1_{[0,\infty)}(t) \sum_{\lambda\in\Lambda_{\ge}}\int_{\G} \vec{b}_\lambda^{\;\transp}\exp(\lambda,t-x,k(\lambda))\e_1\E[ \varphi](x)\, \ell(\dx)+O(e^{\theta t}\wedge e^{\gamma t})	\\
&=\1_{[0,\infty)}(t) \sum_{\lambda\in\Lambda_{\ge}} \vec{a}_\lambda^{\;\transp}\exp(\lambda,t,k(\lambda))\e_1
+O(e^{\theta t}\wedge e^{\gamma t}),
\end{align*}
where $\vec a_\lambda \defeq \int_\G\exp(\lambda,-x,k(\lambda))^\transp \vec b_\lambda\E[\varphi](x) \, \ell(\dx)$.
Consequently (cf.~\eqref{eq:Z_t^psi_Lambda},  \eqref{eq:H_dLambda} and \eqref{eq:m^psi_dLambda}), we have
\begin{align*}
H_{\Lambda}(t)
&= \sum_{\lambda\in \Lambda}\vec{a}_\lambda^{\;\transp}\exp(\lambda,t,k(\lambda)) W(\lambda, k(\lambda)) \e_1\\
&=\sum_{\lambda\in \Lambda}\vec{b}_\lambda^{\;\transp}\int_{\G}\exp(\lambda,t-x,k(\lambda)) W(\lambda, k(\lambda))\e_1\E[\varphi](x)\ell(\dx) 	\\
&=\sum_{\lambda\in \Lambda}\sum_{l=0}^{k(\lambda)-1}b_{\lambda,l}\int_{\G}e^{\lambda(t-x)}\sum_{j=0}^l {l \choose j} (t-x)^{l-j}W^{(j)} (\lambda)\E[\varphi](x)\ell(\dx)\\
&=\sum_{\lambda\in \Lambda}e^{\lambda t}\sum_{l=0}^{k(\lambda)-1}b_{\lambda,l}\sum_{j=0}^l {l \choose j}W^{(j)} (\lambda)\int_{\G}(t-x)^{l-j} \E[\varphi](x)e^{-\lambda x} \ell(\dx)
\end{align*}
and, similarly,
\begin{align*}
H_{\partial\Lambda}(t)
&=	\sum_{\lambda\in \partial\Lambda}\vec{a}_\lambda^{\;\transp}\exp(\lambda,t,k(\lambda)) \e_1\\
&=	\sum_{\lambda\in\partial \Lambda}\vec{b}_\lambda^{\;\transp}\int_{\G}\exp(\lambda,t-x,k(\lambda)) \e_1\E[\varphi](x) \, \ell(\dx) \\
&=	\sum_{\lambda\in \partial\Lambda}e^{\lambda t}\sum_{l=0}^{k(\lambda)-1}b_{\lambda,l}\int_{\G} (t-x)^{l} \E[\varphi](x)e^{-\lambda x}\, \ell(\dx).
\end{align*}
\end{remark}

\subsection{ Proofs of Theorems \ref{Thm:no roots}, \ref{Thm:main non-lattice} and \ref{Thm:main lattice}}

\begin{proof}[Proof of Theorem \ref{Thm:no roots}]
Theorem \ref{Thm:no roots} is a consequence of Theorem \ref{Thm:main}.
To see this, we first check that the assumptions of Theorem \ref{Thm:no roots} imply
those of Theorem \ref{Thm:main}.
In a second step, we show how the conclusion of Theorem \ref{Thm:no roots}
follows from that of Theorem \ref{Thm:main}.

So suppose the assumptions of Theorem \ref{Thm:no roots}, in particular,
(A\ref{ass:Malthusian parameter}) through (A\ref{ass:second moment}), hold.
Also (A\ref{ass:Lambda finite}) holds because $\Lambda_\geq = \{\alpha\}$ and,
by assumption, there are no roots of the equation $\L\mu(z)=1$ in the strip $\vartheta < \Real(z) < \alpha$.
Let $\varphi$ be a characteristic
satisfying (A\ref{ass:variance growth}), (A\ref{ass:local ui of phi^2}) and \eqref{eq:VEphi integrable}.
Notice that \eqref{eq:VEphi integrable} implies (A\ref{ass:mean growth}).
To see this, decompose $\E[\varphi]=f_1-f_2$ for two non-negative, non-decreasing functions
$f_1$ and $f_2$ such that $\mathrm{V}\!\E[\varphi](x)=f_1(x)+f_2(x)$.
Then
\begin{align*}
\int \Big(f_i(x)e^{-\alpha x}\Big)^{\!*} \, \dx
\leq \int f_i(x+1)e^{-\alpha (x-1)} \, \dx
\leq e^{2\alpha}\int \mathrm{V}\!\E[\varphi](x)e^{-\alpha x} \, \dx
<\infty.
\end{align*}
We conclude from Proposition \ref{Prop:f^*} that both
$x\mapsto f_1(x)e^{-\alpha x}$ and $x\mapsto f_2(x)e^{-\alpha x}$
are directly Riemann integrable and hence so is their difference, i.e., (A\ref{ass:mean growth}) is satisfied. We have to check that $m_t^\varphi$ has a representation of the form \eqref{eq:expansion of mean}.
This follows from Lemma \ref{Lem:nonlattice asymptotic of the mean}
once we have shown that \eqref{eq:Lmu becomes small} holds  (cf.\ Remark \ref{Rem:expansion of mean}).
However, the latter follows from the existence of a Lebesgue density for $\mu$ and the Riemann-Lebesgue lemma  (cf.\ Remark \ref{Rem:Riemann-Lebesgue}).

Since $\Lambda_\geq = \{\alpha\}$ and $\L\mu'(\alpha) = -\beta \not = 0$,
i.e., $k(\alpha)=1$, Lemma \ref{Lem:nonlattice asymptotic of the mean} gives
\begin{align*}
m^{\varphi}_t
&=	\begin{cases}
	e^{\alpha t} b_{\alpha,0} \int \E[\varphi(x)] e^{-\alpha x} \, \dx + O(e^{\theta t})	&	\text{as } t \to \infty	\\
	O(e^{\gamma t})										&	\text{as } t \to -\infty
	\end{cases}
\end{align*}
for some constant $b_{\alpha,0} \in \R$.
From Nerman's law of large numbers \cite[Theorem 6.1, see the proof on p.\;246]{Jagers+Nerman:1984}
or alternatively Proposition \ref{Prop:determining the coefficients},
we know that $b_{\alpha,0} = \beta^{-1}$,
so
\begin{equation*}
m^{\varphi}_t = \1_{[0,\infty)}(t) \beta^{-1} \L(\E[\varphi])(\alpha)e^{\alpha t}  + O(e^{\theta t} \wedge e^{\gamma t})
\end{equation*}
as $t \to \pm\infty$, i.e., $m^{\varphi}_t$ indeed has a representation of the form \eqref{eq:expansion of mean}.
Hence, Theorem \ref{Thm:main} applies.
Let $a_\alpha \defeq \beta^{-1} \L(\E[\varphi])(\alpha)$.
Then $H(t) = e^{\alpha t} a_\alpha W$.
Notice that $n = -1$ and thus $\rho_{-1} = 0$ in Theorem \ref{Thm:main}.
With $\sigma \geq 0$ as in Theorem \ref{Thm:main}, we now infer that in both cases,
$\sigma = 0$ and $\sigma > 0$, that
\begin{align*}
e^{-\frac\alpha2 t} \big(\cZ_t^\varphi - a_\alpha e^{\alpha t} W \big)
= e^{-\frac\alpha2 t} \big(\cZ_t^\varphi - H(t)\big)
\distto \sigma  \sqrt{\tfrac{W}{\beta}}
\cN
\quad	\text{as } t \to \infty
\end{align*}
for a standard normal random variable $\cN$ independent of $W$.
\end{proof}

\begin{proof}[Proof of Theorem \ref{Thm:main non-lattice}]
As before we shall prove that the assumptions of Theorem~\ref{Thm:main} are fulfilled.
As in the proof of Theorem \ref{Thm:no roots}, we conclude that \eqref{eq:VEphi integrable} implies (A\ref{ass:mean growth}).
Lemma \ref{Lem:nonlattice asymptotic of the mean} yields (A\ref{ass:Lambda finite})
and that $m_t^\varphi$ has an expansion of the form \eqref{eq:expansion of mean}
with $r(t)=O(e^{\theta t} \wedge e^{\gamma t})$ as $t \to \infty$, see also Remark \ref{Rem:expansion of mean}.
The assertion now follows from Theorem \ref{Thm:main} and Remark \ref{Rem:expansion of mean}.
Note, that $H_{\partial\Lambda}(t)=O(t^{k-1}e^{\frac \alpha 2 t})=o( t^{k-\frac12}e^{\frac \alpha 2 t})$ by the definition of $H_{\partial\Lambda}$ (cf.\ \eqref{eq:H_dLambda})
and hence this term can be neglected in the limit theorem.
\end{proof}

\begin{proof}[Proof of Theorem \ref{Thm:main lattice}]
Again, we first check that the assumptions of Theorem \ref{Thm:main lattice}
imply those of Theorem \ref{Thm:main}.
So suppose that the assumptions of Theorem \ref{Thm:main lattice},
in particular, (A\ref{ass:Malthusian parameter}) through (A\ref{ass:second moment}), hold.
Regarding (A\ref{ass:Lambda finite}), i.e., the finiteness of $\Lambda_\geq$,
notice that since $\mu$ is lattice, we have
\begin{equation*}
\Lambda_\geq = \{\lambda \in \C: \tfrac{\alpha}2 \leq \Real(\lambda) \leq \alpha,\;-\pi < \Imag(\lambda) \leq \pi,\;\L\mu(\lambda)=1\}.
\end{equation*}
By (A\ref{ass:first moment}), $\L\mu$ is holomorphic on $\{z \in \C: \Real(z) > \vartheta\}$
and non-constant by (A\ref{ass:Malthusian parameter}). Hence, $\L\mu(z)=1$ can hold for only finitely many $z$
in the compact box $\tfrac{\alpha}2 \leq \Real(z) \leq \alpha$, $|\Imag(z)| \leq \pi$,
that is, $\Lambda_\geq$ is finite. Now suppose that $\varphi$ is a characteristic satisfying $\sum_{n \in \Z} |\E[\varphi(n)]|(e^{-\theta n}+e^{-\alpha n})<\infty$
and (A\ref{ass:variance growth}).
Then $\varphi$ satisfies (A\ref{ass:mean growth}) and (A\ref{ass:local ui of phi^2}).  Moreover, by Lemma \ref{Lem:lattice asymptotic of the mean} and Remark \ref{Rem:expansion of mean} we conclude \eqref{eq:expansion of mean} hence, Theorem \ref{Thm:main} applies.
As before, the assertion follows from Theorem \ref{Thm:main}
in combination with Remark \ref{Rem:expansion of mean}.  By the same argument as in the proof of Theorem \ref{Thm:main non-lattice},
the term $H_{\partial\Lambda}$ can be neglected.
\end{proof}

\subsection{Determining the coefficients}

Note that although the constants $\vec c_\lambda$ and $\vec b_\lambda$	
are not given explicitly it is not hard to follow the proofs
and provide explicit expressions for them.
However, even for small $k(\lambda)$, this approach may lead to tedious calculations,
not to mention that there can also be several roots in the relevant strip.
It seems that a more efficient way to determine the constants $b^\lambda_l$
is an application of Lemma \ref{Lem:lattice asymptotic of the mean} or \ref{Lem:nonlattice asymptotic of the mean},
respectively, to a characteristic for which we explicitly know the asymptotic behavior of the
expectation of the associated general branching process.

\begin{proposition}	\label{Prop:determining the coefficients}
Let $ \lambda\in\Lambda_{\ge}$ be a root of $\L\mu(z)=1$ of multiplicity $k$
(in the lattice case we also assume that $\Imag(\lambda) \in (-\pi,\pi]$).
Then the vector $\vec{b}_\lambda$ appearing in Lemma \ref{Lem:lattice asymptotic of the mean}
or \ref{Lem:nonlattice asymptotic of the mean}, respectively,
is given by $M^\lambda \vec{b}_\lambda=\e_{k}$,
where $M^\lambda$ is the $k \times k$ upper triangular matrix such that for $j \geq i$
\begin{equation*}
(M^\lambda)_{i,j}
\defeq -\frac{(j-1)!(k-1)!}{(i-1)!(j-i+k)!} (\L\mu)^{(k+j-i)}(\lambda)
\end{equation*}
in the non-lattice case.
In contrast, in the lattice case,
\begin{equation*}
(M^\lambda)_{i,j}
\defeq \binom{j-1}{i-1} P_{k,j-i}\Big(\frac{\dd}{\dz}\Big) \L\mu(z)_{|z=\lambda},
\end{equation*}
where the polynomials $P_{k,l}$ are given by
\begin{equation*}
P_{k,l}(y) \defeq (-1)^l \sum_{m=1}^{k} \binom{k-1}{m-1} y^{k-m} \frac{B_{l+m}( -y)-B_{l+m}(0)}{l+m},
\end{equation*}
and $B_n$ is the $n^{\mathrm{th}}$ Bernoulli polynomial.
In particular, in both cases,
as $(\L\mu)^{(j)}(\lambda)=0$ for $j=1,\ldots,k-1$,
\begin{equation*}
\det(M^\lambda)=\left(\frac{- \L\mu^{(k)}(\lambda)}{k}\right)^k\neq0
\end{equation*}
and the matrix $M^\lambda$ is invertible.
\end{proposition}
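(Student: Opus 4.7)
The plan is to identify $\vec b_\lambda$ as a rescaling of the Laurent coefficients of $(1-\L\mu(z))^{-1}$ at $z=\lambda$ in the non-lattice case (respectively of $(1-\Gfct\mu(z))^{-1}$ at $z=e^{-\lambda}$ in the lattice case), and then derive the stated linear system algebraically by multiplying through the defining identity $(1-\L\mu(z))\cdot(1-\L\mu(z))^{-1}=1$ and extracting coefficients.

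First I would establish the renewal equation $m^\varphi_t = \E[\varphi](t) + \mu * m^\varphi_t$ by conditioning on the ancestor's reproduction, then take bilateral Laplace transforms to obtain $\L m^\varphi(z) = \L(\E[\varphi])(z)/(1-\L\mu(z))$ in the strip of absolute convergence. The contour-integration argument from the proof of Lemma~\ref{Lem:asymptotics of E[N(t)] non-lattice} then expresses the contribution of $\lambda \in \Lambda_\geq$ to the asymptotic expansion of $m_t^\varphi$ as $\Res_{z=\lambda} e^{tz}\L(\E[\varphi])(z)/(1-\L\mu(z))$. Writing the principal part of $(1-\L\mu(z))^{-1}$ at $\lambda$ as $\sum_{j=1}^{k}B_{\lambda,j}(z-\lambda)^{-j}$ and Taylor-expanding $e^{tz}\L(\E[\varphi])(z)$ around $\lambda$ via $(\L(\E[\varphi]))^{(m)}(\lambda)=\int(-x)^m e^{-\lambda x}\E[\varphi](x)\,dx$ together with the binomial identity $(t-x)^l=\sum_m\binom{l}{m}t^{l-m}(-x)^m$, I would compute this residue as $\sum_{l=0}^{k-1}(B_{\lambda,l+1}/l!)\int(t-x)^l e^{\lambda(t-x)}\E[\varphi](x)\,dx$. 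Matching with the form of the $\lambda$-contribution in Lemma~\ref{Lem:nonlattice asymptotic of the mean} then yields $b_{\lambda,l} = B_{\lambda,l+1}/l!$ for $0\leq l\leq k-1$.

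To derive the stated linear system, I would expand $1-\L\mu(\lambda+w) = -\sum_{n\geq k}((\L\mu)^{(n)}(\lambda)/n!)w^n$, using that derivatives of order below $k$ vanish at $\lambda$, and multiply by the principal part $\sum_{j=1}^k B_{\lambda,j}w^{-j}$. Since the product must be holomorphic at $\lambda$ with value $1$, extracting the coefficient of $w^{-i}$ for $i=1,\dots,k$ yields $k$ linear equations in $(B_{\lambda,j})_{j=1}^k$. Substituting $B_{\lambda,j}=(j-1)!\,b_{\lambda,j-1}$ and rescaling the $i$-th equation by the factor $(k-1)!/(i-1)!$ puts the system in the form $M^\lambda\vec b_\lambda=\e_k$ claimed in the proposition. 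Upper-triangular structure is immediate from the constraint $j\geq i$, and the diagonal values $(M^\lambda)_{i,i}=-(\L\mu)^{(k)}(\lambda)/k$ give $\det M^\lambda=(-(\L\mu)^{(k)}(\lambda)/k)^k\neq 0$ at once.

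The lattice case proceeds along the same lines with $\L\mu$ replaced by the generating function $\Gfct\mu$ and Laurent expansions taken at $z=e^{-\lambda}$. The anticipated main obstacle is the algebraic translation from derivatives of $\Gfct\mu$ at $e^{-\lambda}$ into derivatives of $\L\mu$ at $\lambda$: iterated chain rule on $\Gfct\mu(z)=\L\mu(-\log z)$ produces Stirling-number type combinations involving sums $\sum_{r=0}^{n-1}r^s$, and Faulhaber's identity $\sum_{r=0}^{x-1}r^s = (B_{s+1}(x)-B_{s+1}(0))/(s+1)$ then rewrites these in terms of Bernoulli polynomials, yielding precisely the polynomials $P_{k,l}$ in the statement. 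Once this translation is in place, the same coefficient-extraction argument as in the non-lattice case completes the proof.
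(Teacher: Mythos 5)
Your non-lattice argument is a valid alternative to the paper's proof, but with one concrete error and one genuine gap in the lattice case.

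\textbf{Non-lattice case.} The paper does something quite different: it tests the expansion of Lemma~\ref{Lem:nonlattice asymptotic of the mean} against the single characteristic $\phi(t)=\e_k^\transp\E[\phi_\lambda(t)]\e_1$, for which $m_t^\phi=\e_k^\transp\exp(\lambda,t,k)\e_1$ is known exactly by Lemma~\ref{Lem:Nerman's martingales}, and matches coefficients via Lemma~\ref{lem:asymptotic linear independent}; this directly produces $M^\lambda\vec b_\lambda=\e_k$ with $(M^\lambda)_{ij}=(\int\exp(\lambda,-x,k)\phi(x)\,\ell(\dx))_{ji}$, computed via the Beta function. Your route --- identifying $\vec b_\lambda$ with (rescaled) Laurent coefficients $B_{\lambda,j}$ of $(1-\L\mu)^{-1}$ at $z=\lambda$, then reading the linear relations off the identity $(1-\L\mu)\cdot(1-\L\mu)^{-1}=1$ --- is a genuinely different and arguably more transparent approach, and after the scaling $B_{\lambda,j}=(j-1)!\,b_{\lambda,j-1}$ together with rescaling row $i$ by $(k-1)!/(i-1)!$ it does reproduce the stated $M^\lambda$. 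However, the sentence ``extracting the coefficient of $w^{-i}$ for $i=1,\ldots,k$'' is an error: since $1-\L\mu(\lambda+w)=O(w^k)$ and the principal part is $O(w^{-k})$, the product has no negative-power terms at all, so those extractions are vacuous. The equations you want come from the coefficients of $w^{0},\ldots,w^{k-1}$ (equivalently $w^{k-i}$ for $i=1,\ldots,k$), where the constraint is that they equal $\delta_{m,0}$; with $n-j=m$, $n\geq k$, $j\leq k$ this gives $-\sum_{j\geq i}\frac{(\L\mu)^{(j+k-i)}(\lambda)}{(j+k-i)!}B_{\lambda,j}=\delta_{i,k}$, which after the scalings is the paper's $M^\lambda\vec b_\lambda=\e_k$.

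\textbf{Lattice case.} Here I see a real gap. You claim the Bernoulli polynomials $P_{k,l}$ arise because ``iterated chain rule on $\Gfct\mu(z)=\L\mu(-\log z)$ produces Stirling-number type combinations involving sums $\sum_{r=0}^{n-1}r^s$, and Faulhaber's identity then rewrites these in terms of Bernoulli polynomials.'' That is not where these objects come from, and I do not believe the chain rule produces such sums: Fa\`a di Bruno applied to $\Gfct\mu(e^{-z})$ yields Bell-polynomial/Stirling-number coefficients, not power-sums $\sum_{r<n}r^s$. In the paper, Faulhaber's formula is applied to the discrete convolution $\sum_{0\leq x<y}x^{l+m-1}$ that arises from evaluating $\sum_{x\in\Z}(-x)^le^{-\lambda x}\phi(x)$ with the counting measure --- this is a consequence of the test-characteristic approach and has nothing to do with translating $\Gfct\mu$-derivatives to $\L\mu$-derivatives. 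Your proposed route in the lattice case would give a system with entries in $(\Gfct\mu)^{(j)}(e^{-\lambda})$, then require (i) a basis change from the $\binom{n+d-1}{d-1}$ to the $n^l$ polynomial basis and (ii) a chain-rule translation to $(\L\mu)^{(j)}(\lambda)$; you would still need to prove that the result collapses to the specific Bernoulli-polynomial operator $P_{k,l}(\frac{\dd}{\dz})$ in the statement, and this is a non-obvious combinatorial identity you have not established. This is exactly the place where the paper's ``test against $\phi_\lambda$'' trick pays off: it produces the Faulhaber sums directly, making both lattice and non-lattice cases uniform.
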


For the proof, we need a lemma which essentially is Jensen's inequality for the total variation operator $\mathrm{V}$
defined in \eqref{eq:def of Vf}.

\begin{lemma}	\label{Lem:Jensen's inequality for V}
Let $\phi = (\phi(t))_{t \in \R}$ be a stochastic process with c\`adl\`ag paths
such that $\phi(t) \in L^1$ for every $t \in \R$ and $t \mapsto \E[\phi](t)$ is again c\`adl\`ag.
Then, finite or not,
\begin{align}	\label{eq:Jensen's inequality for V}
\mathrm{V}\!\E[\phi](t) \leq \E[\mathrm{V}\!\phi(t)]
\end{align}
for every $t \in \R$.
\end{lemma}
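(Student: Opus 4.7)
The plan is to prove the inequality directly from the definition of $\mathrm{V}$ by fixing a partition, applying Jensen/triangle inequality for expectations termwise, using Fubini to interchange the finite sum with the expectation, bounding pathwise by $\mathrm{V}\!\phi(t)$, and finally taking the supremum over partitions.

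Concretely, I would fix $t \in \R$ and an arbitrary partition $-\infty < x_0 < \ldots < x_n \leq t$. By the triangle inequality for expectations (i.e., $|\E[Y]| \leq \E[|Y|]$ applied to $Y = \phi(x_j) - \phi(x_{j-1}) \in L^1$) we have
\begin{equation*}
\sum_{j=1}^n |\E[\phi](x_j) - \E[\phi](x_{j-1})|
= \sum_{j=1}^n |\E[\phi(x_j) - \phi(x_{j-1})]|
\leq \sum_{j=1}^n \E[|\phi(x_j) - \phi(x_{j-1})|].
\end{equation*}
Since the sum is finite with nonnegative summands, Fubini--Tonelli lets me move it inside the expectation:
\begin{equation*}
\sum_{j=1}^n \E[|\phi(x_j) - \phi(x_{j-1})|] = \E\bigg[\sum_{j=1}^n |\phi(x_j) - \phi(x_{j-1})|\bigg] \leq \E[\mathrm{V}\!\phi(t)],
\end{equation*}
where the last step is the pointwise definition of $\mathrm{V}\!\phi(t)$ applied to the partition $(x_j)$. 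Taking the supremum over all such partitions on the left-hand side yields \eqref{eq:Jensen's inequality for V}.

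The only minor technical point is ensuring that $\mathrm{V}\!\phi(t)$ is measurable as a random variable in $[0,\infty]$, so that $\E[\mathrm{V}\!\phi(t)]$ is well-defined (possibly infinite). This follows from the c\`adl\`ag assumption: by right-continuity of the paths, the supremum over all finite partitions with $x_n \leq t$ coincides with the supremum over partitions with rational $x_0 < \ldots < x_{n-1}$ together with $x_n = t$, which is a countable supremum of measurable functions. I do not foresee any real obstacle here; the argument is essentially Jensen's inequality combined with monotonicity of the supremum, and the whole proof is a few lines.
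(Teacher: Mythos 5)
Your proof is correct and takes essentially the same route as the paper: apply $|\E[Y]|\le\E[|Y|]$ termwise to a fixed partition, interchange the finite sum with the expectation, bound pathwise by $\mathrm{V}\!\phi(t)$, then take the supremum, with measurability of $\mathrm{V}\!\phi(t)$ handled by reducing to a countable family of partitions using right-continuity. One minor difference worth noting: in the measurability step you keep the right endpoint fixed at $t$ and only rationalize $x_0,\ldots,x_{n-1}$, whereas the paper's displayed formula requires \emph{all} $t_0,\ldots,t_n\in\Q$; for irrational $t$ the paper's version as literally written can miss the jump contribution $|\phi(t)-\phi(t-)|$ (consider $\phi=\1_{[t,\infty)}$), since right-continuity does not let you approximate $\phi(t)$ from below. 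Your formulation avoids this pitfall, so it is in fact the cleaner way to state the reduction; otherwise the two arguments are identical.
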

\begin{proof}
First notice that $\mathrm{V}\!\phi(t)$ is a random variable.
Indeed, since the paths of $\phi$ are c\'adl\'ag, we have
\begin{align*}
\mathrm{V}\!\phi(t) = \sup\bigg\{\sum_{j=1}^n |\phi(t_j)-\phi(t_{j-1})|: -\infty < t_0 < \ldots < t_n \leq t,\; t_0,\ldots,t_n \in \Q,\ n \in \N \bigg\},
\end{align*}
which is measurable as the supremum of a family of random variables indexed by a countable set.
Since $\E[\phi]$ is also c\`adl\`ag, we infer
\begin{align*}
\mathrm{V}\!\E[\phi](t)
&= \sup\bigg\{\sum_{j=1}^n |\E[\phi(t_j)-\phi(t_{j-1})]|: t_0 < \ldots < t_n \leq t,\, t_0,\ldots,t_n \in \Q,\, n \in \N \bigg\}	\\
&\leq \sup\bigg\{\E \bigg[\sum_{j=1}^n|\phi(t_j)-\phi(t_{j-1})| \bigg]: t_0 < \ldots < t_n \leq t,\, t_0,\ldots,t_n \in \Q,\, n \in \N \bigg\}	\\
&\leq \E \bigg[\sup\bigg\{\sum_{j=1}^n|\phi(t_j)-\phi(t_{j-1})|: t_0 < \ldots < t_n \leq t,\, t_0,\ldots,t_n \in \Q,\, n \in \N \bigg\}\bigg]	\\
&= \E[\mathrm{V}\!\phi](t).
\end{align*}
\end{proof}

\begin{proof}[Proof of Proposition \ref{Prop:determining the coefficients}]

For $\lambda \in \Lambda_{\geq}$, consider the  characteristic
\begin{align*}
\phi(t) &= \e_k^{\ \transp}\E[\phi_\lambda(t)] \e_1\\
&= \1_{[0,\infty)}(t) \E\bigg[\sum_{j=1}^N \e_k^{\ \transp}\1_{[0,X_j)}(t) \exp(\lambda, t-X_j, k)\e_1 \bigg]\\
&=  \1_{[0,\infty)}(t) \E\bigg[\sum_{j=1}^Nf_{X_j}(t)\bigg],	\quad	t \in \R
\end{align*}
where, for any $x \geq 0$,
\begin{equation*}
f_x(t)\defeq \e_k^{\ \transp}\1_{[0,x)}(t) \exp(\lambda, t-x, k)\e_1=\1_{[0,x)}(t)(t-x)^{k-1}e^{\lambda(t-x)}.
\end{equation*}
By Lemma \ref{Lem:Nerman's martingales} (the assumption are fulfilled by Lemma \ref{Lem:phi^1 and phi^2}), the following holds for all $t\ge 0$
\begin{align}
	\label{eq:expansion of phi_lambda_1}
m^\phi_t= \e_k^{\ \transp}\exp({\lambda },t,k)\e_1.
\end{align}
We now aim to apply either Lemma \ref{Lem:nonlattice asymptotic of the mean} in the lattice case
or Lemma \ref{Lem:lattice asymptotic of the mean} in the non-lattice case to the characteristic $\phi$. To do this, we need to  verify that $\phi$ satisfies either \eqref{eq:VEphi integrable} or \eqref{eq:integrability condition in the lattice case} respectively.

Using equations \eqref{eq:derivative of exp} and \eqref{eq:matrix norm}, we get
\begin{align*}
\mathrm{V}\!f_x(t)&\le
|f_x(0)|\1_{[0,\infty)}(t)+\int_0^{t\wedge x}\big|\frac{\dd}{\ds}f_x(s)\big| \, \ds
+\1_{\{t\ge x\}}\\
&\le C\times
\begin{cases}
0&\quad \text {for }t<0,\\
e^{(\Real(\lambda)-\delta)(t-x)}&\quad \text {for }0\le t<x,\\
1&\quad \text {for }x\le t
\end{cases}
\end{align*}
for some $\delta \in (0,\Real(\lambda) - \vartheta)$ and a constant $C$ that does not depend on $x \geq 0$.
Thus,
\begin{align*}
\int \mathrm{V}\!f_x(t) \big(e^{-\vartheta t} +e^{-\alpha t}\big) \, \dt \leq
2\int \mathrm{V}\!f_x(t) e^{-\vartheta t} \, \dt \leq C'e^{-\vartheta x}
\end{align*}
for some other constant $C'$ that also does not depend on $x$.
Now, observe that we can apply \eqref{eq:Jensen's inequality for V} to $\e_k^{\ \transp}\phi_\lambda \e_1$ since both this characteristic and its expectation have c\`adl\`ag paths.
Only the latter requires a proof. By \eqref{eq:matrix norm},
\begin{align*}
|\e_k^{\ \transp}\phi_\lambda(t) \e_1| \leq C \1_{[0,\infty)}(t) e^{\vartheta t} \sum_{j=1}^N e^{-\vartheta X_j}
\end{align*}
for any $t \in \R$. Hence, (A\ref{ass:first moment}) and the dominated convergence theorem
imply that $\mathbb{E}[\e_k^{\ \transp}\phi_\lambda \e_1]$
is c\`adl\`ag.
Using the subadditivity of $\mathrm{V}$, \eqref{eq:Jensen's inequality for V} and assumption (A\ref{ass:first moment}), we get
\begin{align*}
\int \mathrm{V}\!\phi(t) \big(e^{-\vartheta t} +e^{-\alpha t}\big) \, \dt
&=\int_0^\infty \mathrm{V}\!\bigg(\E\bigg[\sum_{j=0}^Nf_{X_j}\bigg]\bigg) (t)\big(e^{-\vartheta t}\!+\!e^{-\alpha t}\big) \, \dt	\\
&\leq \E\bigg[\sum_{j=0}^N\int \mathrm{V}\!f_{X_j} (t)\big(e^{-\vartheta t}\!+\!e^{-\alpha t}\big) \, \dt\bigg]
\leq C' \E\bigg[\sum_{j=0}^Ne^{-\vartheta X_j}\bigg]
<\infty.
\end{align*}
A similar argument gives \eqref{eq:integrability condition in the lattice case} in the lattice case.
An application of either Lemma \ref{Lem:nonlattice asymptotic of the mean}
or Lemma \ref{Lem:lattice asymptotic of the mean} (as mentioned in Remark \ref{Rem:expansion of mean}), respectively,
to the characteristic $\phi$, yields
\begin{align}
	\nonumber
m^\phi_t &= \sum_{\lambda' \in \Lambda_\ge} \vec{b}_{\lambda'}^{\;\transp} \int \exp(\lambda',t-x,k)\phi(x) \, \ell(\dx) \e_1 + O(e^{\theta t})\\
\label{eq:expansion of phi_lambda_2}
&= \sum_{\lambda' \in \Lambda_\ge} \bigg(\bigg(\int \exp(\lambda',-x,k)\phi(x) \, \ell(\dx)\bigg)^\transp \, \vec{b}_{\lambda'}\bigg)^{\transp} \exp(\lambda',t,k) \e_1 + O(e^{\theta t}),
\end{align}
as $t$ goes to infinity. Next take the difference of the two asymptotic expansions \eqref{eq:expansion of phi_lambda_1} and \eqref{eq:expansion of phi_lambda_2} for $m_t^\phi$
and then apply Lemma \ref{lem:asymptotic linear independent} to infer
\begin{equation*}
\bigg(\int\exp(\lambda,-x,k)\phi(x) \, \ell(\dx)\bigg)^\transp \, \vec{b}_\lambda
=\e_k.
\end{equation*}
It now suffices to evaluate the coefficients of the matrix
\begin{align*}
M^\lambda\defeq\int \exp(\lambda,-x,k)\phi(x) \, \ell(\dx).
\end{align*}
First, we deal with the non-lattice case.
For this purpose, recalling  basic properties of the beta function $B$, we infer
\begin{align*}
\int (-x)^le^{-\lambda x}\phi(x) \, \dx
&=(-1)^l  \E\bigg[\sum_{j=1}^N\int_0^{X_j}x^l(x-X_j)^{k-1}e^{-\lambda X_j} \, \dx\bigg]\\
&=(-1)^l\iint_0^yx^l(x-y)^{k-1} \, \dx e^{-\lambda y} \, \mu(\dy)\\
&=(-1)^{l+k-1}\int B(l+1,k)y^{k+l}e^{-\lambda y} \, \mu(\dy)\\
&=-\frac{l!(k-1)!}{(l+k)!} (\L\mu)^{(k+l)}(\lambda),
\end{align*}
and therefore
\begin{align*}
(M^\lambda)_{i,j}
&=\bigg(\int\exp(\lambda,-x,k)\phi(x)\dx\bigg)_{j,i}	\\
&= -\1_{\{j \geq i\}}  \L\mu^{(k+j-i)}(\lambda)\cdot{j-1\choose i-1}\frac{(j-i)!(k-1)!}{(j-i+k)!}	\\
&= -\1_{\{j \geq i\}} \frac{(j-1)!(k-1)!}{(i-1)!(j-i+k)!} (\L\mu)^{(k+j-i)}(\lambda).
\end{align*}

In the non-lattice case, invoking Faulhaber's formula, we have
\begin{align*}
\sum_{x\in\Z} (-x)^le^{-\lambda x}\phi(x)
&=(-1)^l  \E\bigg[\sum_{j=1}^N\sum_{0\le x< X_j}x^l(x-X_j)^{k-1}e^{-\lambda X_j}\bigg]	\\
&=(-1)^l \int\sum_{0\le x<y}x^l(x-y)^{k-1} e^{-\lambda y} \, \mu(\dy)\\
&=(-1)^l \int\sum_{m=1}^{k}{k-1 \choose m-1}(-y)^{k-m}\sum_{0\le x<y}x^{l+m-1} e^{-\lambda y} \, \mu(\dy)	\\
&=(-1)^l \int\sum_{m=1}^{k}{k-1 \choose m-1}(-y)^{k-m}\frac{B_{l+m}(y)-B_{l+m}(0)}{l+m} e^{-\lambda y} \, \mu(\dy)	\\
&=P_{k,l}\Big(\frac{\dd}{\dz}\Big) \L\mu(z)\Big|_{z=\lambda},
\end{align*}
which gives
\begin{align*}
(M^\lambda)_{i,j}&=\Big(\int\exp(\lambda,-x,k)\phi(x)\dx\Big)_{j,i}\\
&=\ind{j\ge i}{j-1\choose i-1}
 P_{k,j-i}\Big(\frac{\dd}{\dz}\Big) \L\mu(z)\Big|_{z=\lambda}.
\end{align*}
\end{proof}

\section{Discussion and open problems}
\label{sec:discussion}
In this section we formulate several open problems which are closely related to the present framework.

\begin{oproblem}
	\label{OP:multitype_CMJ}
Prove a corresponding limit theorem for the multitype CMJ process.
\end{oproblem}
\begin{oproblem}
Provide functional versions of the theorems proved in this paper.
\end{oproblem}

A drawback of our method in the non-lattice case is that,
in order to find the asymptotic of the mean $m^{\varphi}_t$
we need to assume that the measure $\mu$ is absolutely continuous
with respect to Lebesgue measure
 or that at least \eqref{eq:Lmu becomes small} holds.
\begin{oproblem}
In the non-lattice case, work out
a proof that does not require  \eqref{eq:Lmu becomes small}.
\end{oproblem}

The Gaussian fluctuations appearing
in our theorems are caused by
the finiteness of the second moment \eqref{eq:Konrad's condition}.
In the case where the condition is not satisfied one
still may ask for a generalization.

\begin{oproblem}
Prove a version of the limit theorems  with a stable limit.
\end{oproblem}

One of the basic ingredients of the CMJ process is the underlying branching random walk
$(S(u))_{u\in\familytree}$ with positive increments.
However, the process $\cZ^{ \varphi}$ can also be defined
for a branching random walk with two-sided increments and suitable $ \varphi$.

\begin{oproblem}
Investigate the behavior of $\cZ^{ \varphi}_t$ for a
branching random walk $(S(u))_{u\in\familytree}$ with two-sided increments.
\end{oproblem}

A central limit theorem is usually complemented by a law of the iterated logarithm
(see, for instance, \cite{Iksanov+Kolesko+Meiners:2021b} for the central limit theorem
and the law of the iterated logarithm for Nerman's martingale).
This motivates the following problem.

\begin{oproblem}
Prove a corresponding law of the iterated logarithm for $\cZ^{ \varphi}_t$.
\end{oproblem}

The martingale limits $W^{(j)}(\lambda)$ play an important role
in the asymptotic behavior of the general branching process.
It is important to obtain more information about their distributions.
In particular, the following problem seems to be quite relevant.

\begin{oproblem}
Derive the first-order asymptotic behavior of the tail probabilities
$\Prob(|W^{(j)}(\lambda)|>t)$ as $t \to \infty$ for $j=0,\ldots,k(\lambda)-1$.
\end{oproblem}

We also believe that the approach developed in the present paper
might be useful for settling the following.

\begin{oproblem}
Find large deviation estimates for $\cZ^{ \varphi}_t$.
\end{oproblem}

\subsubsection*{Acknowledgements}
 The authors thank two anonymous referees for exceptionally careful and constructive reports
whose consideration led to a significant improvement of the paper. In the preliminary version of our work, there was an error in the variance calculation in the model described in Section \ref{sec:Poisson}, and we would like to express our sincere gratitude to Beno\^{\i}t Henry for his assistance in its correction. Additionally, we thank David Croydon for bringing the papers \cite{Charmoy+al:2017} and \cite{Jagers+Nerman:1984b} to our attention.
 A.\,I.\ was supported by the Grant of the Ministry of Education and Science
of Ukraine for perspective development of a scientific direction
``Mathematical sciences and natural sciences'' at Taras Shevchenko
National University of Kyiv.
M.\,M.\  was supported by DFG grant ME3625/4-1.

\begin{appendix}
\section*{An auxiliary result}
\begin{lemma}	\label{lem:asymptotic linear independent}
Let $r\in\N$, $k_j\in \N_0$, $j=1,\ldots,r$, $z_1,\dots,z_r$ be distinct complex numbers
with $|z_j| \geq \rho$ and $b_{j,l}$, $j=1,\ldots,r$, $l=0,\ldots,k_j$ complex numbers.
Then
\begin{align}	\label{eq:sum of combinations}
\sum_{j=1}^r \sum_{l=0}^{k_j}b_{j,l}n^lz_j^n=o(\rho^n)	\quad\text{as } n \to \infty,\ n\in\N
\end{align}
implies $b_{j,l}=0$ for all $j=1,\ldots,r$, $l=0,\ldots,k_j$.
\end{lemma}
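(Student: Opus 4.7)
Arguing by contradiction, suppose not all coefficients vanish. The idea is to isolate the single dominant exponential-polynomial mode, show that the distinct arguments on the dominant circle force a nontrivial oscillation, and convert this oscillation into a subsequence where $a_n := \sum_{j,l} b_{j,l} n^l z_j^n$ is $\gtrsim \rho^n$.

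First I would locate the dominant mode. Set $R := \max\{|z_j|:\exists\,l \text{ with } b_{j,l}\neq0\}$, so $R\geq\rho$; next let $K := \max\{l : \exists\, j \text{ with } |z_j|=R,\, b_{j,l}\neq 0\}$ and $I := \{j : |z_j|=R,\, b_{j,K}\neq 0\}\neq\emptyset$. For $j\in I$ write $z_j = R e^{\imag\theta_j}$ with distinct $\theta_j\in(-\pi,\pi]$. Dividing $a_n$ by $R^n n^K$, every term with $|z_j|<R$ contributes $O((|z_j|/R)^n n^{l-K})\to 0$ exponentially, while every term with $|z_j|=R$ and $l<K$ contributes $O(n^{l-K})\to 0$ polynomially, so
\begin{equation*}
\frac{a_n}{R^n n^K} \;=\; g_n + o(1),\qquad g_n \;:=\; \sum_{j\in I} b_{j,K}\, e^{\imag n\theta_j}.
\end{equation*}

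The main step is to rule out $g_n\to 0$ via a Cesàro-orthogonality argument. Fix $k\in I$ and compute
\begin{equation*}
\frac1N\sum_{n=1}^N g_n e^{-\imag n\theta_k} \;=\; b_{k,K} \;+\; \sum_{j\in I\setminus\{k\}} b_{j,K}\,\frac1N\sum_{n=1}^N e^{\imag n(\theta_j-\theta_k)}.
\end{equation*}
Since $\theta_j\not\equiv\theta_k\pmod{2\pi}$ for $j\neq k$, each inner geometric Cesàro sum is $O(1/N)$. If $g_n\to 0$, the left-hand side tends to $0$ as $N\to\infty$, yielding $b_{k,K}=0$, which contradicts $k\in I$. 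Hence $g_n\not\to 0$, so there exist $\varepsilon>0$ and a subsequence $n_m\to\infty$ with $|g_{n_m}|\geq\varepsilon$.

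Finally, feeding this back into the asymptotic identity gives $|a_{n_m}|\geq \tfrac{\varepsilon}{2} R^{n_m} n_m^K$ for large $m$, whence
\begin{equation*}
\frac{|a_{n_m}|}{\rho^{n_m}} \;\geq\; \tfrac{\varepsilon}{2} \Big(\tfrac{R}{\rho}\Big)^{\!n_m} n_m^K \;\geq\; \tfrac{\varepsilon}{2}\, n_m^K \;\longrightarrow\; \infty,
\end{equation*}
which contradicts the hypothesis $a_n=o(\rho^n)$. The only step requiring genuine work is the Cesàro orthogonality argument, which is standard; everything else is a bookkeeping separation of leading-order terms.
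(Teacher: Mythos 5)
Your proof is correct, and it takes a genuinely different route from the paper's. The paper argues by induction on the total polynomial degree $K = k_1 + \cdots + k_r$: the base case ($K=0$) is handled with an invertible Vandermonde matrix applied to shifts of the sequence, and the inductive step applies the difference operator $Lf(n) = f(n) - z_{j_0} f(n-1)$ to lower the degree of the polynomial attached to one root while preserving the $o(\rho^n)$ property, then reads off the top coefficient. This is an algebraic, elimination-style argument that peels off one coefficient at a time. Your proof is instead analytic and non-inductive: you isolate the dominant modulus $R$ and top degree $K$ in one step, reduce to showing the trigonometric sum $g_n = \sum_{j\in I} b_{j,K} e^{\imag n\theta_j}$ does not tend to zero, and obtain that via Ces\`aro averaging against $e^{-\imag n\theta_k}$ (exploiting that distinct arguments on a circle are ``orthogonal'' in the Ces\`aro sense). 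This directly produces a subsequence with $|a_{n_m}| \gtrsim \rho^{n_m}$, which is a bit more quantitative and avoids any bookkeeping of how the difference operator perturbs lower-order terms; the paper's approach, conversely, is self-contained (no appeal to Ces\`aro limits) and delivers all coefficients simultaneously at the end of the induction. One cosmetic remark: in your final display, $\tfrac{\varepsilon}{2} n_m^K \to \infty$ is only true when $K\geq 1$; when $K=0$ the quantity is merely bounded away from zero, but that still contradicts $a_n = o(\rho^n)$, so the conclusion stands.
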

\begin{proof}
We use induction on $K\defeq k_1+\dots+k_r$.
Suppose that $K=0$ and denote by
$V(z_1,\ldots, z_r) \defeq (z_j^{m-1})_{m,j=1,\ldots,r}$ the Vandermonde matrix associated with $z_1,\ldots,z_r$.
Then, putting $b_j\defeq b_{j,0}$, each component of the vector
\begin{equation*}
V(z_1,\ldots, z_r)
\begin{pmatrix}
b_1z_1^n \\ \vdots \\ b_rz_r^n
\end{pmatrix}
= \bigg(\sum_{j=1}^r b_{j} z_j^{m-1} z_j^n\bigg)_{m=1,\ldots,r}
\end{equation*}
is $o(\rho^n)$ as $n \to \infty$.
Since $z_1,\ldots,z_r$ are distinct, $\det V(z_1,\ldots, z_r) \not=0$,
hence we may multiply the last displayed equation by the inverse of $V(z_1,\ldots, z_r)$ from the left and conclude
that $b_{j}z_j^n=o(\rho^n)$ as $n \to \infty$,
which, in turn, gives $b_{j}=0$ for $j =1,\ldots,r$.
For the induction step, we assume that the induction hypothesis holds
whenever $k_1 + \ldots + k_r \leq K$. If now $k_1 + \ldots + k_r = K+1$,
then there is exists some $j_0 \in \{1,\ldots,r\}$ with $k_{j_0}>0$.
We define a linear operator $L$ by
\begin{equation*}
L f(n)\defeq f(n)-z_{j_0} f(n-1)
\end{equation*}
for any $f:\Z\mapsto\C$.
If $f(n)=o(\rho^n)$, then so is $L f(n)$. Moreover, if $f(n)=p(n)z^n$ for some polynomial $p$,
then $Lf(n)=\tilde p(n)z^n$ for another polynomial $\tilde p$ with $\deg \tilde p \leq \deg p$
and if $f(n)=n^l z_{j_0}^n$, then $L f(n)=(ln^{l-1}+p(n))z_{j_0}^n$
for some polynomial $p$ with $\deg p \leq l-2$.
Applying $L$ to both sides of the relation \eqref{eq:sum of combinations} we infer
\begin{align*}
\sum_{j=1}^r\sum_{l=0}^{\tilde k_j}\tilde b_{j,l}n^lz_j^n=o(\rho^n)	\quad \text{as } n \to \infty,\ n\in\N
\end{align*}
for some $\tilde k_j\le k_j$, $\tilde k_{j_0}=k_{j_0}-1$ and $\tilde b_{j_0,k_{j_0}-1}=k_{j_0}b_{j_0,k_{j_0}}$.
The induction hypothesis gives that $\tilde b_{j_0,k_{j_0}-1}=0$
which implies that $b_{j_0,k_{j_0}}=0$ as well.
This allows us to replace $k_{j_0}$ by $k_{j_0}-1$ in \eqref{eq:sum of combinations}.
The claim now follows by induction.
\end{proof}
\end{appendix}


\begin{thebibliography}{48}
	
	\bibitem{Aldous+Eagleson:1978}
	\begin{barticle}[author]
		\bauthor{\bsnm{Aldous},~\bfnm{D.~J.}\binits{D.~J.}} \AND
		\bauthor{\bsnm{Eagleson},~\bfnm{G.~K.}\binits{G.~K.}}
		(\byear{1978}).
		\btitle{On mixing and stability of limit theorems}.
		\bjournal{Ann. Probability}
		\bvolume{6}
		\bpages{325--331}.
		\bmrnumber{517416}
	\end{barticle}
	\endbibitem
	
	\bibitem{Alsmeyer:1991}
	\begin{bbook}[author]
		\bauthor{\bsnm{Alsmeyer},~\bfnm{Gerold}\binits{G.}}
		(\byear{1991}).
		\btitle{{E}rneuerungstheorie}.
		\bseries{Teubner {S}kripten zur {M}athematischen {S}tochastik. [Teubner Texts
			on Mathematical Stochastics]}.
		\bpublisher{B.~G.Teubner, Stuttgart}
		\bnote{Analyse stochastischer {R}egenerationsschemata. [Analysis of stochastic
			regeneration schemes]}.
		\bmrnumber{1119301}
	\end{bbook}
	\endbibitem
	
	\bibitem{Asmussen+Hering:1983}
	\begin{bbook}[author]
		\bauthor{\bsnm{Asmussen},~\bfnm{S{\o}ren}\binits{S.}} \AND
		\bauthor{\bsnm{Hering},~\bfnm{Heinrich}\binits{H.}}
		(\byear{1983}).
		\btitle{Branching processes}.
		\bseries{Progress in Probability and Statistics}
		\bvolume{3}.
		\bpublisher{Birkh\"{a}user Boston, Inc., Boston, MA}.
		\bmrnumber{701538}
	\end{bbook}
	\endbibitem
	
	\bibitem{Athreya:1969a}
	\begin{barticle}[author]
		\bauthor{\bsnm{Athreya},~\bfnm{Krishna~Balasundaram}\binits{K.~B.}}
		(\byear{1969}).
		\btitle{Limit theorems for multitype continuous time {M}arkov branching
			processes. {I}. {T}he case of an eigenvector linear functional}.
		\bjournal{Z. Wahrscheinlichkeitstheorie und Verw. Gebiete}
		\bvolume{12}
		\bpages{320--332}.
		\bmrnumber{254927}
	\end{barticle}
	\endbibitem
	
	\bibitem{Athreya:1969b}
	\begin{barticle}[author]
		\bauthor{\bsnm{Athreya},~\bfnm{Krishna~Balasundaram}\binits{K.~B.}}
		(\byear{1969}).
		\btitle{Limit theorems for multitype continuous time {M}arkov branching
			processes. {II}. {T}he case of an arbitrary linear functional}.
		\bjournal{Z. Wahrscheinlichkeitstheorie und Verw. Gebiete}
		\bvolume{13}
		\bpages{204--214}.
		\bmrnumber{254928}
	\end{barticle}
	\endbibitem
	
	\bibitem{Athreya+al:2008}
	\begin{barticle}[author]
		\bauthor{\bsnm{Athreya},~\bfnm{Krishna~B.}\binits{K.~B.}},
		\bauthor{\bsnm{Ghosh},~\bfnm{Arka~P.}\binits{A.~P.}} \AND
		\bauthor{\bsnm{Sethuraman},~\bfnm{Sunder}\binits{S.}}
		(\byear{2008}).
		\btitle{Growth of preferential attachment random graphs via continuous-time
			branching processes}.
		\bjournal{Proc. Indian Acad. Sci. Math. Sci.}
		\bvolume{118}
		\bpages{473--494}.
		\bmrnumber{2450248}
	\end{barticle}
	\endbibitem
	
	\bibitem{Athreya+McDonald+Ney:1978}
	\begin{barticle}[author]
		\bauthor{\bsnm{Athreya},~\bfnm{K.~B.}\binits{K.~B.}},
		\bauthor{\bsnm{McDonald},~\bfnm{D.}\binits{D.}} \AND
		\bauthor{\bsnm{Ney},~\bfnm{P.}\binits{P.}}
		(\byear{1978}).
		\btitle{Limit theorems for semi-{M}arkov processes and renewal theory for
			{M}arkov chains}.
		\bjournal{Ann. Probab.}
		\bvolume{6}
		\bpages{788--797}.
		\bmrnumber{503952}
	\end{barticle}
	\endbibitem
	
	\bibitem{Ball+al:2014}
	\begin{barticle}[author]
		\bauthor{\bsnm{Ball},~\bfnm{Frank}\binits{F.}},
		\bauthor{\bsnm{Gonz\'{a}lez},~\bfnm{Miguel}\binits{M.}},
		\bauthor{\bsnm{Mart\'{\i}nez},~\bfnm{Rodrigo}\binits{R.}} \AND
		\bauthor{\bsnm{Slavtchova-Bojkova},~\bfnm{Maroussia}\binits{M.}}
		(\byear{2014}).
		\btitle{Stochastic monotonicity and continuity properties of functions defined
			on {C}rump-{M}ode-{J}agers branching processes, with application to
			vaccination in epidemic modelling}.
		\bjournal{Bernoulli}
		\bvolume{20}
		\bpages{2076--2101}.
		\bmrnumber{3263099}
	\end{barticle}
	\endbibitem
	
	\bibitem{Bhamidi+al:2015}
	\begin{barticle}[author]
		\bauthor{\bsnm{Bhamidi},~\bfnm{Shankar}\binits{S.}},
		\bauthor{\bsnm{Steele},~\bfnm{J.~Michael}\binits{J.~M.}} \AND
		\bauthor{\bsnm{Zaman},~\bfnm{Tauhid}\binits{T.}}
		(\byear{2015}).
		\btitle{Twitter event networks and the superstar model}.
		\bjournal{Ann. Appl. Probab.}
		\bvolume{25}
		\bpages{2462--2502}.
		\bmrnumber{3375881}
	\end{barticle}
	\endbibitem
	
	\bibitem{Bhamidi+al:2014}
	\begin{barticle}[author]
		\bauthor{\bsnm{Bhamidi},~\bfnm{Shankar}\binits{S.}},
		\bauthor{\bparticle{van~der} \bsnm{Hofstad},~\bfnm{Remco}\binits{R.}} \AND
		\bauthor{\bsnm{Komj\'{a}thy},~\bfnm{J\'{u}lia}\binits{J.}}
		(\byear{2014}).
		\btitle{The front of the epidemic spread and first passage percolation}.
		\bjournal{J. Appl. Probab.}
		\bvolume{51A}
		\bpages{101--121}.
		\bmrnumber{3317353}
	\end{barticle}
	\endbibitem
	
	\bibitem{Britton+al:2019}
	\begin{barticle}[author]
		\bauthor{\bsnm{Britton},~\bfnm{Tom}\binits{T.}},
		\bauthor{\bsnm{Leung},~\bfnm{Ka~Yin}\binits{K.~Y.}} \AND
		\bauthor{\bsnm{Trapman},~\bfnm{Pieter}\binits{P.}}
		(\byear{2019}).
		\btitle{Who is the infector? {G}eneral multi-type epidemics and real-time
			susceptibility processes}.
		\bjournal{Adv. in Appl. Probab.}
		\bvolume{51}
		\bpages{606--631}.
		\bmrnumber{3989528}
	\end{barticle}
	\endbibitem
	
	\bibitem{Britton+Scalia_Tomba:2019}
	\begin{barticle}[author]
		\bauthor{\bsnm{Britton},~\bfnm{Tom}\binits{T.}} \AND
		\bauthor{\bsnm{Tomba},~\bfnm{Gianpaolo~Scalia}\binits{G.~S.}}
		(\byear{2019}).
		\btitle{Estimation in emerging epidemics: biases and remedies}.
		\bjournal{Journal of The Royal Society Interface}
		\bvolume{16}
		\bpages{20180670}.
	\end{barticle}
	\endbibitem
	
	\bibitem{Charmoy+al:2017}
	\begin{barticle}[author]
		\bauthor{\bsnm{Charmoy},~\bfnm{Philippe H.~A.}\binits{P.~H.~A.}},
		\bauthor{\bsnm{Croydon},~\bfnm{David~A.}\binits{D.~A.}} \AND
		\bauthor{\bsnm{Hambly},~\bfnm{Ben~M.}\binits{B.~M.}}
		(\byear{2017}).
		\btitle{Central limit theorems for the spectra of classes of random fractals}.
		\bjournal{Trans. Amer. Math. Soc.}
		\bvolume{369}
		\bpages{8967--9013}.
		\bmrnumber{3710650}
	\end{barticle}
	\endbibitem
	
	\bibitem{Devroye:1987}
	\begin{barticle}[author]
		\bauthor{\bsnm{Devroye},~\bfnm{L.}\binits{L.}}
		(\byear{1987}).
		\btitle{Branching processes in the analysis of the heights of trees}.
		\bjournal{Acta Inform.}
		\bvolume{24}
		\bpages{277--298}.
		\bmrnumber{894557}
	\end{barticle}
	\endbibitem
	
	\bibitem{Durrett:2015}
	\begin{bbook}[author]
		\bauthor{\bsnm{Durrett},~\bfnm{Richard}\binits{R.}}
		(\byear{2015}).
		\btitle{Branching process models of cancer}.
		\bseries{Mathematical Biosciences Institute Lecture Series. Stochastics in
			Biological Systems}
		\bvolume{1}.
		\bpublisher{Springer, Cham; MBI Mathematical Biosciences Institute, Ohio State
			University, Columbus, OH}.
		\bmrnumber{3363681}
	\end{bbook}
	\endbibitem
	
	\bibitem{Gatzouras:2000}
	\begin{barticle}[author]
		\bauthor{\bsnm{Gatzouras},~\bfnm{Dimitris}\binits{D.}}
		(\byear{2000}).
		\btitle{On the lattice case of an almost-sure renewal theorem for branching
			random walks}.
		\bjournal{Adv. in Appl. Probab.}
		\bvolume{32}
		\bpages{720--737}.
		\bmrnumber{1788092}
	\end{barticle}
	\endbibitem
	
	\bibitem{Haccou+al:2007}
	\begin{bbook}[author]
		\bauthor{\bsnm{Haccou},~\bfnm{Patsy}\binits{P.}},
		\bauthor{\bsnm{Jagers},~\bfnm{Peter}\binits{P.}} \AND
		\bauthor{\bsnm{Vatutin},~\bfnm{Vladimir~A.}\binits{V.~A.}}
		(\byear{2007}).
		\btitle{Branching processes: variation, growth, and extinction of populations}.
		\bseries{Cambridge Studies in Adaptive Dynamics}
		\bvolume{5}.
		\bpublisher{Cambridge University Press, Cambridge; IIASA, Laxenburg}.
		\bmrnumber{2429372}
	\end{bbook}
	\endbibitem
	
	\bibitem{Hall+Heyde:1980}
	\begin{bbook}[author]
		\bauthor{\bsnm{Hall},~\bfnm{P.}\binits{P.}} \AND
		\bauthor{\bsnm{Heyde},~\bfnm{C.~C.}\binits{C.~C.}}
		(\byear{1980}).
		\btitle{Martingale limit theory and its application}.
		\bpublisher{Academic Press, Inc. [Harcourt Brace Jovanovich, Publishers], New
			York-London}
		\bnote{Probability and Mathematical Statistics}.
		\bmrnumber{624435}
	\end{bbook}
	\endbibitem
	
	\bibitem{Henry:2017}
	\begin{barticle}[author]
		\bauthor{\bsnm{Henry},~\bfnm{Beno\^{\i}t}\binits{B.}}
		(\byear{2017}).
		\btitle{Central limit theorem for supercritical binary homogeneous
			{C}rump-{M}ode-{J}agers processes}.
		\bjournal{ESAIM Probab. Stat.}
		\bvolume{21}
		\bpages{113--137}.
		\bmrnumber{3716122}
	\end{barticle}
	\endbibitem
	
	\bibitem{Heyde:1970}
	\begin{barticle}[author]
		\bauthor{\bsnm{Heyde},~\bfnm{C.~C.}\binits{C.~C.}}
		(\byear{1970}).
		\btitle{A rate of convergence result for the super-critical {G}alton-{W}atson
			process}.
		\bjournal{J. Appl. Probability}
		\bvolume{7}
		\bpages{451--454}.
		\bmrnumber{288859}
	\end{barticle}
	\endbibitem
	
	\bibitem{Hildebrandt:1940}
	\begin{barticle}[author]
		\bauthor{\bsnm{Hildebrandt},~\bfnm{T.~H.}\binits{T.~H.}}
		(\byear{1940}).
		\btitle{{On unconditional convergence in normed vector spaces}}.
		\bjournal{Bulletin of the American Mathematical Society}
		\bvolume{46}
		\bpages{959 -- 962}.
		\bmrnumber{0003448}
	\end{barticle}
	\endbibitem
	
	\bibitem{Holmgren+Janson:2017}
	\begin{barticle}[author]
		\bauthor{\bsnm{Holmgren},~\bfnm{Cecilia}\binits{C.}} \AND
		\bauthor{\bsnm{Janson},~\bfnm{Svante}\binits{S.}}
		(\byear{2017}).
		\btitle{Fringe trees, {C}rump-{M}ode-{J}agers branching processes and {$m$}-ary
			search trees}.
		\bjournal{Probab. Surv.}
		\bvolume{14}
		\bpages{53--154}.
		\bmrnumber{3626585}
	\end{barticle}
	\endbibitem
	
	\bibitem{Iksanov+Kabluchko:2018}
	\begin{barticle}[author]
		\bauthor{\bsnm{Iksanov},~\bfnm{Alexander}\binits{A.}} \AND
		\bauthor{\bsnm{Kabluchko},~\bfnm{Zakhar}\binits{Z.}}
		(\byear{2018}).
		\btitle{A functional limit theorem for the profile of random recursive trees}.
		\bjournal{Electron. Commun. Probab.}
		\bvolume{23}
		\bpages{Paper No. 87, 13}.
		\bmrnumber{3882228}
	\end{barticle}
	\endbibitem
	
	\bibitem{Iksanov+Kolesko+Meiners:2021b}
	\begin{barticle}[author]
		\bauthor{\bsnm{Iksanov},~\bfnm{Alexander}\binits{A.}},
		\bauthor{\bsnm{Kolesko},~\bfnm{Konrad}\binits{K.}} \AND
		\bauthor{\bsnm{Meiners},~\bfnm{Matthias}\binits{M.}}
		(\byear{2021}).
		\btitle{Gaussian fluctuations and a law of the iterated logarithm for
			{N}erman's martingale in the supercritical general branching process}.
		\bjournal{Electron. J. Probab.}
		\bvolume{26}
		\bpages{Paper No. 160, 22}.
		\bmrnumber{4355681}
	\end{barticle}
	\endbibitem
	
	\bibitem{Jagers:1975}
	\begin{bbook}[author]
		\bauthor{\bsnm{Jagers},~\bfnm{Peter}\binits{P.}}
		(\byear{1975}).
		\btitle{Branching processes with biological applications}.
		\bpublisher{Wiley-Interscience [John Wiley \& Sons], London-New York-Sydney}
		\bnote{Wiley Series in Probability and Mathematical Statistics---Applied
			Probability and Statistics}.
		\bmrnumber{0488341}
	\end{bbook}
	\endbibitem
	
	\bibitem{Jagers:1989}
	\begin{barticle}[author]
		\bauthor{\bsnm{Jagers},~\bfnm{Peter}\binits{P.}}
		(\byear{1989}).
		\btitle{General branching processes as {M}arkov fields}.
		\bjournal{Stochastic Process. Appl.}
		\bvolume{32}
		\bpages{183--212}.
		\bmrnumber{1014449}
	\end{barticle}
	\endbibitem
	
	\bibitem{Jagers+Nerman:1984b}
	\begin{barticle}[author]
		\bauthor{\bsnm{Jagers},~\bfnm{Peter}\binits{P.}} \AND
		\bauthor{\bsnm{Nerman},~\bfnm{Olle}\binits{O.}}
		(\byear{1984}).
		\btitle{Limit theorems for sums determined by branching and other exponentially
			growing processes}.
		\bjournal{Stochastic Process. Appl.}
		\bvolume{17}
		\bpages{47--71}.
		\bmrnumber{738768}
	\end{barticle}
	\endbibitem
	
	\bibitem{Jagers+Nerman:1984}
	\begin{barticle}[author]
		\bauthor{\bsnm{Jagers},~\bfnm{Peter}\binits{P.}} \AND
		\bauthor{\bsnm{Nerman},~\bfnm{Olle}\binits{O.}}
		(\byear{1984}).
		\btitle{The growth and composition of branching populations}.
		\bjournal{Adv. in Appl. Probab.}
		\bvolume{16}
		\bpages{221--259}.
		\bmrnumber{742953}
	\end{barticle}
	\endbibitem
	
	\bibitem{Janson:2004}
	\begin{barticle}[author]
		\bauthor{\bsnm{Janson},~\bfnm{Svante}\binits{S.}}
		(\byear{2004}).
		\btitle{Functional limit theorems for multitype branching processes and
			generalized {P}\'{o}lya urns}.
		\bjournal{Stochastic Process. Appl.}
		\bvolume{110}
		\bpages{177--245}.
		\bmrnumber{2040966}
	\end{barticle}
	\endbibitem
	
	\bibitem{Janson:2018}
	\begin{barticle}[author]
		\bauthor{\bsnm{Janson},~\bfnm{Svante}\binits{S.}}
		(\byear{2018}).
		\btitle{Asymptotics of fluctuations in {C}rump-{M}ode-{J}agers processes: the
			lattice case}.
		\bjournal{Adv. in Appl. Probab.}
		\bvolume{50}
		\bpages{141--171}.
		\bmrnumber{3905097}
	\end{barticle}
	\endbibitem
	
	\bibitem{Janson+Neininger:2008}
	\begin{barticle}[author]
		\bauthor{\bsnm{Janson},~\bfnm{Svante}\binits{S.}} \AND
		\bauthor{\bsnm{Neininger},~\bfnm{Ralph}\binits{R.}}
		(\byear{2008}).
		\btitle{The size of random fragmentation trees}.
		\bjournal{Probab. Theory Related Fields}
		\bvolume{142}
		\bpages{399--442}.
		\bmrnumber{2438697}
	\end{barticle}
	\endbibitem
	
	\bibitem{Jog+Loh:2017}
	\begin{barticle}[author]
		\bauthor{\bsnm{Jog},~\bfnm{Varun}\binits{V.}} \AND
		\bauthor{\bsnm{Loh},~\bfnm{Po-Ling}\binits{P.-L.}}
		(\byear{2017}).
		\btitle{Analysis of centrality in sublinear preferential attachment trees via
			the {C}rump-{M}ode-{J}agers branching process}.
		\bjournal{IEEE Trans. Network Sci. Eng.}
		\bvolume{4}
		\bpages{1--12}.
		\bmrnumber{3625951}
	\end{barticle}
	\endbibitem
	
	\bibitem{Kallenberg:2002}
	\begin{bbook}[author]
		\bauthor{\bsnm{Kallenberg},~\bfnm{Olav}\binits{O.}}
		(\byear{2002}).
		\btitle{Foundations of modern probability},
		\bedition{second} ed.
		\bseries{Probability and its Applications (New York)}.
		\bpublisher{Springer-Verlag, New York}.
		\bmrnumber{1876169}
	\end{bbook}
	\endbibitem
	
	\bibitem{Kimmel+Axelrod:2015}
	\begin{bbook}[author]
		\bauthor{\bsnm{Kimmel},~\bfnm{Marek}\binits{M.}} \AND
		\bauthor{\bsnm{Axelrod},~\bfnm{David~E.}\binits{D.~E.}}
		(\byear{2015}).
		\btitle{Branching processes in biology},
		\bedition{second} ed.
		\bseries{Interdisciplinary Applied Mathematics}
		\bvolume{19}.
		\bpublisher{Springer, New York}.
		\bmrnumber{3310028}
	\end{bbook}
	\endbibitem
	
	\bibitem{Leckey+Mitsche+Wormald:2020}
	\begin{barticle}[author]
		\bauthor{\bsnm{Leckey},~\bfnm{Kevin}\binits{K.}},
		\bauthor{\bsnm{Mitsche},~\bfnm{Dieter}\binits{D.}} \AND
		\bauthor{\bsnm{Wormald},~\bfnm{Nick}\binits{N.}}
		(\byear{2020}).
		\btitle{The height of depth-weighted random recursive trees}.
		\bjournal{Random Structures Algorithms}
		\bvolume{56}
		\bpages{851--866}.
		\bmrnumber{4084192}
	\end{barticle}
	\endbibitem
	
	\bibitem{Meiners:2010}
	\begin{barticle}[author]
		\bauthor{\bsnm{Meiners},~\bfnm{Matthias}\binits{M.}}
		(\byear{2010}).
		\btitle{An almost-sure renewal theorem for branching random walks on the line}.
		\bjournal{J. Appl. Probab.}
		\bvolume{47}
		\bpages{811--825}.
		\bmrnumber{2731350}
	\end{barticle}
	\endbibitem
	
	\bibitem{Mori+Rokob:2019}
	\begin{barticle}[author]
		\bauthor{\bsnm{M\'{o}ri},~\bfnm{T.~F.}\binits{T.~F.}} \AND
		\bauthor{\bsnm{Rokob},~\bfnm{S.}\binits{S.}}
		(\byear{2019}).
		\btitle{Moments of general time dependent branching processes with
			applications}.
		\bjournal{Acta Math. Hungar.}
		\bvolume{159}
		\bpages{131--149}.
		\bmrnumber{4003699}
	\end{barticle}
	\endbibitem
	
	\bibitem{Nerman:1979}
	\begin{bphdthesis}[author]
		\bauthor{\bsnm{Nerman},~\bfnm{Olle}\binits{O.}}
		(\byear{1979}).
		\btitle{On the {C}onvergence of {S}upercritical {G}eneral {B}ranching
			{P}rocesses},
		\btype{PhD thesis},
		\bpublisher{Chalmers University of Technology and the University of
			G\"oteborg}.
	\end{bphdthesis}
	\endbibitem
	
	\bibitem{Nerman:1981}
	\begin{barticle}[author]
		\bauthor{\bsnm{Nerman},~\bfnm{Olle}\binits{O.}}
		(\byear{1981}).
		\btitle{On the convergence of supercritical general ({C}-{M}-{J}) branching
			processes}.
		\bjournal{Z. Wahrsch. Verw. Gebiete}
		\bvolume{57}
		\bpages{365--395}.
		\bmrnumber{629532}
	\end{barticle}
	\endbibitem
	
	\bibitem{Olofsson:1998}
	\begin{barticle}[author]
		\bauthor{\bsnm{Olofsson},~\bfnm{Peter}\binits{P.}}
		(\byear{1998}).
		\btitle{The {$x\log x$} condition for general branching processes}.
		\bjournal{J. Appl. Probab.}
		\bvolume{35}
		\bpages{537--544}.
		\bmrnumber{1659492}
	\end{barticle}
	\endbibitem
	
	\bibitem{Olofsson+Sindi:2014}
	\begin{barticle}[author]
		\bauthor{\bsnm{Olofsson},~\bfnm{Peter}\binits{P.}} \AND
		\bauthor{\bsnm{Sindi},~\bfnm{Suzanne~S.}\binits{S.~S.}}
		(\byear{2014}).
		\btitle{A {C}rump-{M}ode-{J}agers branching process model of prion loss in
			yeast}.
		\bjournal{J. Appl. Probab.}
		\bvolume{51}
		\bpages{453--465}.
		\bmrnumber{3217778}
	\end{barticle}
	\endbibitem
	
	\bibitem{Pittel:1994}
	\begin{barticle}[author]
		\bauthor{\bsnm{Pittel},~\bfnm{Boris}\binits{B.}}
		(\byear{1994}).
		\btitle{Note on the heights of random recursive trees and random {$m$}-ary
			search trees}.
		\bjournal{Random Structures Algorithms}
		\bvolume{5}
		\bpages{337--347}.
		\bmrnumber{1262983}
	\end{barticle}
	\endbibitem
	
	\bibitem{Resnick:1992}
	\begin{bbook}[author]
		\bauthor{\bsnm{Resnick},~\bfnm{Sidney}\binits{S.}}
		(\byear{1992}).
		\btitle{Adventures in stochastic processes}.
		\bpublisher{Birkh\"{a}user Boston, Inc., Boston, MA}.
		\bmrnumber{1181423}
	\end{bbook}
	\endbibitem
	
	\bibitem{Resnick:2014}
	\begin{bbook}[author]
		\bauthor{\bsnm{Resnick},~\bfnm{Sidney~I.}\binits{S.~I.}}
		(\byear{2014}).
		\btitle{A probability path}.
		\bseries{Modern Birkh\"{a}user Classics}.
		\bpublisher{Birkh\"{a}user/Springer, New York}
		\bnote{Reprint of the fifth (2005) printing of the 1999 original [MR1664717]}.
		\bmrnumber{3135152}
	\end{bbook}
	\endbibitem
	
	\bibitem{Rudas+Toth:2009}
	\begin{bincollection}[author]
		\bauthor{\bsnm{Rudas},~\bfnm{Anna}\binits{A.}} \AND
		\bauthor{\bsnm{T\'{o}th},~\bfnm{B\'{a}lint}\binits{B.}}
		(\byear{2009}).
		\btitle{Random tree growth with branching processes---a survey}.
		In \bbooktitle{Handbook of large-scale random networks}.
		\bseries{Bolyai Soc. Math. Stud.}
		\bvolume{18}
		\bpages{171--202}.
		\bpublisher{Springer, Berlin}.
		\bmrnumber{2582389}
	\end{bincollection}
	\endbibitem
	
	\bibitem{Shi:2015}
	\begin{bbook}[author]
		\bauthor{\bsnm{Shi},~\bfnm{Zhan}\binits{Z.}}
		(\byear{2015}).
		\btitle{Branching random walks}.
		\bseries{Lecture Notes in Mathematics}
		\bvolume{2151}.
		\bpublisher{Springer, Cham}
		\bnote{Lecture notes from the 42nd Probability Summer School held in Saint
			Flour, 2012, \'Ecole d'\'Et\'e de Probabilit\'es de Saint-Flour. [Saint-Flour
			Probability Summer School]}.
		\bmrnumber{3444654}
	\end{bbook}
	\endbibitem
	
	\bibitem{Trapman+al:2016}
	\begin{barticle}[author]
		\bauthor{\bsnm{Trapman},~\bfnm{Pieter}\binits{P.}},
		\bauthor{\bsnm{Ball},~\bfnm{Frank}\binits{F.}},
		\bauthor{\bsnm{Dhersin},~\bfnm{Jean-St{\'{e}}phane}\binits{J.-S.}},
		\bauthor{\bsnm{Tran},~\bfnm{Viet~Chi}\binits{V.~C.}},
		\bauthor{\bsnm{Wallinga},~\bfnm{Jacco}\binits{J.}} \AND
		\bauthor{\bsnm{Britton},~\bfnm{Tom}\binits{T.}}
		(\byear{2016}).
		\btitle{Inferring ${R_0}$ in emerging epidemics{\textemdash}the effect of
			common population structure is small}.
		\bjournal{Journal of The Royal Society Interface}
		\bvolume{13}
		\bpages{20160288}.
	\end{barticle}
	\endbibitem
	
	\bibitem{Widder:1941}
	\begin{bbook}[author]
		\bauthor{\bsnm{Widder},~\bfnm{David~Vernon}\binits{D.~V.}}
		(\byear{1941}).
		\btitle{The {L}aplace {T}ransform}.
		\bseries{Princeton Mathematical Series, vol. 6}.
		\bpublisher{Princeton University Press, Princeton, N. J.}
		\bmrnumber{0005923}
	\end{bbook}
	\endbibitem
	
\end{thebibliography}
\end{document}